\newcommand{\lyxmathsym}[1]{\ifmmode\begingroup\def\b@ld{bold}
  \text{\ifx\math@version\b@ld\bfseries\fi#1}\endgroup\else#1\fi}
\providecommand{\tabularnewline}{\\}
\providecommand{\algorithmname}{Algorithm}
\theoremstyle{plain}
\newtheorem{thm}{\protect\theoremname}[section]
\theoremstyle{plain}
\newtheorem{assumption}[thm]{\protect\assumptionname}
\theoremstyle{remark}
\newtheorem{rem}[thm]{\protect\remarkname}
\theoremstyle{definition}
\newtheorem{example}[thm]{\protect\examplename}
\theoremstyle{plain}
\newtheorem{cor}[thm]{\protect\corollaryname}
\theoremstyle{plain}
\newtheorem{lem}[thm]{\protect\lemmaname}
\providecommand{\assumptionname}{Assumption}
\providecommand{\corollaryname}{Corollary}
\providecommand{\examplename}{Example}
\providecommand{\lemmaname}{Lemma}
\providecommand{\remarkname}{Remark}
\providecommand{\theoremname}{Theorem}
\begin{document}
\global\long\def\E{\mathbb{\mathbb{E}}}%
\global\long\def\R{\mathbb{\mathbb{R}}}%
\global\long\def\P{\mathbb{\mathbb{P}}}%
\global\long\def\N{\mathbb{\mathbb{N}}}%
\global\long\def\F{\mathcal{F}}%
\global\long\def\O{O}%
\global\long\def\argmin{\mathbb{\mathrm{argmin}}}%
\global\long\def\dom{\mathrm{dom}}%
\global\long\def\bx{\boldsymbol{x}}%
\global\long\def\by{\boldsymbol{y}}%
\global\long\def\bz{\boldsymbol{z}}%
\global\long\def\bzero{\boldsymbol{0}}%
\global\long\def\bg{\boldsymbol{g}}%
\global\long\def\be{\boldsymbol{e}}%
\global\long\def\RR{\textsf{RR}}%
\global\long\def\SS{\textsf{SS}}%
\global\long\def\IG{\textsf{IG}}%
\global\long\def\avg{\textsf{avg}}%
\global\long\def\suf{\textsf{suffix}}%
\global\long\def\bR{\overline{\R}}%
\global\long\def\defeq{\triangleq}%
\global\long\def\diseq{\overset{\mathcal{D}}{=}}%
\global\long\def\1{\mathds{1}}%
\global\long\def\I{\mathsf{i}}%
\global\long\def\re{\mathsf{r}}%
\global\long\def\qu{\mathsf{q}}%
\global\long\def\inv{\star}%
\global\long\def\domx{\mathcal{C}}%
\global\long\def\charf{I}%

\twocolumn[
\icmltitle{Improved Last-Iterate Convergence of Shuffling Gradient Methods for Nonsmooth Convex Optimization}
% It is OKAY to include author information, even for blind 
% submissions: the style file will automatically remove it for you 
% unless you've provided the [accepted] option to the icml2025 
% package.

% List of affiliations: The first argument should be a (short) 
% identifier you will use later to specify author affiliations 
% Academic affiliations should list Department, University, City, Region, Country 
% Industry affiliations should list Company, City, Region, Country

% You can specify symbols, otherwise they are numbered in order. 
% Ideally, you should not use this facility. Affiliations will be numbered 
% in order of appearance and this is the preferred way. 

\icmlsetsymbol{equal}{*}

\begin{icmlauthorlist}
\icmlauthor{Zijian Liu}{NYU}
\icmlauthor{Zhengyuan Zhou}{NYU,Arena} 
%\icmlauthor{Firstname1 Lastname1}{equal,yyy} 
%\icmlauthor{Firstname2 Lastname2}{equal,yyy,comp} 
%\icmlauthor{Firstname3 Lastname3}{comp} 
%\icmlauthor{Firstname4 Lastname4}{sch} 
%\icmlauthor{Firstname5 Lastname5}{yyy} 
%\icmlauthor{Firstname6 Lastname6}{sch,yyy,comp} 
%\icmlauthor{Firstname7 Lastname7}{comp} 
%\icmlauthor{}{sch} 
%\icmlauthor{Firstname8 Lastname8}{sch} 
%\icmlauthor{Firstname8 Lastname8}{yyy,comp} 
%\icmlauthor{}{sch} 
%\icmlauthor{}{sch} 
\end{icmlauthorlist}

\icmlaffiliation{NYU}{Stern School of Business, New York University} 
\icmlaffiliation{Arena}{Arena Technologies} 
%\icmlaffiliation{yyy}{Department of XXX, University of YYY, Location, Country} 
%\icmlaffiliation{comp}{Company Name, Location, Country} 
%\icmlaffiliation{sch}{School of ZZZ, Institute of WWW, Location, Country}

\icmlcorrespondingauthor{Zijian Liu}{zl3067@stern.nyu.edu} 
%\icmlcorrespondingauthor{Firstname1 Lastname1}{first1.last1@xxx.edu} 
%\icmlcorrespondingauthor{Firstname2 Lastname2}{first2.last2@www.uk}

% You may provide any keywords that you 
% find helpful for describing your paper; these are used to populate 
% the "keywords" metadata in the PDF but will not be shown in the document 
\icmlkeywords{Machine Learning, ICML}

\vskip 0.3in 
]

% this must go after the closing bracket ] following \twocolumn[ ...

% This command actually creates the footnote in the first column 
% listing the affiliations and the copyright notice. 
% The command takes one argument, which is text to display at the start of the footnote. 
% The \icmlEqualContribution command is standard text for equal contribution. 
% Remove it (just {}) if you do not need this facility.

\printAffiliationsAndNotice{}  % leave blank if no need to mention equal contribution 
% \printAffiliationsAndNotice{\icmlEqualContribution} % otherwise use the standard text.
\begin{abstract}
We study the convergence of the shuffling gradient method, a popular
algorithm employed to minimize the finite-sum function with regularization,
in which functions are passed to apply (Proximal) Gradient Descent
(GD) one by one whose order is determined by a permutation on the
indices of functions. In contrast to its easy implementation and effective
performance in practice, the theoretical understanding remains limited.
A recent advance by \cite{pmlr-v235-liu24cg} establishes the first
last-iterate convergence results under various settings, especially
proving the optimal rates for smooth (strongly) convex optimization.
However, their bounds for nonsmooth (strongly) convex functions are
only as fast as Proximal GD. In this work, we provide the first improved
last-iterate analysis for the nonsmooth case demonstrating that the
widely used Random Reshuffle ($\RR$) and Single Shuffle ($\SS$)
strategies are both provably faster than Proximal GD, reflecting the
benefit of randomness. As an important implication, we give the first
(nearly) optimal convergence result for the suffix average under the
$\RR$ sampling scheme in the general convex case, matching the lower
bound shown by \cite{NEURIPS2022_7bc4f74e}.
\end{abstract}

\section{Introduction}

\begin{figure*}[ht]

\noindent\begin{minipage}[t]{1\textwidth}%
\vspace{-0.25in}
\begin{table}[H]
\caption{\label{tab:tab}Summary of our new convergence rates and the best-known
upper/lower bounds under different settings when $T=Kn$ where $K\in\protect\N$.
All results use the function value gap as the convergence measurement.
In the \textquotedbl Shuffling\textquotedbl{} column, $\textsf{ANY}$
means the rate in the same row holds for any type of shuffling scheme
not limited to $\protect\RR/\protect\SS/\protect\IG$. In the \textquotedbl Rate\textquotedbl{}
column, $D_{\star}\triangleq\left\Vert \protect\bx_{\star}-\protect\bx_{1}\right\Vert $
denotes the Euclidean distance (or any upper bound on it) from the
optimal solution $\protect\bx_{\star}$ and the initial point $\protect\bx_{1}$.
$\land$ and $\lor$ indicate $\min$ and $\max$ operations, respectively.
In the last column, $\protect\bx_{Kn+1}^{\protect\avg}\protect\defeq\frac{1}{Kn}\sum_{t=1}^{Kn}\protect\bx_{t+1}$
and $\protect\bx_{Kn+1}^{\protect\suf}\protect\defeq\frac{1}{n}\sum_{t=Kn-n+1}^{Kn}\protect\bx_{t+1}$
respectively refer to the average iterate and the suffix average
of the last one epoch.}
\vspace{0.1in}

\centering{}{\setlength{\extrarowheight}{5pt}%
\begin{tabular}{|c|c|>{\centering}m{0.25\textwidth}|>{\centering}m{0.25\textwidth}|c|}
\hline 
\multicolumn{5}{|c|}{$F=f+\psi$ where $f=\frac{1}{n}\sum_{i=1}^{n}f_{i}$, each $f_{i}$
is convex and $G$-Lipschitz, and $\psi$ is $\mu$-strongly convex}\tabularnewline
\hline 
Setting & Shuffling & Reference & Rate & Output\tabularnewline
\hline 
\multicolumn{1}{|c|}{} & $\textsf{ANY}$ & \cite{pmlr-v235-liu24cg} & $\O\left(\frac{GD_{\star}}{\sqrt{K}}\right)$\footnote{The same rates hold for the last iterate of Proximal GD when the gradient
budget is $Kn$. Also, we remark that these rates cannot apply to
our Algorithm \ref{alg:Alg} once $\psi\neq0$ due to the difference
from the method studied in \cite{pmlr-v235-liu24cg}. See Section
\ref{sec:alg} for details.}\saveFN\sfna\ & $\bx_{Kn+1}$\tabularnewline
\cline{2-5} \cline{3-5} \cline{4-5} \cline{5-5} 
 & \multicolumn{1}{c|}{} & \cite{NEURIPS2022_7bc4f74e} & $\O\left(\frac{GD_{\star}}{n^{1/4}\sqrt{K}}\right)$\footnote{These rates are proved under $\psi=\charf_{\domx}$ where $\charf_{\domx}$
is the characteristic function for the nonempty closed convex set
$\domx$ in $\R^{d}$.}\saveFN\sfnb\ & $\bx_{Kn+1}^{\avg}$\tabularnewline
 & $\RR$ & \textbf{Ours} (Theorem \ref{thm:RR-cvx}) & $\widetilde{\O}\left(\frac{GD_{\star}}{n^{1/4}\sqrt{K}}\right)$\footnote{This rate can automatically improve to $\O\left(\frac{GD_{\star}}{n^{1/4}\sqrt{K}}\right)$,
i.e., no extra logarithmic factor, if $K=\Omega(\log n)$.} & $\bx_{Kn+1}$\tabularnewline
 &  & \textbf{Ours} (Corollary \ref{cor:RR-suffix}) & $\widetilde{\O}\left(\frac{GD_{\star}}{n^{1/4}\sqrt{K}}\right)$ & $\bx_{Kn+1}^{\suf}$\tabularnewline
\cline{2-5} \cline{3-5} \cline{4-5} \cline{5-5} 
$\mu=0$ & \multicolumn{1}{c|}{} & \cite{NEURIPS2022_7bc4f74e} & $\O\left(\frac{GD_{\star}}{n^{1/4}K^{1/4}}\lor\frac{GD_{\star}}{\sqrt{n}}\right)$\useFN\sfnb\ & $\bx_{Kn+1}^{\avg}$\tabularnewline
 & $\SS$ & \textbf{Ours} (Theorem \ref{thm:SS-cvx}) & $\widetilde{\O}\left(\frac{GD_{\star}}{n^{1/4}K^{1/4}}\lor\frac{GD_{\star}}{\sqrt{n}}\right)$ & $\bx_{Kn+1}$\tabularnewline
 &  & \textbf{Ours} (Theorem \ref{thm:SS-cvx-improved}) & $\widetilde{\O}\left(\frac{GD_{\star}}{n^{1/4}K^{1/4}}\land\frac{GD_{\star}}{\sqrt{K}}\right)$\useFN\sfnb\ & $\bx_{Kn+1}$\tabularnewline
\cline{2-5} \cline{3-5} \cline{4-5} \cline{5-5} 
 & $\RR/\SS$ & \cite{NEURIPS2022_7bc4f74e} & $\Omega\left(\frac{1}{n^{1/4}\sqrt{K}}\right)$\footnote{This bound is built under $G=4$, $D_{\star}=1$, and $\psi=\charf_{\domx}$
for $\domx$ being the unit ball centered at $\bzero$. See also discussions
in Subsection \ref{subsec:related}.} & $\bx_{Kn+1}^{\suf}$\tabularnewline
\hline 
\multicolumn{1}{|c|}{} & $\textsf{ANY}$ & \cite{pmlr-v235-liu24cg} & $\widetilde{\O}\left(\frac{\mu D_{\star}^{2}}{K^{2}}+\frac{G^{2}}{\mu K}\right)$\useFN\sfna\ & $\bx_{Kn+1}$\tabularnewline
\cline{2-5} \cline{3-5} \cline{4-5} \cline{5-5} 
$\mu>0$ & $\RR$ & \textbf{Ours} (Theorem \ref{thm:RR-str}) & $\widetilde{\O}\left(\frac{\mu D_{\star}^{2}}{n^{2}K^{2}}+\frac{G^{2}}{\mu\sqrt{n}K}\right)$ & $\bx_{Kn+1}$\tabularnewline
\cline{2-5} \cline{3-5} \cline{4-5} \cline{5-5} 
 & $\SS$ & \textbf{Ours} (Theorem \ref{thm:SS-str}) & $\widetilde{\O}\left(\frac{\mu D_{\star}^{2}}{n^{2}K^{2}}+\frac{G^{2}}{\mu\sqrt{nK}}+\frac{G^{2}}{\mu n}\right)$ & $\bx_{Kn+1}$\tabularnewline
\hline 
\end{tabular}}
\end{table}
\end{minipage}

\end{figure*}

This work considers a common machine learning problem, minimizing
a finite-sum function with regularization, i.e.,
\[
\min_{\bx\in\R^{d}}F(\bx)\defeq f(\bx)+\psi(\bx)\text{ where }f(\bx)\defeq\frac{1}{n}\sum_{i=1}^{n}f_{i}(\bx),
\]
in which $f_{i}$ and $\psi$ are convex and potentially satisfy other
properties, e.g., Lipschitz continuity. Due to the famous empirical
risk minimization framework \cite{shalev2014understanding}, such
a problem arises in a wide range of applications (e.g., SVMs \cite{cortes1995support})
and has been extensively studied in the past few years.

Two text-book level algorithms for solving the problem are Proximal
Gradient Descent (GD) and its variant Proximal Stochastic Gradient
Descent (SGD) \cite{nemirovskij1983problem,nesterov2018lectures,bubeck2015convex,lan2020first},
where the former requires a true gradient in every step in contrast
to the latter only computing the gradient of a single function selected
based on a random index uniformly sampled from $\left[n\right]\defeq\left\{ 1,\cdots,n\right\} $.

Whereas neither of the above classic algorithms is widely adopted
in practice since, when $n$ is large (the standard scenario nowadays),
Proximal GD incurs large computational overhead and Proximal SGD suffers
from cache misses. Instead, the shuffling gradient method is arguably
the most popular and practical choice, in which functions (or data
points) are passed to apply (proximal) gradient descent one by one
whose order is determined by a permutation on $\left[n\right]$. In
particular, three shuffling strategies named Random Reshuffle ($\RR$),
Single Shuffle ($\SS$), and Incremental Gradient ($\IG$) are mostly
used, where the permutation varies randomly in every epoch (containing
$n$ steps) for $\RR$, is randomly sampled at the beginning and employed
through all updates for $\SS$, and is deterministically picked in
advance for $\IG$.

Compared to its easy implementation, lightweight computation, and
effective performance \cite{bottou2009curiously,bottou2012stochastic,bengio2012practical},
the theoretical understanding of the shuffling gradient method remains
limited, especially for the most common output, the last iterate.
A recent advance by \cite{pmlr-v235-liu24cg} establishes the first
last-iterate convergence results measured by the function value gap
under various settings, particularly, proving the optimal rates for
smooth (strongly) convex optimization under the $\RR/\SS/\IG$ sampling
schemes mentioned before. However, their bounds for nonsmooth (strongly)
convex functions are proved for any kind of shuffling strategy (not
limited to $\RR/\SS/\IG$) and only as fast as Proximal GD, leaving
the following unaddressed research question as also mentioned by \cite{pmlr-v235-liu24cg}:
\begin{center}
\textit{For nonsmooth (strongly) convex optimization, can we prove
better last-iterate convergence rates than Proximal GD for $\RR/\SS$
to reflect the benefit of randomness?}
\par\end{center}

\subsection{Our Contributions}

We answer the question affirmatively by establishing the first improved
last-iterate convergence rates for nonsmooth (strongly) convex optimization
under both  $\RR$ and $\SS$ sampling schemes, as summarized in Table
\ref{tab:tab}.

For $\RR$, our new rates are better than the best-known bounds in
\cite{pmlr-v235-liu24cg} by up to a factor of $\Theta(n^{-1/4})$
in the general convex case and a factor of $\Theta(n^{-1/2})$ in
the strongly convex case. As such, our results provide the first concrete
evidence indicating that the $\RR$ sampling scheme indeed converges
faster than Proximal GD, reflecting the benefit of randomness. As
an important implication, we give the first provable and (nearly)
optimal convergence result for the suffix average of the last $n$
points in the optimization trajectory, matching the lower bound shown
by \cite{NEURIPS2022_7bc4f74e} and thus filling in the gap.

For $\SS$, our new rates are better than the bounds in \cite{pmlr-v235-liu24cg}
when the time horizon is below a certain threshold. Specifically,
suppose $T=Kn$ where $K\in\N$, then there exists a critical value
$K_{\star}\in\left(1,n\right]$ such that once $K\leq K_{\star}$,
our bounds decay faster than \cite{pmlr-v235-liu24cg} (and also Proximal
GD) for both general and strongly convex optimization. In the special
case of constrained optimization (i.e., $\psi=\charf_{\domx}$ where
$\charf_{\domx}$ is the characteristic function for the nonempty
closed convex set $\domx$ in $\R^{d}$), we sharpen our bound further
and obtain an improved rate better than \cite{pmlr-v235-liu24cg}
for any $K\in\N$. These results suggest that the $\SS$ strategy
also beats Proximal GD (at least partially), demonstrating the benefit
of using random permutations.

We also highlight that our results (except Theorem \ref{thm:SS-cvx-improved})
hold for \textit{any} $T\in\N$, which as far as we know is new in
the literature on shuffling gradient methods for convex optimization.

Moreover, we propose a novel sufficient condition to guarantee the
last-iterate convergence when the index is selected in a general manner,
not limited to shuffling-based methods.

\subsection{Related Work\label{subsec:related}}

Due to limited space, we will only review the study of shuffling-based
gradient methods for nonsmooth (strongly) convex optimization. For
details and progress in smooth optimization, the reader could refer
to \cite{gurbuzbalaban2019convergence,gurbuzbalaban2021random,ying2018stochastic,pmlr-v97-haochen19a,pmlr-v97-nagaraj19a,pmlr-v119-rajput20a,pmlr-v125-safran20a,NEURIPS2020_cb8acb1d,NEURIPS2020_c8cc6e90,JMLR:v22:20-1238,NEURIPS2021_803ef568,rajput2022permutationbased,pmlr-v162-mishchenko22a,pmlr-v162-tran22a,pmlr-v202-cha23a,NEURIPS2024_84d39572,cai2025last}
for the convex case and \cite{solodov1998incremental,li2019incremental,NEURIPS2020_c8cc6e90,JMLR:v22:20-1238,pmlr-v139-tran21b,pauwels2021incremental,lu2022a,mohtashami2022characterizing,NEURIPS2022_3acb4925,li2023convergence,NEURIPS2023_eeb57fdf,yu2023high,qiu2023new,pmlr-v235-koloskova24a,qiu2024random,josz2024proximal}
for the nonconvex case.

In the following, we assume the time horizon $T=Kn$ where $K\in\N$
for simplicity and only focus on the dependence of $n$ and $K$ in
the convergence rate.

\textbf{Upper Bound. }In the general convex case, the first $\O\left(\frac{1}{\sqrt{K}}\right)$
rate is established by \cite{nedic2001incremental} for $\IG$ when
$\psi=\charf_{\domx}$. Later in \cite{bertsekas2011incrementalgs},
the requirement $\psi=\charf_{\domx}$ is relaxed to $\psi=\varphi+\charf_{\domx}$
where $\varphi$ needs to be Lipschitz on $\domx$. \cite{NIPS2016_c74d97b0}
studies the generalized linear model and provides the $\O\left(\frac{1}{\sqrt{n}}\right)$
upper bound for $\RR/\SS$ when $K=1$. As for general objectives
and general $K\in\N$, \cite{NEURIPS2022_7bc4f74e} is the only work
showing the $\O\left(\frac{1}{n^{1/4}\sqrt{K}}\right)$ rate for $\RR$
and the $\O\left(\frac{1}{n^{1/4}K^{1/4}}\lor\frac{1}{\sqrt{n}}\right)$
rate for $\SS$, both under $\psi=\charf_{\domx}$. However, all the
results mentioned until now only work for the average iterate. Recently,
\cite{pmlr-v235-liu24cg} gives the first last-iterate convergence
result $\O\left(\frac{1}{\sqrt{K}}\right)$ being applied to any convex
$\psi$ and any type of shuffling strategy not limited to $\RR/\SS/\IG$.

In the strongly convex case, \cite{kibardin1979decomposition,nedic2001convergence}
show the $\IG$ sampling scheme guarantees the $\O\left(\frac{1}{K}\right)$
convergence measured by the squared distance from the optimal solution
and the last iterate, assuming strongly convex $f$ and $\psi=\charf_{\domx}$.
\cite{pmlr-v235-liu24cg} proves a similar $\widetilde{\O}\left(\frac{1}{K}\right)$
rate for strongly convex $\psi$ with improvements in two aspects:
one is using a stronger criterion, the function value gap, to measure
convergence, the other is that their result holds for any shuffling
scheme not restricted to $\RR/\SS/\IG$.

\textbf{Lower Bound. }The lower complexity bound of shuffling gradient
methods for nonsmooth (strongly) convex optimization is a long-open
problem. The first insightful observation is by \cite{pmlr-v97-nagaraj19a}
pointing out that any lower bound established for the deterministic
case is also valid here as one can take $f_{i}\equiv f$ and $\psi=\charf_{\domx}$
where $\domx$ is a certain convex set (usually a ball in $\R^{d}$
centered at $\bzero$). Such a reduction immediately implies two results
(not limited to the last iterate) working for any shuffling strategy,
i.e., $\Omega\left(\frac{1}{\sqrt{nK}}\right)$ for the general convex
case and $\Omega\left(\frac{1}{nK}\right)$\footnote{\label{fn:lb}A subtle point is that this lower bound is established
for strongly convex $f$ instead of $\psi$, not strictly fitting
our Assumption \ref{assu:basic}. However, by slightly modifying the
existing proof \cite{bubeck2015convex} to make it work for the first-order
algorithm containing a proximal update step, we can show the same
bound still holds for strongly convex $\psi$. See Appendix \ref{sec:lb}
for details.} for the strongly convex case \cite{nemirovskij1983problem,nesterov2018lectures,bubeck2015convex}.

Though these two lower bounds may be too optimistic for shuffling-based
gradient methods, no further progress has been made until \cite{NEURIPS2022_7bc4f74e},
showing that both $\RR$ and $\SS$ sampling schemes with a constant
stepsize $\eta$ in the general convex case admit the lower bound
$\Omega\left(\min\left\{ 1,\eta\sqrt{\frac{n}{J}}+\eta+\frac{1}{\eta nK}\right\} \right)$
for the suffix average of the last $J$ epochs (i.e., $\frac{1}{Jn}\sum_{j=K-J}^{K-1}\sum_{i=1}^{n}\bx_{jn+i+1}$).
Noticing $\Omega\left(\eta\sqrt{\frac{n}{J}}+\frac{1}{\eta nK}\right)\geq\Omega\left(\frac{1}{J^{1/4}n^{1/4}\sqrt{K}}\right)$
and $\Omega\left(\eta+\frac{1}{\eta nK}\right)\geq\Omega\left(\frac{1}{\sqrt{nK}}\right)$,
we hence can simplify the bound into $\Omega\left(\frac{1}{J^{1/4}n^{1/4}\sqrt{K}}+\frac{1}{\sqrt{nK}}\right)$.
Especially, this implies the $\Omega\left(\frac{1}{n^{1/4}\sqrt{K}}\right)$
barrier for the suffix average of the last one epoch as listed in
Table \ref{tab:tab}.

However, whether the $\Omega\left(\frac{1}{n^{1/4}\sqrt{K}}\right)$
bound also holds for the last iterate under $\RR/\SS$ is still unclear.
Furthermore, whether the general lower bound $\Omega\left(\frac{1}{nK}\right)$
for the strongly convex case mentioned above is tight for shuffling
gradient methods remains unknown as well.

\section{Preliminary}

\textbf{Notation. }$\N$ is the set of all positive integers and $\left[m\right]\defeq\left\{ 1,\dots,m\right\} ,\forall m\in\N$.
$a\land b$ and $a\lor b$ respectively indicate $\min\left\{ a,b\right\} $
and $\max\left\{ a,b\right\} $. $X\diseq Y$ means that two random
variables $X$ and $Y$ have the same probability distribution. $\left\langle \cdot,\cdot\right\rangle $
denotes the Euclidean inner product on $\R^{d}$. $\left\Vert \cdot\right\Vert \defeq\sqrt{\left\langle \cdot,\cdot\right\rangle }$
is the $2$-norm. Given an extended real-valued convex function $h:\R^{d}\to\bR$
where $\bR\defeq\left(-\infty,+\infty\right]$, $\dom h\defeq\left\{ \bx\in\R^{d}:h(\bx)<+\infty\right\} $.
For any $\bx\in\dom h$, $\partial h(\bx)\defeq\left\{ \bg\in\R^{d}:h(\by)\geq h(\bx)+\left\langle \bg,\by-\bx\right\rangle ,\forall\by\in\R^{d}\right\} $
is the set of subgradients at $\bx$. We denote by $\nabla h(\bx)$
an element in $\partial h(\bx)$ when $\partial h(\bx)\neq\varnothing$.
Throughout the paper, $\domx$ always denotes a nonempty closed convex
set in $\R^{d}$ and $\charf_{\domx}$ represents its characteristic
function, i.e., $\charf_{\domx}(\bx)=0$ if $\bx\in\domx$, $+\infty$
otherwise.

In this work, we study the following optimization problem
\[
\min_{\bx\in\R^{d}}F(\bx)\defeq f(\bx)+\psi(\bx)\text{ where }f(\bx)\defeq\frac{1}{n}\sum_{i=1}^{n}f_{i}(\bx).
\]

Our analysis relies on two mild assumptions.
\begin{assumption}
\label{assu:basic}\textup{Each $f_{i}:\R^{d}\to\R$ is convex. $\psi:\R^{d}\to\bR$
is proper, closed, and convex. Moreover, there exists $\mu\geq0$
such that $\psi(\bx)-\psi(\by)-\left\langle \nabla\psi(\by),\bx-\by\right\rangle \geq\frac{\mu}{2}\left\Vert \bx-\by\right\Vert ^{2},\forall\bx\in\R^{d},\by\in\dom\psi,\nabla\psi(\by)\in\partial\psi(\by)$
whenever $\partial\psi(\by)\neq\varnothing$.}
\end{assumption}

Under Assumption \ref{assu:basic}, $\partial f_{i}(\bx)$ is always
nonempty for any $\bx\in\R^{d}$ and $i\in\left[n\right]$ since $\dom f_{i}=\R^{d}$.
\begin{assumption}
\label{assu:lip}\textup{Each $f_{i}$ is $G_{i}$-Lipschitz on $\dom\psi$
for some $G_{i}>0$.}
\end{assumption}

We remark that Assumption \ref{assu:lip} only requires $f_{i}$ to
be Lipschitz on $\dom\psi$ instead of the whole space $\R^{d}$.
Hence, it is also possible to consider the case of strongly convex
$f_{i}$ or $f$ without the domain issue pointed out by \cite{pmlr-v80-nguyen18c}.
However, to keep it simple, we only focus on the situation of $\psi$
being possibly strongly convex.

\section{General Proximal Gradient Method\label{sec:alg}}

\begin{algorithm}[H]
\caption{\label{alg:Alg}General Proximal Gradient Method}

\textbf{Input:} initial point $\bx_{1}\in\dom\psi$, stepsize $\eta_{t}>0,\forall t\in\left[T\right]$.

\textbf{for} $t=1$ \textbf{to} $T$\textbf{ do}

$\quad$Generate an index $\I(t)\in\left[n\right]$

$\quad$$\bx_{t+1}=\argmin_{\bx\in\R^{d}}\psi(\bx)+\left\langle \nabla f_{\I(t)}(\bx_{t}),\bx\right\rangle +\frac{\left\Vert \bx-\bx_{t}\right\Vert ^{2}}{2\eta_{t}}$

\textbf{Output:} $\bx_{T+1}$
\end{algorithm}

\begin{rem}
Algorithm \ref{alg:Alg} is also known as Incremental Subgradient-Proximal
Method \cite{bertsekas2011incrementalgs}. However, we use a different
name here to distinguish it from the term Incremental Gradient in
the literature.
\end{rem}

The algorithmic framework studied in the paper, General Proximal Gradient
Method \cite{bertsekas2011incrementalgs}, is provided in Algorithm
\ref{alg:Alg}. We highlight three key differences from the prior
proximal shuffling gradient methods \cite{kibardin1979decomposition,pmlr-v162-mishchenko22a,pmlr-v235-liu24cg,josz2024proximal}.
First, Algorithm \ref{alg:Alg} is more general since the generation
process of $\I(t)$ is not limited to shuffling-based. Second, Algorithm
\ref{alg:Alg} works for any $T\in\N$, in contrast to $T=Kn$ where
$K\in\N$ required in the studies mentioned above. Moreover, the proximal
update in Algorithm \ref{alg:Alg} happens in every step instead of
at the end of every epoch (containing $n$ gradient descent steps)
in the existing algorithms.

Now we provide some concrete examples of how to generate the index
$\I(t)$. The first one is Example \ref{exa:SGD}, showing that Proximal
SGD is a special case of Algorithm \ref{alg:Alg}.
\begin{example}
\label{exa:SGD}When $\I(1)$ to $\I(T)$ are mutually independent
random variables uniformly distributed on $\left[n\right]$, Algorithm
\ref{alg:Alg} recovers the famous Proximal SGD algorithm.
\end{example}

Next, to formally define different shuffling strategies, we require
some new notations. Henceforth, $\re(t)$ denotes the modulo operation
of $n$, i.e., $\re(t)\defeq t\mod n$, where we use the convention
$Kn\mod n=n,\forall K\in\N$. In addition, we let $\qu(t)$ be the
smallest integer greater than or equal to $\frac{t}{n}$, i.e., $\qu(t)\defeq\left\lceil \frac{t}{n}\right\rceil $
where $\left\lceil \cdot\right\rceil $ is the ceiling function. Remarkably,
the equation $t=(\qu(t)-1)n+\re(t),\forall t\in\N$ always holds.
Lastly, we denote by $S_{n}$ the symmetric group of $\left[n\right]$,
i.e., the set containing all permutations of $\left[n\right]$. 

Equipped with these notations, we introduce the commonly used $\RR$/$\SS$/$\IG$
shuffling schemes as follows.
\begin{example}
\label{exa:RR}When $\I(t)=\pi_{\qu(t)}^{\re(t)}$ where $\pi_{1}$
to $\pi_{\qu(T)}$ are mutually independent random permutations uniformly
distributed on $S_{n}$, it is called the $\RR$ sampling scheme.
\end{example}

\begin{example}
\label{exa:SS}When $\I(t)=\pi^{\re(t)}$ where $\pi$ is a random
permutation uniformly distributed on $S_{n}$, it is called the $\SS$
sampling scheme.
\end{example}

\begin{example}
\label{exa:IG}When $\I(t)=\pi^{\re(t)}$ where $\pi$ is a deterministic
permutation in $S_{n}$, it is called the $\IG$ sampling scheme.
\end{example}

\section{Improved Last-Iterate Convergence Rates\label{sec:rates}}

In this section, we provide our improved last-iterate convergence
rates for Algorithm \ref{alg:Alg} under both $\RR$ and $\SS$ sampling
schemes. To simplify the notation, we define $G_{f,1}\defeq\frac{1}{n}\sum_{i=1}^{n}G_{i}$
and $G_{f,2}\defeq\sqrt{\frac{1}{n}\sum_{i=1}^{n}G_{i}^{2}}$, respectively
representing the arithmetic mean and the root mean square of Lipschitz
parameters. Notably, the following inequality always holds
\begin{equation}
G_{f,1}\leq G_{f,2}<\sqrt{n}G_{f,1}.\label{eq:G}
\end{equation}

Next, to make easy and fair comparisons to the best existing convergence
results of shuffling gradient methods for nonsmooth (strongly) convex
optimization \cite{NEURIPS2022_7bc4f74e,pmlr-v235-liu24cg}, we make
two extra assumptions here. One is the existence of a point $\bx_{\star}\in\R^{d}$
attaining the minimum value of $F$, i.e., $F(\bx_{\star})=F_{\star}\defeq\inf_{\bx\in\R^{d}}F(\bx)$.
Under this assumption, let $D_{\star}\defeq\left\Vert \bx_{\star}-\bx_{1}\right\Vert $
denote the distance between the optimal solution and the initial point.
The other is assuming the time horizon satisfies $T\geq n$ (or one
can simply think $T=Kn$ for $K\in\N$ as in prior works).
\begin{rem}
We clarify that the above assumptions are both unnecessary in the
full statement of every theorem (except Theorem \ref{thm:SS-cvx-improved}).
Concretely, for any reference point $\bz\in\R^{d}$ and $T\in\N$,
the gap $\E\left[F(\bx_{T+1})-F(\bz)\right]$ can always be properly
upper bounded. See Appendix \ref{sec:full-thm} for details.
\end{rem}

In addition, following the convention in nonsmooth optimization for
the general convex case \cite{nesterov2018lectures,lan2020first},
the value of $\eta$ used in the stepsize $\eta_{t}$ in Theorems
\ref{thm:RR-cvx}, \ref{thm:SS-cvx}, and \ref{thm:SS-cvx-improved}
has been optimized to obtain the best dependence on problem-dependent
parameters, e.g., $G_{f,1}$, $G_{f,2}$, and $D_{\star}$. Rates
working for arbitrarily picked $\eta$ are deferred to the corresponding
full version of each theorem in Appendix \ref{sec:full-thm}, in which
the precise logarithmic factor hidden in the $\widetilde{\O}$ notation
is also provided.

\subsection{$\protect\RR$ Sampling Scheme}

This subsection will focus on the $\RR$ sampling scheme. As the reader
will see, our new last-iterate bounds are always better than the best-known
results in \cite{pmlr-v235-liu24cg}.
\begin{thm}
\label{thm:RR-cvx}Under Assumptions \ref{assu:basic} (with $\mu=0$)
and \ref{assu:lip}, suppose the $\RR$ sampling scheme is employed
with one of the following three stepsizes $\eta_{t},\forall t\in\left[T\right]$:
\begin{itemize}
\item $\eta_{t}=\eta\frac{\qu(T)-\qu(t)+1}{\qu(T)\sqrt{T}}$ and $\eta=\frac{D_{\star}}{n^{1/4}\sqrt{G_{f,1}G_{f,2}\left(1+\frac{\log n}{\qu(T)}\right)}}$.
\item $\eta_{t}=\frac{\eta}{\sqrt{T}}$ and $\eta=\frac{D_{\star}}{n^{1/4}\sqrt{G_{f,1}G_{f,2}(1+\log T)}}.$
\item $\eta_{t}=\frac{\eta}{\sqrt{t}}$ and $\eta=\frac{D_{\star}}{n^{1/4}\sqrt{G_{f,1}G_{f,2}}}$.
\end{itemize}
Then Algorithm \ref{alg:Alg} guarantees
\[
\E\left[F(\bx_{T+1})-F_{\star}\right]\leq\widetilde{\O}\left(\frac{n^{1/4}\sqrt{G_{f,1}G_{f,2}}D_{\star}}{\sqrt{T}}\right).
\]
If additionally assuming $T=\Omega(n\log n)$, then the first stepsize
choice achieves the following improved rate
\[
\E\left[F(\bx_{T+1})-F_{\star}\right]\leq O\left(\frac{n^{1/4}\sqrt{G_{f,1}G_{f,2}}D_{\star}}{\sqrt{T}}\right).
\]
\end{thm}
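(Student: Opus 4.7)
My plan is to combine three ingredients: the standard descent lemma for the proximal update, a sampling-without-replacement variance bound tailored to $\RR$, and a Zhang-Shamir-style last-iterate reduction.

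\textbf{Per-epoch regret.} I would start from the optimality of $\bx_{t+1}$ together with convexity of $f_{\I(t)}$ and $\psi$ and the bound $\|\nabla f_{\I(t)}(\bx_t)\|\le G_{\I(t)}$, which gives the one-step inequality
\begin{equation*}
\eta_t\bigl(f_{\I(t)}(\bx_t)+\psi(\bx_{t+1})-F(\bz)\bigr)\le\tfrac{1}{2}\|\bx_t-\bz\|^2-\tfrac{1}{2}\|\bx_{t+1}-\bz\|^2+\tfrac{\eta_t^2}{2}G_{\I(t)}^2
\end{equation*}
for every $\bz\in\dom\psi$. Summing over a full epoch $j$ and using the permutation identity $\sum_{i=1}^n f_{\pi_j^i}\equiv nf$ produces a clean full-gradient contribution $n[f(\bx_{(j-1)n+1})-f(\bz)]$ plus a within-epoch drift $\sum_{i=1}^n[f_{\pi_j^i}(\bx_{(j-1)n+i})-f_{\pi_j^i}(\bx_{(j-1)n+1})]$. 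Controlling the drift in expectation over $\pi_j$ is the crux: a brute-force Lipschitz estimate only yields the Proximal-GD rate, so instead I would pair the drift against the distance term via Young's inequality and exploit the sampling-without-replacement structure (negative correlations between $G_{\pi_j^l}$ and $G_{\pi_j^k}$ for $l\ne k$, together with identities such as $\sum_{a\ne b}G_aG_b=n^2 G_{f,1}^2-n G_{f,2}^2$) to show that the effective per-iteration noise is of order $\sqrt{n}\,G_{f,1}G_{f,2}$, rather than $G_{f,2}^2$.

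\textbf{Reduction to the last iterate.} With the epoch-level regret bound in hand, I would apply the Zhang-Shamir weighted-telescoping trick: for each tail window of length $k$, instantiate the one-step inequality with $\bz$ equal to the window's suffix average, use convexity to express $F(\bx_{T+1})$ as a weighted combination of suffix-average gaps, and sum with weights of order $1/(k(k+1))$. The three step-size schedules of the theorem are handled uniformly and yield a logarithmic prefactor $L_T$ equal to $O(\log T)$ for the constant and $1/\sqrt{t}$ schedules and $O(1+\log n/\qu(T))$ for the weighted schedule. Putting the pieces together produces
\begin{equation*}
\E[F(\bx_{T+1})-F_\star]\le O\!\left(\frac{D_\star^2}{\eta T}+\eta\sqrt{n}\,G_{f,1}G_{f,2}\cdot L_T\right),
\end{equation*}
and the stated choices of $\eta$ optimize this to the claimed $\widetilde O(n^{1/4}\sqrt{G_{f,1}G_{f,2}}D_\star/\sqrt{T})$; for the weighted schedule, $L_T=O(1)$ once $T=\Omega(n\log n)$, yielding the sharper log-free rate.

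\textbf{Main obstacle.} In my view, the hardest step is extracting the $\sqrt{n}$ saving in the drift estimate without invoking gradient smoothness: the naive Lipschitz bound is loose by exactly the factor one needs, so the proof must employ a variance/covariance-style estimate adapted to random permutations, likely by integrating the Yun--Yun--Sra sampling-without-replacement machinery into the proximal analysis. A secondary difficulty is coupling this epoch-level argument with the iteration-level Zhang-Shamir telescoping while keeping each reference point measurable with respect to the correct $\sigma$-algebra; I would handle this by restricting the suffix windows to have length divisible by $n$ (so that permutation expectations can be taken cleanly over whole epochs) and controlling any partial-epoch residuals by a direct Lipschitz estimate.
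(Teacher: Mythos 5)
Your high-level architecture (one-step inequality, sum over an epoch, telescope with a Zhang--Shamir/Zamani--Glineur weighting) is plausible, but it differs from the paper's route in a way that matters, and the step you flag as "the crux" is not yet a proof. The paper deliberately \emph{avoids} summing over epochs: it analyzes the iteration \emph{per step}, like Proximal SGD, through Lemma \ref{lem:last-main}, whose key hypothesis is the abstract bound $\left|\E\left[\Omega_t(\bx_s)\right]\right|\leq\Phi\eta_s$ for all $s\leq t$. For $\RR$ this is then verified (Lemma \ref{lem:RR-Omega}) by a transposition-coupling argument: swap positions $\re(s)$ and $i$ in the epoch's permutation, run the coupled trajectory $\widehat{\bx}_s(\re,i)$, and show via the nonexpansiveness of the proximal step (Lemma \ref{lem:contractive}) that $\E\left[\left\Vert \bx_s-\widehat{\bx}_s(\re,i)\right\Vert ^2\mid\pi_{\qu}^{\re}\right]$ grows like $(s-i)\eta^2 G_{f,2}^2$, which is precisely where the $\sqrt{n}$ comes from. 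This per-step formulation is also what makes the third schedule $\eta_t=\eta/\sqrt{t}$ and arbitrary $T$ painless, since neither respects epoch boundaries.

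The concrete gap in your outline is in the drift control. After bounding the within-epoch drift by Lipschitzness you get, deterministically, $\left|\Delta_j\right|\leq\eta\sum_{i<l}G_{\pi_j^i}G_{\pi_j^l}$, and the identity $\sum_{a\neq b}G_aG_b=n^2G_{f,1}^2-nG_{f,2}^2$ merely rewrites this $O(\eta n^2G_{f,1}^2)$ bound --- that is exactly the Proximal-GD rate and there is no $\sqrt{n}$ to be found in an absolute-value estimate. The $\sqrt{n}$ saving comes from \emph{signed} cancellation: one must show $\E\left[f_{\pi_j^i}(\bx_{(j-1)n+i})-f(\bx_{(j-1)n+i})\right]$ is small despite the dependence between $\pi_j^i$ and $\bx_{(j-1)n+i}$, which is a second-moment/stability statement about how sensitive the trajectory is to a single transposition of the permutation, not a covariance identity on the $G_i$. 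Your appeal to "Yun--Yun--Sra SWOR machinery" gestures toward such a coupling, but you do not describe it, and the tools you actually name (Young pairing plus the $\sum_{a\neq b}G_aG_b$ identity) cannot close the step. Secondary issue: restricting the suffix windows to multiples of $n$ does not sit well with the $\eta/\sqrt{t}$ schedule, whose stepsizes change inside an epoch; the paper's per-step Lemma \ref{lem:last-main} sidesteps that entirely.
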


We start with the general convex case and discuss Theorem \ref{thm:RR-cvx}
in detail here. As far as we know, the best and only last-iterate
bound that can be applied to the same setting is $\O\left(\frac{G_{f,1}D_{\star}}{\sqrt{K}}\right)$
for $T=Kn$ where $K\in\N$ \cite{pmlr-v235-liu24cg}. In that case,
Theorem \ref{thm:RR-cvx} achieves the rate $\O\left(\frac{\sqrt{G_{f,1}G_{f,2}}D_{\star}}{n^{1/4}\sqrt{K}}\right)$.
Note that by (\ref{eq:G}), there is 
\[
\frac{\sqrt{G_{f,1}G_{f,2}}D_{\star}}{n^{1/4}\sqrt{K}}\Big/\frac{G_{f,1}D_{\star}}{\sqrt{K}}=\frac{1}{n^{1/4}}\sqrt{\frac{G_{f,2}}{G_{f,1}}}\in\left[\frac{1}{n^{1/4}},1\right).
\]
Therefore, our new result is always better than \cite{pmlr-v235-liu24cg}
by up to a factor of $\Theta(n^{-1/4})$.

In particular, the $\Theta(n^{-1/4})$ improvement can be achieved
when $G_{i}\equiv G$, leading to the rate $\O\left(\frac{GD_{\star}}{n^{1/4}\sqrt{K}}\right)$.
Remarkably, such a rate is as fast as the previously best-known bound
established only for the average iterate $\bx_{Kn+1}^{\avg}\defeq\frac{1}{Kn}\sum_{t=1}^{Kn}\bx_{t+1}$
when $\psi=\charf_{\domx}$ \cite{NEURIPS2022_7bc4f74e}.

An important implication of Theorem \ref{thm:RR-cvx} is to provide
the convergence of $\bx_{T+1}^{\suf}\defeq\frac{1}{n}\sum_{t=T-n+1}^{T}\bx_{t+1}$
as follows.
\begin{cor}
\label{cor:RR-suffix}Under the same setting in Theorem \ref{thm:RR-cvx}
(using the third stepsize), Algorithm \ref{alg:Alg} guarantees 
\[
\E\left[F(\bx_{T+1}^{\suf})-F_{\star}\right]\leq\widetilde{\O}\left(\frac{n^{1/4}\sqrt{G_{f,1}G_{f,2}}D_{\star}}{\sqrt{T}}\right).
\]
\end{cor}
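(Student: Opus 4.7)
The plan is to deduce Corollary \ref{cor:RR-suffix} directly from Theorem \ref{thm:RR-cvx} by combining Jensen's inequality with the anytime nature of the third stepsize. The only nontrivial ingredient is recognizing that the third stepsize in Theorem \ref{thm:RR-cvx} is horizon-independent, which lets a single sample path be reinterpreted as a valid run at every intermediate time.

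First, by the convexity of $F$ and linearity of expectation,
\[
\E\left[F(\bx_{T+1}^{\suf})-F_{\star}\right] \leq \frac{1}{n}\sum_{t=T-n+1}^{T}\E\left[F(\bx_{t+1})-F_{\star}\right],
\]
so it suffices to bound the expected function value gap at each iterate $\bx_{t+1}$ in the last epoch.

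Second, I would observe that the third stepsize $\eta_{s}=\eta/\sqrt{s}$ with $\eta=D_{\star}/(n^{1/4}\sqrt{G_{f,1}G_{f,2}})$ depends only on $s$, not on $T$. Combined with the fact that the $\RR$ scheme draws one fresh permutation per epoch independently, this means the prefix $\bx_{1},\ldots,\bx_{t+1}$ is identical, on the same sample path, to the output sequence one would obtain by running Algorithm \ref{alg:Alg} from scratch with horizon $t$. Consequently, Theorem \ref{thm:RR-cvx} applies at every intermediate time, giving
\[
\E\left[F(\bx_{t+1})-F_{\star}\right] \leq \widetilde{\O}\left(\frac{n^{1/4}\sqrt{G_{f,1}G_{f,2}}D_{\star}}{\sqrt{t}}\right) \quad \forall t\in\N.
\]

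Third, I would average these bounds over $t\in\{T-n+1,\ldots,T\}$ and show the window average decays as $1/\sqrt{T}$. Since $1/\sqrt{t}$ is decreasing, a standard integral comparison yields
\[
\frac{1}{n}\sum_{t=T-n+1}^{T}\frac{1}{\sqrt{t}} \leq \frac{1}{n}\int_{T-n}^{T}\frac{ds}{\sqrt{s}} = \frac{2}{\sqrt{T}+\sqrt{T-n}} \leq \frac{2}{\sqrt{T}},
\]
which, plugged into the two preceding displays, produces the claimed rate $\widetilde{\O}(n^{1/4}\sqrt{G_{f,1}G_{f,2}}D_{\star}/\sqrt{T})$.

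The main (minor) obstacle is the justification in the second step: one must explicitly argue that a single sample path supports invoking Theorem \ref{thm:RR-cvx} at each $t$ simultaneously. The horizon-independent stepsize and the per-epoch independence of the $\RR$ permutations make this immediate, but it is worth writing out since the rest of the argument is a purely deterministic averaging inequality. No new algorithmic ingredients or probabilistic tools are required beyond this reinterpretation.
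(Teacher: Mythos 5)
Your proposal is correct and follows essentially the same route as the paper: apply Jensen's inequality over the last epoch, invoke Theorem \ref{thm:RR-cvx} at each intermediate time $t$ (valid because the third stepsize $\eta_s=\eta/\sqrt{s}$ and the per-epoch $\RR$ permutations are independent of the horizon, so the prefix through $\bx_{t+1}$ is distributed as a horizon-$t$ run), and then bound $\frac{1}{n}\sum_{t=T-n+1}^{T}t^{-1/2}\le 2/\sqrt{T}$. The only cosmetic difference is that you spell out the anytime/horizon-free justification that the paper leaves implicit.
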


\begin{proof}
Due to the convexity of $F$, $\E\left[F(\bx_{T+1}^{\suf})-F_{\star}\right]\leq\frac{1}{n}\sum_{t=T-n+1}^{T}\E\left[F(\bx_{t+1})-F_{\star}\right].$
We conclude from Theorem \ref{thm:RR-cvx} and the inequality $\frac{1}{n}\sum_{t=T-n+1}^{T}\frac{1}{\sqrt{t}}\leq\frac{2}{\sqrt{T}}$.
\end{proof}

To our best knowledge, Corollary \ref{cor:RR-suffix} is not only
the first provable but also the first optimal rate for the suffix
average since when $T=Kn$ and $G_{i}\equiv G$, it matches the lower
bound $\Omega\left(\frac{1}{n^{1/4}\sqrt{K}}\right)$ (up to logarithmic
factors) shown by \cite{NEURIPS2022_7bc4f74e} proved for $\psi=\charf_{\domx}$.
However, the careful reader may argue that Corollary \ref{cor:RR-suffix}
is not convincing because the original lower bound is established
for the constant stepsize $\eta_{t}\equiv\eta$ and has a stronger
version depending on the value of $\eta$ (see Subsection \ref{subsec:related}).
In Corollary \ref{cor:RR-suffix-full}, we close the gap by giving
$\E\left[F(\bx_{T+1}^{\suf})-F_{\star}\right]\leq\widetilde{\O}\left(\frac{D_{\star}^{2}}{\eta T}+\eta\sqrt{n}G_{f,1}G_{f,2}\right)$
when $\eta_{t}\equiv\eta$ and $T\geq2(n-1)$, which perfectly matches
the original lower bound in \cite{NEURIPS2022_7bc4f74e} by up to
logarithmic factors.

Moreover, we want to talk about the first stepsize choice $\eta_{t}=\eta\frac{\qu(T)-\qu(t)+1}{\qu(T)\sqrt{T}}$,
which is inspired by \cite{pmlr-v235-liu24cg} who showed that the
stepsize schedule proportional to $K-k+1$ in the $k$-th epoch when
$T=Kn$ can remove any extra logarithmic factor in the final rate.
Here, we prove this strategy can also be applied to arbitrary $T\in\N$
but will incur an additional $\O\left(\sqrt{\frac{\log n}{\qu(T)}}\right)$
factor, which is at most in the order of $\O(\sqrt{\log n})$ and
is automatically shaved off once $\qu(T)=\Omega(\log n)\Leftrightarrow T=\Omega(n\log n)$.
We emphasize that, though the principle of our first stepsize is highly
similar to \cite{pmlr-v235-liu24cg}, showing it indeed works for
any $T\in\N$ requires a refined analysis, for example, see Lemma
\ref{lem:stepsize} in the appendix.
\begin{thm}
\label{thm:RR-str}Under Assumptions \ref{assu:basic} (with $\mu>0$)
and \ref{assu:lip}, suppose the $\RR$ sampling scheme is employed
with the stepsize $\eta_{t}=\frac{2}{\mu t},\forall t\in\left[T\right]$,
then Algorithm \ref{alg:Alg} guarantees
\[
\E\left[F(\bx_{T+1})-F_{\star}\right]\leq\widetilde{\O}\left(\frac{\mu D_{\star}^{2}}{T^{2}}+\frac{\sqrt{n}G_{f,1}G_{f,2}}{\mu T}\right).
\]
\end{thm}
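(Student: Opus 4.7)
The plan is to combine a per-step strongly convex descent inequality with an $\RR$-specific epoch-level cancellation, and then convert the resulting weighted-average guarantee into a last-iterate one via tail averaging. From the optimality condition of the proximal step, combining convexity of $f_{\I(t)}$ at $\bx_t$ with $\mu$-strong convexity of $\psi$ at $\bx_{t+1}$ gives, for any $\bz\in\dom\psi$, a three-point inequality of the form
\begin{equation*}
(1+\mu\eta_t)\|\bx_{t+1}-\bz\|^2 \leq \|\bx_t-\bz\|^2 - 2\eta_t\bigl[F(\bx_{t+1})-F(\bz)\bigr] + 2\eta_t\langle\nabla f(\bx_t)-\nabla f_{\I(t)}(\bx_t), \bz-\bx_{t+1}\rangle + 2\eta_t^2 G_{\I(t)}^2.
\end{equation*}
With $\eta_t=2/(\mu t)$, multiplying by a weight $w_t\propto t+1$ makes the $\|\cdot\|^2$ terms telescope, yielding $\sum_t w_t=\Theta(T^2)$ and producing the $\mu D_\star^2/T^2$ bias together with a baseline noise of order $\O(G_{f,2}^2/(\mu T))$ that matches Proximal SGD without any $\RR$ improvement.

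The technical core is to improve that noise to $\O(\sqrt{n}G_{f,1}G_{f,2}/(\mu T))$ by exploiting the $\RR$ structure. The key observation is that within any single epoch $k$, the indices $\I((k-1)n+1),\ldots,\I(kn)$ form a complete permutation of $[n]$, so for any point $\bx$ measurable with respect to $\F_k\defeq\sigma(\bx_1,\ldots,\bx_{(k-1)n+1})$ one has the pointwise identity $\sum_{t\in k}\nabla f_{\I(t)}(\bx)=n\nabla f(\bx)$ deterministically, and the without-replacement variance bound $\E\|\sum_{t\in k}(\nabla f_{\I(t)}(\bx)-\nabla f(\bx))\|^2\leq nG_{f,2}^2$ holds conditional on $\F_k$. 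I would then anchor the inner-product error to the epoch start $\bx_{\tau_k}\defeq\bx_{(k-1)n+1}$ by writing $\bx_t-\bx_{\tau_k}$ as a telescoping sum of update increments of size $\eta_s G_{\I(s)}$, use the exact cancellation at the anchor to kill the leading piece, and control the residual drift via Cauchy--Schwarz and AM--GM against the above $L^2$ variance bound; balancing a pointwise $G_{f,1}$-type bound on gradient differences against the $\sqrt{n}G_{f,2}$ variance bound is what yields the geometric-mean structure $\sqrt{n}G_{f,1}G_{f,2}$.

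To convert the weighted-average guarantee into a last-iterate one, I would apply a Shamir--Zhang / Harvey-style window-halving argument: iteratively compare $F(\bx_{T+1})-F_\star$ to the weighted suffix-average over exponentially shrinking windows using the per-step descent, at a cost of an $O(\log T)$ factor that is the source of the $\widetilde{\O}$. The main obstacle will be reconciling the $t$-varying weights $w_t$ required by the strongly convex telescoping with the epoch-constant weighting implicit in the $\RR$ cancellation: since $\eta_t$ decays within each epoch, the anchor argument creates cross terms proportional to $w_t-w_{\tau_k}$, and showing these contribute only a logarithmic rather than polynomial-in-$n$ overhead will likely require a refined stepsize lemma analogous to Lemma \ref{lem:stepsize} but tailored to the $1/(\mu t)$ schedule.
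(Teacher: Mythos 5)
Your high-level plan (per-step strongly convex descent inequality, weighted telescoping, exploit the permutation structure, then convert to last-iterate) correctly identifies the overall shape of the problem, but the specific mechanism you propose for the $\RR$-improvement does not work in the nonsmooth setting, and it is not the one the paper uses. You propose to anchor the gradient error to the epoch start $\bx_{\tau_k}$, use the exact cancellation $\sum_{t\in k}\nabla f_{\I(t)}(\bx_{\tau_k})=n\nabla f(\bx_{\tau_k})$, and bound the residual drift. That is the standard smooth-$\RR$ argument (Nagaraj--Jain--Netrapalli, Mishchenko et al.), and it relies crucially on $\|\nabla f_{\I(t)}(\bx_t)-\nabla f_{\I(t)}(\bx_{\tau_k})\|\leq L\|\bx_t-\bx_{\tau_k}\|=\O(L n\eta G)$, which vanishes with $\eta$. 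Under Assumption~\ref{assu:lip} alone there is no smoothness, so the subgradient drift can only be bounded by $\O(G_{\I(t)})$, a stepsize-independent constant; after the anchored cancellation the residual contributes an $\O(\eta_t G\cdot\|\bz-\bx_{\tau_k}\|)=\O(\eta_t G D_\star)$ term per step, which after weighted summation produces a $\Theta(G D_\star/(\mu T))$ error rather than the target $\sqrt{n}G_{f,1}G_{f,2}/(\mu T)$. (Your proposal also contains an internal inconsistency: the deterministic identity $\sum_{t\in k}(\nabla f_{\I(t)}(\bx)-\nabla f(\bx))=\bzero$ and the ``variance bound'' $\E\|\sum_{t\in k}(\nabla f_{\I(t)}(\bx)-\nabla f(\bx))\|^2\leq nG_{f,2}^2$ for the same full-epoch sum cannot both be non-trivial.) Finally, the framing of the noise term as an ``improvement'' from $G_{f,2}^2$ to $\sqrt{n}G_{f,1}G_{f,2}$ is backwards: by~(\ref{eq:G}) we have $G_{f,2}<\sqrt{n}G_{f,1}$, so $\sqrt{n}G_{f,1}G_{f,2}>G_{f,2}^2$, and the paper is explicit that $\RR$ is slower than Proximal SGD. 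The naive per-step weighted analysis does not produce a $\O(G_{f,2}^2/(\mu T))$ ``baseline'' for $\RR$ at all, because $\E\left[\left\langle\nabla f(\bx_t)-\nabla f_{\I(t)}(\bx_t),\bz-\bx_{t+1}\right\rangle\right]\neq0$ when $\I(t)$ and $\bx_t$ are dependent.

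The paper avoids the drift obstruction by working with function-value differences $\Omega_t(\cdot)=f_{\I(t)}(\cdot)-f(\cdot)$ instead of subgradient differences, and by replacing anchoring with a coupling/swapping argument (Lemmas~\ref{lem:RR-Omega} and~\ref{lem:RR-stability}). For $s$ in a strictly earlier epoch, $\E[\Omega_t(\bx_s)]=0$ outright; for $s$ in the same epoch, one constructs a coupled trajectory $\widehat{\bx}_s(\re,i)$ from a permutation with two positions swapped, proves via Lemma~\ref{lem:contractive} that $\|\bx_s-\widehat{\bx}_s(\re,i)\|$ grows only as $\O\big(\sum_j\eta_j G_j\big)$ (the standard ``stability of proximal SGD for convex Lipschitz losses'' bound, which needs no smoothness), and converts this to $|\E[\Omega_t(\bx_s)]|\leq\Phi\eta_s$ with $\Phi=\Theta(\sqrt{n}G_{f,1}G_{f,2})$ via Lipschitzness of $f_{\I(t)}$. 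The last-iterate conversion is then done in one shot through the $v_t$-weighted auxiliary sequence of Lemma~\ref{lem:last} (a Zamani--Glineur-style trick), which is how the condition $|\E[\Omega_t(\bx_s)]|\leq\Phi\eta_s$ for all $s\leq t$ (not merely $s=t$) enters; a Shamir--Zhang window-halving could be substituted here, but the real work is in the coupling bound, which your proposal does not have.
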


Now we turn our attention to the strongly convex case. As before,
we first check the case $T=Kn$, under which Theorem \ref{thm:RR-str}
gives us the rate $\widetilde{\O}\left(\frac{\mu D_{\star}^{2}}{n^{2}K^{2}}+\frac{G_{f,1}G_{f,2}}{\mu\sqrt{n}K}\right)$.
In comparison, the only last-iterate bound in the same setting is
$\widetilde{\O}\left(\frac{\mu D_{\star}^{2}}{K^{2}}+\frac{G_{f,1}^{2}}{\mu K}\right)$
\cite{pmlr-v235-liu24cg}. As one can see, the higher order term $\frac{\mu D_{\star}^{2}}{n^{2}K^{2}}$
achieves acceleration by a factor of $\Theta(n^{-2})$. Notably, the
lower order term $\frac{G_{f,1}G_{f,2}}{\mu\sqrt{n}K}$ is also always
faster due to $G_{f,2}<\sqrt{n}G_{f,1}$ in the aforementioned inequality
(\ref{eq:G}). If further assuming $G_{i}\equiv G$, Theorem \ref{thm:RR-str}
yields the rate $\widetilde{\O}\left(\frac{G^{2}}{\mu\sqrt{n}K}\right)$
(suppose $K$ is large for simplicity) significantly improving upon
the bound $\widetilde{\O}\left(\frac{G^{2}}{\mu K}\right)$ in \cite{pmlr-v235-liu24cg}
by a factor of $\Theta(n^{-1/2})$.

In addition, we also extend the convergence result to the stepsize
$\eta_{t}=\frac{m}{\mu t}$ for any $m\in\N$ like \cite{pmlr-v235-liu24cg}.
The interested reader could refer to Theorem \ref{thm:RR-str-full}
for details.

Before ending this subsection, we should mention that though both
Theorems \ref{thm:RR-cvx} and \ref{thm:RR-str} improve upon \cite{pmlr-v235-liu24cg}
and indicate that $\RR$ enjoys faster convergence than Proximal GD
suggesting the benefit of randomness, these results are however still
slower than Proximal SGD, whose last iterate is known to converge
in $\O\left(\frac{G_{f,2}D_{\star}}{\sqrt{T}}\right)$ and $\widetilde{\O}\left(\frac{\mu D_{\star}^{2}}{T^{2}}+\frac{G_{f,2}^{2}}{\mu T}\right)$
for general and strongly functions, respectively \cite{pmlr-v99-harvey19a,doi:10.1137/19M128908X,orabona2020blog,liu2024revisiting}.
Whether $\RR$ can be proved to guarantee the same rates is unclear\footnote{\label{fn:SGD}We incline to a negative answer at least for the general
convex case. See our discussion after Corollary \ref{cor:RR-suffix-full}
for why.}.

\subsection{$\protect\SS$ Sampling Scheme}

We study the $\SS$ sampling scheme in this subsection. Unlike the
always faster rates achieved by $\RR$ presented before, our new results
for $\SS$ are better than \cite{pmlr-v235-liu24cg} only when a certain
condition is met.
\begin{thm}
\label{thm:SS-cvx}Under Assumptions \ref{assu:basic} (with $\mu=0$)
and \ref{assu:lip}, suppose the $\SS$ sampling scheme is employed
with the stepsize $\eta_{t}=\frac{\eta}{\sqrt{T}},\forall t\in\left[T\right]$
and $\eta=\frac{D_{\star}}{\sqrt{\left(\qu(T)G_{f,2}^{2}\lor\sqrt{n\qu(T)}G_{f,1}G_{f,2}\right)\left(1+\log T\right)}}$,
then Algorithm \ref{alg:Alg} guarantees
\[
\E\left[F(\bx_{T+1})-F_{\star}\right]\leq\widetilde{\O}\left(\frac{\sqrt{G_{f,1}G_{f,2}}D_{\star}}{T^{1/4}}\lor\frac{G_{f,2}D_{\star}}{\sqrt{n}}\right).
\]
\end{thm}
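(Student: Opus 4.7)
The plan is to adapt the last-iterate framework that underlies Theorem~\ref{thm:RR-cvx} to the $\SS$ regime. The essential difference is that the shuffling-noise cross term can no longer be decoupled across epochs, because the single permutation $\pi$ is reused in every epoch. The argument decomposes into three stages: a one-step inequality combined with a backward-shift induction, an $\SS$-specific noise bound, and the stepsize optimization that produces the stated $\max$ rate.

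\textbf{One-step inequality and induction.} Using the optimality condition for the proximal update together with convexity of the $f_i$ and the three-point identity, one obtains, for any $\bz\in\dom\psi$,
\[
2\eta_t\bigl(F(\bx_{t+1})-F(\bz)\bigr)\leq\|\bx_t-\bz\|^2-\|\bx_{t+1}-\bz\|^2+2\eta_t^2 G_{\I(t)}^2+2\eta_t\bigl\langle\nabla f(\bx_t)-\nabla f_{\I(t)}(\bx_t),\,\bx_t-\bz\bigr\rangle.
\]
Summing from $t=s$ to $T$ and then unrolling a backward Shamir--Zhang-style shift of the reference point ($\bz=\bx_s$ with $s$ running from $T$ down toward $1$) converts the resulting weighted-average estimate into a last-iterate bound of the form
\[
\E\bigl[F(\bx_{T+1})-F_\star\bigr]\lesssim\frac{D_\star^2}{\eta\sqrt{T}}+(1+\log T)\cdot\mathcal N,
\]
where $\mathcal N$ collects the $\pi$-expectation of the cross term together with the $\sum\eta_t^2 G_{\I(t)}^2=O(\eta^2 G_{f,2}^2/\sqrt{T})$ contribution. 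The $1+\log T$ factor is the price of using the constant stepsize $\eta_t\equiv\eta/\sqrt T$.

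\textbf{Shuffling noise under $\SS$ (the main obstacle).} The crux is bounding $\mathcal N$. Writing $t=(k-1)n+i$, the relevant object is $\E_\pi\sum_{k=1}^{\qu(T)}\sum_{i=1}^{n}\bigl\langle\nabla f(\bx_t)-\nabla f_{\pi^i}(\bx_t),\,\bx_t-\bz\bigr\rangle$. Within a single epoch $k$, conditional on $\pi$ and on the epoch-start iterate $\bx_{(k-1)n+1}$, the multiset $\{\pi^1,\dots,\pi^n\}$ equals $[n]$, so if every $\bx_t$ in the inner sum were replaced by $\bx_{(k-1)n+1}$ the telescoping would annihilate the sum. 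I would therefore (i) estimate the replacement error using the Lipschitzness of each $f_i$ and $\|\bx_t-\bx_{(k-1)n+1}\|\lesssim\eta_t i\,G_{f,1}$, which contributes a term of order $\eta^2 n G_{f,1}G_{f,2}/\sqrt T$, and (ii) bound the remaining $\pi$-dependent residue via a sampling-without-replacement variance of order $G_{f,2}^2$ per epoch combined with Cauchy--Schwarz against $\|\bx_{(k-1)n+1}-\bz\|\lesssim D_\star$. Because $\pi$ is shared across all $\qu(T)$ epochs, the residues are not mean-zero once summed, and the best bound obtainable is \emph{linear} in $\qu(T)$ rather than $\sqrt{\qu(T)}$ as for $\RR$; this is the source of the two noise scales $\qu(T)G_{f,2}^2$ and $\sqrt{n\qu(T)}\,G_{f,1}G_{f,2}$ that appear inside the prescribed $\eta$.

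\textbf{Optimization of $\eta$.} Combining the previous two steps gives $\E[F(\bx_{T+1})-F_\star]\leq\widetilde O\bigl(D_\star^2/(\eta\sqrt T)+\eta(\qu(T)G_{f,2}^2\lor\sqrt{n\qu(T)}G_{f,1}G_{f,2})/\sqrt T\bigr)$. The prescribed $\eta$ is exactly the value balancing the two terms, and using $\qu(T)\asymp T/n$ when $T\geq n$ the two regimes collapse respectively to $G_{f,2}D_\star/\sqrt n$ and $\sqrt{G_{f,1}G_{f,2}}D_\star/T^{1/4}$, yielding the stated rate. The principal technical difficulty is the second stage: one must simultaneously decouple the iterates from $\pi$ (paying a Lipschitz-in-stepsize price that shows up as the $n G_{f,1}G_{f,2}$ term) and accumulate the residual $\pi$-dependent terms coherently across all $\qu(T)$ epochs without the benefit of epoch-independence. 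This is precisely why $\SS$ only yields $T^{-1/4}$ rather than the $T^{-1/2}$ rate attainable for $\RR$.
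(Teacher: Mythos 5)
Your proposal is a genuinely different route from the paper's, and as written it has a gap that prevents it from reaching the generality of the stated theorem. The paper's proof passes through the abstract sufficient condition of Lemma~\ref{lem:last}: one only needs to certify a bound $\left|\E\left[\Omega_{t}(\bx_{s})\right]\right|\leq\Phi\eta_{s}$ for some $\Phi$, and for $\SS$ this is done (Lemma~\ref{lem:SS-Omega}) by a permutation-swap identity (Lemma~\ref{lem:SS-same-dist}) together with a stability estimate between the real trajectory and the trajectory run under the swapped permutation $\widehat{\pi}(\re,\inv_{i})$. That stability bound (Lemma~\ref{lem:SS-stability}) rests solely on the non-expansiveness of the proximal map (Lemma~\ref{lem:contractive}), because the two coupled trajectories start from the same $\bx_{1}$ and only \emph{gradient differences} accumulate. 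This machinery is insensitive to the size of $\psi$ and works for any $T\in\N$.

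Your argument instead telescopes over a whole epoch and pays a replacement error controlled by $\left\Vert \bx_{t}-\bx_{(k-1)n+1}\right\Vert \lesssim\eta_{t}\,\re(t)\,G_{f,1}$, which in turn needs the single-step bound $\left\Vert \bx_{s+1}-\bx_{s}\right\Vert =\O(\eta_{s}G)$. For a general proper closed convex $\psi$ this bound fails: the update is $\bx_{s+1}=\mathrm{prox}_{\eta_{s}\psi}(\bx_{s}-\eta_{s}\nabla f_{\I(s)}(\bx_{s}))$ and the prox step can displace the iterate by an arbitrary amount unless $\psi$ is (locally) Lipschitz plus an indicator of a closed convex set; this is precisely the additional Condition~\ref{enu:special-refined-condition-3} the paper is forced to impose in Lemma~\ref{lem:special-refined} for exactly this telescoping idea, which is why it only supports the refined Theorem~\ref{thm:SS-cvx-improved} (with $\psi=\charf_{\domx}$ and $T=Kn$), not Theorem~\ref{thm:SS-cvx}. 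Your epoch-telescoping also silently requires complete epochs ($T=Kn$), while Theorem~\ref{thm:SS-cvx} holds for any $T$. Finally, the appeal to $\left\Vert \bx_{(k-1)n+1}-\bz\right\Vert \lesssim D_{\star}$ is not an almost-sure statement for general $\psi$, whereas the paper's path never needs a uniform bound on iterate distances. In short, the correct high-level scaling ($\Phi\asymp\qu(T)G_{f,2}^{2}+\sqrt{n\qu(T)}G_{f,1}G_{f,2}$) is identified, but the mechanism you propose cannot produce it at the stated level of generality; the swap-and-stability argument is the missing idea.
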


We begin with the general convex case in Theorem \ref{thm:SS-cvx}.
To understand how the above rate compares to the bound $\O\left(\frac{G_{f,1}D_{\star}}{\sqrt{K}}\right)$
when $T=Kn$ where $K\in\N$ \cite{pmlr-v235-liu24cg}, we introduce
a critical value $K_{\star}\defeq\frac{nG_{f,1}^{2}}{G_{f,2}^{2}}$,
which falls into $\left(1,n\right]$ due to (\ref{eq:G}). If $K\leq K_{\star}$,
we notice the rate in Theorem \ref{thm:SS-cvx} equals $\widetilde{\O}\left(\frac{\sqrt{G_{f,1}G_{f,2}}D_{\star}}{n^{1/4}K^{1/4}}\right)$
and is faster than $\O\left(\frac{G_{f,1}D_{\star}}{\sqrt{K}}\right)$.
Otherwise, the rate in Theorem \ref{thm:SS-cvx} equals $\widetilde{\O}\left(\frac{G_{f,2}D_{\star}}{\sqrt{n}}\right)$
and is slower than $\O\left(\frac{G_{f,1}D_{\star}}{\sqrt{K}}\right)$.
This observation suggests an interesting convergence phenomenon for
the $\SS$ sampling scheme, that is the function value gap will decay
in the rate no slower than $\widetilde{\O}\left(\frac{\sqrt{G_{f,1}G_{f,2}}D_{\star}}{n^{1/4}K^{1/4}}\right)$
during the first $\Theta(K_{\star})$ epochs until reaching the $\widetilde{\O}\left(\frac{G_{f,2}D_{\star}}{\sqrt{n}}\right)$
error regime. We emphasize that this does not necessarily imply the
$\SS$ strategy must bear constant optimization error since Theorem
\ref{thm:SS-cvx} only states a convergence upper bound. In other
words, Theorem \ref{thm:SS-cvx} improves upon \cite{pmlr-v235-liu24cg}
when $K$ is small. Especially, if $G_{i}\equiv G$, the rate in Theorem
\ref{thm:SS-cvx} and the threshold $K_{\star}=n$ match the convergence
behavior of $\bx_{Kn+1}^{\avg}$ when $\psi=\charf_{\domx}$ proved
in \cite{NEURIPS2022_7bc4f74e}.

The reader may not feel satisfied with Theorem \ref{thm:SS-cvx} since
the rate does not vanish as $T$ becomes larger and could ask whether
this is an inherent issue of Algorithm \ref{alg:Alg} under $\SS$.
The answer turns out to be negative. A simple but important observation
is that when $\psi=0$ and $T=Kn$, Algorithm \ref{alg:Alg} with
shuffling-based $\I(t)$ and the algorithm studied in \cite{pmlr-v235-liu24cg}
are actually the same method. We recall a key fact that the rate $\O\left(\frac{G_{f,1}D_{\star}}{\sqrt{K}}\right)$
in \cite{pmlr-v235-liu24cg} is proved for any shuffling type as mentioned
in Subsection \ref{subsec:related} (or see Table \ref{tab:tab}).
Hence, at least in the case of $\psi=0$ and $T=Kn$, we should expect
a refined upper bound that converges to $0$ when $K$ approaches
infinity.

Built upon this thought, we prove the following sharper rate when
$\psi=\charf_{\domx}$ (i.e., constrained optimization) and $T=Kn$.
\begin{thm}
\label{thm:SS-cvx-improved}Under Assumptions \ref{assu:basic} (with
$\psi=\charf_{\domx}$) and \ref{assu:lip}, suppose $T=Kn$ where
$K\in\N$ and the $\SS$ sampling scheme is employed with the stepsize
$\eta_{t}=\frac{\eta}{\sqrt{T}},\forall t\in\left[T\right]$ and $\eta=\frac{D_{\star}}{\sqrt{\left(\sqrt{nK}G_{f,1}G_{f,2}\land nG_{f,1}^{2}\right)(1+\log nK)}}$,
then Algorithm \ref{alg:Alg} guarantees
\[
\E\left[F(\bx_{Kn+1})-F_{\star}\right]\leq\widetilde{\O}\left(\frac{\sqrt{G_{f,1}G_{f,2}}D_{\star}}{n^{1/4}K^{1/4}}\land\frac{G_{f,1}D_{\star}}{\sqrt{K}}\right).
\]
\end{thm}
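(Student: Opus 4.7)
The $\land$ in the rate suggests deriving two complementary bounds $\E[F(\bx_{Kn+1})-F_\star]\leq\widetilde{\O}(D_\star\sqrt{A_{i}/T})$ for $i\in\{1,2\}$, one for $A_{1}=\sqrt{nK}\,G_{f,1}G_{f,2}$ (giving the $n^{-1/4}K^{-1/4}$ rate) and one for $A_{2}=nG_{f,1}^{2}$ (giving the $K^{-1/2}$ rate), and then observing that the single constant stepsize stated in the theorem is precisely what one obtains by minimizing over $A_{1}\land A_{2}$ in the usual $D_\star^{2}/(\eta T)+\eta A$ balancing argument. The $A_{1}$ bound is already in hand as Theorem \ref{thm:SS-cvx} in unbalanced form, so the real work is establishing the $A_{2}$ bound for Algorithm \ref{alg:Alg} when $\psi=\charf_{\domx}$.

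\textbf{Second bound.} The plan for $A_{2}$ is to mirror the Proximal GD-flavored ``any-shuffling'' analysis that underlies the $\O(G_{f,1}D_\star/\sqrt{K})$ bound of \cite{pmlr-v235-liu24cg}, adapted to the per-step projection setting. Starting from the standard projection descent lemma, for any $\bz\in\domx$,
\[
\|\bx_{t+1}-\bz\|^{2}\leq\|\bx_{t}-\bz\|^{2}-2\eta_{t}\langle\nabla f_{\I(t)}(\bx_{t}),\bx_{t}-\bz\rangle+\eta_{t}^{2}G_{\I(t)}^{2},
\]
I would apply convexity of $f_{\I(t)}$ and sum within epoch $k$ to rewrite $\sum_{i=1}^{n}f_{\pi^{i}}(\bx_{(k-1)n+i})=nf(\bz_{k})+R_{k}$, where $\bz_{k}\defeq\bx_{(k-1)n+1}$ and the within-epoch drift is controlled via $\|\bx_{(k-1)n+i}-\bz_{k}\|\leq\eta_{t}\sum_{j<i}G_{\pi^{j}}$ using Lipschitzness together with non-expansiveness of projection. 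Summing over all $K$ epochs, the cumulative drift contribution comes out to $\O(\eta^{2}nG_{f,1}^{2})$ after dividing through by the time horizon, delivering $A_{2}=nG_{f,1}^{2}$. The last-iterate reduction of \cite{pmlr-v235-liu24cg} would then translate the resulting epoch-level average bound on $\frac{1}{K}\sum_{k}f(\bz_{k})-F_\star$ into one on $F(\bx_{Kn+1})-F_\star$, absorbing an $\O(\log nK)$ factor into the $\widetilde{\O}$.

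\textbf{Main obstacle.} In \cite{pmlr-v235-liu24cg} the proximal step fires only at epoch boundaries, so within an epoch the iterates are plain gradient descent on a partial-sum objective, which cleanly separates ``shuffling'' error from ``projection'' error; in Algorithm \ref{alg:Alg} the projection fires at every step and entangles the two. The observation that keeps the analysis tractable is that projection onto $\domx$ is non-expansive: this simultaneously preserves the within-epoch drift bound exactly as in the unprojected case and guarantees that the per-step projection cannot inflate $\|\bx_{t+1}-\bz\|^{2}$ for any feasible $\bz\in\domx$, so the descent inequality incurs no additional penalty. Once the $A_{2}$ bound is in place, combining it with the $A_{1}$ bound of Theorem \ref{thm:SS-cvx} under the single stepsize stated in the theorem will yield the claimed $\land$ rate.
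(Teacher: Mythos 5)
Your proposal takes essentially the same route as the paper: the paper proves this theorem by taking the minimum of the general $\SS$ bound (its Theorem \ref{thm:SS-cvx-full}, which supplies your $A_1$) and a separate almost-sure, epoch-level "any-shuffling" descent inequality (its Lemma \ref{lem:special-refined}, which supplies your $A_2=nG_{f,1}^2$), and the key enabler of the second bound is exactly the observation you make — that when $\psi=\charf_{\domx}$, the per-step proximal map is the non-expansive projection, so the drift accumulated inside an epoch is still $\O(\widetilde{\eta}_k n G_{f,1})$, just as in the epoch-level-proximal method of \cite{pmlr-v235-liu24cg}. Two small bookkeeping remarks, neither a real gap: (i) the paper anchors the within-epoch drift at the epoch's \emph{end} $\bx_{kn+1}$ rather than its start $\bz_k=\bx_{(k-1)n+1}$, which gives a descent inequality for $F(\bx_{kn+1})-F(\by)$ directly, whereas your choice produces a one-epoch index shift you would need to re-align; (ii) the final step is not literally a translation of a $\frac{1}{K}\sum_k$ average bound into a last-iterate bound — it is the Zamani–Glineur-style weighted-anchor argument (the $\bz_t,v_t$ construction of Lemma \ref{lem:last}) re-run at epoch granularity, which is why the per-epoch inequality must hold for an arbitrary comparison point $\by$; you do in fact derive the inequality in the required form, so this is a phrasing imprecision rather than a missing ingredient. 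Also note that Theorem \ref{thm:SS-cvx} actually gives $A_1' = K G_{f,2}^2 \lor \sqrt{nK}\,G_{f,1}G_{f,2}$ rather than bare $\sqrt{nK}\,G_{f,1}G_{f,2}$, but by the identity $(a\lor\sqrt{ab})\land b=\sqrt{ab}\land b$ with $a=KG_{f,2}^2$, $b=nG_{f,1}^2$, the min against $A_2$ collapses to the form you state, so your final rate is unaffected.
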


Remarkably, the above result integrates the advantages of Theorem
\ref{thm:SS-cvx} and \cite{pmlr-v235-liu24cg} and is thereby faster
than both. One point we remind the reader is that the existing analysis
in \cite{pmlr-v235-liu24cg} immediately invalids once $\domx\neq\R^{d}$
(equivalently $\psi\neq0$) since in that case Algorithm \ref{alg:Alg}
is no longer the same as the method in \cite{pmlr-v235-liu24cg} due
to different places for the proximal update as discussed in Section
\ref{sec:alg}. Therefore, a more careful analysis is required to
overcome this issue. We also mention that the condition $\psi=\charf_{\domx}$
can be further relaxed to $\psi=\varphi+\charf_{\domx}$ for $\varphi$
being Lipschitz on $\domx$ (see Lemma \ref{lem:special-refined}
for details) as previously assumed in \cite{bertsekas2011incrementalgs}.
However, how to extend Theorem \ref{thm:SS-cvx-improved} to any general
$\psi$ and $T\in\N$ is unknown to us currently, which is left as
an important future work.
\begin{thm}
\label{thm:SS-str}Under Assumptions \ref{assu:basic} (with $\mu>0$)
and \ref{assu:lip}, suppose the $\SS$ sampling scheme is employed
with the stepsize $\eta_{t}=\frac{2}{\mu t},\forall t\in\left[T\right]$,
then Algorithm \ref{alg:Alg} guarantees
\[
\E\left[F(\bx_{T+1})-F_{\star}\right]\leq\widetilde{\O}\left(\frac{\mu D_{\star}^{2}}{T^{2}}+\frac{G_{f,1}G_{f,2}}{\mu\sqrt{T}}+\frac{G_{f,2}^{2}}{\mu n}\right).
\]
\end{thm}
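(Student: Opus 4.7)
The plan is to adapt the proof template used for Theorem \ref{thm:RR-str} (the RR strongly convex case) while carefully accounting for the structural difference of SS: the single permutation $\pi$ persists across all epochs, preventing the per-epoch variance reduction that drives the RR rate. I expect the additional $G_{f,2}^2/(\mu n)$ additive term to be the manifestation of this obstruction, while the first two terms should follow from the same template.

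First I would establish a per-step descent inequality for the proximal update. Using first-order optimality of the prox step, convexity of $f_{\I(t)}$, and strong convexity of $\psi$, one obtains an inequality of the form
\[
\|\bx_{t+1}-\bx_\star\|^2 \leq (1-\mu\eta_t)\|\bx_t-\bx_\star\|^2 - 2\eta_t(F(\bx_{t+1})-F_\star) + 2\eta_t\langle \nabla f(\bx_t)-\nabla f_{\I(t)}(\bx_t),\bx_{t+1}-\bx_\star\rangle + \eta_t^2 G_{\I(t)}^2.
\]
Then I would group the steps epoch by epoch, writing $\nabla f_{\I(t)}(\bx_t) = \nabla f_{\I(t)}(\bx_{kn+1}) + \bigl(\nabla f_{\I(t)}(\bx_t)-\nabla f_{\I(t)}(\bx_{kn+1})\bigr)$ where $kn+1$ is the start of the current epoch. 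The key structural fact for SS is that within any single epoch each index is visited exactly once, so $\sum_{i=1}^n \nabla f_{\pi^i}(\bx_{kn+1}) = n\nabla f(\bx_{kn+1})$, collapsing the anchored piece. The drift piece is controlled through Lipschitz continuity, giving a per-epoch contribution of order $\eta_k^2 n^2 G_{f,2}^2$ plus $\eta_k \cdot \eta_k n G_{f,2}\cdot n G_{f,1}$ after Cauchy--Schwarz, mirroring the RR intra-epoch bound.

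The genuinely new step is the cross-epoch term. Since $\pi$ is frozen, the partial-sum deviations $\sum_{i=1}^{j}\bigl(\nabla f_{\pi^i}(\bz)-\nabla f(\bz)\bigr)$ evaluated at any fixed anchor $\bz$ are correlated across epochs. I would exploit that for uniformly random $\pi$ each prefix $\{\pi^1,\ldots,\pi^j\}$ is a uniformly random $j$-subset of $[n]$, so a standard sampling-without-replacement second-moment bound yields an $O\bigl(\frac{j(n-j)}{n-1}G_{f,2}^2\bigr)$ variance per prefix; summing over epochs with the geometric-decay weights produced by $\prod_s (1-\mu\eta_s)$ (the same weighting used in Theorem \ref{thm:RR-str}) separates the contribution into a summable part scaling as $G_{f,1}G_{f,2}/(\mu\sqrt{T})$ and an irreducible bias scaling as $G_{f,2}^2/(\mu n)$ that reflects the impossibility of decorrelating the single permutation across epochs. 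Finally, to convert from an average-iterate style bound to the last iterate, I would apply the Harvey--Liu--Orabona weighted telescoping argument that introduces the $\widetilde{O}$ logarithmic factor, using the same recursion technique as in Theorem \ref{thm:RR-str}.

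The main obstacle I anticipate is the precise bookkeeping of the cross-epoch covariance: disentangling which portion of $\langle \nabla f(\bx_{kn+1})-\nabla f_{\I(t)}(\bx_{kn+1}),\cdot\rangle$ admits a "free" sign-cancellation after averaging over $\pi$ and which portion remains as an irreducible $G_{f,2}^2/(\mu n)$ bias is delicate. In particular, one must be careful that the anchor $\bx_{kn+1}$ is already $\pi$-measurable for $k\geq 1$, so conditional on it the remaining index order within the epoch is \emph{not} uniformly distributed on $S_n$; handling this requires either a coupling to a fresh-permutation process or an explicit computation that tracks the residual dependence through the stepsize schedule $\eta_t=2/(\mu t)$, and this interaction is what I expect to consume most of the technical effort.
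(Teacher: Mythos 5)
Your plan diverges from the paper's proof in a fundamental way, and as written it has a gap that I do not see how to close.

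The paper does not use epoch anchoring at all. Its entire analysis for Theorem \ref{thm:SS-str} runs through the per-step Lemma \ref{lem:last}, whose only structural hypothesis is the marginal condition $\I(t)\diseq\mathrm{Uniform}\left[n\right]$ together with the bound $\left|\E\left[\Omega_{t}(\bx_{s})\right]\right|\leq\Phi\eta_{s}$. The epoch structure of $\SS$ enters only when computing $\Phi$, and there it enters via a permutation-swap coupling (Lemmas \ref{lem:SS-Omega}, \ref{lem:SS-stability}, \ref{lem:SS-same-dist}): one compares the trajectory under $\pi$ against the trajectory run with the two entries $\pi^{\re}$ and $\pi^{\inv_i}$ exchanged, and the only places the two trajectories take different subgradients are the $O(T/n)$ steps whose epoch-position collides with $\re$ or $\inv_i$. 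That is exactly what produces the $(T/n)G_{f,2}^{2}$ and $\sqrt{T}G_{f,1}G_{f,2}$ pieces of $\Phi$, and hence the $G_{f,2}^{2}/(\mu n)$ and $G_{f,1}G_{f,2}/(\mu\sqrt{T})$ terms in the final rate. Crucially, the quantity being coupled is the function-value gap $\Omega_t(\bx_s)=f_{\I(t)}(\bx_s)-f(\bx_s)$, which is Lipschitz in the iterate; the gradient difference $\nabla f_{\I(t)}(\bx_t)-\nabla f_{\I(t)}(\bx_{kn+1})$ that your anchoring produces is $O(G)$ no matter how close $\bx_t$ and $\bx_{kn+1}$ are, which is the generic obstruction to gradient-level anchoring in the nonsmooth setting.

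The specific gap in your route is twofold. First, the epoch-anchored style analysis with per-step proximal updates is known to lose a factor of $n$ in effective strong convexity: the paper's Lemma \ref{lem:special-refined} (which is precisely this epoch-grouped argument) only extracts a single $(1+\mu\widetilde{\eta}_k)$ contraction per epoch, because the intermediate $\mu\left\Vert\by-\bx_{t+1}\right\Vert^2$ terms cannot be re-absorbed into the epoch-level recursion, and the text right after Theorem \ref{thm:SS-str} explains that this caps any such bound at $\widetilde{\O}\left(nG_{f,1}^{2}/(\mu K)\right)$ --- strictly worse than the stated theorem already for $K=1$. You mitigate this by keeping the per-step $(1-\mu\eta_t)$ contraction, but then the within-epoch weights $\gamma_t\eta_t$ on the noise terms vary across the epoch, which destroys the exact cancellation $\sum_{i}\nabla f_{\pi^i}(\bx_{kn+1})=n\nabla f(\bx_{kn+1})$ you rely on; you would have to argue the residual from the weight variation is negligible, and you do not. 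Second, the step you flag as delicate --- handling the fact that $\bx_{kn+1}$ is already $\pi$-measurable for $k\geq1$ --- is in fact the crux, and ``a standard sampling-without-replacement second-moment bound'' does not apply: once you condition on $\bx_{kn+1}$ you have effectively conditioned on $\pi$ and there is no residual randomness to integrate out. Resolving this requires something like the swap-coupling above, and your target terms $G_{f,1}G_{f,2}/(\mu\sqrt{T})$ and $G_{f,2}^{2}/(\mu n)$ are asserted to pop out of the ``geometric-decay weights'' rather than derived. I would recommend replacing the whole anchoring scheme with the per-step $\Phi$-bound route, since the clean separation between a general sufficient condition (Lemma \ref{lem:last}) and a sampling-scheme-specific coupling estimate (Lemma \ref{lem:SS-Omega}) is what makes the nonsmooth strongly convex case tractable here.
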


Lastly, we move to the strongly convex case. When $T=Kn$ where $K\in\N$,
one can check the critical time to balance the last two terms is still
$K_{\star}=\frac{nG_{f,1}^{2}}{G_{f,2}^{2}}\in\left(1,n\right]$.
Consequently, compared to $\widetilde{\O}\left(\frac{\mu D_{\star}^{2}}{K^{2}}+\frac{G_{f,1}^{2}}{\mu K}\right)$
\cite{pmlr-v235-liu24cg}, Theorem \ref{thm:SS-str} outperforms in
the regime $K\leq K_{\star}$.

One may expect that when $T$ is a multiple of $n$ and $\psi$ satisfies
certain conditions, we can again obtain an improved rate better than
both Theorem \ref{thm:SS-str} and \cite{pmlr-v235-liu24cg}. However,
even under some additional assumptions, the best improvement upon
Theorem \ref{thm:SS-str} we can obtain currently is the rate $\widetilde{\O}\left(\frac{nG_{f,1}^{2}}{\mu K}\right)$\footnote{\label{fn:SS}The reader who wonders why we cannot do better could
refer to the discussion of Lemma \ref{lem:special-refined}.} (suppose $K$ is large enough). Unfortunately, this is worse than
$\widetilde{\O}\left(\frac{G_{f,1}^{2}}{\mu K}\right)$ in \cite{pmlr-v235-liu24cg}.
Therefore, we simply leave Theorem \ref{thm:SS-str} in its current
form without providing any further refined result and hope that a
bound faster than both Theorem \ref{thm:SS-str} and \cite{pmlr-v235-liu24cg}
could be found in the future.

Like the strongly convex case for $\RR$, the stepsize in Theorem
\ref{thm:SS-str} can also be generalized to $\eta_{t}=\frac{m}{\mu t}$
for any $m\in\N$. For details, see Theorem \ref{thm:SS-str-full}.

To summarize, this subsection provides a new picture for the last-iterate
convergence of the $\SS$ sampling scheme. Precisely, we show $\SS$
indeed boosts the convergence leading to a faster rate than Proximal
GD at least in the early optimization phase, which partially improves
upon \cite{pmlr-v235-liu24cg} and reflects the benefit of randomness.
In the special case of constrained general convex optimization, we
give an even sharper bound demonstrating that $\SS$ provably beats
Proximal GD no matter how long the time horizon is. In contrast, such
information is missed in the best previous bounds \cite{pmlr-v235-liu24cg}.
However, the same as the $\RR$ strategy, there is still a gap between
$\SS$ and Proximal SGD. Understanding this difference is an important
future work.

\section{Key Ideas and Proof Sketch\label{sec:idea}}

This section describes our key ideas in the analysis, provides the
most important Lemma \ref{lem:last-main}, and then uses it to sketch
the proof. Any omitted detail can be found in the appendix.

\subsection{The Existing Analysis Misses Randomness}

Before talking about our analysis, we first provide an intuitive
explanation for why \cite{pmlr-v235-liu24cg} can only achieve the
same rate as Proximal GD. Roughly speaking, this is because \cite{pmlr-v235-liu24cg}
analyzes the shuffling gradient methods in a way similar to Proximal
GD. More technically, their analysis views one whole epoch as a single
update step (this also explains why their proximal step happens at
the end of every epoch) and then measures how close the progress made
in every epoch is to Proximal GD (see their Lemmas D.1 and D.3). Afterwards,
they follow the way developed in \cite{zamani2023exact,liu2024revisiting}
to prove the last-iterate convergence rate.

However, one key point missed in \cite{pmlr-v235-liu24cg} is the
randomness. Precisely, their proof for the nonsmooth case goes through
whenever the index $\I(t)$ satisfies $\left\{ \I((k-1)n+1),\cdots,\I(kn))\right\} =\left[n\right],\forall k\in\left[K\right]$
if $T=Kn$ where $K\in\N$ (this also explains why their rates work
for any shuffling scheme not limited to $\RR/\SS/\IG$). As such,
results in \cite{pmlr-v235-liu24cg} cannot reflect any potential
benefit for randomly generated $\I(t)$.

\subsection{Our Analysis Considers Randomness}

Due to the above discussion, a natural thought arises, identifying
the role of randomness for $\RR/\SS$ and trying to integrate it into
the analysis. 

Let us first understand what randomness $\RR$ and $\SS$ have. A
simple but important observation is that for both of them, regardless
of the dependence between indices, every $\I(t)$ has the same distribution
as the random variable uniformly distributed on $\left[n\right]$,
i.e., $\I(t)\diseq\mathrm{Uniform}\left[n\right],\forall t\in\left[T\right]$.
Hence, we can make an abstraction here, i.e., consider any possible
random process $\I(t)$ satisfying $\I(t)\diseq\mathrm{Uniform}\left[n\right],\forall t\in\left[T\right]$
instead of limited to $\RR/\SS$.

Next, we think about how to inject this abstraction into a concrete
analysis. A potential template we can try to follow is the last-iterate
analysis for the standard Proximal SGD method \cite{liu2024revisiting},
where indices $\I(1)$ to $\I(T)$ are exactly mutually independent
random variables uniformly distributed on $\left[n\right]$, which
can be recognized as the extreme case. Moreover, note that Proximal
SGD converges for any $T\in\N$, it is hence a perfect candidate to
help us remove the requirement $T=Kn$ for $K\in\N$ used in prior
works. Unfortunately, proofs in \cite{liu2024revisiting} cannot work
directly due to the potential dependence between every $\I(t)$ in
the general case. Thus, our analysis contains careful changes and
naturally diverges from \cite{liu2024revisiting}.

In brief, our plan is to analyze the last-iterate convergence of Algorithm
\ref{alg:Alg} (for general indices satisfying $\I(t)\diseq\mathrm{Uniform}\left[n\right],\forall t\in\left[T\right]$)
in a manner inspired by Proximal SGD instead of Proximal GD. We clarify
that though some analyses motivated by SGD have also appeared in \cite{pmlr-v97-nagaraj19a,NEURIPS2022_7bc4f74e}
before, there are however many differences. The first and most important
one is that they only focus on shuffling-based methods without further
abstraction on $\I(t)$. Moreover, their goal is to establish convergence
for the average iterate in contrast to the harder last iterate. In
addition, both of them only take $\psi=\charf_{\domx}$ (i.e., constrained
optimization) into account instead of general $\psi$ studied by us.
Even more, \cite{pmlr-v97-nagaraj19a} additionally requires smoothness
on $f_{i}$, \cite{NEURIPS2022_7bc4f74e} only has results for the
general convex case, and both of them assume $G_{i}\equiv G$. As
the reader will see, our core Lemma \ref{lem:last-main} holds for
general $\I(t)$ and gives a novel sufficient condition for the last-iterate
convergence working any $\psi$ under minimal conditions, Assumptions
\ref{assu:basic} and \ref{assu:lip}. Hence, although the high-level
idea may sound similar, our analysis significantly differs from theirs.

\subsection{A New Sufficient Lemma for the Last-Iterate Rate}

Due to limited space, we will not provide formal analysis but only
state the core result, Lemma \ref{lem:last-main}, a new sufficient
lemma giving the last-iterate convergence rate.
\begin{lem}
\label{lem:last-main}Under Assumptions \ref{assu:basic} and \ref{assu:lip},
suppose the following three conditions hold:
\begin{enumerate}
\item \label{enu:last-main-1}The index satisfies $\I(t)\diseq\mathrm{Uniform}\left[n\right],\forall t\in\left[T\right]$.
\item \label{enu:last-main-2}The stepsize $\eta_{t},\forall t\in\left[T\right]$
is non-increasing.
\item \label{enu:last-main-3}$\left|\E\left[\Omega_{t}(\bx_{s})\right]\right|\leq\Phi\eta_{s},\forall t\in\left[T\right],s\in\left[t\right]$
where $\Omega_{t}(\cdot)\defeq f_{\I(t)}(\cdot)-f(\cdot),\forall t\in\left[T\right]$
and $\Phi\geq0$ is a constant probably depending on $T$, $n$, $\mu$,
and $G_{1}$ to $G_{n}$.
\end{enumerate}
Then Algorithm \ref{alg:Alg} guarantees
\begin{align*}
 & \E\left[F(\bx_{T+1})-F_{\star}\right]\\
 & \leq\O\left(\frac{D_{\star}^{2}}{\sum_{t=1}^{T}\gamma_{t}\eta_{t}}+\left(G_{f,2}^{2}+\Phi\right)\sum_{t=1}^{T}\frac{\gamma_{t}\eta_{t}^{2}}{\sum_{s=t}^{T}\gamma_{s}\eta_{s}}\right),
\end{align*}
where $\gamma_{t}\defeq\prod_{s=1}^{t-1}(1+\mu\eta_{s}),\forall t\in\left[T+1\right]$.
\end{lem}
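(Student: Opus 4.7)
The plan is to adapt the Zamani--Glineur last-iterate template of \cite{zamani2023exact,liu2024revisiting} to the abstract noise model encoded by Condition~\ref{enu:last-main-3}.

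\textbf{Step 1: one-step prox recursion.} From the first-order optimality of the proximal subproblem and the $\mu$-strong convexity of $\psi$, I would derive, for every $\bz\in\dom\psi$, an inequality of the form
\begin{align*}
&\gamma_{t+1}\|\bx_{t+1}-\bz\|^{2}+2\gamma_{t}\eta_{t}(F(\bx_{t+1})-F(\bz))\\
&\quad\leq\gamma_{t}\|\bx_{t}-\bz\|^{2}+2\gamma_{t}\eta_{t}(\Omega_{t}(\bz)-\Omega_{t}(\bx_{t}))+\O(\gamma_{t}\eta_{t}^{2}G_{\I(t)}^{2}),
\end{align*}
obtained by splitting $\langle\nabla f_{\I(t)}(\bx_{t}),\bz-\bx_{t+1}\rangle$ into a convexity piece (giving $f_{\I(t)}(\bz)-f_{\I(t)}(\bx_{t})$) and a Young-type piece, then writing $f_{\I(t)}=f+\Omega_{t}$ and absorbing the Lipschitz-controlled discrepancy $|f(\bx_{t+1})-f(\bx_{t})|\leq G_{f,1}\|\bx_{t+1}-\bx_{t}\|$ by one more Young step. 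Taking expectation produces a noise budget of $\O(\gamma_{t}\eta_{t}^{2}G_{f,2}^{2})$ via Condition~\ref{enu:last-main-1}.

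\textbf{Step 2: two families of telescopes.} I would then feed the recursion into two telescoping summations. Setting $\bz=\bx_{\star}$ and summing from $t=1$ to $T$ bounds $\sum_{t=1}^{T}\gamma_{t}\eta_{t}\E[F(\bx_{t+1})-F_{\star}]$ by $\tfrac{1}{2}D_{\star}^{2}$ plus the aggregated noise and the bias $\sum_{t}\gamma_{t}\eta_{t}\E[\Omega_{t}(\bx_{t})]$, using $\E[\Omega_{t}(\bx_{\star})]=0$ because $\bx_{\star}$ is deterministic. For each $k\in[T]$, applying the same recursion with the random reference $\bz=\bx_{k}$, which is $\F_{k-1}$-measurable so Condition~\ref{enu:last-main-3} is applicable to every $\E[\Omega_{t}(\bx_{k})]$ with $t\geq k$, and summing from $t=k$ to $T$ kills the leading squared-distance term $\|\bx_{k}-\bx_{k}\|^{2}=0$ and bounds $\sum_{t=k}^{T}\gamma_{t}\eta_{t}\E[F(\bx_{t+1})-F(\bx_{k})]$.

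\textbf{Step 3: weighted combination and main obstacle.} Combining these $T+1$ inequalities in convex combination with weights $\alpha_{k}\propto(\sum_{s\geq k}\gamma_{s}\eta_{s})^{-1}$, engineered so that the intermediate coefficients of every $F(\bx_{t+1})$ cancel, leaves only $\E[F(\bx_{T+1})-F_{\star}]$ on the left-hand side; the initial-distance piece produces the first claimed term $\O(D_{\star}^{2}/\sum_{t}\gamma_{t}\eta_{t})$, and the noise-plus-bias piece produces the second, $\O((G_{f,2}^{2}+\Phi)\sum_{t}\gamma_{t}\eta_{t}^{2}/\sum_{s\geq t}\gamma_{s}\eta_{s})$. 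The delicate part throughout is controlling the bias: in classical Proximal SGD the indices are mutually independent so $\E[\Omega_{t}(\bx_{s})]=0$ for every $s\leq t$ and the bias drops out, but under shuffling the indices are correlated and this quantity can be nonzero. Condition~\ref{enu:last-main-3} is the abstract substitute that quantifies how far the bias is from zero, and its restriction to $s\leq t$ is exactly what forces the telescopes in Step~2 to use past iterates $\bx_{k}$ (never future ones) as reference points; the non-increasing stepsize from Condition~\ref{enu:last-main-2} is then what allows each $\Phi\eta_{s}$ contribution to be re-packaged into the same tail-weighted form $\sum_{t}\gamma_{t}\eta_{t}^{2}/\sum_{s\geq t}\gamma_{s}\eta_{s}$ as the noise.
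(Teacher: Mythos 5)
Your proposal is correct and follows the same high-level template as the paper (the Zamani--Glineur last-iterate machinery of \cite{zamani2023exact,liu2024revisiting}, with Condition~\ref{enu:last-main-3} controlling the bias introduced by dependent indices), but you execute it in the dual formulation. Where the paper constructs virtual reference points $\bz_t$ as convex combinations of $\bz$ and past iterates and then must decompose $\Omega_t(\bz_{t+1})$ and $\psi(\bz_{t+1})$ by convexity of $f_{\I(t)}$ and $\psi$, you instead combine $T+1$ suffix telescopes anchored at $\bx_\star,\bx_1,\dots,\bx_T$. The two routes are isomorphic: the paper's $v_{k+1}=\gamma_T\eta_T/\sum_{s\geq k}\gamma_s\eta_s$ is exactly your cumulative weight $\sum_{\ell\leq k}\alpha_\ell$ up to normalization, so the combination that cancels the intermediate $F(\bx_{t+1})$ coefficients reproduces the paper's $v$-sequence identity (\ref{eq:last-simplify-v2}). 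Your route bypasses the paper's explicit convexity decomposition of $\Omega_t$ at the moving reference, since each telescope uses a fixed random reference $\bx_k$ to which Condition~\ref{enu:last-main-3} applies directly; the price is that you have to manage the weight recursion. Two bookkeeping points to make precise in a full write-up: the phrase ``$\alpha_k\propto(\sum_{s\geq k}\gamma_s\eta_s)^{-1}$'' should say that the \emph{partial sums} $\sum_{\ell\leq k}\alpha_\ell$ are proportional to that quantity (the $\alpha_k$ themselves are the successive differences, which is precisely the paper's $v_{k+1}-v_k$); and the $k=1$ telescope leaves a residual $-\alpha_1\bigl(\sum_t\gamma_t\eta_t\bigr)F(\bx_1)$ term that must be absorbed using $F(\bx_1)\geq F_\star$, since $F(\bx_1)$ does not appear in any other inequality. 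Neither is a gap in the idea.
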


\begin{rem}
We again assume the existence of an optimum $\bx_{\star}\in\R^{d}$
and use the notation $D_{\star}=\left\Vert \bx_{\star}-\bx_{1}\right\Vert $.
See Lemma \ref{lem:last} for the full version valid for any $\bz\in\R^{d}$.
\end{rem}

We first talk about the three conditions in Lemma \ref{lem:last-main}.
Condition \ref{enu:last-main-1} is from the abstraction of the randomness
in $\RR/\SS$ mentioned before. Condition \ref{enu:last-main-2} is
satisfied by almost all common stepsizes. However, we remark that
it is actually not necessary and only for simplicity. Even without
it, the rate still holds with the only change $\eta_{t}^{2}$ to $\eta_{t}(\eta_{t}\lor\eta_{t+1})$.
Condition \ref{enu:last-main-3} is the most important. Intuitively,
it says that even $\I(t)$ and $\bx_{s}$ are possibly dependent
leading to $\E\left[\Omega_{t}(\bx_{s})\right]\neq0$ (equivalently,
$\E\left[f_{\I(t)}(\bx_{s})\right]\neq\E\left[f(\bx_{s})\right]$),
Algorithm \ref{alg:Alg} still guarantees the last-iterate convergence
once $\left|\E\left[\Omega_{t}(\bx_{s})\right]\right|$ can be controlled
by the value of $\eta_{s}$ with another multiplicative factor $\Phi\geq0$.
Notably, the smaller $\Phi$ is, the better the convergence rate is.
As a sanity check, we consider Proximal SGD and notice that $\Phi=0$
in this case. Then Lemma \ref{lem:last-main} recovers the existing
bound in \cite{liu2024revisiting}.

Armed with Lemma \ref{lem:last-main}, to prove a last-iterate convergence
rate for Algorithm \ref{alg:Alg} employing any general random index
$\I(t)$, it is sufficient to find the corresponding constant $\Phi$.
Here we claim that under the same setting of every theorem in Section
\ref{sec:rates}, there is always
\[
\Phi=\begin{cases}
\Theta\left(\sqrt{n}G_{f,1}G_{f,2}\right) & \text{for }\RR\\
\Theta\left(\sqrt{T}G_{f,1}G_{f,2}+\frac{T}{n}G_{f,2}^{2}\right) & \text{for }\SS
\end{cases}.
\]
With these two values of $\Phi$ and the inequality (\ref{eq:G}),
i.e., $G_{f,1}\leq G_{f,2}<\sqrt{n}G_{f,1}$, we immediately conclude
Theorems \ref{thm:RR-cvx}, \ref{thm:RR-str}, \ref{thm:SS-cvx},
and \ref{thm:SS-str} after plugging in the stepsize. However, we
remark that finding these two values is not a trivial task for $\RR/\SS$.
Though some clues on how to bound $\left|\E\left[\Omega_{t}(\bx_{t})\right]\right|$
can be found in prior works \cite{pmlr-v97-nagaraj19a,NEURIPS2022_7bc4f74e},
our goal is more general and hence harder since we need to bound $\left|\E\left[\Omega_{t}(\bx_{s})\right]\right|$
by a time-dependent stepsize $\eta_{s}$ for any $s\leq t$. More
importantly, our fine-grained analysis on $G_{f,1}$ and $G_{f,2}$
is the key to establishing the superiority of our convergence results
over \cite{pmlr-v235-liu24cg}, which cannot be found in \cite{pmlr-v97-nagaraj19a,NEURIPS2022_7bc4f74e}
due to their simpler settings as mentioned earlier.

Lastly, we briefly discuss Theorem \ref{thm:SS-cvx-improved}. Due
to the dependence on $T$ in $\Phi$ for $\SS$, the above analysis
is inadequate. However, as hinted by \cite{pmlr-v235-liu24cg} (though
algorithms are different), one should expect that $\SS$ at least
provably converges as fast as Proximal GD when $T=Kn$ for $K\in\N$.
As such, we will combine some other techniques to obtain Theorem \ref{thm:SS-cvx-improved}
and not discuss them here.

\section{Conclusion and Future Work}

We provide improved last-iterate convergence rates for nonsmooth (strongly)
convex optimization under both  $\RR$ and $\SS$ sampling schemes,
showing they are faster than Proximal GD (conditionally for $\SS$)
and enjoy the benefit of randomness. Two valued problems are left
to be addressed in the future. One is to explore whether our rates
for $\RR$ are tight. The other is to prove better bounds for $\SS$
if possible.

\clearpage

\section*{Acknowledgments}

This work is supported by the NSF grant ECCS-2419564. We also thank
the anonymous reviewers for their valuable comments.

\section*{Impact Statement}

This paper presents work whose goal is to advance the field of Machine
Learning. There are many potential societal consequences of our work,
none which we feel must be specifically highlighted here.

\bibliographystyle{icml2025}
\bibliography{ref}

\appendix
\onecolumn

\section{Full Theorems\label{sec:full-thm}}

In this section, we provide the full version of each theorem presented
in Section \ref{sec:rates} and the corresponding proof. The proofs
of lemmas used in the analysis are deferred to Sections \ref{sec:analysis}
and \ref{sec:auxiliary} later. One point here we want to remind the
reader is that our new last-iterate results (except Theorem \ref{thm:SS-cvx-improved-full})
hold for any $T\in\N$ instead of only $T=Kn$ for $K\in\N$ used
in most of the existing works for shuffling gradient methods mentioned
in Subsection \ref{subsec:related}.

\subsection{$\protect\RR$ Sampling Scheme\label{subsec:RR-full-thm}}

We first consider the general convex case in Theorem \ref{thm:RR-cvx-full}.
One notable thing is that, under the first stepsize choice, the extra
logarithmic factor can be shaved off once $\qu(T)=\Omega(\log n)\Leftrightarrow T=\Omega(n\log n)$
as previously mentioned in Theorem \ref{thm:RR-cvx}. Finding a stepsize
that can remove any extra logarithmic factor without requiring $T=\Omega(n\log n)$
will be an interesting task.
\begin{thm}
\label{thm:RR-cvx-full}(Full version of Theorem \ref{thm:RR-cvx})
Under Assumptions \ref{assu:basic} (with $\mu=0$) and \ref{assu:lip},
suppose the $\RR$ sampling scheme is employed:
\begin{itemize}
\item Taking the stepsize $\eta_{t}=\eta\frac{\qu(T)-\qu(t)+1}{\qu(T)\sqrt{T}},\forall t\in\left[T\right]$,
then for any $\bz\in\R^{d}$, Algorithm \ref{alg:Alg} guarantees
\[
\E\left[F(\bx_{T+1})-F(\bz)\right]\leq\O\left(\left(\frac{\left\Vert \bz-\bx_{1}\right\Vert ^{2}}{\eta}+\eta\sqrt{n}G_{f,1}G_{f,2}\left(1+\frac{\log n}{\qu(T)}\right)\right)\frac{1}{\sqrt{T}}\right).
\]
Setting $\eta=\frac{\left\Vert \bz-\bx_{1}\right\Vert }{n^{1/4}\sqrt{G_{f,1}G_{f,2}\left(1+\frac{\log n}{\qu(T)}\right)}}$
to optimize the dependence on parameters.
\item Taking the stepsize $\eta_{t}=\frac{\eta}{\sqrt{T}},\forall t\in\left[T\right]$,
then for any $\bz\in\R^{d}$, Algorithm \ref{alg:Alg} guarantees
\[
\E\left[F(\bx_{T+1})-F(\bz)\right]\leq\O\left(\left(\frac{\left\Vert \bz-\bx_{1}\right\Vert ^{2}}{\eta}+\eta\sqrt{n}G_{f,1}G_{f,2}(1+\log T)\right)\frac{1}{\sqrt{T}}\right).
\]
Setting $\eta=\frac{\left\Vert \bz-\bx_{1}\right\Vert }{n^{1/4}\sqrt{G_{f,1}G_{f,2}(1+\log T)}}$
to optimize the dependence on parameters.
\item Taking the stepsize $\eta_{t}=\frac{\eta}{\sqrt{t}},\forall t\in\left[T\right]$,
then for any $\bz\in\R^{d}$, Algorithm \ref{alg:Alg} guarantees
\[
\E\left[F(\bx_{T+1})-F(\bz)\right]\leq\O\left(\left(\frac{\left\Vert \bz-\bx_{1}\right\Vert ^{2}}{\eta}+\eta\sqrt{n}G_{f,1}G_{f,2}(1+\log T)\right)\frac{1}{\sqrt{T}}\right).
\]
Setting $\eta=\frac{\left\Vert \bz-\bx_{1}\right\Vert }{n^{1/4}\sqrt{G_{f,1}G_{f,2}}}$
to optimize the dependence on parameters.
\end{itemize}
\end{thm}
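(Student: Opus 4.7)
\textbf{Proof Proposal for Theorem \ref{thm:RR-cvx-full}.}

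The plan is to apply the sufficient condition of Lemma \ref{lem:last-main} with $\mu=0$ (so $\gamma_t \equiv 1$) and reduce everything to the scalar estimate
\[
\E[F(\bx_{T+1})-F(\bz)] \leq O\!\left(\frac{\|\bz-\bx_1\|^2}{\sum_{t=1}^{T}\eta_t} + (G_{f,2}^2+\Phi)\sum_{t=1}^{T}\frac{\eta_t^2}{\sum_{s=t}^{T}\eta_s}\right).
\]
Condition~\ref{enu:last-main-1} is immediate from Example~\ref{exa:RR}: under $\RR$, each $\I(t)$ is a coordinate of a uniformly random permutation, hence marginally uniform on $[n]$. Condition~\ref{enu:last-main-2} is a trivial arithmetic check for each of the three stepsizes listed. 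So the core work is Condition~\ref{enu:last-main-3}: establishing $\Phi=\Theta(\sqrt{n}\,G_{f,1}G_{f,2})$ for $\RR$.

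The main obstacle — and the step I expect to be by far the hardest — is bounding $|\E[\Omega_t(\bx_s)]|$ for $s\le t$. If $s$ and $t$ lie in different epochs, the two permutations are independent; because $\bx_s$ is measurable with respect to the earlier epoch and $\I(t)$ is uniform on $[n]$, independence forces $\E[\Omega_t(\bx_s)]=0$. The nontrivial case is when $s,t$ sit in the same epoch, where the sampling-without-replacement correlation between $\I(t)$ and the indices already used to form $\bx_s$ produces a non-vanishing bias. To control it I would unroll the inner epoch: write $\bx_s=\bx_{s'} - \sum_{r=s'}^{s-1}\eta_r \bg_r + (\text{prox correction})$ where $s'$ is the first step of the current epoch, use Lipschitzness of $f_{\I(t)}-f$ to reduce to $\|\bx_s-\bx_{s'}\|\le \sum_{r=s'}^{s-1}\eta_r G_{\I(r)}$, and then take expectation conditional on the epoch permutation structure. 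The uniform marginal gives $\E[G_{\I(r)}]=G_{f,1}$, while conditioning on the positions within the permutation leads to a root-mean-square term $G_{f,2}$; putting these together with the monotonicity of $\eta_t$ (so $\eta_r\le \eta_s$) and the fact that at most $n$ indices can overlap within one epoch yields the claimed $\Phi=\Theta(\sqrt{n}\,G_{f,1}G_{f,2})$. A useful sanity check: since $G_{f,2}\le\sqrt{n}\,G_{f,1}$ by \eqref{eq:G}, we have $G_{f,2}^2\le \sqrt{n}\,G_{f,1}G_{f,2}$, so the $G_{f,2}^2$ term is absorbed into $\Phi$.

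With $\Phi$ in hand, the three bullets reduce to computing the two stepsize sums. For $\eta_t=\eta/\sqrt{t}$, standard integral estimates give $\sum_{t=1}^{T}\eta_t \asymp \eta\sqrt{T}$, and splitting the range at $t=T/2$ to separately lower-bound $\sum_{s=t}^{T}\eta_s$ by $\Omega(\eta\sqrt{T})$ on the lower half and by $\Omega(\eta(T-t+1)/\sqrt{T})$ on the upper half yields $\sum_{t=1}^{T}\eta_t^2/\sum_{s=t}^{T}\eta_s = O(\eta \log T/\sqrt{T})$. For $\eta_t=\eta/\sqrt{T}$, the sums are immediate: $\sum\eta_t=\eta\sqrt{T}$ and $\sum \eta_t^2/\sum_{s=t}^T\eta_s = (\eta/\sqrt{T})\sum_{t=1}^{T}1/(T-t+1)=O(\eta\log T/\sqrt{T})$. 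For the epoch-decaying choice $\eta_t=\eta(\qu(T)-\qu(t)+1)/(\qu(T)\sqrt{T})$, the key computation — which I would isolate in an auxiliary lemma (cf.\ Lemma~\ref{lem:stepsize}) — is that inside epoch $k<\qu(T)$ the tail sum $\sum_{s=t}^{T}\eta_s$ is of order $n\eta(\qu(T)-k+1)^2/(\qu(T)\sqrt{T})$, so the contribution of epoch $k$ to the second sum telescopes to $O(\eta/(\qu(T)\sqrt{T}))$ and summing over $k$ gives $O(\eta/\sqrt{T})$; only the last (possibly partial) epoch contributes an extra $O(\eta\log n/(\qu(T)\sqrt{T}))$, which vanishes once $\qu(T)=\Omega(\log n)$.

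Plugging each of the three stepsize choices into the displayed bound gives the three claimed inequalities. Optimizing $\eta$ — balancing the $\|\bz-\bx_1\|^2/\eta$ term with the $\eta\sqrt{n}\,G_{f,1}G_{f,2}(\text{log factor})$ term — produces the stated $\eta=\|\bz-\bx_1\|/\bigl(n^{1/4}\sqrt{G_{f,1}G_{f,2}\cdot(\text{log factor})}\bigr)$, and specializing to $\bz=\bx_\star$ recovers the $\widetilde O(n^{1/4}\sqrt{G_{f,1}G_{f,2}}D_\star/\sqrt{T})$ rate summarized in Theorem~\ref{thm:RR-cvx}.
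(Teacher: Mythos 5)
Your high-level scaffolding — invoke Lemma~\ref{lem:last-main} with $\mu=0$, check the three conditions, extract $\Phi$, then compute the stepsize sums — is exactly the paper's route, and your stepsize arithmetic for all three choices (including the per-epoch telescoping and the $\log n/\qu(T)$ remainder handled in Lemma~\ref{lem:stepsize}) is essentially correct. The problem is in the step you yourself flag as the hardest: the derivation of $\Phi=\Theta(\sqrt n\,G_{f,1}G_{f,2})$. As sketched, your argument gives $\Phi=\Theta(nG_{f,1}^2)$, not $\Theta(\sqrt n\,G_{f,1}G_{f,2})$, which loses precisely the $n^{-1/4}$ improvement the theorem is about.

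Concretely, you propose to write $\E[\Omega_t(\bx_s)]=\E[\Omega_t(\bx_s)]-\E[\Omega_t(\bx_{s'})]$ (the subtracted term vanishes by independence across epochs), Lipschitz the difference down to $\E[(G_{\I(t)}+G_{f,1})\|\bx_s-\bx_{s'}\|]$, and then bound the drift $\|\bx_s-\bx_{s'}\|$ by the cumulative step lengths over the epoch. But that drift is deterministically $\Theta(n\eta_s)$ in magnitude (an $\ell_1$ accumulation of up to $n$ steps, each of size $\eta_r G_{\I(r)}$); no amount of conditioning on the permutation or Cauchy--Schwarz turns $\sum_{r}\eta_r G_{\I(r)}$ into $\sqrt n$. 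If you push your route to completion you get $|\E[\Omega_t(\bx_s)]|\lesssim nG_{f,1}^2\eta_s$, hence after optimizing $\eta$ a final rate $\widetilde{\O}\!\bigl(\sqrt n\,G_{f,1}\|\bz-\bx_1\|/\sqrt T\bigr)=\widetilde{\O}\!\bigl(G_{f,1}\|\bz-\bx_1\|/\sqrt K\bigr)$ when $T=Kn$, which is exactly the Proximal GD rate and not the claimed $\widetilde{\O}(n^{1/4}\sqrt{G_{f,1}G_{f,2}}\|\bz-\bx_1\|/\sqrt T)$. The paper avoids this by not comparing $\bx_s$ to $\bx_{s'}$ at all. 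Instead it writes the bias as $\frac{1}{n}\sum_{i<\re}\E[f_{\pi^\re}(\bx_s)-f_{\pi^i}(\bx_s)]$ and, for each $i$, couples the trajectory under $\pi$ with the trajectory under the \emph{transposed} permutation $\widehat\pi(\re,i)$ (Lemma~\ref{lem:RR-Omega}). The single-swap perturbation of $\bx_s$ is controlled by a stability lemma (Lemma~\ref{lem:RR-stability}) whose key feature is that, after the swap point, gradient differences accumulate in $\ell_2$ (a variance-type bound $\sqrt{\sum_j\eta_j^2 G_{f,2}^2}$), not in $\ell_1$. The $1/n$ prefactor plus the $\ell_2$ accumulation is what delivers $\sqrt n\,G_{f,1}G_{f,2}$. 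Without some version of that swap/stability argument your $\Phi$ is off by $\sqrt n$ and the theorem does not follow.

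Two smaller points. First, the one-step bound $\|\bx_{r+1}-\bx_r\|\le\eta_r G_{\I(r)}$ that your drift estimate relies on does not hold for general convex $\psi$ (take $f_i\equiv0$, $\psi=\tfrac{\mu}{2}\|\cdot\|^2$, $\bx_r\ne\bzero$: then $\bx_{r+1}=\bx_r/(1+\mu\eta_r)\ne\bx_r$ while $G_{\I(r)}=0$). The paper uses such a one-step bound only in Lemma~\ref{lem:special-refined}, which explicitly assumes $\psi=\varphi+\charf_{\domx}$ with $\varphi$ Lipschitz; Theorem~\ref{thm:RR-cvx-full} has no such restriction, which is another reason the proof must route through the swap-coupling contraction (Lemma~\ref{lem:contractive}) rather than one-step drift. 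Second, a sign slip: with a non-increasing stepsize and $r<s$ you have $\eta_r\ge\eta_s$, not $\eta_r\le\eta_s$; the paper uses this direction to pull $\eta_j\le\eta_i$ out of $\sqrt{\sum_{j\ge i}\eta_j^2}$.
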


\begin{proof}
Note that the $\RR$ sampling scheme satisfies $\I(t)\diseq\mathrm{Uniform}\left[n\right],\forall t\in\left[T\right]$,
and all these stepsizes listed are non-increasing. Hence, Conditions
\ref{enu:last-condition-1} and \ref{enu:last-condition-2} in Lemma
\ref{lem:last} are fulfilled. If Condition \ref{enu:last-condition-3}
also holds, i.e., $\left|\E\left[\Omega_{t}(\bx_{s})\right]\right|\leq\Phi\eta_{s},\forall t\in\left[T\right],s\in\left[t\right]$
for some $\Phi\geq0$, we can invoke Lemma \ref{lem:last} to obtain
for any $\bz\in\R^{d}$,
\begin{align}
\E\left[F(\bx_{T+1})-F(\bz)\right] & \leq\O\left(\frac{\left\Vert \bz-\bx_{1}\right\Vert ^{2}}{\sum_{t=1}^{T}\gamma_{t}\eta_{t}}+\left(G_{f,2}^{2}+\Phi\right)\sum_{t=1}^{T}\frac{\gamma_{t}\eta_{t}^{2}}{\sum_{s=t}^{T}\gamma_{s}\eta_{s}}\right)\nonumber \\
 & =\O\left(\frac{\left\Vert \bz-\bx_{1}\right\Vert ^{2}}{\sum_{t=1}^{T}\eta_{t}}+\left(G_{f,2}^{2}+\Phi\right)\sum_{t=1}^{T}\frac{\eta_{t}^{2}}{\sum_{s=t}^{T}\eta_{s}}\right),\label{eq:RR-cvx-full-1}
\end{align}
where the second line is by $\gamma_{t}=\prod_{s=1}^{t-1}(1+\mu\eta_{s})=1,\forall t\in\left[T+1\right]$
since $\mu=0$ now.

Our next task is to find a constant $\Phi\geq0$ satisfying
\[
\left|\E\left[\Omega_{t}(\bx_{s})\right]\right|\leq\Phi\eta_{s},\forall t\in\left[T\right],s\in\left[t\right].
\]
For all stepsizes listed, we claim
\begin{equation}
\Phi=4\left(G_{f,2}^{2}+2\sqrt{n}G_{f,1}G_{f,2}\right).\label{eq:RR-cvx-full-Psi}
\end{equation}
To prove our statement, we apply Lemma \ref{lem:RR-Omega} with $\mu=0$
to have:
\begin{itemize}
\item If $s\in\left[(\qu(t)-1)n\right]$, there is
\[
\E\left[\Omega_{t}(\bx_{s})\right]=0\Rightarrow\left|\E\left[\Omega_{t}(\bx_{s})\right]\right|=0\leq\Phi\eta_{s}.
\]
\item If $s\in\left[t\right]\backslash\left[(\qu(t)-1)n\right]$ (which
implies $\qu(s)=\qu(t)$), there is
\begin{align}
\left|\E\left[\Omega_{t}(\bx_{s})\right]\right| & \leq\frac{\sqrt{2}G_{f,2}^{2}}{n}\sum_{j=(\qu(t)-1)n+1}^{s-1}\eta_{j}+\frac{2\sqrt{2}G_{f,1}G_{f,2}}{n}\sum_{i=(\qu(t)-1)n+1}^{s-1}\sqrt{\sum_{j=i}^{s-1}\eta_{j}^{2}}.\nonumber \\
 & \overset{(a)}{\leq}\frac{\sqrt{2}G_{f,2}^{2}}{n}\sum_{j=(\qu(t)-1)n+1}^{s-1}\eta_{j}+\frac{2\sqrt{2}G_{f,1}G_{f,2}}{n}\sum_{i=(\qu(t)-1)n+1}^{s-1}\sqrt{s-i}\eta_{i}\nonumber \\
 & \overset{(b)}{\leq}\frac{\sqrt{2}\left(G_{f,2}^{2}+2\sqrt{n}G_{f,1}G_{f,2}\right)}{n}\sum_{j=(\qu(t)-1)n+1}^{s-1}\eta_{j},\label{eq:RR-cvx-full-Omega}
\end{align}
where $(a)$ is because $\eta_{j}$ is non-increasing and $(b)$ is
due to $\sqrt{s-i}\leq\sqrt{s-(\qu(t)-1)n-1}=\sqrt{\re(s)-1}\leq\sqrt{n}$
when $i\in\left\{ (\qu(t)-1)n+1,\cdots,s-1\right\} $. Note that if
$s=(\qu(t)-1)n+1$, $\left|\E\left[\Omega_{t}(\bx_{s})\right]\right|=0\leq\Phi\eta_{s}$.
Hence, we assume $(\qu(t)-1)n+2\leq s\leq t$ in the following.
\begin{itemize}
\item For the stepsize satisfying $\eta_{t}=\eta_{(\qu(t)-1)n+1},\forall t\in\left[T\right]$
(the first two), we observe that $(\qu(t)-1)n+1\leq j\leq s-1\Rightarrow\eta_{j}=\eta_{(\qu(t)-1)n+1}=\eta_{s}$.
Therefore,
\begin{equation}
\sum_{j=(\qu(t)-1)n+1}^{s-1}\eta_{j}=(\re(s)-1)\eta_{s}\leq n\eta_{s}.\label{eq:RR-cvx-full-2}
\end{equation}
Thus, there is
\[
\left|\E\left[\Omega_{t}(\bx_{s})\right]\right|\overset{\eqref{eq:RR-cvx-full-Omega},\eqref{eq:RR-cvx-full-2}}{\leq}\sqrt{2}\left(G_{f,2}^{2}+2\sqrt{n}G_{f,1}G_{f,2}\right)\eta_{s}\overset{\eqref{eq:RR-cvx-full-Psi}}{\leq}\Phi\eta_{s}.
\]
\item For the stepsize $\eta_{t}=\frac{\eta}{\sqrt{t}},\forall t\in\left[T\right]$,
we know
\begin{align}
\sum_{j=(\qu(t)-1)n+1}^{s-1}\eta_{j} & =\eta\sum_{j=(\qu(t)-1)n+1}^{s-1}\frac{1}{\sqrt{j}}\leq\eta\int_{(\qu(t)-1)n}^{s-1}\frac{1}{\sqrt{j}}\mathrm{d}j=2\eta(\sqrt{s-1}-\sqrt{(\qu(t)-1)n})\nonumber \\
 & =\frac{2\eta(\re(s)-1)}{\sqrt{s-1}+\sqrt{(\qu(t)-1)n}}=\frac{2(\re(s)-1)\sqrt{s}}{\sqrt{s-1}+\sqrt{(\qu(t)-1)n}}\eta_{s}\leq2\sqrt{2}n\eta_{s},\label{eq:RR-cvx-full-3}
\end{align}
where the last step is by $\frac{2(\re(s)-1)\sqrt{s}}{\sqrt{s-1}+\sqrt{(\qu(t)-1)n}}\leq2\sqrt{\frac{s}{s-1}}n\leq2\sqrt{2}n$
when $s\geq(\qu(t)-1)n+2\geq2$. Thus, there is
\[
\left|\E\left[\Omega_{t}(\bx_{s})\right]\right|\overset{\eqref{eq:RR-cvx-full-Omega},\eqref{eq:RR-cvx-full-3}}{\leq}4\left(G_{f,2}^{2}+2\sqrt{n}G_{f,1}G_{f,2}\right)\eta_{s}\overset{\eqref{eq:RR-cvx-full-Psi}}{=}\Phi\eta_{s}.
\]
\end{itemize}
\end{itemize}
By (\ref{eq:RR-cvx-full-1}) and (\ref{eq:RR-cvx-full-Psi}), we have
for all these stepsizes
\begin{align*}
\E\left[F(\bx_{T+1})-F(\bz)\right] & \leq\O\left(\frac{\left\Vert \bz-\bx_{1}\right\Vert ^{2}}{\sum_{t=1}^{T}\eta_{t}}+\left(G_{f,2}^{2}+\sqrt{n}G_{f,1}G_{f,2}\right)\sum_{t=1}^{T}\frac{\eta_{t}^{2}}{\sum_{s=t}^{T}\eta_{s}}\right)\\
 & \leq\O\left(\frac{\left\Vert \bz-\bx_{1}\right\Vert ^{2}}{\sum_{t=1}^{T}\eta_{t}}+\sqrt{n}G_{f,1}G_{f,2}\sum_{t=1}^{T}\frac{\eta_{t}^{2}}{\sum_{s=t}^{T}\eta_{s}}\right),
\end{align*}
where the last step is by noticing $G_{f,2}<\sqrt{n}G_{f,1}$.
\begin{itemize}
\item For the stepsize $\eta_{t}=\eta\frac{\qu(T)-\qu(t)+1}{\qu(T)\sqrt{T}},\forall t\in\left[T\right]$,
by applying Lemma \ref{lem:stepsize} with $\eta_{\star}=\frac{\eta}{\qu(T)\sqrt{T}}$,
we know $\sum_{t=1}^{T}\eta_{t}\geq\frac{\eta\sqrt{T}}{2}$ and $\sum_{t=1}^{T}\frac{\eta_{t}^{2}}{\sum_{s=t}^{T}\eta_{s}}\leq\frac{9\eta}{2\sqrt{T}}\left(1+\frac{\log n}{\qu(T)}\right)$,
which implies
\[
\E\left[F(\bx_{T+1})-F(\bz)\right]\leq\O\left(\left(\frac{\left\Vert \bz-\bx_{1}\right\Vert ^{2}}{\eta}+\eta\sqrt{n}G_{f,1}G_{f,2}\left(1+\frac{\log n}{\qu(T)}\right)\right)\frac{1}{\sqrt{T}}\right).
\]
Setting $\eta=\frac{\left\Vert \bz-\bx_{1}\right\Vert }{n^{1/4}\sqrt{G_{f,1}G_{f,2}\left(1+\frac{\log n}{\qu(T)}\right)}}$
to obtain
\[
\E\left[F(\bx_{T+1})-F(\bz)\right]\leq\O\left(\frac{n^{1/4}\sqrt{G_{f,1}G_{f,2}}\left\Vert \bz-\bx_{1}\right\Vert \sqrt{1+\frac{\log n}{\qu(T)}}}{\sqrt{T}}\right).
\]
Particularly, if $\qu(T)=\Omega(\log n)\Leftrightarrow T=\Omega(n\log n)$,
there is
\[
\E\left[F(\bx_{T+1})-F(\bz)\right]\leq\O\left(\frac{n^{1/4}\sqrt{G_{f,1}G_{f,2}}\left\Vert \bz-\bx_{1}\right\Vert }{\sqrt{T}}\right).
\]
\item For the stepsize $\eta_{t}=\frac{\eta}{\sqrt{T}},\forall t\in\left[T\right]$,
we know
\[
\sum_{t=1}^{T}\frac{\eta_{t}^{2}}{\sum_{s=t}^{T}\eta_{s}}\leq\frac{\eta}{\sqrt{T}}\sum_{t=1}^{T}\frac{1}{T-t+1}\leq\frac{\eta(1+\log T)}{\sqrt{T}}.
\]
Hence, we have
\[
\E\left[F(\bx_{T+1})-F(\bz)\right]\leq\O\left(\left(\frac{\left\Vert \bz-\bx_{1}\right\Vert ^{2}}{\eta}+\eta\sqrt{n}G_{f,1}G_{f,2}(1+\log T)\right)\frac{1}{\sqrt{T}}\right).
\]
Setting $\eta=\frac{\left\Vert \bz-\bx_{1}\right\Vert }{n^{1/4}\sqrt{G_{f,1}G_{f,2}(1+\log T)}}$
to obtain
\[
\E\left[F(\bx_{T+1})-F(\bz)\right]\leq\O\left(\frac{n^{1/4}\sqrt{G_{f,1}G_{f,2}}\left\Vert \bz-\bx_{1}\right\Vert \sqrt{1+\log T}}{\sqrt{T}}\right).
\]
\item For the stepsize $\eta_{t}=\frac{\eta}{\sqrt{t}},\forall t\in\left[T\right]$,
we know for any $t\in\left[T\right]$,
\[
\sum_{s=t}^{T}\eta_{s}=\eta\sum_{s=t}^{T}\frac{1}{\sqrt{s}}\geq\eta\int_{t}^{T+1}\frac{1}{\sqrt{s}}\mathrm{d}s=2\eta(\sqrt{T+1}-\sqrt{t}),
\]
which implies
\begin{align*}
\sum_{t=1}^{T}\frac{\eta_{t}^{2}}{\sum_{s=t}^{T}\eta_{s}} & \leq\eta\sum_{t=1}^{T}\frac{1}{t(\sqrt{T+1}-\sqrt{t})}=\eta\sum_{t=1}^{T}\frac{\sqrt{T+1}+\sqrt{t}}{t(T+1-t)}\leq2\eta\sum_{t=1}^{T}\frac{\sqrt{T+1}}{t(T+1-t)}\\
 & =\frac{2\eta}{\sqrt{T+1}}\sum_{t=1}^{T}\frac{1}{t}+\frac{1}{T+1-t}\leq\frac{4\eta(1+\log T)}{\sqrt{T+1}}.
\end{align*}
Hence, we have
\[
\E\left[F(\bx_{T+1})-F(\bz)\right]\leq\O\left(\left(\frac{\left\Vert \bz-\bx_{1}\right\Vert ^{2}}{\eta}+\eta\sqrt{n}G_{f,1}G_{f,2}(1+\log T)\right)\frac{1}{\sqrt{T}}\right).
\]
Setting $\eta=\frac{\left\Vert \bz-\bx_{1}\right\Vert }{n^{1/4}\sqrt{G_{f,1}G_{f,2}}}$
to obtain
\[
\E\left[F(\bx_{T+1})-F(\bz)\right]\leq\O\left(\frac{n^{1/4}\sqrt{G_{f,1}G_{f,2}}\left\Vert \bz-\bx_{1}\right\Vert (1+\log T)}{\sqrt{T}}\right).
\]
\end{itemize}
\end{proof}

Built upon Theorem \ref{thm:RR-cvx-full}, we give the following rate
for $\bx_{T+1}^{\suf}=\frac{1}{n}\sum_{t=T-n+1}^{T}\bx_{t+1}$.
\begin{cor}
\label{cor:RR-suffix-full}(Full version of Corollary \ref{cor:RR-suffix})
Under Assumptions \ref{assu:basic} (with $\mu=0$) and \ref{assu:lip},
suppose $T\geq n$ and the $\RR$ sampling scheme is employed:
\begin{itemize}
\item Taking the stepsize $\eta_{t}=\eta,\forall t\in\left[T\right]$, then
for any $\bz\in\R^{d}$, Algorithm \ref{alg:Alg} guarantees (additionally
assuming $T\geq2(n-1)$\footnote{This requirement can be further relaxed to $T\geq(1+c)(n-1)$ for
any $c>0$. We simply choose $c=1$ here.})
\[
\E\left[F(\bx_{T+1}^{\suf})-F(\bz)\right]\leq\O\left(\frac{\left\Vert \bz-\bx_{1}\right\Vert ^{2}}{\eta T}+\eta\sqrt{n}G_{f,1}G_{f,2}(1+\log T)\right).
\]
\item Taking the stepsize $\eta_{t}=\frac{\eta}{\sqrt{t}},\forall t\in\left[T\right]$,
then for any $\bz\in\R^{d}$, Algorithm \ref{alg:Alg} guarantees
\[
\E\left[F(\bx_{T+1}^{\suf})-F(\bz)\right]\leq\O\left(\left(\frac{\left\Vert \bz-\bx_{1}\right\Vert ^{2}}{\eta}+\eta\sqrt{n}G_{f,1}G_{f,2}(1+\log T)\right)\frac{1}{\sqrt{T}}\right).
\]
\end{itemize}
\end{cor}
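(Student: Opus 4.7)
The plan is to reduce the suffix-average bound to a per-iterate last-iterate bound via convexity of $F$:
\[
F(\bx_{T+1}^{\suf}) - F(\bz) \leq \frac{1}{n}\sum_{t=T-n+1}^{T}\bigl[F(\bx_{t+1}) - F(\bz)\bigr],
\]
and then to control each term $\E[F(\bx_{t+1}) - F(\bz)]$ separately by applying our last-iterate machinery with the horizon set to $t$. The crucial observation that makes this reduction work is that both prescribed stepsize schedules, $\eta_{t}=\eta$ and $\eta_{t}=\eta/\sqrt{t}$, are \emph{horizon-free}, so the same run of Algorithm \ref{alg:Alg} is simultaneously a valid run of length $t$ for every $t$ in the last epoch. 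Averaging will then be performed using the elementary inequalities $\frac{1}{n}\sum_{t=T-n+1}^{T}\frac{1}{\sqrt{t}}\leq\frac{2}{\sqrt{T}}$ (valid whenever $T\geq n$, since $\sqrt{T}-\sqrt{T-n}=n/(\sqrt{T}+\sqrt{T-n})\leq n/\sqrt{T}$) and $\frac{1}{n}\sum_{t=T-n+1}^{T}\frac{1}{t}\leq\frac{1}{T-n+1}\leq\frac{2}{T}$ (valid whenever $T\geq 2(n-1)$), together with $1+\log t\leq 1+\log T$.

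For the $\eta_{t}=\eta/\sqrt{t}$ case I would directly quote the third bullet of Theorem \ref{thm:RR-cvx-full} applied with horizon $t$ to obtain
\[
\E\bigl[F(\bx_{t+1})-F(\bz)\bigr]\leq \O\!\left(\left(\frac{\|\bz-\bx_{1}\|^{2}}{\eta}+\eta\sqrt{n}\,G_{f,1}G_{f,2}(1+\log t)\right)\frac{1}{\sqrt{t}}\right),
\]
and conclude by the $1/\sqrt{t}$ averaging bound above. For the constant-stepsize case $\eta_{t}\equiv\eta$, which is not one of the three schedules stated in Theorem \ref{thm:RR-cvx-full}, I would instead invoke Lemma \ref{lem:last-main} (equivalently Lemma \ref{lem:last}) directly for the prefix of length $t$: hypothesis \ref{enu:last-main-1} is satisfied because $\RR$ marginally uniformizes each $\I(s)$, hypothesis \ref{enu:last-main-2} is trivial since a constant schedule is non-increasing, and hypothesis \ref{enu:last-main-3} is obtained by observing that the derivation \eqref{eq:RR-cvx-full-Omega}--\eqref{eq:RR-cvx-full-2} in the proof of Theorem \ref{thm:RR-cvx-full} carries over verbatim (the per-epoch equality $\eta_{j}=\eta_{s}$ used in \eqref{eq:RR-cvx-full-2} holds trivially), yielding $\Phi=\O(G_{f,2}^{2}+\sqrt{n}\,G_{f,1}G_{f,2})=\O(\sqrt{n}\,G_{f,1}G_{f,2})$ after using $G_{f,2}<\sqrt{n}G_{f,1}$. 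Since $\sum_{s=1}^{t}\eta_{s}=t\eta$ and $\sum_{s=1}^{t}\eta_{s}^{2}/\sum_{r=s}^{t}\eta_{r}=\eta\sum_{s=1}^{t}\frac{1}{t-s+1}\leq\eta(1+\log t)$, Lemma \ref{lem:last-main} gives
\[
\E\bigl[F(\bx_{t+1})-F(\bz)\bigr]\leq \O\!\left(\frac{\|\bz-\bx_{1}\|^{2}}{t\eta}+\eta\sqrt{n}\,G_{f,1}G_{f,2}(1+\log t)\right),
\]
and averaging via the $1/t$ bound, which requires precisely $T\geq 2(n-1)$, produces the advertised rate.

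The only delicate bookkeeping will be verifying that the constant $\Phi$ and the internal estimates in the proof of Theorem \ref{thm:RR-cvx-full} truly depend only on the current epoch $\{(\qu(t)-1)n+1,\ldots,t\}$ and the schedule within it, so that the per-iterate bound is genuinely horizon-free and can be summed over $t\in\{T-n+1,\ldots,T\}$ without accumulating a further factor of $n$. Once that is checked, the remainder reduces to the integral-comparison estimates stated above; no fundamentally new technique is required beyond what has already been developed for Theorem \ref{thm:RR-cvx-full}.
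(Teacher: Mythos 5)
Your proposal is correct and follows essentially the same route as the paper: decompose the suffix average by convexity, observe that both stepsize schedules are horizon-free so the per-iterate bound applies at every $t$ in the last epoch, and then sum using the elementary estimates $\frac{1}{n}\sum_{t=T-n+1}^{T}\frac{1}{t}\leq\frac{2}{T}$ (needing $T\geq 2(n-1)$) and $\frac{1}{n}\sum_{t=T-n+1}^{T}\frac{1}{\sqrt{t}}\leq\frac{2}{\sqrt{T}}$. The only cosmetic deviation is that for $\eta_t\equiv\eta$ you invoke Lemma~\ref{lem:last-main} directly, whereas the paper notes that the second bullet of Theorem~\ref{thm:RR-cvx-full} already covers the constant schedule after the reparametrization $\eta\mapsto\eta\sqrt{t}$ (so your remark that it is ``not one of the three schedules'' is a slight mislabel, though it does not affect the argument); the horizon-freeness of $\Phi$ that you flag for verification is indeed delivered by Lemma~\ref{lem:RR-Omega}, which shows $\E[\Omega_t(\bx_s)]$ depends only on the current epoch.
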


\begin{proof}
Both results are directly implied by Theorem \ref{thm:RR-cvx-full}.
\begin{itemize}
\item For the stepsize $\eta_{t}=\eta,\forall t\in\left[T\right]$, we invoke
the second result in Theorem \ref{thm:RR-cvx-full} (under changing
$\eta$ to $\eta\sqrt{T}$ in it) to obtain
\[
\E\left[F(\bx_{T+1})-F(\bz)\right]\leq\O\left(\frac{\left\Vert \bz-\bx_{1}\right\Vert ^{2}}{\eta T}+\eta\sqrt{n}G_{f,1}G_{f,2}(1+\log T)\right).
\]
Note that the above result actually holds for any $T\in\N$. Hence,
there is by convexity
\[
\E\left[F(\bx_{T+1}^{\suf})-F(\bz)\right]\leq\frac{1}{n}\sum_{t=T-n+1}^{T}\E\left[F(\bx_{t+1})-F(\bz)\right]\leq\frac{1}{n}\sum_{t=T-n+1}^{T}\O\left(\frac{\left\Vert \bz-\bx_{1}\right\Vert ^{2}}{\eta t}+\eta\sqrt{n}G_{f,1}G_{f,2}(1+\log t)\right).
\]
Moreover, we have
\begin{eqnarray*}
\frac{1}{n}\sum_{t=T-n+1}^{T}\frac{1}{t}\leq\frac{1}{T-n+1}\overset{(a)}{\leq}\frac{2}{T} & \text{and} & \frac{1}{n}\sum_{t=T-n+1}^{T}1+\log t\leq1+\log T,
\end{eqnarray*}
where we use $T\geq2(n-1)$ in $(a)$. Finally, we obtain
\[
\E\left[F(\bx_{T+1}^{\suf})-F(\bz)\right]\leq\O\left(\frac{\left\Vert \bz-\bx_{1}\right\Vert ^{2}}{\eta T}+\eta\sqrt{n}G_{f,1}G_{f,2}(1+\log T)\right).
\]
\item For the stepsize $\eta_{t}=\frac{\eta}{\sqrt{t}},\forall t\in\left[T\right]$,
we invoke the third result in Theorem \ref{thm:RR-cvx-full} and follow
almost the same steps for proving Corollary \ref{cor:RR-suffix} to
conclude.
\end{itemize}
\end{proof}

We emphasize two important implications of Corollary \ref{cor:RR-suffix-full}.

The first one is that under the same setting in the proof of lower
bound \cite{NEURIPS2022_7bc4f74e}, i.e., $\psi=\charf_{\domx}$ where
$\domx$ is the unit ball, $G_{i}\equiv4$, $T=Kn$, $\bx_{1}=\bzero$,
the existence of $\bx_{\star}\in\argmin_{\bx\in\R^{d}}F(\bx)$, and
$\eta_{t}\equiv\eta$, we have
\[
\E\left[F(\bx_{Kn+1}^{\suf})-F_{\star}\right]\leq\O\left(\frac{1}{\eta nK}+\eta\sqrt{n}(1+\log nK)\right),
\]
where $F_{\star}=F(\bx_{\star})$. In addition, notice that now
\[
F(\bx_{Kn+1}^{\suf})-F_{\star}=f(\bx_{Kn+1}^{\suf})-f(\bx_{\star})\leq G_{f,1}\left\Vert \bx_{Kn+1}^{\suf}-\bx_{\star}\right\Vert \leq8=\O(1).
\]
We hence obtain
\begin{equation}
\E\left[F(\bx_{Kn+1}^{\suf})-F_{\star}\right]\leq\O\left(\min\left\{ 1,\frac{1}{\eta nK}+\eta\sqrt{n}(1+\log nK)\right\} \right).\label{eq:suffix-exact}
\end{equation}
Remarkably, this rate matches the lower bound $\Omega\left(\min\left\{ 1,\frac{1}{\eta nK}+\eta\sqrt{n}\right\} \right)$
of the suffix average for the last one epoch exactly by up to an extra
logarithmic factor. Therefore, we (almost) close the gap. 

The second one is related to the discussion in Footnote \ref{fn:SGD},
which is that we suspect the rate $\O\left(\frac{n^{1/4}}{\sqrt{T}}\right)$
(or $\O\left(\frac{1}{n^{1/4}\sqrt{K}}\right)$ when $T=Kn$ where
$K\in\N$) is tight for the last iterate under the $\RR$ sampling
scheme in the general convex case. By the first implication above,
one can see our last-iterate bound $\widetilde{\O}\left(\frac{1}{\eta T}+\eta\sqrt{n}\right)$
for the constant stepsize $\eta_{t}\equiv\eta$ (we only include $\eta$,
$n$, and $T$ in the rate for simplicity) is almost tight by up to
extra logarithmic factors. This is because if one can establish the
following bound
\[
\E\left[F(\bx_{T+1})-F_{\star}\right]\leq o\left(\frac{1}{\eta T}+\eta\sqrt{n}\right),
\]
then, using the same proof of (\ref{eq:suffix-exact}), there will
be 
\[
\E\left[F(\bx_{T+1}^{\suf})-F_{\star}\right]\leq o\left(\min\left\{ 1,\frac{1}{\eta T}+\eta\sqrt{n}\right\} \right),
\]
which contradicts the lower bound $\Omega\left(\min\left\{ 1,\frac{1}{\eta nK}+\eta\sqrt{n}\right\} \right)$
in \cite{NEURIPS2022_7bc4f74e}. Hence, at least in the case $\eta_{t}\equiv\eta$,
our last-iterate result $\widetilde{\O}\left(\frac{1}{\eta T}+\eta\sqrt{n}\right)$
should be tight, which immediately implies that $\widetilde{\O}\left(\frac{n^{1/4}}{\sqrt{T}}\right)$
is also tight by picking the optimal $\eta$. As such, we conjecture
that the rate $\O\left(\frac{n^{1/4}}{\sqrt{T}}\right)$ is tight
for $\RR$ in the general convex case though we cannot prove it when
the stepsize is not constant.

Next, Theorem \ref{thm:RR-str-full} shows the convergence rate when
$\psi$ is $\mu$-strongly convex, e.g., the common regularizer $\psi(\bx)=\frac{\mu}{2}\left\Vert \bx\right\Vert ^{2}$.
\begin{thm}
\label{thm:RR-str-full}(Full version of Theorem \ref{thm:RR-str})
Under Assumptions \ref{assu:basic} (with $\mu>0$) and \ref{assu:lip},
suppose the $\RR$ sampling scheme is employed with the stepsize $\eta_{t}=\frac{m}{\mu t},\forall t\in\left[T\right]$
where $m\in\N$, then for any $\bz\in\R^{d}$, Algorithm \ref{alg:Alg}
guarantees
\[
\E\left[F(\bx_{T+1})-F(\bz)\right]\leq\O\left(\frac{\mu\left\Vert \bz-\bx_{1}\right\Vert ^{2}}{\binom{m+T}{m}}+\frac{m\sqrt{n}G_{f,1}G_{f,2}(1+\log T)}{\mu T}\right).
\]
\end{thm}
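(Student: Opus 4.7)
The plan is to apply Lemma \ref{lem:last} (the full version of Lemma \ref{lem:last-main}) following the same template used in the proof of Theorem \ref{thm:RR-cvx-full}, with the only structural change being that strong convexity is now active and therefore the cumulative weights $\gamma_t = \prod_{s=1}^{t-1}(1+\mu\eta_s)$ are nontrivial. Conditions 1 and 2 of the lemma are immediate, since $\RR$ ensures $\I(t)\diseq\mathrm{Uniform}[n]$ and $\eta_t = m/(\mu t)$ is non-increasing. For condition 3, I would reuse Lemma \ref{lem:RR-Omega} exactly as in the derivation of \eqref{eq:RR-cvx-full-Omega}, reducing $|\E[\Omega_t(\bx_s)]|$ to a linear combination of $(1/n)\sum_j\eta_j$ and $(1/n)\sum_i\sqrt{s-i}\,\eta_i$ (over $j,i\in\{(\qu(t)-1)n+1,\ldots,s-1\}$). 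For the harmonic stepsize $\eta_t=m/(\mu t)$, a direct integral estimate yields $\sum_j\eta_j\leq\O(n\eta_s)$ (up to a $1+\log n$ slack coming from the first epoch, which is absorbed by the final $1+\log T$ factor), and the bound $\sqrt{s-i}\leq\sqrt{n}$ handles the second sum. Combined with $G_{f,2}^2\leq\sqrt{n}G_{f,1}G_{f,2}$ from \eqref{eq:G}, this gives $\Phi = \O(\sqrt{n}G_{f,1}G_{f,2})$.

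The key closed form here is $\gamma_t = \prod_{s=1}^{t-1}(1+m/s) = \binom{m+t-1}{m}$, which together with the telescoping identity $\gamma_{t+1}-\gamma_t = \mu\gamma_t\eta_t$ immediately delivers
\[
\sum_{t=1}^T \gamma_t\eta_t \;=\; \frac{\gamma_{T+1}-1}{\mu} \;=\; \frac{\binom{m+T}{m}-1}{\mu},
\]
accounting exactly for the first term $\mu\|\bz-\bx_1\|^2/\binom{m+T}{m}$ in the advertised rate. It remains to control $\sum_{t=1}^T \gamma_t\eta_t^2/\sum_{s=t}^T\gamma_s\eta_s$, which using $\sum_{s=t}^T\gamma_s\eta_s = (\gamma_{T+1}-\gamma_t)/\mu$ rewrites as
\[
\frac{m^2}{\mu}\sum_{t=1}^T \frac{\binom{m+t-1}{m}}{t^2\bigl[\binom{m+T}{m}-\binom{m+t-1}{m}\bigr]}.
\]

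Evaluating this last sum is the main obstacle. I would split it at $t=T/2$. For $t\leq T/2$, the denominator exceeds $\binom{m+T}{m}/2$, and using $\binom{m+t-1}{m} = \O(t^m/m!)$ together with $\sum_{t\leq T/2} t^{m-2} = \O(T^{m-1}/(m-1))$ gives a contribution of order $\O(m/(\mu T))$. For $t>T/2$, Pascal's rule yields $\binom{m+T}{m}-\binom{m+t-1}{m} = \sum_{s=t}^T\binom{m+s-1}{m-1} \geq (T-t+1)\binom{m+t-1}{m-1}$, and the identity $\binom{m+t-1}{m}/\binom{m+t-1}{m-1} = t/m$ then reduces each summand to $1/\bigl(mt(T-t+1)\bigr)$; the partial-fraction decomposition $\frac{1}{t(T-t+1)} = \frac{1}{T+1}\bigl(\frac{1}{t}+\frac{1}{T-t+1}\bigr)$ bounds this tail by $\O((1+\log T)/(mT))$. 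Adding the two regimes and multiplying by $m^2/\mu$ yields $\O(m(1+\log T)/(\mu T))$, so that after multiplication by $G_{f,2}^2+\Phi = \O(\sqrt{n}G_{f,1}G_{f,2})$ Lemma \ref{lem:last} produces exactly the rate claimed by the theorem.
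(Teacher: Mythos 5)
Your overall template matches the paper's proof (apply Lemma \ref{lem:last}, compute $\gamma_t\eta_t=\mu^{-1}\binom{m+t-1}{m-1}$, telescope $\sum_t\gamma_t\eta_t$, then control the weighted sum), but there is a genuine gap in your bound on $\Phi$, and the split at $t=T/2$ is an unnecessary detour.

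On $\Phi$: after invoking Lemma \ref{lem:RR-Omega} with $\mu>0$ the first summand is $\tfrac{\sqrt{2}G_{f,2}^2}{n}\sum_{j}\gamma_j\eta_j/\gamma_s$, not $\tfrac{\sqrt{2}G_{f,2}^2}{n}\sum_{j}\eta_j$. The paper's crucial observation is that $\gamma_t\eta_t=\mu^{-1}\binom{m+t-1}{m-1}$ is \emph{non-decreasing} in $t$, so $\gamma_j\eta_j/\gamma_s\leq\eta_s$ for every $j\leq s-1$, and the sum collapses to $(\re(s)-1)\eta_s\leq n\eta_s$ with no slack; the same trick handles the square-root term, giving $\Phi=\sqrt{2}G_{f,2}^2+2\sqrt{2n}G_{f,1}G_{f,2}=\Theta(\sqrt{n}G_{f,1}G_{f,2})$ exactly. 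Your route instead drops the $\gamma$'s via $\gamma_j\leq\gamma_s$ and then bounds $\sum_j\eta_j$ by an integral, which on the first epoch ($s\leq n$) costs you a genuine $1+\log n$ factor in $\Phi$. That factor is \emph{not} absorbed by the final $(1+\log T)$: in Lemma \ref{lem:last} the term $\Phi$ multiplies $\sum_t\gamma_t\eta_t^2/\sum_{s\geq t}\gamma_s\eta_s=\O\left(\frac{m(1+\log T)}{\mu T}\right)$, so you would obtain $\O\left(\frac{m\sqrt{n}G_{f,1}G_{f,2}(1+\log n)(1+\log T)}{\mu T}\right)$, strictly weaker than the claimed rate (and remaining so even assuming $T\geq n$, where it becomes $(1+\log T)^2$). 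Keeping $\gamma_j\eta_j/\gamma_s\leq\eta_s$ is essential.

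On the second sum: your Pascal-rule inequality $\binom{m+T}{m}-\binom{m+t-1}{m}=\sum_{s=t}^T\binom{m+s-1}{m-1}\geq(T-t+1)\binom{m+t-1}{m-1}$ is exactly the statement $\sum_{s=t}^T\gamma_s\eta_s\geq(T-t+1)\gamma_t\eta_t$, and it holds for \emph{all} $t\in[T]$. So you can apply it globally, get $\gamma_t\eta_t^2/\sum_{s\geq t}\gamma_s\eta_s\leq\eta_t/(T-t+1)=m/(\mu t(T-t+1))$ for every $t$, and finish with the partial-fraction step — the $t\leq T/2$ branch and the $\binom{m+t-1}{m}=\O(t^m/m!)$ estimates are correct but superfluous, and they also hide a $2^m$ constant that you would need to control for the $\O(m/(\mu T))$ claim. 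The paper does the one-line version directly.
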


\begin{proof}
Note that the $\RR$ sampling scheme satisfies $\I(t)\diseq\mathrm{Uniform}\left[n\right],\forall t\in\left[T\right]$,
and the stepsize $\eta_{t}=\frac{m}{\mu t}$ is non-increasing. Hence,
Conditions \ref{enu:last-condition-1} and \ref{enu:last-condition-2}
in Lemma \ref{lem:last} are fulfilled. If Condition \ref{enu:last-condition-3}
also holds, i.e., $\left|\E\left[\Omega_{t}(\bx_{s})\right]\right|\leq\Phi\eta_{s},\forall t\in\left[T\right],s\in\left[t\right]$
for some $\Phi\geq0$, we can invoke Lemma \ref{lem:last} to obtain
for any $\bz\in\R^{d}$,
\begin{equation}
\E\left[F(\bx_{T+1})-F(\bz)\right]\leq\O\left(\frac{\left\Vert \bz-\bx_{1}\right\Vert ^{2}}{\sum_{t=1}^{T}\gamma_{t}\eta_{t}}+\left(G_{f,2}^{2}+\Phi\right)\sum_{t=1}^{T}\frac{\gamma_{t}\eta_{t}^{2}}{\sum_{s=t}^{T}\gamma_{s}\eta_{s}}\right),\label{eq:RR-str-full-1}
\end{equation}
where $\gamma_{t}=\prod_{s=1}^{t-1}(1+\mu\eta_{s}),\forall t\in\left[T+1\right]$.

When $\eta_{t}=\frac{m}{\mu t},\forall t\in\left[T\right]$, there
is
\[
\gamma_{t}=\prod_{s=1}^{t-1}\frac{s+m}{s}=\frac{(m+t-1)!}{m!(t-1)!}=\binom{m+t-1}{m},\forall t\in\left[T+1\right],
\]
which implies
\begin{equation}
\gamma_{t}\eta_{t}=\frac{(m+t-1)!}{m!(t-1)!}\cdot\frac{m}{\mu t}=\frac{1}{\mu}\binom{m+t-1}{m-1},\forall t\in\left[T\right].\label{eq:RR-str-full-gamma-eta}
\end{equation}
Therefore, we know
\begin{equation}
\sum_{t=1}^{T}\gamma_{t}\eta_{t}=\frac{1}{\mu}\sum_{t=1}^{T}\binom{m+t-1}{m-1}=\frac{1}{\mu}\sum_{t=1}^{T}\binom{m+t}{m}-\binom{m+t-1}{m}=\frac{1}{\mu}\left[\binom{m+T}{m}-1\right],\label{eq:RR-str-full-2}
\end{equation}
and
\begin{equation}
\sum_{t=1}^{T}\frac{\gamma_{t}\eta_{t}^{2}}{\sum_{s=t}^{T}\gamma_{s}\eta_{s}}\overset{(a)}{\leq}\sum_{t=1}^{T}\frac{\eta_{t}}{T-t+1}=\sum_{t=1}^{T}\frac{m}{\mu(T-t+1)t}=\frac{m}{\mu(T+1)}\sum_{t=1}^{T}\frac{1}{t}+\frac{1}{T+1-t}\leq\frac{2m(1+\log T)}{\mu(T+1)},\label{eq:RR-str-full-3}
\end{equation}
where $(a)$ is because $\gamma_{t}\eta_{t}$ is non-decreasing in
$t$ as shown in (\ref{eq:RR-str-full-gamma-eta}). We combine (\ref{eq:RR-str-full-1}),
(\ref{eq:RR-str-full-2}) and (\ref{eq:RR-str-full-3}) to have
\begin{equation}
\E\left[F(\bx_{T+1})-F(\bz)\right]\leq\O\left(\frac{\mu\left\Vert \bz-\bx_{1}\right\Vert ^{2}}{\binom{m+T}{m}}+\frac{m\left(G_{f,2}^{2}+\Phi\right)(1+\log T)}{\mu T}\right).\label{eq:RR-str-full-4}
\end{equation}

Our last task is to find a constant $\Phi\geq0$ satisfying
\[
\left|\E\left[\Omega_{t}(\bx_{s})\right]\right|\leq\Phi\eta_{s},\forall t\in\left[T\right],s\in\left[t\right].
\]
Here we claim
\begin{equation}
\Phi=\sqrt{2}G_{f,2}^{2}+2\sqrt{2n}G_{f,1}G_{f,2}.\label{eq:RR-str-full-Psi}
\end{equation}
To prove our statement, we use Lemma \ref{lem:RR-Omega} to have:
\begin{itemize}
\item If $s\in\left[(\qu(t)-1)n\right]$, there is
\[
\E\left[\Omega_{t}(\bx_{s})\right]=0\Rightarrow\left|\E\left[\Omega_{t}(\bx_{s})\right]\right|=0\leq\Phi\eta_{s}.
\]
\item If $s\in\left[t\right]\backslash\left[(\qu(t)-1)n\right]$, there
is
\begin{align*}
\left|\E\left[\Omega_{t}(\bx_{s})\right]\right| & \leq\frac{\sqrt{2}G_{f,2}^{2}}{n}\sum_{j=(\qu(t)-1)n+1}^{s-1}\frac{\gamma_{j}\eta_{j}}{\gamma_{s}}+\frac{2\sqrt{2}G_{f,1}G_{f,2}}{n}\sum_{i=(\qu(t)-1)n+1}^{s-1}\sqrt{\sum_{j=i}^{s-1}\frac{\gamma_{j}^{2}\eta_{j}^{2}}{\gamma_{s}^{2}}}\\
 & \overset{(b)}{\leq}\frac{\sqrt{2}G_{f,2}^{2}}{n}\sum_{j=(\qu(t)-1)n+1}^{s-1}\eta_{s}+\frac{2\sqrt{2}G_{f,1}G_{f,2}}{n}\sum_{i=(\qu(t)-1)n+1}^{s-1}\sqrt{\sum_{j=i}^{s-1}\eta_{s}^{2}}\\
 & \overset{(c)}{\leq}\left(\sqrt{2}G_{f,2}^{2}+2\sqrt{2n}G_{f,1}G_{f,2}\right)\eta_{s}\overset{\eqref{eq:RR-str-full-Psi}}{=}\Phi\eta_{s},
\end{align*}
where $(b)$ holds by $\frac{\gamma_{j}\eta_{j}}{\gamma_{s}}\leq\eta_{s},\forall j\in\left[s-1\right]$
since $\gamma_{t}\eta_{t}$ is non-decreasing as shown in (\ref{eq:RR-str-full-gamma-eta})
and $(c)$ is due to
\begin{align*}
\sum_{j=(\qu(t)-1)n+1}^{s-1}\eta_{s} & =(\re(s)-1)\eta_{s}\leq n\eta_{s},\\
\sum_{i=(\qu(t)-1)n+1}^{s-1}\sqrt{\sum_{j=i}^{s-1}\eta_{s}^{2}} & =\sum_{i=(\qu(t)-1)n+1}^{s-1}\sqrt{s-i}\eta_{s}\leq n^{\frac{3}{2}}\eta_{s}.
\end{align*}
\end{itemize}
By (\ref{eq:RR-str-full-4}) and (\ref{eq:RR-str-full-Psi}), we finally
have 
\begin{align*}
\E\left[F(\bx_{T+1})-F(\bz)\right] & \leq\O\left(\frac{\mu\left\Vert \bz-\bx_{1}\right\Vert ^{2}}{\binom{m+T}{m}}+\frac{m\left(G_{f,2}^{2}+\sqrt{n}G_{f,1}G_{f,2}\right)(1+\log T)}{\mu T}\right)\\
 & \leq\O\left(\frac{\mu\left\Vert \bz-\bx_{1}\right\Vert ^{2}}{\binom{m+T}{m}}+\frac{m\sqrt{n}G_{f,1}G_{f,2}(1+\log T)}{\mu T}\right),
\end{align*}
where the last step is by using $G_{f,2}<\sqrt{n}G_{f,1}$.
\end{proof}

\subsection{$\protect\SS$ Sampling Scheme\label{subsec:SS-full-thm}}

First, Theorem \ref{thm:SS-cvx-full} below gives the convergence
guarantee for general convex functions (i.e., $\mu=0$). Unlike the
previous Theorem \ref{thm:RR-cvx-full}, we currently do not know
how to design a proper stepsize to get rid of the extra logarithmic
factor, which we leave as a future direction. Another crucial fact
is that we also have no idea how to set a time-varying stepsize. Loosely
speaking, this is because $\Phi$ depends on $T$ now. See our analysis
for details.
\begin{thm}
\label{thm:SS-cvx-full}(Full version of Theorem \ref{thm:SS-cvx})
Under Assumptions \ref{assu:basic} and (with $\mu=0$) and \ref{assu:lip},
suppose the $\SS$ sampling scheme is employed with the stepsize $\eta_{t}=\frac{\eta}{\sqrt{T}},\forall t\in\left[T\right]$,
then for any $\bz\in\R^{d}$, Algorithm \ref{alg:Alg} guarantees
\[
\E\left[F(\bx_{T+1})-F(\bz)\right]\leq\O\left(\left(\frac{\left\Vert \bz-\bx_{1}\right\Vert ^{2}}{\eta}+\eta\left(\qu(T)G_{f,2}^{2}\lor\sqrt{n\qu(T)}G_{f,1}G_{f,2}\right)(1+\log T)\right)\frac{1}{\sqrt{T}}\right).
\]
Setting $\eta=\frac{\left\Vert \bz-\bx_{1}\right\Vert }{\sqrt{\left(\qu(T)G_{f,2}^{2}\lor\sqrt{n\qu(T)}G_{f,1}G_{f,2}\right)(1+\log T)}}$
to optimize the dependence on parameters.
\end{thm}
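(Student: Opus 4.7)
}
The plan is to apply the master Lemma \ref{lem:last-main} (more precisely, its $\bz$-version Lemma \ref{lem:last}) to the $\SS$ sampling scheme with the constant stepsize $\eta_{t}=\eta/\sqrt{T}$, and to reduce the theorem to the single task of identifying an admissible constant $\Phi$ for Condition \ref{enu:last-main-3}. Conditions \ref{enu:last-main-1} and \ref{enu:last-main-2} are immediate: for $\SS$ we have $\I(t)=\pi^{\re(t)}$ with $\pi$ uniform on $S_{n}$, so marginally $\I(t)\diseq\mathrm{Uniform}[n]$, and the stepsize is constant hence non-increasing. With $\mu=0$ we have $\gamma_{t}\equiv1$, so the conclusion of Lemma \ref{lem:last-main} simplifies to
\[
\E\left[F(\bx_{T+1})-F(\bz)\right]\leq\O\!\left(\frac{\left\Vert \bz-\bx_{1}\right\Vert ^{2}}{\sum_{t=1}^{T}\eta_{t}}+\left(G_{f,2}^{2}+\Phi\right)\sum_{t=1}^{T}\frac{\eta_{t}^{2}}{\sum_{s=t}^{T}\eta_{s}}\right),
\]
where $\sum_{t=1}^{T}\eta_{t}=\eta\sqrt{T}$ and the harmonic-type sum $\sum_{t=1}^{T}\eta_{t}^{2}/\sum_{s=t}^{T}\eta_{s}\leq(\eta/\sqrt{T})\sum_{t=1}^{T}1/(T-t+1)\leq\eta(1+\log T)/\sqrt{T}$. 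So once the right $\Phi$ is in hand, matching the statement only requires routine arithmetic and optimizing $\eta$.

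The bulk of the proof is therefore to show
\[
\left|\E\left[\Omega_{t}(\bx_{s})\right]\right|\leq\Phi\,\eta_{s},\qquad\Phi=\O\!\left(\sqrt{T}\,G_{f,1}G_{f,2}+\tfrac{T}{n}G_{f,2}^{2}\right),
\]
for every $s\leq t$, where $\Omega_{t}=f_{\I(t)}-f$. The key structural fact for $\SS$ is that the same permutation $\pi$ governs $\I(t)$ in every epoch, so $\I(t)$ is correlated with $\bx_{s}$ through $\pi$ for all $s\leq t$, not only for $s$ in the current epoch as in $\RR$. My plan is a coupling/virtual-iterate argument analogous to Lemma \ref{lem:RR-Omega} but global in time: for each $s$, introduce an auxiliary trajectory $\tilde{\bx}_{s}^{(t)}$ obtained from the same algorithm but with the role of index $\pi^{\re(t)}$ averaged out (e.g.\ by swapping it with an independent uniform index), so that $\E[\Omega_{t}(\tilde{\bx}_{s}^{(t)})]=0$. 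By $G_{f,2}$-Lipschitzness of the difference $f_{\I(t)}-f$ in expectation, we get $|\E\Omega_{t}(\bx_{s})|\leq G_{f,2}\E\|\bx_{s}-\tilde{\bx}_{s}^{(t)}\|$, reducing the problem to bounding the coupling distance. Each swap affects at most one step per epoch, and the proximal update is non-expansive, so unrolling yields $\|\bx_{s}-\tilde{\bx}_{s}^{(t)}\|\lesssim\sum_{\text{affected }j<s}(G_{\I(j)}+G_{f,1})\eta_{j}$. With $\eta_{j}\equiv\eta/\sqrt{T}$ this contributes two pieces: an $O(\qu(s))$ count from cross-epoch resonances (one affected step per previous epoch at position $\re(t)$), giving a term of order $\sqrt{T}\,G_{f,1}G_{f,2}\cdot\eta_{s}$ after the $G_{f,2}$ prefactor; and the within-epoch contribution, which using Cauchy--Schwarz and the number of steps $\qu(s)\leq \qu(T)=\Theta(T/n)$ yields a term of order $(T/n)G_{f,2}^{2}\cdot\eta_{s}$.

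The main obstacle is precisely controlling these cross-epoch correlations, since naive bounds that treat $\SS$ like $\RR$ lose a factor of $\qu(T)$. The careful bookkeeping step is arguing that each index swap affects only one specific step in every past epoch, so the accumulated drift grows like $\qu(T)\eta$ (giving the $T/n$ prefactor on $G_{f,2}^{2}$) rather than $T\eta$, and that the ``cross terms'' between different $f_{i}$'s in the expansion contribute only $G_{f,1}G_{f,2}\sqrt{\qu(T)n}=G_{f,1}G_{f,2}\sqrt{T}$ by Cauchy--Schwarz over the $n$ coordinates of the permutation. Once these are in place, substituting $\Phi=\O(\sqrt{T}G_{f,1}G_{f,2}+(T/n)G_{f,2}^{2})$ into the displayed bound and noting $G_{f,2}^{2}\leq\Phi$ yields the $(\qu(T)G_{f,2}^{2}\lor\sqrt{n\qu(T)}G_{f,1}G_{f,2})$ factor in the theorem. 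Finally, optimizing $\eta$ balances the two terms in $1/\eta$ and $\eta$, giving the claimed rate.
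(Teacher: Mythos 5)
Your high-level route is the paper's: verify the three conditions of Lemma~\ref{lem:last} (the $\bz$-version of Lemma~\ref{lem:last-main}) for $\SS$ with the constant stepsize, observe that $\mu=0$ kills the $\gamma_t$'s, and reduce the theorem to exhibiting an admissible $\Phi$; your target $\Phi=\Theta\!\left(\sqrt{T}G_{f,1}G_{f,2}+\tfrac{T}{n}G_{f,2}^{2}\right)$ agrees with the paper's $\Phi=8\qu(T)G_{f,2}^{2}+2\sqrt{2n\qu(T)}G_{f,1}G_{f,2}$ since $\qu(T)\asymp T/n$. Two aspects of your sketch are imprecise in ways that matter. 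First, the paper's coupling is not a single auxiliary trajectory; Lemma~\ref{lem:SS-Omega} writes $\E[f_{\I(t)}(\bx_s)]=\tfrac{1}{n}\sum_{i=1}^n\E[f_i(\mathcal A_s(\widehat\pi(\re,\inv_i)))]$ via the deterministic within-permutation swap $\widehat\pi(\re,\inv_i)$ and Lemma~\ref{lem:SS-same-dist}, and then bounds $|\E\Omega_t(\bx_s)|\leq\tfrac{1}{n}\sum_i G_i\,\E\|\widehat{\bx}_s(\re,\inv_i)-\bx_s\|$. The index-by-index $G_i$ weighting is exactly what produces the cross product $G_{f,1}G_{f,2}$; your single-trajectory bound $|\E\Omega_t(\bx_s)|\leq G_{f,2}\E\|\bx_s-\tilde{\bx}_s^{(t)}\|$ would collapse to $G_{f,2}^2$ everywhere and give a weaker $\Phi$, and ``swapping with an independent uniform index'' (as opposed to a within-permutation transposition) does not yield a valid $\SS$ trajectory to compare against. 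Second, your ``two pieces'' paragraph misattributes the two contributions: the $O(\qu(s))$ count of steps with $\re(j)=\re$ is the \emph{drift} term and gives $\Theta(\qu(T))G_{f,2}^2\eta_s$, while the Cauchy--Schwarz/second-moment term $\newmoon$ in Lemma~\ref{lem:SS-stability} is what produces $\sqrt{n\qu(T)}G_{f,1}G_{f,2}\eta_s$; your concluding paragraph has this right, but the middle has them reversed. With these fixed, the remaining arithmetic (harmonic sum, tuning $\eta$) is exactly as you describe.
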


\begin{proof}
Similar to (\ref{eq:RR-cvx-full-1}), if we can find a constant $\Phi\geq0$
such that
\[
\left|\E\left[\Omega_{t}(\bx_{s})\right]\right|\leq\Phi\eta_{s},\forall t\in\left[T\right],s\in\left[t\right].
\]
Then there is
\begin{equation}
\E\left[F(\bx_{T+1})-F(\bz)\right]\leq\O\left(\frac{\left\Vert \bz-\bx_{1}\right\Vert ^{2}}{\sum_{t=1}^{T}\eta_{t}}+\left(G_{f,2}^{2}+\Phi\right)\sum_{t=1}^{T}\frac{\eta_{t}^{2}}{\sum_{s=t}^{T}\eta_{s}}\right),\label{eq:SS-cvx-full-1}
\end{equation}
Here, we claim
\begin{equation}
\Phi=8\qu(T)G_{f,2}^{2}+2\sqrt{2n\qu(T)}G_{f,1}G_{f,2}.\label{eq:SS-cvx-full-Psi}
\end{equation}
To prove our statement, we invoke Lemma \ref{lem:SS-Omega} with $\mu=0$
to have for any $t\in\left[T\right]$ and $s\in\left[t\right]$,
\begin{align}
\left|\E\left[\Omega_{t}(\bx_{s})\right]\right|\leq & 4G_{f,2}^{2}\sum_{j=1}^{s-1}\eta_{j}\left(\1\left[\re(j)=\re(t)\right]+\frac{\1\left[\re(j)\neq\re(t)\right]}{n-1}\right)\nonumber \\
 & +\frac{2}{n}\sum_{i=1}^{n}G_{i}\sqrt{\sum_{j=1}^{s-1}\eta_{j}^{2}\left(G_{f,2}^{2}+G_{i}^{2}\1\left[\re(j)=\re(t)\right]+\frac{nG_{f,2}^{2}-G_{i}^{2}}{n-1}\1\left[\re(j)\neq\re(t)\right]\right)}\nonumber \\
\leq & 4G_{f,2}^{2}\sum_{j=1}^{\qu(T)n}\eta_{j}\left(\1\left[\re(j)=\re(t)\right]+\frac{\1\left[\re(j)\neq\re(t)\right]}{n-1}\right)\nonumber \\
 & +\frac{2}{n}\sum_{i=1}^{n}G_{i}\sqrt{\sum_{j=1}^{\qu(T)n}\eta_{j}^{2}\left(G_{f,2}^{2}+G_{i}^{2}\1\left[\re(j)=\re(t)\right]+\frac{nG_{f,2}^{2}-G_{i}^{2}}{n-1}\1\left[\re(j)\neq\re(t)\right]\right)},\label{eq:SS-cvx-full-Omega}
\end{align}
where the second step is by $s\le\qu(s)n\leq\qu(T)n$. Note that when
the stepsize is constant, there are
\begin{equation}
\sum_{j=1}^{\qu(T)n}\eta_{j}\left(\1\left[\re(j)=\re(t)\right]+\frac{\1\left[\re(j)\neq\re(t)\right]}{n-1}\right)=\eta_{s}\sum_{j=1}^{\qu(T)n}\left(\1\left[\re(j)=\re(t)\right]+\frac{\1\left[\re(j)\neq\re(t)\right]}{n-1}\right)=2\qu(T)\eta_{s},\label{eq:SS-cvx-full-2}
\end{equation}
and
\begin{align}
 & \sum_{j=1}^{\qu(T)n}\eta_{j}^{2}\left(G_{f,2}^{2}+G_{i}^{2}\1\left[\re(j)=\re(t)\right]+\frac{nG_{f,2}^{2}-G_{i}^{2}}{n-1}\1\left[\re(j)\neq\re(t)\right]\right)\nonumber \\
= & \eta_{s}^{2}\sum_{j=1}^{\qu(T)n}\left(G_{f,2}^{2}+G_{i}^{2}\1\left[\re(j)=\re(t)\right]+\frac{nG_{f,2}^{2}-G_{i}^{2}}{n-1}\1\left[\re(j)\neq\re(t)\right]\right)=2n\qu(T)G_{f,2}^{2}\eta_{s}^{2}.\label{eq:SS-cvx-full-3}
\end{align}
Combine (\ref{eq:SS-cvx-full-Omega}), (\ref{eq:SS-cvx-full-2}) and
(\ref{eq:SS-cvx-full-3}) to obtain
\[
\left|\E\left[\Omega_{t}(\bx_{s})\right]\right|\leq\left(8\qu(T)G_{f,2}^{2}+2\sqrt{2n\qu(T)}G_{f,1}G_{f,2}\right)\eta_{s}\overset{\eqref{eq:SS-cvx-full-Psi}}{=}\Phi\eta_{s}.
\]

By (\ref{eq:SS-cvx-full-1}) and (\ref{eq:SS-cvx-full-Psi}), we have
\begin{align*}
\E\left[F(\bx_{T+1})-F(\bz)\right] & \leq\O\left(\frac{\left\Vert \bz-\bx_{1}\right\Vert ^{2}}{\sum_{t=1}^{T}\eta_{t}}+\left(G_{f,2}^{2}+\qu(T)G_{f,2}^{2}+\sqrt{n\qu(T)}G_{f,1}G_{f,2}\right)\sum_{t=1}^{T}\frac{\eta_{t}^{2}}{\sum_{s=t}^{T}\eta_{s}}\right)\\
 & \overset{(a)}{\leq}\O\left(\frac{\left\Vert \bz-\bx_{1}\right\Vert ^{2}}{\sum_{t=1}^{T}\eta_{t}}+\left(\qu(T)G_{f,2}^{2}+\sqrt{n\qu(T)}G_{f,1}G_{f,2}\right)\sum_{t=1}^{T}\frac{\eta_{t}^{2}}{\sum_{s=t}^{T}\eta_{s}}\right)\\
 & \overset{(b)}{\leq}\O\left(\left(\frac{\left\Vert \bz-\bx_{1}\right\Vert ^{2}}{\eta}+\eta\left(\qu(T)G_{f,2}^{2}+\sqrt{n\qu(T)}G_{f,1}G_{f,2}\right)(1+\log T)\right)\frac{1}{\sqrt{T}}\right)\\
 & =\O\left(\left(\frac{\left\Vert \bz-\bx_{1}\right\Vert ^{2}}{\eta}+\eta\left(\qu(T)G_{f,2}^{2}\lor\sqrt{n\qu(T)}G_{f,1}G_{f,2}\right)(1+\log T)\right)\frac{1}{\sqrt{T}}\right),
\end{align*}
where $(a)$ is due to $1\leq\qu(T)$ and $(b)$ holds by plugging
in $\eta_{t}=\frac{\eta}{\sqrt{T}},\forall t\in\left[T\right]$. 

Finally, setting $\eta=\frac{\left\Vert \bz-\bx_{1}\right\Vert }{\sqrt{\left(\qu(T)G_{f,2}^{2}\lor\sqrt{n\qu(T)}G_{f,1}G_{f,2}\right)(1+\log T)}}$
to obtain
\begin{align}
\E\left[F(\bx_{T+1})-F(\bz)\right] & \leq\O\left(\frac{\sqrt{\qu(T)G_{f,2}^{2}\lor\sqrt{n\qu(T)}G_{f,1}G_{f,2}}\left\Vert \bz-\bx_{1}\right\Vert \sqrt{1+\log T}}{\sqrt{T}}\right)\nonumber \\
 & \overset{(c)}{\leq}\O\left(\left(\frac{n^{1/4}\sqrt{G_{f,1}G_{f,2}}}{\sqrt{T}}\lor\frac{\sqrt{G_{f,1}G_{f,2}}}{T^{1/4}}\lor\frac{G_{f,2}}{\sqrt{n}}\right)\left\Vert \bz-\bx_{1}\right\Vert \sqrt{1+\log T}\right),\label{eq:SS-cvx-full-4}
\end{align}
where $(c)$ is by noticing $\qu(T)\leq\frac{T}{n}+1$ and $G_{f,2}<\sqrt{n}G_{f,1}$,
together implying
\begin{align*}
\qu(T)G_{f,2}^{2}\lor\sqrt{n\qu(T)}G_{f,1}G_{f,2} & \leq\left(\frac{T}{n}G_{f,2}^{2}+G_{f,2}^{2}\right)\lor\sqrt{T+n}G_{f,1}G_{f,2}\\
 & =\O\left(\frac{T}{n}G_{f,2}^{2}\lor G_{f,2}^{2}\lor\left(\sqrt{T\lor n}G_{f,1}G_{f,2}\right)\right)\\
 & \leq\O\left(\frac{T}{n}G_{f,2}^{2}\lor\sqrt{n}G_{f,1}G_{f,2}\lor\left(\sqrt{T\lor n}G_{f,1}G_{f,2}\right)\right)\\
 & =\O\left(\sqrt{n}G_{f,1}G_{f,2}\lor\sqrt{T}G_{f,1}G_{f,2}\lor\frac{T}{n}G_{f,2}^{2}\right).
\end{align*}
When $T\geq n$, we observe that $\frac{n^{1/4}}{\sqrt{T}}\leq\frac{1}{T^{1/4}}$,
(\ref{eq:SS-cvx-full-4}) thus reduces to the following form used
in Theorem \ref{thm:SS-cvx}
\[
\E\left[F(\bx_{T+1})-F(\bz)\right]\leq\O\left(\left(\frac{\sqrt{G_{f,1}G_{f,2}}}{T^{1/4}}\lor\frac{G_{f,2}}{\sqrt{n}}\right)\left\Vert \bz-\bx_{1}\right\Vert \sqrt{1+\log T}\right).
\]
\end{proof}

The above Theorem \ref{thm:SS-cvx-full} (or see (\ref{eq:SS-cvx-full-4})
for the final bound) is not very satisfying as the rate will be blocked
at $\widetilde{\O}\left(\frac{1}{\sqrt{n}}\right)$ even if $T$ approaches
$+\infty$. As discussed in the main text, if $\psi=\charf_{\domx}$
and $T=Kn$ where $K\in\N$, we actually can do better. Note that
$\psi=\charf_{\domx}$ can be further relaxed to $\psi=\varphi+\charf_{\domx}$
for $\varphi$ being $G_{\psi}$-Lipschitz on $\domx$. However, this
will introduce new parameters $G_{\psi}$ in the final rate. Instead,
we choose to keep the following simple form. For why the relaxation
holds, see Lemma \ref{lem:special-refined}.
\begin{thm}
\label{thm:SS-cvx-improved-full}(Full version of Theorem \ref{thm:SS-cvx-improved})
Under Assumptions \ref{assu:basic} (with $\psi=\charf_{\domx}$)
and \ref{assu:lip}, suppose $T=Kn$ where $K\in\N$ and the $\SS$
sampling scheme is employed with the stepsize $\eta_{t}=\frac{\eta}{\sqrt{T}},\forall t\in\left[T\right]$,
then for any $\bz\in\R^{d}$, Algorithm \ref{alg:Alg} guarantees
\[
\E\left[F(\bx_{Kn+1})-F(\bz)\right]\leq\O\left(\left(\frac{\left\Vert \bz-\bx_{1}\right\Vert ^{2}}{\eta}+\eta\left(\sqrt{nK}G_{f,1}G_{f,2}\land nG_{f,1}^{2}\right)(1+\log nK)\right)\frac{1}{\sqrt{nK}}\right).
\]
Setting $\eta=\frac{\left\Vert \bz-\bx_{1}\right\Vert }{\sqrt{\left(\sqrt{nK}G_{f,1}G_{f,2}\land nG_{f,1}^{2}\right)(1+\log nK)}}$
to optimize the dependence on parameters.
\end{thm}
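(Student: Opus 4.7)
My plan is to derive two separate upper bounds on $\E[F(\bx_{Kn+1})-F(\bz)]$ under the stated assumptions and then take their minimum, since both hold simultaneously. The first bound comes essentially for free from Theorem \ref{thm:SS-cvx-full} and provides the $\sqrt{nK}G_{f,1}G_{f,2}$ branch of the claim. The second bound, which is the new contribution, exploits the constraint structure $\psi = \charf_{\domx}$ to obtain a $K$-independent error term of order $nG_{f,1}^{2}$, matching the Proximal GD-style rate of \cite{pmlr-v235-liu24cg}. Combining the two via $\land$ yields the claimed coefficient $\sqrt{nK}G_{f,1}G_{f,2}\land nG_{f,1}^{2}$.

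Concretely, I first instantiate Theorem \ref{thm:SS-cvx-full} at $T=Kn$ so that $\qu(T)=K$. A short inspection of $KG_{f,2}^{2}\lor\sqrt{nK}G_{f,1}G_{f,2}$ using $G_{f,2}<\sqrt{n}G_{f,1}$ shows this quantity is $\O(\sqrt{nK}G_{f,1}G_{f,2}\lor nG_{f,1}^{2})$, which already handles one side. Second, I invoke Lemma \ref{lem:last-main} with a sharper constant $\Phi = \O(nG_{f,1}^{2})$, which is the content of the refined sufficient condition Lemma \ref{lem:special-refined}. Plugging this $\Phi$ into Lemma \ref{lem:last-main} with $\eta_t\equiv\eta/\sqrt{T}$ (so that $\gamma_t\equiv 1$ and $\sum_{t=1}^T\eta_t = \eta\sqrt{T}$, $\sum_{t=1}^T \eta_t^2/\sum_{s=t}^T\eta_s \leq \eta(1+\log T)/\sqrt{T}$) yields
\[
\E[F(\bx_{Kn+1})-F(\bz)] \leq \O\!\left(\left(\frac{\left\Vert \bz-\bx_{1}\right\Vert^{2}}{\eta}+\eta n G_{f,1}^{2}(1+\log nK)\right)\frac{1}{\sqrt{nK}}\right),
\]
which gives the other side. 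Taking the minimum of the two bounds and setting $\eta$ as specified completes the proof.

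The hard part is establishing Lemma \ref{lem:special-refined}, i.e., obtaining $|\E[\Omega_{t}(\bx_{s})]|\leq \O(nG_{f,1}^{2}\eta_{s})$ in the constrained setting. The generic bound from Lemma \ref{lem:SS-Omega} accumulates $\sum_{j=1}^{s-1}\eta_{j}$ across the full optimization history, producing the offending $KG_{f,2}^{2}$ factor after plugging in the constant stepsize. To avoid this, the analysis must exploit that within any complete $\SS$ epoch $\{\I(kn+1),\dots,\I((k+1)n)\}=[n]$, so the aggregated deviation vanishes and the accumulation effectively resets at each epoch boundary; and that under $\psi=\charf_{\domx}$ (or more generally $\psi=\varphi+\charf_{\domx}$ for Lipschitz $\varphi$), iterates stay in $\domx$, on which each $f_{i}$ is $G_{i}$-Lipschitz, letting one bound $|f_{\I(t)}(\bx_{s})-f(\bx_{s})|$ by a single-epoch quantity of order $\eta n G_{f,1}$ rather than a cumulative one. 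The extra technical twist, compared to \cite{pmlr-v235-liu24cg}, is that Algorithm \ref{alg:Alg} applies the proximal operator at \emph{every} iteration rather than only at epoch ends, so the clean per-epoch decomposition used there must be adapted to absorb per-iterate projections onto $\domx$, which is precisely the role Lemma \ref{lem:special-refined} plays. I also expect this to be the place where the restriction $T=Kn$ is essential, as it is what guarantees the final epoch closes and the ``reset'' actually happens at time $T+1$.
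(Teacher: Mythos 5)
Your overall strategy — derive two bounds, one from Theorem \ref{thm:SS-cvx-full} and one exploiting $\psi=\charf_{\domx}$ to get a $K$-independent $nG_{f,1}^2$ term, then take their minimum — is exactly the paper's route, and you correctly flag that $T=Kn$ and the per-iterate proximal step are the technical pressure points. But the mechanism you propose for the second bound does not match Lemma \ref{lem:special-refined} and would not go through as written.

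You say you will invoke Lemma \ref{lem:last-main} "with a sharper constant $\Phi=\O(nG_{f,1}^2)$," claiming this is "the content of the refined sufficient condition Lemma \ref{lem:special-refined}." It is not. Lemma \ref{lem:special-refined} never bounds $\left|\E\left[\Omega_t(\bx_s)\right]\right|$ and supplies no $\Phi$ for Condition \ref{enu:last-main-3} of Lemma \ref{lem:last-main}. It abandons the $\Omega_t$/expectation framework altogether: its proof revisits the per-step inequality behind Lemma \ref{lem:core}, sums it over each full epoch $t=(k-1)n+1,\dots,kn$, uses Condition \ref{enu:special-refined-condition-1} to turn $\sum_t f_{\I(t)}(\by)$ into $nf(\by)$ \emph{deterministically}, and then controls the remaining per-epoch corrections $\sum_t f_{\I(t)}(\bx_{kn+1})-f_{\I(t)}(\bx_t)$ and $\sum_t\psi(\bx_{kn+1})-\psi(\bx_{t+1})$ via Lipschitzness and the step-length bound $\left\Vert\bx_{s+1}-\bx_s\right\Vert\le\eta_s(G_\psi+G_{\I(s)})$, which is where $\psi=\varphi+\charf_\domx$ is actually used. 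The output is an \emph{almost-sure} per-epoch descent recursion with effective stepsize $n\widetilde{\eta}_k$, to which the last-iterate telescoping of Lemma \ref{lem:last} is then applied at the epoch scale — no $\Phi$ appears anywhere. Moreover a bound of the form $\left|\E\left[\Omega_t(\bx_s)\right]\right|\le\O(nG_{f,1}^2)\eta_s$ is not something you should expect to prove for $\SS$: the same permutation $\pi$ drives every epoch, so $\bx_s$ becomes progressively more correlated with $\pi$, and the paper's own Lemma \ref{lem:SS-Omega} shows the deviation accumulates like $\qu(s)$ under a constant stepsize — exactly the $KG_{f,2}^2$ factor you are trying to remove, and the constraint $\psi=\charf_\domx$ gives no pointwise cancellation of $\Omega_t(\bx_s)$.

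One smaller slip: the coefficient from Theorem \ref{thm:SS-cvx-full} at $T=Kn$ is $KG_{f,2}^2\lor\sqrt{nK}G_{f,1}G_{f,2}$, which is \emph{not} $\O\bigl(\sqrt{nK}G_{f,1}G_{f,2}\lor nG_{f,1}^2\bigr)$ (take $K$ large). The stated coefficient $\sqrt{nK}G_{f,1}G_{f,2}\land nG_{f,1}^2$ emerges only after intersecting with the $nG_{f,1}^2$ bound via $\land$ and applying the identity $(a\lor\sqrt{ab})\land b=\sqrt{ab}\land b$ with $a=KG_{f,2}^2$, $b=nG_{f,1}^2$.
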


\begin{proof}
Under the stepsize $\eta_{t}=\frac{\eta}{\sqrt{T}},\forall t\in\left[T\right]$,
we first know by Theorem \ref{thm:SS-cvx-full},
\begin{align}
\E\left[F(\bx_{T+1})-F(\bz)\right] & \leq\O\left(\left(\frac{\left\Vert \bz-\bx_{1}\right\Vert ^{2}}{\eta}+\eta\left(\qu(T)G_{f,2}^{2}\lor\sqrt{n\qu(T)}G_{f,1}G_{f,2}\right)(1+\log T)\right)\frac{1}{\sqrt{T}}\right)\nonumber \\
\Rightarrow\E\left[F(\bx_{Kn+1})-F(\bz)\right] & \leq\O\left(\left(\frac{\left\Vert \bz-\bx_{1}\right\Vert ^{2}}{\eta}+\eta\left(KG_{f,2}^{2}\lor\sqrt{nK}G_{f,1}G_{f,2}\right)(1+\log nK)\right)\frac{1}{\sqrt{nK}}\right),\label{eq:SS-cvx-improved-full-1}
\end{align}
where in the second line we use $\qu(T)=K$ when $T=Kn$.

Next, note that the $\SS$ sampling scheme and the stepsize $\eta_{t}=\frac{\eta}{\sqrt{T}},\forall t\in\left[T\right]$
respectively satisfy Conditions \ref{enu:special-refined-condition-1}
and \ref{enu:special-refined-condition-2} in Lemma \ref{lem:special-refined}.
Moreover, we can take $\varphi=0$ and $G_{\psi}=0$ in Condition
\ref{enu:special-refined-condition-3}. Hence there is almost surely
for any $\bz\in\R^{d}$,
\begin{align*}
F(\bx_{Kn+1})-F(\bz) & \leq\O\left(\left(\frac{\left\Vert \bz-\bx_{1}\right\Vert ^{2}}{\eta}+\eta nG_{f,1}^{2}\sum_{k=1}^{K}\frac{1}{K-k+1}\right)\frac{1}{\sqrt{nK}}\right)\\
 & \leq\O\left(\left(\frac{\left\Vert \bz-\bx_{1}\right\Vert ^{2}}{\eta}+\eta nG_{f,1}^{2}(1+\log K)\right)\frac{1}{\sqrt{nK}}\right),
\end{align*}
which implies
\begin{equation}
\E\left[F(\bx_{Kn+1})-F(\bz)\right]\leq\O\left(\left(\frac{\left\Vert \bz-\bx_{1}\right\Vert ^{2}}{\eta}+\eta nG_{f,1}^{2}(1+\log K)\right)\frac{1}{\sqrt{nK}}\right).\label{eq:SS-cvx-improved-full-2}
\end{equation}

We combine (\ref{eq:SS-cvx-improved-full-1}) and (\ref{eq:SS-cvx-improved-full-2})
to obtain
\begin{align*}
\E\left[F(\bx_{Kn+1})-F(\bz)\right] & \leq\O\left(\left(\frac{\left\Vert \bz-\bx_{1}\right\Vert ^{2}}{\eta}+\eta\left(\left(KG_{f,2}^{2}\lor\sqrt{nK}G_{f,1}G_{f,2}\right)\land nG_{f,1}^{2}\right)(1+\log nK)\right)\frac{1}{\sqrt{nK}}\right)\\
 & =\O\left(\left(\frac{\left\Vert \bz-\bx_{1}\right\Vert ^{2}}{\eta}+\eta\left(\sqrt{nK}G_{f,1}G_{f,2}\land nG_{f,1}^{2}\right)(1+\log nK)\right)\frac{1}{\sqrt{nK}}\right),
\end{align*}
where we use the fact $(a\lor\sqrt{ab})\land b=\sqrt{ab}\land b$
for $a=KG_{f,2}^{2}$ and $b=nG_{f,1}^{2}$. Finally, Setting $\eta=\frac{\left\Vert \bz-\bx_{1}\right\Vert }{\sqrt{\left(\sqrt{nK}G_{f,1}G_{f,2}\land nG_{f,1}^{2}\right)(1+\log nK)}}$
to obtain
\[
\E\left[F(\bx_{Kn+1})-F(\bz)\right]\leq\O\left(\left(\frac{\sqrt{G_{f,1}G_{f,2}}}{n^{1/4}K^{1/4}}\land\frac{G_{f,1}}{\sqrt{K}}\right)\left\Vert \bz-\bx_{1}\right\Vert \sqrt{1+\log nK}\right).
\]
\end{proof}

Finally, we establish the convergence upper bound for strongly convex
functions in Theorem \ref{thm:SS-str-full}. As mentioned after Theorem
\ref{thm:SS-str}, we actually can improve the rate further to avoid
the $\widetilde{\O}\left(\frac{1}{n}\right)$ barrier when $T=Kn$
where $K\in\N$ and $\psi=\varphi+\charf_{\domx}$ for $\varphi$
being Lipschitz on $\domx$. However, the rate in that case will be
in the order $\widetilde{\O}\left(\frac{n}{K}\right)$ for larger
$K$, which is still slower than the bound $\widetilde{\O}\left(\frac{1}{K}\right)$
in \cite{pmlr-v235-liu24cg}. Therefore, we do not provide it here.
See Lemma \ref{lem:special-refined} for why we can do at most $\widetilde{\O}\left(\frac{n}{K}\right)$.
\begin{thm}
\label{thm:SS-str-full}(Full version of Theorem \ref{thm:SS-str})
Under Assumptions \ref{assu:basic} (with $\mu>0$) and \ref{assu:lip},
suppose the $\SS$ sampling scheme is employed with the stepsize $\eta_{t}=\frac{m}{\mu t},\forall t\in\left[T\right]$
where $m\in\N$, then for any $\bz\in\R^{d}$, Algorithm \ref{alg:Alg}
guarantees
\[
\E\left[F(\bx_{T+1})-F(\bz)\right]\leq\O\left(\frac{\mu\left\Vert \bz-\bx_{1}\right\Vert ^{2}}{\binom{m+T}{m}}+\frac{m(1+\log T)}{\mu}\left(\frac{\sqrt{n}G_{f,1}G_{f,2}}{T}+\frac{G_{f,1}G_{f,2}}{\sqrt{T}}+\frac{G_{f,2}^{2}}{n}\right)\right).
\]
\end{thm}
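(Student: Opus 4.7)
The plan is to follow the template of the proof of Theorem \ref{thm:RR-str-full} and apply Lemma \ref{lem:last} (the full version of Lemma \ref{lem:last-main}) to Algorithm \ref{alg:Alg} under the $\SS$ scheme. First I would verify the prerequisites: the $\SS$ scheme satisfies $\I(t) \diseq \mathrm{Uniform}[n]$ (each slot of a uniformly random permutation is uniform on $[n]$), and the stepsize $\eta_t = m/(\mu t)$ is non-increasing, so Conditions \ref{enu:last-main-1} and \ref{enu:last-main-2} of Lemma \ref{lem:last} hold. The bounds on $\gamma_t$ and the running sums depend only on $\mu$ and the stepsize, so they are identical to the $\RR$ strongly-convex proof: $\gamma_t \eta_t = \mu^{-1}\binom{m+t-1}{m-1}$ is non-decreasing in $t$, $\sum_{t=1}^T \gamma_t \eta_t = \mu^{-1}[\binom{m+T}{m} - 1]$, and $\sum_{t=1}^T \gamma_t \eta_t^2 / \sum_{s=t}^T \gamma_s \eta_s \leq 2m(1+\log T)/(\mu(T+1))$.

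With those reductions in hand, the remaining task is to locate $\Phi \geq 0$ with $\left|\E[\Omega_t(\bx_s)]\right| \leq \Phi \eta_s$ for the $\SS$ scheme. I would use the $\SS$ analogue of Lemma \ref{lem:SS-Omega} (carrying the extra $\gamma_j/\gamma_s$ weights appropriate for $\mu > 0$, exactly as Lemma \ref{lem:RR-Omega} is used inside Theorem \ref{thm:RR-str-full}), and then exploit three ingredients: (i) monotonicity of $\gamma_t \eta_t$ gives $\gamma_j \eta_j / \gamma_s \leq \eta_s$ for every $j \leq s$; (ii) the partition of $[s-1]$ by whether $\re(j) = \re(t)$ satisfies $\sum_{j=1}^{s-1}\bigl(\1[\re(j) = \re(t)] + \1[\re(j) \neq \re(t)]/(n-1)\bigr) \leq 2\qu(s) \leq 2\qu(T) = O(T/n)$, which controls the linear-in-$\eta_j$ piece of the $\Omega$-bound by $O(G_{f,2}^2 T/n)\,\eta_s$; (iii) pulling $\eta_s^2$ out of the square-root piece, using $\sum_{j=1}^{s-1} G_{f,2}^2 \leq T G_{f,2}^2$ inside the root (the indicator contributions are of lower order), and applying $\frac{1}{n}\sum_{i=1}^n G_i = G_{f,1}$ outside, produces the additional contribution $O(\sqrt{T}\,G_{f,1} G_{f,2})\,\eta_s$. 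Combining yields
\[
\Phi = O\!\left(\tfrac{T}{n} G_{f,2}^2 + \sqrt{T}\, G_{f,1} G_{f,2}\right).
\]

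Substituting this $\Phi$ and the bounds from the first paragraph into Lemma \ref{lem:last} gives
\[
\E\bigl[F(\bx_{T+1}) - F(\bz)\bigr] \leq \O\!\left(\frac{\mu \|\bz - \bx_1\|^2}{\binom{m+T}{m}} + \frac{m(1+\log T)}{\mu T}\!\left(G_{f,2}^2 + \tfrac{T G_{f,2}^2}{n} + \sqrt{T}\,G_{f,1} G_{f,2}\right)\right),
\]
after which the relaxation $G_{f,2}^2 \leq \sqrt{n}\, G_{f,1} G_{f,2}$ from (\ref{eq:G}) applied to the stray $G_{f,2}^2/T$ term converts the rate into exactly the claimed $\frac{\sqrt{n}G_{f,1}G_{f,2}}{T} + \frac{G_{f,1}G_{f,2}}{\sqrt{T}} + \frac{G_{f,2}^2}{n}$ form.

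The main obstacle I anticipate is step (ii)--(iii), namely establishing the $\SS$ $\Omega$-bound with the $\gamma_j/\gamma_s$ weights. Unlike $\RR$, where independence across epochs forces $\E[\Omega_t(\bx_s)] = 0$ for $s$ in earlier epochs and lets the partition restrict to a single epoch, the $\SS$ scheme uses the same permutation throughout, so dependencies link indices across \emph{all} epochs; the inner sums then run up to $s-1$ rather than within one epoch, and one must lean on the negative correlation between distinct slots of a uniform permutation (which produces the $1/(n-1)$ factor) to recover a bound that grows only like $T/n$ rather than $T$ in the leading piece. Getting both the $\gamma_j/\gamma_s$ weighting and the combinatorial bookkeeping simultaneously is the delicate part, but once that lemma is in place the rest of the proof is a routine application of Lemma \ref{lem:last}.
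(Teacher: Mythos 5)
Your proposal follows essentially the same route as the paper: invoke Lemma \ref{lem:last} after checking Conditions \ref{enu:last-condition-1}--\ref{enu:last-condition-2}, reuse the $\gamma_t\eta_t=\mu^{-1}\binom{m+t-1}{m-1}$ algebra from the $\RR$ strongly-convex case, and compute $\Phi$ from Lemma \ref{lem:SS-Omega} using $\gamma_j\eta_j/\gamma_s\leq\eta_s$ together with the $s\leq\qu(s)n$ partition counts. The one minor imprecision is in step (iii) --- the indicator contributions inside the square root are the \emph{same} order as the part you keep (the total is $\Theta(n\qu(s)G_{f,2}^2)$), so the sharp constant is $\Phi=O\bigl((T/n+1)G_{f,2}^2+\sqrt{T+n}\,G_{f,1}G_{f,2}\bigr)$ rather than your slightly smaller expression --- but the extra pieces produce precisely the $\sqrt{n}G_{f,1}G_{f,2}/T$ and $G_{f,2}^2/T$ terms that your relaxation step introduces anyway, so the final rate is unchanged.
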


\begin{proof}
Note that the $\SS$ sampling scheme satisfies $\I(t)\diseq\mathrm{Uniform}\left[n\right],\forall t\in\left[T\right]$,
and the stepsize $\eta_{t}=\frac{m}{\mu t}$ is non-increasing. Hence,
Conditions \ref{enu:last-condition-1} and \ref{enu:last-condition-2}
in Lemma \ref{lem:last} are fulfilled. If Condition \ref{enu:last-condition-3}
also holds, i.e., $\left|\E\left[\Omega_{t}(\bx_{s})\right]\right|\leq\Phi\eta_{s},\forall t\in\left[T\right],s\in\left[t\right]$
for some $\Phi\geq0$, we can invoke Lemma \ref{lem:last} and follow
the same steps of proving (\ref{eq:RR-str-full-4}) to obtain for
any $\bz\in\R^{d}$,
\begin{equation}
\E\left[F(\bx_{T+1})-F(\bz)\right]\leq\O\left(\frac{\mu\left\Vert \bz-\bx_{1}\right\Vert ^{2}}{\binom{m+T}{m}}+\frac{m\left(G_{f,2}^{2}+\Phi\right)(1+\log T)}{\mu T}\right),\label{eq:SS-str-full-1}
\end{equation}
and know
\begin{equation}
\gamma_{t}\eta_{t}=\frac{1}{\mu}\binom{m+t-1}{m-1},\forall t\in\left[T\right].\label{eq:SS-str-full-gamma-eta}
\end{equation}

Our last task is to find a constant $\Phi\geq0$ satisfying
\[
\left|\E\left[\Omega_{t}(\bx_{s})\right]\right|\leq\Phi\eta_{s},\forall t\in\left[T\right],s\in\left[t\right].
\]
Here we claim
\begin{equation}
\Phi=8\left(\frac{T}{n}+1\right)G_{f,2}^{2}+2\sqrt{2(T+n)}G_{f,1}G_{f,2}.\label{eq:SS-str-full-Psi}
\end{equation}
To prove our statement, we invoke Lemma \ref{lem:SS-Omega} to have
for any $t\in\left[T\right]$ and $s\in\left[t\right]$,
\begin{align}
\left|\E\left[\Omega_{t}(\bx_{s})\right]\right|\leq & 4G_{f,2}^{2}\sum_{j=1}^{s-1}\frac{\gamma_{j}\eta_{j}}{\gamma_{s}}\left(\1\left[\re(j)=\re(t)\right]+\frac{\1\left[\re(j)\neq\re(t)\right]}{n-1}\right)\nonumber \\
 & +\frac{2}{n}\sum_{i=1}^{n}G_{i}\sqrt{\sum_{j=1}^{s-1}\frac{\gamma_{j}^{2}\eta_{j}^{2}}{\gamma_{s}^{2}}\left(G_{f,2}^{2}+G_{i}^{2}\1\left[\re(j)=\re(t)\right]+\frac{nG_{f,2}^{2}-G_{i}^{2}}{n-1}\1\left[\re(j)\neq\re(t)\right]\right)}\nonumber \\
\leq & 4G_{f,2}^{2}\eta_{s}\sum_{j=1}^{s-1}\1\left[\re(j)=\re(t)\right]+\frac{\1\left[\re(j)\neq\re(t)\right]}{n-1}\nonumber \\
 & +\frac{2\eta_{s}}{n}\sum_{i=1}^{n}G_{i}\sqrt{\sum_{j=1}^{s-1}G_{f,2}^{2}+G_{i}^{2}\1\left[\re(j)=\re(t)\right]+\frac{nG_{f,2}^{2}-G_{i}^{2}}{n-1}\1\left[\re(j)\neq\re(t)\right]},\label{eq:SS-str-full-2}
\end{align}
where the last step is due to $\frac{\gamma_{j}\eta_{j}}{\gamma_{s}}\leq\eta_{s},\forall j\in\left[s-1\right]$
because $\gamma_{t}\eta_{t}$ is non-decreasing in $t$ as shown in
(\ref{eq:SS-str-full-gamma-eta}). Note that $s\leq\qu(s)n$, we therefore
have
\begin{equation}
\sum_{j=1}^{s-1}\1\left[\re(j)=\re(t)\right]+\frac{\1\left[\re(j)\neq\re(t)\right]}{n-1}\leq\sum_{j=1}^{\qu(s)n}\1\left[\re(j)=\re(t)\right]+\frac{\1\left[\re(j)\neq\re(t)\right]}{n-1}=2\qu(s),\label{eq:SS-str-full-3}
\end{equation}
and
\begin{align}
 & \sum_{j=1}^{s-1}\left(G_{f,2}^{2}+G_{i}^{2}\1\left[\re(j)=\re(t)\right]+\frac{nG_{f,2}^{2}-G_{i}^{2}}{n-1}\1\left[\re(j)\neq\re(t)\right]\right)\nonumber \\
\leq & \sum_{j=1}^{\qu(s)n}G_{f,2}^{2}+G_{i}^{2}\1\left[\re(j)=\re(t)\right]+\frac{nG_{f,2}^{2}-G_{i}^{2}}{n-1}\1\left[\re(j)\neq\re(t)\right]\nonumber \\
= & 2n\qu(s)G_{f,2}^{2}.\label{eq:SS-str-full-4}
\end{align}
Combine (\ref{eq:SS-str-full-2}), (\ref{eq:SS-str-full-3}) and (\ref{eq:SS-str-full-4})
to obtain
\begin{align*}
\left|\E\left[\Omega_{t}(\bx_{s})\right]\right| & \leq8\qu(s)G_{f,2}^{2}\eta_{s}+\frac{2\eta_{s}}{n}\sum_{i=1}^{n}G_{i}\sqrt{2n\qu(s)G_{f,2}^{2}}=\left(8\qu(s)G_{f,2}^{2}+2\sqrt{2n\qu(s)}G_{f,1}G_{f,2}\right)\eta_{s}\\
 & \leq\left(8\left(\frac{T}{n}+1\right)G_{f,2}^{2}+2\sqrt{2(T+n)}G_{f,1}G_{f,2}\right)\eta_{s}\overset{\eqref{eq:SS-str-full-Psi}}{=}\Phi\eta_{s},
\end{align*}
where the second inequality holds by $\qu(s)\leq\qu(T)\leq\frac{T}{n}+1$.

By (\ref{eq:SS-str-full-1}) and (\ref{eq:SS-str-full-Psi}), we finally
have
\begin{align*}
\E\left[F(\bx_{T+1})-F(\bz)\right] & \leq\O\left(\frac{\mu\left\Vert \bz-\bx_{1}\right\Vert ^{2}}{\binom{m+T}{m}}+\frac{m\left(G_{f,2}^{2}+\left(\frac{T}{n}+1\right)G_{f,2}^{2}+\sqrt{T+n}G_{f,1}G_{f,2}\right)(1+\log T)}{\mu T}\right)\\
 & =\O\left(\frac{\mu\left\Vert \bz-\bx_{1}\right\Vert ^{2}}{\binom{m+T}{m}}+\frac{m\left(G_{f,2}^{2}+\sqrt{n}G_{f,1}G_{f,2}+\sqrt{T}G_{f,1}G_{f,2}+\frac{T}{n}G_{f,2}^{2}\right)(1+\log T)}{\mu T}\right)\\
 & \leq\O\left(\frac{\mu\left\Vert \bz-\bx_{1}\right\Vert ^{2}}{\binom{m+T}{m}}+\frac{m(1+\log T)}{\mu}\left(\frac{\sqrt{n}G_{f,1}G_{f,2}}{T}+\frac{G_{f,1}G_{f,2}}{\sqrt{T}}+\frac{G_{f,2}^{2}}{n}\right)\right),
\end{align*}
where the last step is by $G_{f,2}<\sqrt{n}G_{f,1}$. To recover the
rate stated in Theorem \ref{thm:SS-str}, we only need to take $m=2$
and observe that $\frac{\sqrt{n}}{T}\leq\frac{1}{\sqrt{T}}$ when
$T\geq n$.
\end{proof}

\section{Theoretical Analysis\label{sec:analysis}}

This section includes our theoretical analysis in detail. 

\subsection{General Lemmas}

As mentioned in \ref{sec:idea}, the high-level idea is inspired by
\cite{liu2024revisiting}. However, several important modifications
are required to circumvent the potential issue caused by the general
$\I(t)$ considered in our setting. To begin with, we first characterize
the progress made in every single step, provided in the following
Lemma \ref{lem:core}. Note this result holds for any $\I(t)$ even
if it does not equal $\mathrm{Uniform}\left[n\right]$ in distribution.
\begin{lem}
\label{lem:core}Under Assumptions \ref{assu:basic} and \ref{assu:lip},
for any $t\in\left[T\right]$ and $\by\in\R^{d}$, Algorithm \ref{alg:Alg}
guarantees
\[
F(\bx_{t+1})-F(\by)\leq\frac{\left\Vert \by-\bx_{t}\right\Vert ^{2}}{2\eta_{t}}-(1+\mu\eta_{t})\frac{\left\Vert \by-\bx_{t+1}\right\Vert ^{2}}{2\eta_{t}}+\eta_{t}\left(G_{\I(t)}^{2}+G_{f,1}^{2}\right)+\Omega_{t}(\by)-\Omega_{t}(\bx_{t}),
\]
where $\Omega_{t}(\by)\defeq f_{\I(t)}(\by)-f(\by),\forall\by\in\R^{d}$.
\end{lem}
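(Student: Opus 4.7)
The plan is to decompose $F(\bx_{t+1})-F(\by)$ into three additive pieces, handle each by standard convex-analysis tools, and finally assemble them using the three-point identity and Young's inequality. Concretely, I would split
\[
F(\bx_{t+1})-F(\by) = [f(\bx_{t+1})-f(\bx_t)] + [f(\bx_t)-f(\by)] + [\psi(\bx_{t+1})-\psi(\by)]
\]
and treat each bracket separately.

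For the first bracket, I would use that $\bx_t,\bx_{t+1}\in\dom\psi$ (the former by induction starting from $\bx_1\in\dom\psi$, the latter because $\psi(\bx_{t+1})<+\infty$ is forced by optimality in the proximal step), so the Lipschitz property on $\dom\psi$ gives $f(\bx_{t+1})-f(\bx_t)\leq G_{f,1}\|\bx_{t+1}-\bx_t\|$. For the second bracket, I would insert $f_{\I(t)}$ via $f=f_{\I(t)}-\Omega_t$ to get $f(\bx_t)-f(\by)=[f_{\I(t)}(\bx_t)-f_{\I(t)}(\by)]+\Omega_t(\by)-\Omega_t(\bx_t)$ and then use convexity of $f_{\I(t)}$ at $\bx_t$ to bound the first piece by $\langle\nabla f_{\I(t)}(\bx_t),\bx_t-\by\rangle$. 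For the third bracket, I would invoke the first-order optimality of the proximal update, which gives $\bg_{t+1}\defeq\frac{\bx_t-\bx_{t+1}}{\eta_t}-\nabla f_{\I(t)}(\bx_t)\in\partial\psi(\bx_{t+1})$, and combine it with the $\mu$-strong convexity of $\psi$ at $\bx_{t+1}$ to obtain an upper bound on $\psi(\bx_{t+1})-\psi(\by)$ in terms of $\langle\nabla f_{\I(t)}(\bx_t),\by-\bx_{t+1}\rangle+\frac{1}{\eta_t}\langle\bx_{t+1}-\bx_t,\by-\bx_{t+1}\rangle-\frac{\mu}{2}\|\by-\bx_{t+1}\|^2$.

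After summing the three bounds, the $\langle\nabla f_{\I(t)}(\bx_t),\cdot\rangle$ contributions collapse to $\langle\nabla f_{\I(t)}(\bx_t),\bx_t-\bx_{t+1}\rangle$, which I would control via the subgradient inequality $\langle\nabla f_{\I(t)}(\bx_t),\bx_t-\bx_{t+1}\rangle\leq f_{\I(t)}(\bx_t)-f_{\I(t)}(\bx_{t+1})\leq G_{\I(t)}\|\bx_{t+1}-\bx_t\|$, again using Lipschitzness on $\dom\psi$. The remaining inner product $\frac{1}{\eta_t}\langle\bx_{t+1}-\bx_t,\by-\bx_{t+1}\rangle$ unfolds via the three-point identity into $\frac{1}{2\eta_t}(\|\by-\bx_t\|^2-\|\by-\bx_{t+1}\|^2-\|\bx_{t+1}-\bx_t\|^2)$. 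The $-\frac{1}{2\eta_t}\|\bx_{t+1}-\bx_t\|^2$ leftover is then canceled against the two Lipschitz-induced terms by Young's inequality $(G_{f,1}+G_{\I(t)})\|\bx_{t+1}-\bx_t\|\leq\frac{\|\bx_{t+1}-\bx_t\|^2}{2\eta_t}+\frac{\eta_t(G_{f,1}+G_{\I(t)})^2}{2}$, and the elementary bound $(a+b)^2\leq 2(a^2+b^2)$ upgrades the last piece into the target $\eta_t(G_{\I(t)}^2+G_{f,1}^2)$. Finally, absorbing $-\frac{\mu}{2}\|\by-\bx_{t+1}\|^2$ into the coefficient of $\frac{1}{2\eta_t}\|\by-\bx_{t+1}\|^2$ yields the $(1+\mu\eta_t)$ factor claimed.

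No single step is deeply technical; the main obstacle is bookkeeping, specifically tracking that Lipschitzness of each $f_i$ is assumed only on $\dom\psi$ rather than on $\R^d$. This requires checking, at every place we invoke the Lipschitz bound (namely to control $f(\bx_{t+1})-f(\bx_t)$ and $f_{\I(t)}(\bx_t)-f_{\I(t)}(\bx_{t+1})$), that both arguments lie in $\dom\psi$. Once this domain check is in place, the remaining manipulations are routine and the claim follows by direct substitution.
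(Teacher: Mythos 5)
Your decomposition reaches the same bound as the paper's proof and uses essentially the same ingredients: the optimality condition for the proximal step, the three-point identity, $\mu$-strong convexity of $\psi$, Lipschitzness of $f_i$ and $f$ on $\dom\psi$, and a Young/AM-GM step. The organizational difference is minor: the paper bounds $f_{\I(t)}(\bx_t)-f_{\I(t)}(\by)\leq\langle\nabla f_{\I(t)}(\bx_t),\bx_t-\by\rangle$ first and then inserts $\nabla f(\bx_{t+1})$ mid-proof to produce the $f(\bx_t)-f(\bx_{t+1})$ correction, whereas you split $F(\bx_{t+1})-F(\by)$ into three brackets upfront and replace the paper's use of $\|\nabla f(\bx_{t+1})\|\leq G_{f,1}$ with the equivalent $f(\bx_{t+1})-f(\bx_t)\leq G_{f,1}\|\bx_{t+1}-\bx_t\|$. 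Both give the identical $(G_{f,1}+G_{\I(t)})\|\bx_{t+1}-\bx_t\|$ intermediate bound and the same final expression.

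There is one incorrect intermediate step. You write that the subgradient inequality gives $\left\langle \nabla f_{\I(t)}(\bx_t),\,\bx_t-\bx_{t+1}\right\rangle \leq f_{\I(t)}(\bx_t)-f_{\I(t)}(\bx_{t+1})$, but convexity of $f_{\I(t)}$ at $\bx_t$ yields the \emph{reverse} inequality: $f_{\I(t)}(\bx_{t+1})\geq f_{\I(t)}(\bx_t)+\left\langle \nabla f_{\I(t)}(\bx_t),\,\bx_{t+1}-\bx_t\right\rangle$, i.e.\ $\left\langle \nabla f_{\I(t)}(\bx_t),\,\bx_t-\bx_{t+1}\right\rangle \geq f_{\I(t)}(\bx_t)-f_{\I(t)}(\bx_{t+1})$. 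So your stated chain does not deliver the claimed upper bound. The fix is immediate and does not change anything downstream: bound the inner product directly by Cauchy--Schwarz plus Lipschitzness, $\left\langle \nabla f_{\I(t)}(\bx_t),\,\bx_t-\bx_{t+1}\right\rangle \leq \left\Vert \nabla f_{\I(t)}(\bx_t)\right\Vert \left\Vert \bx_t-\bx_{t+1}\right\Vert \leq G_{\I(t)}\left\Vert \bx_{t+1}-\bx_t\right\Vert$, which is exactly what the paper does (applied to the difference $\nabla f_{\I(t)}(\bx_t)-\nabla f(\bx_{t+1})$). With this replacement your argument is complete.
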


\begin{proof}
Let $t\in\left[T\right]$ be fixed. If $\by\not\in\dom\psi$, then
the inequality holds automatically since $F(\by)=+\infty$ and $F(\bx_{t+1})<+\infty$
almost surely. Thus, we only need to consider the case $\by\in\dom\psi$.
By the convexity of $f_{\I(t)}$, we have
\begin{equation}
f_{\I(t)}(\bx_{t})-f_{\I(t)}(\by)\leq\left\langle \nabla f_{\I(t)}(\bx_{t}),\bx_{t}-\by\right\rangle =\left\langle \nabla f_{\I(t)}(\bx_{t}),\bx_{t+1}-\by\right\rangle +\left\langle \nabla f_{\I(t)}(\bx_{t}),\bx_{t}-\bx_{t+1}\right\rangle .\label{eq:core-1}
\end{equation}
According to the update rule of $\bx_{t+1}$, there exists $\nabla\psi(\bx_{t+1})\in\partial\psi(\bx_{t+1})$
such that
\[
\bzero=\nabla\psi(\bx_{t+1})+\nabla f_{\I(t)}(\bx_{t})+\frac{\bx_{t+1}-\bx_{t}}{\eta_{t}}\Rightarrow\nabla f_{\I(t)}(\bx_{t})=\frac{\bx_{t}-\bx_{t+1}}{\eta_{t}}-\nabla\psi(\bx_{t+1}),
\]
which gives us
\begin{align}
\left\langle \nabla f_{\I(t)}(\bx_{t}),\bx_{t+1}-\by\right\rangle  & =\frac{\left\langle \bx_{t}-\bx_{t+1},\bx_{t+1}-\by\right\rangle }{\eta_{t}}+\left\langle \nabla\psi(\bx_{t+1}),\by-\bx_{t+1}\right\rangle \nonumber \\
 & =\frac{\left\Vert \by-\bx_{t}\right\Vert ^{2}-\left\Vert \by-\bx_{t+1}\right\Vert ^{2}-\left\Vert \bx_{t+1}-\bx_{t}\right\Vert ^{2}}{2\eta_{t}}+\left\langle \nabla\psi(\bx_{t+1}),\by-\bx_{t+1}\right\rangle \nonumber \\
 & \leq\frac{\left\Vert \by-\bx_{t}\right\Vert ^{2}}{2\eta_{t}}-(1+\mu\eta_{t})\frac{\left\Vert \by-\bx_{t+1}\right\Vert ^{2}}{2\eta_{t}}-\frac{\left\Vert \bx_{t+1}-\bx_{t}\right\Vert ^{2}}{2\eta_{t}}+\psi(\by)-\psi(\bx_{t+1}),\label{eq:core-2}
\end{align}
where the last step holds by the $\mu$-strong convexity of $\psi$.
In addition, there is
\begin{align}
\left\langle \nabla f_{\I(t)}(\bx_{t}),\bx_{t}-\bx_{t+1}\right\rangle  & =\left\langle \nabla f_{\I(t)}(\bx_{t})-\nabla f(\bx_{t+1}),\bx_{t}-\bx_{t+1}\right\rangle +\left\langle \nabla f(\bx_{t+1}),\bx_{t}-\bx_{t+1}\right\rangle \nonumber \\
 & \leq\left\langle \nabla f_{\I(t)}(\bx_{t})-\nabla f(\bx_{t+1}),\bx_{t}-\bx_{t+1}\right\rangle +f(\bx_{t})-f(\bx_{t+1}),\label{eq:core-3}
\end{align}
where the last inequality is due to the convexity of $f$.

Plug (\ref{eq:core-2}) and (\ref{eq:core-3}) back into (\ref{eq:core-1})
and rearrange terms to get (recall $\Omega_{t}(\cdot)=f_{\I(t)}(\cdot)-f(\cdot)$)
\begin{align*}
F(\bx_{t+1})-F(\by)\leq & \frac{\left\Vert \by-\bx_{t}\right\Vert ^{2}}{2\eta_{t}}-(1+\mu\eta_{t})\frac{\left\Vert \by-\bx_{t+1}\right\Vert ^{2}}{2\eta_{t}}+\Omega_{t}(\by)-\Omega_{t}(\bx_{t})\\
 & +\left\langle \nabla f_{\I(t)}(\bx_{t})-\nabla f(\bx_{t+1}),\bx_{t}-\bx_{t+1}\right\rangle -\frac{\left\Vert \bx_{t+1}-\bx_{t}\right\Vert ^{2}}{2\eta_{t}}\\
\leq & \frac{\left\Vert \by-\bx_{t}\right\Vert ^{2}}{2\eta_{t}}-(1+\mu\eta_{t})\frac{\left\Vert \by-\bx_{t+1}\right\Vert ^{2}}{2\eta_{t}}+\eta_{t}\left(G_{\I(t)}^{2}+G_{f,1}^{2}\right)+\Omega_{t}(\by)-\Omega_{t}(\bx_{t}),
\end{align*}
where the last line follows by
\begin{align*}
 & \left\langle \nabla f_{\I(t)}(\bx_{t})-\nabla f(\bx_{t+1}),\bx_{t}-\bx_{t+1}\right\rangle \\
\overset{(a)}{\leq} & \left\Vert \nabla f_{\I(t)}(\bx_{t})-\nabla f(\bx_{t+1})\right\Vert \left\Vert \bx_{t+1}-\bx_{t}\right\Vert \overset{(b)}{\leq}\left(G_{\I(t)}+G_{f,1}\right)\left\Vert \bx_{t+1}-\bx_{t}\right\Vert \\
\overset{(c)}{\leq} & \frac{\eta_{t}\left(G_{\I(t)}+G_{f,1}\right)^{2}}{2}+\frac{\left\Vert \bx_{t+1}-\bx_{t}\right\Vert ^{2}}{2\eta_{t}}\leq\eta_{t}\left(G_{\I(t)}^{2}+G_{f,1}^{2}\right)+\frac{\left\Vert \bx_{t+1}-\bx_{t}\right\Vert ^{2}}{2\eta_{t}},
\end{align*}
where $(a)$ is due to Cauchy-Schwarz inequality, $(b)$ is because
$f_{\I(t)}$ and $f$ are respectively Lipschitz on $\dom\psi$ with
the parameter $G_{\I(t)}$ and $G_{f,1}$, and $(c)$ is by AM-GM
inequality.
\end{proof}

Next, we are ready to prove our core last-iterate result, Lemma \ref{lem:last},
which provides new sufficient conditions for the last-iterate convergence
rate. We remind the reader again that Condition \ref{enu:last-condition-2}
is actually not necessary since we can simply stop the proof at the
equation (\ref{eq:last-8}) and change $\eta_{t}^{2}$ to $\eta_{t}(\eta_{t}\lor\eta_{t+1})$
in the final bound.
\begin{lem}
\label{lem:last} (Full version of Lemma \ref{lem:last-main}) Under
Assumptions \ref{assu:basic} and \ref{assu:lip}, suppose the following
three conditions hold:
\begin{enumerate}
\item \label{enu:last-condition-1}The index satisfies $\I(t)\diseq\mathrm{Uniform}\left[n\right],\forall t\in\left[T\right]$.
\item \label{enu:last-condition-2}The stepsize $\eta_{t},\forall t\in\left[T\right]$
is non-increasing.
\item \label{enu:last-condition-3}$\left|\E\left[\Omega_{t}(\bx_{s})\right]\right|\leq\Phi\eta_{s},\forall t\in\left[T\right],s\in\left[t\right]$
where $\Phi\geq0$ is a constant probably depending on $T$, $n$,
$\mu$, and $G_{1}$ to $G_{n}$
\end{enumerate}
Then for any $\bz\in\R^{d}$, Algorithm \ref{alg:Alg} guarantees
\[
\E\left[F(\bx_{T+1})-F(\bz)\right]\leq\O\left(\frac{\left\Vert \bz-\bx_{1}\right\Vert ^{2}}{\sum_{t=1}^{T}\gamma_{t}\eta_{t}}+\left(G_{f,2}^{2}+\Phi\right)\sum_{t=1}^{T}\frac{\gamma_{t}\eta_{t}^{2}}{\sum_{s=t}^{T}\gamma_{s}\eta_{s}}\right),
\]
where $\Omega_{t}(\cdot)$ is defined in Lemma \ref{lem:core} and
$\gamma_{t}\defeq\prod_{s=1}^{t-1}(1+\mu\eta_{s}),\forall t\in\left[T+1\right]$.
\end{lem}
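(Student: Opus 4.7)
The plan is to combine the one-step inequality of Lemma \ref{lem:core} with the suffix-averaging technique that underlies recent last-iterate analyses of stochastic subgradient methods (Harvey et al., Liu--Zhou 2024). The workhorse is Lemma \ref{lem:core} applied at time $t$ with $\by = \bz$. Multiplying by $\gamma_t\eta_t$ and using the key algebraic identity $\gamma_t(1+\mu\eta_t) = \gamma_{t+1}$, the quadratic terms line up into a telescoping potential $\gamma_t\|\bz-\bx_t\|^2/2$. Taking expectations, Condition \ref{enu:last-condition-1} yields $\E[G_{\I(t)}^2] = G_{f,2}^2$ and, since $\bz$ is deterministic, $\E[\Omega_t(\bz)] = 0$; Condition \ref{enu:last-condition-3} supplies $|\E[\Omega_t(\bx_t)]| \le \Phi\eta_t$. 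Summing from $t=k$ to $T$ produces the family of suffix-regret bounds
\[
\sum_{t=k}^T \gamma_t\eta_t\, \E[F(\bx_{t+1}) - F(\bz)] \le \frac{\gamma_k\,\E\|\bz-\bx_k\|^2}{2} + O\Bigl((G_{f,2}^2 + \Phi)\sum_{t=k}^T \gamma_t\eta_t^2\Bigr),
\]
valid for every $k \in [T]$, which for $k = 1$ reduces to the classical weighted regret bound with $\|\bz-\bx_1\|^2$ on the right.

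The second step is to convert this family of suffix-regret inequalities into a last-iterate bound. Set $A_k \defeq \sum_{t=k}^T \gamma_t\eta_t$ and let $S_k \defeq A_k^{-1}\sum_{t=k}^T \gamma_t\eta_t \bx_{t+1}$ denote the weighted suffix averages, so $S_T = \bx_{T+1}$. The recursion $A_k S_k = \gamma_k\eta_k \bx_{k+1} + A_{k+1} S_{k+1}$ exhibits $S_k$ as a convex combination of $\bx_{k+1}$ and $S_{k+1}$, and the displacement $S_{k+1} - S_k = (\gamma_k\eta_k / A_k)(S_{k+1} - \bx_{k+1})$ admits a clean closed form. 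I would then write
\[
F(\bx_{T+1}) - F(\bz) = \bigl[F(S_1) - F(\bz)\bigr] + \sum_{k=1}^{T-1}\bigl[F(S_{k+1}) - F(S_k)\bigr],
\]
bound the first bracket by the $k=1$ suffix-regret inequality divided by $A_1$ (via Jensen), and handle each increment by applying the $(k+1)$-th suffix-regret inequality with a properly chosen reference point (either $\bz$ or the past iterate $\bx_{k+1}$). Summing and telescoping, the cumulative error terms are designed to assemble into the target tail sum $\sum_t \gamma_t\eta_t^2 / A_t$, while the leading potential collapses to $\|\bz-\bx_1\|^2/A_1$.

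\textbf{Anticipated main obstacle.} The hard part is taming the residual $\gamma_k\,\E\|\bz-\bx_k\|^2$ that appears in the interior suffix bounds when $\bz$ is arbitrary (not necessarily optimal). For $\bz = \bx_\star$ this is free since $F(\bx_{t+1}) - F(\bz) \ge 0$ absorbs the distance growth, but for general $\bz$ one needs a companion distance-recursion obtained by rearranging Lemma \ref{lem:core}: multiplying the inequality $\gamma_{t+1}\|\bz-\bx_{t+1}\|^2 \le \gamma_t\|\bz-\bx_t\|^2 + 2\gamma_t\eta_t E_t - 2\gamma_t\eta_t[F(\bx_{t+1}) - F(\bz)]$ and unfolding from $1$ to $k-1$ yields $\gamma_k\,\E\|\bz-\bx_k\|^2 \le \|\bz-\bx_1\|^2 + O((G_{f,2}^2+\Phi)\sum_{t<k}\gamma_t\eta_t^2)$, up to regret contributions that will be reabsorbed into the analogous terms already on the right of the target bound. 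A secondary subtlety worth highlighting is that Condition \ref{enu:last-condition-3} only controls $|\E[\Omega_t(\bx_s)]|$ for $s \le t$; the entire argument is engineered to evaluate $\Omega_t$ only at $\bz$ (where the expectation is exactly zero) or at past iterates $\bx_s$ with $s \le t$, so this restriction is respected throughout. Finally, Condition \ref{enu:last-condition-2} (monotone $\eta_t$) is used only to collapse the natural error $\eta_t(\eta_t \lor \eta_{t+1})$ to $\eta_t^2$ and could in principle be dropped, as the excerpt itself notes.
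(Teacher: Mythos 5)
Your route is genuinely different from the paper's. The paper implements the Zamani--Glineur/Liu--Zhou ``moving comparator'' device: it introduces non-decreasing weights $v_t\propto 1/\sum_{s\geq t-1}\gamma_s\eta_s$, defines auxiliary comparators $\bz_{t}$ as convex combinations of $\bz$ and the past iterates $\bx_1,\dots,\bx_{t-1}$, applies Lemma~\ref{lem:core} at $\by=\bz_{t+1}$, and decomposes $\Omega_t(\bz_{t+1})$ by convexity into a piece at $\bz$ (expectation zero by Condition~\ref{enu:last-condition-1}) and pieces at $\bx_s$, $s\le t$ (Condition~\ref{enu:last-condition-3}); everything telescopes in a single pass. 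You instead go through a family of suffix-regret inequalities and a Shamir--Zhang/Harvey-style telescope over suffix averages. Both are legitimate, and yours can be made to close, but the comparison is instructive: the paper's approach keeps the bookkeeping to one layer, whereas yours needs two (suffix regrets, then the telescope of averages).

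Two corrections to make your version airtight. First, you define $S_k$ as the weighted \emph{point} average of $\bx_{t+1}$; but then $S_k$ is a convex combination of $\bx_{k+1}$ and $S_{k+1}$, and Jensen only gives a \emph{lower} bound $F(S_{k+1})-F(S_k)\ge \frac{\gamma_k\eta_k}{A_k}\bigl(F(S_{k+1})-F(\bx_{k+1})\bigr)$, the wrong direction for the telescope, and the subgradient route $\bigl\langle \nabla F(S_{k+1}), S_{k+1}-S_k\bigr\rangle$ does not close either. The standard fix is to telescope the weighted average of \emph{function values}, $\widehat{S}_k\defeq A_k^{-1}\sum_{t=k}^{T}\gamma_t\eta_t\,\E\bigl[F(\bx_{t+1})\bigr]$. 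Then $\widehat{S}_T=\E[F(\bx_{T+1})]$ exactly, and
\[
\widehat{S}_{k+1}-\widehat{S}_k=\frac{\gamma_k\eta_k}{A_k}\Bigl(\widehat{S}_{k+1}-\E\bigl[F(\bx_{k+1})\bigr]\Bigr)=\frac{\gamma_k\eta_k}{A_k A_{k+1}}\sum_{t=k+1}^{T}\gamma_t\eta_t\,\E\bigl[F(\bx_{t+1})-F(\bx_{k+1})\bigr]
\]
is precisely your suffix regret with comparator $\bx_{k+1}$, rescaled, with no Jensen step at all; summing and using $\frac{\gamma_k\eta_k}{A_kA_{k+1}}=\frac{1}{A_{k+1}}-\frac{1}{A_k}$ gives the $\sum_t \gamma_t\eta_t^2/A_t$ tail. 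Second, the ``anticipated main obstacle'' you flag is actually not one: once the comparator in the $(k{+}1)$-st suffix regret is taken to be $\bx_{k+1}$, the initial-distance term produced by Lemma~\ref{lem:core} is $\gamma_{k+1}\|\bx_{k+1}-\bx_{k+1}\|^2/2=0$, so no $\gamma_k\E\|\bz-\bx_k\|^2$ ever appears in the interior bounds and the companion distance recursion is unnecessary. The point $\bz$ (with the genuine $\|\bz-\bx_1\|^2$ term) is needed only in the $k=1$ full-horizon regret that bounds $\widehat{S}_1-F(\bz)$. Finally, your observation that Condition~\ref{enu:last-condition-3} is respected is correct: each $\Omega_t$ is evaluated at $\bx_{k+1}$ with $t\ge k+1$ or at $\bx_t$, so $s\le t$ always holds, and the remaining $\Omega_t(\bz)$ has mean zero by Condition~\ref{enu:last-condition-1}.
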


\begin{proof}
It is enough to only consider the case $\bz\in\dom\psi$. In the following
proof, we fix a point $\bz\in\dom\psi$. By Lemma \ref{lem:core},
the following inequality holds for any $t\in\left[T\right]$ and $\by\in\R^{d}$,
\[
F(\bx_{t+1})-F(\by)\leq\frac{\left\Vert \by-\bx_{t}\right\Vert ^{2}}{2\eta_{t}}-(1+\mu\eta_{t})\frac{\left\Vert \by-\bx_{t+1}\right\Vert ^{2}}{2\eta_{t}}+\eta_{t}\left(G_{\I(t)}^{2}+G_{f,1}^{2}\right)+\Omega_{t}(\by)-\Omega_{t}(\bx_{t}).
\]
Multiplying both sides by $\gamma_{t}\eta_{t}$ yields (note that
$(1+\mu\eta_{t})\gamma_{t}=\gamma_{t+1},\forall t\in\left[T\right]$)
\begin{equation}
\gamma_{t}\eta_{t}(F(\bx_{t+1})-F(\by))\leq\frac{\gamma_{t}\left\Vert \by-\bx_{t}\right\Vert ^{2}-\gamma_{t+1}\left\Vert \by-\bx_{t+1}\right\Vert ^{2}}{2}+\gamma_{t}\eta_{t}^{2}\left(G_{\I(t)}^{2}+G_{f,1}^{2}\right)+\gamma_{t}\eta_{t}(\Omega_{t}(\by)-\Omega_{t}(\bx_{t})).\label{eq:last-1}
\end{equation}

Next, inspired by \cite{liu2024revisiting}, we first define the following
non-decreasing sequence
\begin{align}
v_{t} & \defeq\frac{\gamma_{T}\eta_{T}}{\sum_{s=t-1}^{T}\gamma_{s}\eta_{s}},\forall t\in\left[T+1\right]\backslash\left[1\right],\label{eq:last-v1}\\
v_{1} & \defeq v_{2}=\frac{\gamma_{T}\eta_{T}}{\sum_{s=1}^{T}\gamma_{s}\eta_{s}},\label{eq:last-v2}
\end{align}
and then introduce
\begin{equation}
\bz_{t}\defeq\frac{v_{1}}{v_{t}}\bz+\sum_{s=1}^{t-1}\frac{v_{s+1}-v_{s}}{v_{t}}\bx_{s},\forall t\in\left[T+1\right].\label{eq:last-z1}
\end{equation}
Note that $\bz_{t}$ also falls in $\dom\psi$ as it is a convex combination
of points in $\dom\psi$ and admits
\begin{equation}
\bz_{t+1}=\frac{v_{t}}{v_{t+1}}\bz_{t}+\left(1-\frac{v_{t}}{v_{t+1}}\right)\bx_{t},\forall t\in\left[T\right].\label{eq:last-z2}
\end{equation}
For any $t\in\left[T\right]$, we invoke (\ref{eq:last-1}) with $\by=\bz_{t+1}$
to obtain
\begin{align}
 & \gamma_{t}\eta_{t}(F(\bx_{t+1})-F(\bz_{t+1}))\nonumber \\
\leq & \frac{\gamma_{t}\left\Vert \bz_{t+1}-\bx_{t}\right\Vert ^{2}-\gamma_{t+1}\left\Vert \bz_{t+1}-\bx_{t+1}\right\Vert ^{2}}{2}+\gamma_{t}\eta_{t}^{2}\left(G_{\I(t)}^{2}+G_{f,1}^{2}\right)+\gamma_{t}\eta_{t}(\Omega_{t}(\bz_{t+1})-\Omega_{t}(\bx_{t}))\nonumber \\
\leq & \frac{\gamma_{t}\frac{v_{t}}{v_{t+1}}\left\Vert \bz_{t}-\bx_{t}\right\Vert ^{2}-\gamma_{t+1}\left\Vert \bz_{t+1}-\bx_{t+1}\right\Vert ^{2}}{2}+\gamma_{t}\eta_{t}^{2}\left(G_{\I(t)}^{2}+G_{f,1}^{2}\right)+\gamma_{t}\eta_{t}(\Omega_{t}(\bz_{t+1})-\Omega_{t}(\bx_{t})),\label{eq:last-2}
\end{align}
where the second inequality is by $\left\Vert \bz_{t+1}-\bx_{t}\right\Vert ^{2}\leq\frac{v_{t}}{v_{t+1}}\left\Vert \bz_{t}-\bx_{t}\right\Vert ^{2}+\left(1-\frac{v_{t}}{v_{t+1}}\right)\left\Vert \bx_{t}-\bx_{t}\right\Vert ^{2}=\frac{v_{t}}{v_{t+1}}\left\Vert \bz_{t}-\bx_{t}\right\Vert ^{2}$
due to the convexity of $\left\Vert \cdot\right\Vert ^{2}$ and (\ref{eq:last-z2}).
Multiply both sides of (\ref{eq:last-2}) by $v_{t+1}$ and sum up
from $t=1$ to $T$ to obtain
\begin{align}
 & \sum_{t=1}^{T}\gamma_{t}\eta_{t}v_{t+1}(F(\bx_{t+1})-F(\bz_{t+1}))\nonumber \\
\leq & \frac{\gamma_{1}v_{1}\left\Vert \bz_{1}-\bx_{1}\right\Vert ^{2}-\gamma_{T+1}v_{T+1}\left\Vert \bz_{T+1}-\bx_{T+1}\right\Vert ^{2}}{2}+\sum_{t=1}^{T}\gamma_{t}\eta_{t}^{2}v_{t+1}\left(G_{\I(t)}^{2}+G_{f,1}^{2}\right)+\gamma_{t}\eta_{t}v_{t+1}(\Omega_{t}(\bz_{t+1})-\Omega_{t}(\bx_{t}))\nonumber \\
\leq & \frac{\gamma_{1}v_{1}\left\Vert \bz_{1}-\bx_{1}\right\Vert ^{2}}{2}+\sum_{t=1}^{T}\gamma_{t}\eta_{t}^{2}v_{t+1}\left(G_{\I(t)}^{2}+G_{f,1}^{2}\right)+\gamma_{t}\eta_{t}v_{t+1}(\Omega_{t}(\bz_{t+1})-\Omega_{t}(\bx_{t}))\nonumber \\
\overset{\eqref{eq:last-z1}}{=} & \frac{\gamma_{1}v_{1}\left\Vert \bz-\bx_{1}\right\Vert ^{2}}{2}+\sum_{t=1}^{T}\gamma_{t}\eta_{t}^{2}v_{t+1}\left(G_{\I(t)}^{2}+G_{f,1}^{2}\right)+\gamma_{t}\eta_{t}v_{t+1}(\Omega_{t}(\bz_{t+1})-\Omega_{t}(\bx_{t})).\label{eq:last-3}
\end{align}
Now we focus on the term $\sum_{t=1}^{T}\gamma_{t}\eta_{t}v_{t+1}\Omega_{t}(\bz_{t+1})$
and observe that
\begin{align}
\sum_{t=1}^{T}\gamma_{t}\eta_{t}v_{t+1}\Omega_{t}(\bz_{t+1})= & \sum_{t=1}^{T}\gamma_{t}\eta_{t}v_{t+1}(f_{\I(t)}(\bz_{t+1})-f(\bz_{t+1}))\nonumber \\
\overset{(a)}{\leq} & \sum_{t=1}^{T}\gamma_{t}\eta_{t}\left[v_{1}f_{\I(t)}(\bz)+\sum_{s=1}^{t}(v_{s+1}-v_{s})f_{\I(t)}(\bx_{s})-v_{t+1}f(\bz_{t+1})\right]\nonumber \\
\overset{(b)}{=} & \sum_{t=1}^{T}\gamma_{t}\eta_{t}\left[v_{1}(\Omega_{t}(\bz)+f(\bz))+\sum_{s=1}^{t}(v_{s+1}-v_{s})(\Omega_{t}(\bx_{s})+f(\bx_{s}))-v_{t+1}f(\bz_{t+1})\right]\nonumber \\
= & \sum_{t=1}^{T}\gamma_{t}\eta_{t}\left[v_{1}\Omega_{t}(\bz)+\sum_{s=1}^{t}(v_{s+1}-v_{s})\Omega_{t}(\bx_{s})\right]\nonumber \\
 & +\sum_{t=1}^{T}\gamma_{t}\eta_{t}\left[v_{1}f(\bz)+\sum_{s=1}^{t}(v_{s+1}-v_{s})f(\bx_{s})-v_{t+1}f(\bz_{t+1})\right],\label{eq:last-4}
\end{align}
where $(a)$ is due to $f_{\I(t)}(\bz_{t+1})\leq\frac{v_{1}}{v_{t+1}}f_{\I(t)}(\bz)+\sum_{s=1}^{t}\frac{v_{s+1}-v_{s}}{v_{t+1}}f_{\I(t)}(\bx_{s})$
by the convexity of $f_{\I(t)}$ and the definition of $\bz_{t+1}$
in (\ref{eq:last-z1}) and $(b)$ holds by recalling that $\Omega_{t}(\cdot)=f_{\I(t)}(\cdot)-f(\cdot)$. 

Combine (\ref{eq:last-3}) and (\ref{eq:last-4}) to have
\begin{equation}
\sum_{t=1}^{T}\gamma_{t}\eta_{t}v_{t+1}(F(\bx_{t+1})-F(\bz_{t+1}))\leq\frac{\gamma_{1}v_{1}\left\Vert \bz-\bx_{1}\right\Vert ^{2}}{2}+\sum_{t=1}^{T}\gamma_{t}\eta_{t}^{2}v_{t+1}\left(G_{\I(t)}^{2}+G_{f,1}^{2}\right)+\square+\blacksquare,\label{eq:last-5}
\end{equation}
where 
\begin{align}
\square & \defeq\sum_{t=1}^{T}\gamma_{t}\eta_{t}\left[v_{1}\Omega_{t}(\bz)+\sum_{s=1}^{t}(v_{s+1}-v_{s})\Omega_{t}(\bx_{s})-v_{t+1}\Omega_{t}(\bx_{t})\right],\label{eq:last-white}\\
\blacksquare & \defeq\sum_{t=1}^{T}\gamma_{t}\eta_{t}\left[v_{1}f(\bz)+\sum_{s=1}^{t}(v_{s+1}-v_{s})f(\bx_{s})-v_{t+1}f(\bz_{t+1})\right].\label{eq:last-black}
\end{align}
We bound term $\blacksquare$ as follows
\begin{align}
\blacksquare= & \sum_{t=1}^{T}\gamma_{t}\eta_{t}\left[v_{1}(F(\bz)-\psi(z))+\sum_{s=1}^{t}(v_{s+1}-v_{s})(F(\bx_{s})-\psi(\bx_{s}))-v_{t+1}(F(\bz_{t+1})-\psi(\bz_{t+1}))\right]\nonumber \\
= & \sum_{t=1}^{T}\gamma_{t}\eta_{t}\left[v_{1}F(\bz)+\sum_{s=1}^{t}(v_{s+1}-v_{s})F(\bx_{s})-v_{t+1}F(\bz_{t+1})+v_{t+1}\psi(\bz_{t+1})-\sum_{s=1}^{t}(v_{s+1}-v_{s})\psi(\bx_{s})-v_{1}\psi(\bz)\right]\nonumber \\
\overset{(c)}{\leq} & \sum_{t=1}^{T}\gamma_{t}\eta_{t}\left[v_{1}F(\bz)+\sum_{s=1}^{t}(v_{s+1}-v_{s})F(\bx_{s})-v_{t+1}F(\bz_{t+1})\right]\nonumber \\
= & \sum_{t=1}^{T}\gamma_{t}\eta_{t}\left[\sum_{s=1}^{t}(v_{s+1}-v_{s})(F(\bx_{s})-F(\bz))-v_{t+1}(F(\bz_{t+1})-F(\bz))\right]\nonumber \\
= & \sum_{s=1}^{T}\left(\sum_{t=s}^{T}\gamma_{t}\eta_{t}\right)(v_{s+1}-v_{s})(F(\bx_{s})-F(\bz))-\sum_{t=1}^{T}\gamma_{t}\eta_{t}v_{t+1}(F(\bz_{t+1})-F(\bz))\nonumber \\
\overset{(d)}{=} & \sum_{s=2}^{T}\gamma_{s-1}\eta_{s-1}v_{s}(F(\bx_{s})-F(\bz))-\sum_{t=1}^{T}\gamma_{t}\eta_{t}v_{t+1}(F(\bz_{t+1})-F(\bz)),\label{eq:last-6}
\end{align}
where $(c)$ holds by $\psi(\bz_{t+1})\leq\frac{v_{1}}{v_{t+1}}\psi(\bz)+\sum_{s=1}^{t}\frac{v_{s+1}-v_{s}}{v_{t+1}}\psi(\bx_{s})$
due to the convexity of $\psi$ and the definition of $\bz_{t+1}$
in (\ref{eq:last-z1}) and $(d)$ is by noticing that
\begin{itemize}
\item if $s=1$, 
\begin{equation}
v_{2}-v_{1}\overset{\eqref{eq:last-v2}}{=}0;\label{eq:last-simplify-v1}
\end{equation}
\item if $s\in\left[T\right]\backslash\left[1\right]$,
\[
v_{s+1}-v_{s}\overset{\eqref{eq:last-v1}}{=}\frac{\gamma_{T}\eta_{T}}{\sum_{t=s}^{T}\gamma_{t}\eta_{t}}-\frac{\gamma_{T}\eta_{T}}{\sum_{t=s-1}^{T}\gamma_{t}\eta_{t}}=\frac{\gamma_{T}\eta_{T}\gamma_{s-1}\eta_{s-1}}{(\sum_{t=s}^{T}\gamma_{t}\eta_{t})(\sum_{t=s-1}^{T}\gamma_{t}\eta_{t})}\overset{\eqref{eq:last-v1}}{=}\frac{\gamma_{s-1}\eta_{s-1}v_{s}}{\sum_{t=s}^{T}\gamma_{t}\eta_{t}},
\]
which implies
\begin{equation}
\left(\sum_{t=s}^{T}\gamma_{t}\eta_{t}\right)(v_{s+1}-v_{s})=\gamma_{s-1}\eta_{s-1}v_{s}.\label{eq:last-simplify-v2}
\end{equation}
\end{itemize}
We then plug (\ref{eq:last-6}) back into (\ref{eq:last-5}) and rearrange
terms to get
\[
\gamma_{T}\eta_{T}v_{T+1}\left(F(\bx_{T+1})-F(\bz)\right)\leq\frac{\gamma_{1}v_{1}\left\Vert \bz-\bx_{1}\right\Vert ^{2}}{2}+\sum_{t=1}^{T}\gamma_{t}\eta_{t}^{2}v_{t+1}\left(G_{\I(t)}^{2}+G_{f,1}^{2}\right)+\square.
\]
Take expectations and divide by $\gamma_{T}\eta_{T}$ on both sides
and then use $v_{T+1}\overset{\eqref{eq:last-v1}}{=}1$ and $\gamma_{1}=1$
to obtain
\begin{align}
\E\left[F(\bx_{T+1})-F(\bz)\right] & \leq\frac{v_{1}\left\Vert \bz-\bx_{1}\right\Vert ^{2}}{2\gamma_{T}\eta_{T}}+\sum_{t=1}^{T}\frac{\gamma_{t}\eta_{t}^{2}v_{t+1}}{\gamma_{T}\eta_{T}}\left(\E\left[G_{\I(t)}^{2}\right]+G_{f,1}^{2}\right)+\frac{\E\left[\square\right]}{\gamma_{T}\eta_{T}}\nonumber \\
 & \leq\frac{v_{1}\left\Vert \bz-\bx_{1}\right\Vert ^{2}}{2\gamma_{T}\eta_{T}}+2G_{f,2}^{2}\sum_{t=1}^{T}\frac{\gamma_{t}\eta_{t}^{2}v_{t+1}}{\gamma_{T}\eta_{T}}+\frac{\E\left[\square\right]}{\gamma_{T}\eta_{T}},\label{eq:last-7}
\end{align}
where the last line holds due to $\I(t)\diseq\mathrm{Uniform}\left[n\right],\forall t\in\left[T\right]\Rightarrow\E\left[G_{\I(t)}^{2}\right]=\frac{1}{n}\sum_{i=1}^{n}G_{i}^{2}=G_{f,2}^{2}$
and $G_{f,1}\leq G_{f,2}$. 

Lastly, we note that
\begin{align}
\E\left[\square\right] & \overset{\eqref{eq:last-white}}{=}\sum_{t=1}^{T}\gamma_{t}\eta_{t}\left[v_{1}\E\left[\Omega_{t}(\bz)\right]+\sum_{s=1}^{t}(v_{s+1}-v_{s})\E\left[\Omega_{t}(\bx_{s})\right]-v_{t+1}\E\left[\Omega_{t}(\bx_{t})\right]\right]\nonumber \\
 & \overset{(e)}{=}\sum_{t=1}^{T}\gamma_{t}\eta_{t}\left[\sum_{s=1}^{t}(v_{s+1}-v_{s})\E\left[\Omega_{t}(\bx_{s})\right]-v_{t+1}\E\left[\Omega_{t}(\bx_{t})\right]\right]\nonumber \\
 & \overset{(f)}{\leq}\sum_{t=1}^{T}\gamma_{t}\eta_{t}\left[\sum_{s=1}^{t}(v_{s+1}-v_{s})\cdot\Phi\eta_{s}+v_{t+1}\cdot\Phi\eta_{t}\right]\nonumber \\
 & =\Phi\left[\sum_{s=1}^{T}\left(\sum_{t=s}^{T}\gamma_{t}\eta_{t}\right)(v_{s+1}-v_{s})\eta_{s}+\sum_{t=1}^{T}\gamma_{t}\eta_{t}^{2}v_{t+1}\right]\nonumber \\
 & \overset{\eqref{eq:last-simplify-v1},\eqref{eq:last-simplify-v2}}{=}\Phi\left[\sum_{s=2}^{T}\gamma_{s-1}\eta_{s-1}v_{s}\eta_{s}+\sum_{t=1}^{T}\gamma_{t}\eta_{t}^{2}v_{t+1}\right]\label{eq:last-8}\\
 & \overset{(g)}{\leq}2\Phi\sum_{t=1}^{T}\gamma_{t}\eta_{t}^{2}v_{t+1},\label{eq:last-9}
\end{align}
where $(e)$ is due to $\E\left[\Omega_{t}(\bz)\right]=\E\left[f_{\I(t)}(\bz)-f(\bz)\right]=0,\forall t\in\left[T\right]$
since $\I(t)\diseq\mathrm{Uniform}\left[n\right],\forall t\in\left[T\right]$,
$(f)$ holds by the condition of $\left|\E\left[\Omega_{t}(\bx_{s})\right]\right|\leq\Phi\eta_{s},\forall t\in\left[T\right],s\in\left[t\right]$,
and $(g)$ is by the condition of the non-increasing $\eta_{t},\forall t\in\left[T\right]$.
We thereby finish the proof by plugging (\ref{eq:last-9}) back into
(\ref{eq:last-7}) and applying the definition of $v_{t}$ in (\ref{eq:last-v1})
and (\ref{eq:last-v2}).
\end{proof}

Though Lemma \ref{lem:last} is enough for the $\RR$ case, as the
reader already has seen, it is inadequate for the $\SS$ case. So
what do we miss? The problem is that we never use the fact $\left\{ \I((k-1)n+1),\cdots,\I(kn)\right\} =\left[n\right],\forall k\in\left[K\right]$
if $T=Kn$ for $K\in\N$, which is the major factor employed in the
analysis of \cite{pmlr-v235-liu24cg}. So similar to \cite{pmlr-v235-liu24cg},
we may also hope for a last-iterate convergence bound that holds almost
surely. However, due to a different structure in our Algorithm \ref{alg:Alg},
i.e., the place for the proximal update, the existing analysis and
rates in \cite{pmlr-v235-liu24cg} are immediately invalid. Hence,
it is unknown whether Algorithm \ref{alg:Alg} still guarantees a
last-iterate rate in the same order as \cite{pmlr-v235-liu24cg} (or
equivalently, Proximal GD).

Fortunately, we can still prove a similar almost surely convergence
bound with two extra requirements Conditions \ref{enu:special-refined-condition-2}
and \ref{enu:special-refined-condition-3}. Condition \ref{enu:special-refined-condition-2}
here is standard if we consider the whole epoch as a single update
like \cite{pmlr-v235-liu24cg}. In contrast, Condition \ref{enu:special-refined-condition-3}
is more artificial and makes the rate depend on a new parameter $G_{\psi}$.
We currently do not know whether it can be removed or not.

Lastly, we point out that, if ignoring $G_{\psi}$ (or one can think
it is smaller than $G_{f,1}$), the rate in Lemma \ref{lem:special-refined}
for the general convex case is in the same order as \cite{pmlr-v235-liu24cg}.
This is why we can obtain the improved result in Theorem \ref{thm:SS-cvx-improved-full}.
However, for the strongly convex case, Lemma \ref{lem:special-refined}
is worse than \cite{pmlr-v235-liu24cg} because the parameter $\mu$
in our $\widetilde{\gamma}_{k}$ is worse than a factor of $n$ compared
to \cite{pmlr-v235-liu24cg} (in other words, they have $n\mu$).
This crucial point leads us to a worse rate $\widetilde{\O}\left(\frac{n}{K}\right)$
(e.g., setting $\widetilde{\eta}_{k}=\frac{2}{\mu k},\forall k\in\left[K\right]$)
than Proximal GD. We currently do not know whether it is fixable.
If it is, one can improve Theorem \ref{thm:SS-str-full} to show $\SS$
also beats Proximal GD for any $K\in\N$ (again, ignoring $G_{\psi}$).
\begin{lem}
\label{lem:special-refined}Under Assumptions \ref{assu:basic} and
\ref{assu:lip}, suppose $T=Kn$ where $K\in\N$ and the following
three conditions hold:
\begin{enumerate}
\item \label{enu:special-refined-condition-1}The index satisfies $\left\{ \I((k-1)n+1),\cdots,\I(kn)\right\} =\left[n\right],\forall k\in\left[K\right]$.
\item \label{enu:special-refined-condition-2}The stepsize satisfies $\eta_{t}=\widetilde{\eta}_{\qu(t)},\forall t\in\left[T\right]$
where $\widetilde{\eta}_{k},\forall k\in\left[K\right]$ is another
positive sequence.
\item \label{enu:special-refined-condition-3}$\psi=\varphi+\charf_{\domx}$
where $\varphi:\R^{d}\to\R$ is convex and $G_{\psi}$-Lipschitz on
$\dom\psi=\domx$.
\end{enumerate}
Then for any $\bz\in\R^{d}$, Algorithm \ref{alg:Alg} guarantees
almost surely
\[
F(\bx_{Kn+1})-F(\bz)\leq\O\left(\frac{\left\Vert \bz-\bx_{1}\right\Vert ^{2}}{n\sum_{k=1}^{K}\widetilde{\gamma}_{k}\widetilde{\eta}_{k}}+n\left(G_{f,1}^{2}+G_{\psi}^{2}\right)\sum_{k=1}^{K}\frac{\widetilde{\gamma}_{k}\widetilde{\eta}_{k}^{2}}{\sum_{\ell=k}^{K}\widetilde{\gamma}_{\ell}\widetilde{\eta}_{\ell}}\right),
\]
where $\widetilde{\gamma}_{k}\defeq\prod_{\ell=1}^{k-1}(1+\mu\widetilde{\eta}_{\ell}),\forall k\in\left[K+1\right]$.
\end{lem}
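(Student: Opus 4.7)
The plan is to reduce each epoch to an epoch-level analog of Lemma~\ref{lem:core} and then invoke the Zamani--Bhaskara-type weighted-telescope argument already developed in the proof of Lemma~\ref{lem:last}, applied to the sequence of epoch-starting iterates rather than per-step iterates. Concretely, set $\widetilde{\bx}_k\defeq\bx_{(k-1)n+1}$ for $k\in[K+1]$, so $\widetilde{\bx}_1=\bx_1$ and $\widetilde{\bx}_{K+1}=\bx_{Kn+1}$. The key structural gift from Conditions \ref{enu:special-refined-condition-1}--\ref{enu:special-refined-condition-3} is that, within each epoch, the indices form a permutation of $[n]$, the stepsize is a constant $\widetilde{\eta}_k$, and $\psi=\varphi+\charf_{\domx}$ with $\varphi$ Lipschitz on $\domx$. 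Together these will let us absorb all $\Omega_t$ contributions into a deterministic $O(n\widetilde{\eta}_k(G_{f,1}^2+G_\psi^2))$ error and then run the last-iterate argument at the epoch level \emph{without} any residual noise term.

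\textbf{Step 1 (within-epoch drift).} The optimality condition of the proximal step gives $\bx_t-\bx_{t+1}=\widetilde{\eta}_k(\nabla f_{\I(t)}(\bx_t)+g+m)$ for some $g\in\partial\varphi(\bx_{t+1})$ with $\|g\|\leq G_\psi$ and some $m\in N_{\domx}(\bx_{t+1})$. Since $\bx_t\in\domx$, we have $\langle m,\bx_t-\bx_{t+1}\rangle\leq0$, so taking the inner product with $\bx_t-\bx_{t+1}$ and applying Cauchy--Schwarz yields $\|\bx_{t+1}-\bx_t\|\leq\widetilde{\eta}_k(G_{\I(t)}+G_\psi)$. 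Summing along the epoch and using $\sum_{t=(k-1)n+1}^{kn}G_{\I(t)}=nG_{f,1}$ (permutation property) gives $\|\bx_t-\widetilde{\bx}_k\|\leq n\widetilde{\eta}_k(G_{f,1}+G_\psi)$ for every $t$ in the epoch.

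\textbf{Step 2 (sum Lemma~\ref{lem:core} over an epoch).} Fix an arbitrary $\bz\in\R^d$, invoke Lemma~\ref{lem:core} with $\by=\bz$ for each $t\in\{(k-1)n+1,\ldots,kn\}$, and sum. Because $\eta_t\equiv\widetilde{\eta}_k$ within the epoch, the distance terms ``almost'' telescope: the residual $-\mu\widetilde{\eta}_k\sum_{t=(k-1)n+2}^{kn}\|\bz-\bx_t\|^2$ is nonpositive and can be discarded. The permutation structure makes $\sum_t\Omega_t(\bz)=0$ for \emph{every} $\bz$, so $\sum_t\Omega_t(\bx_t)=\sum_t(\Omega_t(\bx_t)-\Omega_t(\widetilde{\bx}_k))$, which by the Lipschitz bound $|\Omega_t(\bx_t)-\Omega_t(\widetilde{\bx}_k)|\leq(G_{\I(t)}+G_{f,1})\|\bx_t-\widetilde{\bx}_k\|$ and Step~1 is at most $O(n^2\widetilde{\eta}_k G_{f,1}(G_{f,1}+G_\psi))$. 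Together with the aggregated gradient noise $n\widetilde{\eta}_k^2(G_{f,2}^2+G_{f,1}^2)$ and the bound $G_{f,2}^2\leq nG_{f,1}^2$ from~\eqref{eq:G}, the total error after the sum is $O(n^2\widetilde{\eta}_k^2(G_{f,1}^2+G_\psi^2))$.

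\textbf{Step 3 (replace averaged iterate by $\widetilde{\bx}_{k+1}$).} The LHS of the summed inequality is $\sum_t(F(\bx_{t+1})-F(\bz))$, but we want $n(F(\widetilde{\bx}_{k+1})-F(\bz))$. Convexity of $F$ gives $F(\tfrac1n\sum_t\bx_{t+1})\leq\tfrac1n\sum_t F(\bx_{t+1})$, and Lipschitzness of $F$ on $\domx$ with constant $G_{f,1}+G_\psi$ combined with Step~1 gives $|F(\widetilde{\bx}_{k+1})-F(\tfrac1n\sum_t\bx_{t+1})|\leq n\widetilde{\eta}_k(G_{f,1}+G_\psi)^2$. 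Chaining these produces an identical-order error, and dividing through by $2n\widetilde{\eta}_k$ yields the epoch-level inequality
\[
F(\widetilde{\bx}_{k+1})-F(\bz)\leq\frac{\|\bz-\widetilde{\bx}_k\|^2}{2n\widetilde{\eta}_k}-(1+\mu\widetilde{\eta}_k)\frac{\|\bz-\widetilde{\bx}_{k+1}\|^2}{2n\widetilde{\eta}_k}+\O\!\left(n\widetilde{\eta}_k(G_{f,1}^2+G_\psi^2)\right),
\]
which is the exact analog of Lemma~\ref{lem:core} with effective distance stepsize $n\widetilde{\eta}_k$, contraction factor $(1+\mu\widetilde{\eta}_k)$, \emph{deterministic} noise $O(n\widetilde{\eta}_k(G_{f,1}^2+G_\psi^2))$, and \emph{no} $\Omega_t$ term left over.

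\textbf{Step 4 (epoch-level Zamani--Bhaskara).} Because the $\Omega_t$ contribution has been fully absorbed, the weighted anchor construction from the proof of Lemma~\ref{lem:last} applies verbatim with $\Phi=0$, reindexed by $k\in[K]$ with weights $v_k$ and anchors $\bz_k$ built from $\widetilde{\gamma}_k\widetilde{\eta}_k$ instead of $\gamma_t\eta_t$. Multiplying the epoch-level inequality by $\widetilde{\gamma}_k\widetilde{\eta}_kv_{k+1}$, telescoping, and invoking convexity of $F$ and of $\psi$ exactly as in that proof gives the claimed bound, and since no expectation is ever taken, the bound holds almost surely.

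\textbf{Main obstacle.} The delicate bookkeeping lives in Step~2--3: (i) the $(1+\mu\widetilde{\eta}_k)$ factor does not telescope cleanly against a constant epoch stepsize, forcing us to discard a nonpositive residual; this is also precisely why the contraction per epoch is $(1+\mu\widetilde{\eta}_k)$ rather than the stronger $(1+n\mu\widetilde{\eta}_k)$ one would get from Proximal~GD and is the root cause of the $\widetilde{\O}(n/K)$ barrier flagged in Footnote~\ref{fn:SS}; and (ii) every cross term arising from Steps~1--3 must be reconcilable with a single clean $\O(n\widetilde{\eta}_k(G_{f,1}^2+G_\psi^2))$ bound, which is where the slack $G_{f,2}^2\leq nG_{f,1}^2$ from~\eqref{eq:G} and the Lipschitz constant $G_\psi$ of $\varphi$ together do the work.
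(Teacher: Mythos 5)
Your proposal is correct and reaches the same epoch-level inequality as the paper, but the reduction to that inequality runs along a genuinely different path. The paper does \emph{not} reuse Lemma \ref{lem:core}: instead it re-derives a per-step bound (its (\ref{eq:special-refined-1})) that keeps $f_{\I(t)}(\bx_t)+\psi(\bx_{t+1})$ on the left-hand side rather than $F(\bx_{t+1})$, sums over the epoch, invokes the permutation identity $\sum_t f_{\I(t)}(\cdot)=nf(\cdot)$ to turn the sum into $nF(\bx_{kn+1})$ plus correction terms $\sum_t[f_{\I(t)}(\bx_{kn+1})-f_{\I(t)}(\bx_t)]$ and $\sum_t[\psi(\bx_{kn+1})-\psi(\bx_{t+1})]$, and bounds both corrections with the drift estimate. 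You instead sum Lemma \ref{lem:core} directly, absorb $\sum_t\Omega_t(\bx_t)=\sum_t(\Omega_t(\bx_t)-\Omega_t(\widetilde{\bx}_k))$ with the same drift bound, and then pass from $\tfrac1n\sum_t F(\bx_{t+1})$ to $F(\widetilde{\bx}_{k+1})$ via Jensen on the epoch average plus Lipschitzness of $F$ on $\domx$. Your route buys modularity (Lemmas \ref{lem:core} and \ref{lem:last} are reused verbatim) at the cost of one extra Jensen--Lipschitz detour and a somewhat looser constant (you invoke $G_{f,2}^2\leq nG_{f,1}^2$ to absorb the $G_{f,2}^2$ term from Lemma \ref{lem:core}, whereas the paper's tighter per-step inequality produces only $\tfrac{\eta_tG_{\I(t)}^2}{2}$ and never needs that slack). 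The identification of the $(1+\mu\widetilde{\eta}_k)$ versus $(1+n\mu\widetilde{\eta}_k)$ contraction as the source of the $\widetilde{\O}(n/K)$ barrier is also spot-on and matches the discussion following the lemma. Two small slips that do not affect the argument: the total error after Step 2 should read $\O(n^2\widetilde{\eta}_k(G_{f,1}^2+G_\psi^2))$, not $\widetilde{\eta}_k^2$ (the noise term in Lemma \ref{lem:core} is linear in $\eta_t$); and in Step 3 you should divide by $n$, not by $2n\widetilde{\eta}_k$, though the displayed inequality you then write down is correct.
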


\begin{proof}
Given $\by\in\dom\psi=\domx$, for any $t\in\left[T\right]$, we have
by (\ref{eq:core-2})
\[
\left\langle \nabla f_{\I(t)}(\bx_{t}),\bx_{t+1}-\by\right\rangle \leq\frac{\left\Vert \by-\bx_{t}\right\Vert ^{2}}{2\eta_{t}}-(1+\mu\eta_{t})\frac{\left\Vert \by-\bx_{t+1}\right\Vert ^{2}}{2\eta_{t}}-\frac{\left\Vert \bx_{t+1}-\bx_{t}\right\Vert ^{2}}{2\eta_{t}}+\psi(\by)-\psi(\bx_{t+1}).
\]
Combine the above inequality with (\ref{eq:core-1}) to obtain
\begin{align}
 & f_{\I(t)}(\bx_{t})+\psi(\bx_{t+1})-f_{\I(t)}(\by)-\psi(\by)\nonumber \\
\leq & \left\langle \nabla f_{\I(t)}(\bx_{t}),\bx_{t}-\bx_{t+1}\right\rangle +\frac{\left\Vert \by-\bx_{t}\right\Vert ^{2}}{2\eta_{t}}-(1+\mu\eta_{t})\frac{\left\Vert \by-\bx_{t+1}\right\Vert ^{2}}{2\eta_{t}}-\frac{\left\Vert \bx_{t+1}-\bx_{t}\right\Vert ^{2}}{2\eta_{t}}\nonumber \\
\leq & \frac{\left\Vert \by-\bx_{t}\right\Vert ^{2}}{2\eta_{t}}-(1+\mu\eta_{t})\frac{\left\Vert \by-\bx_{t+1}\right\Vert ^{2}}{2\eta_{t}}+\frac{\eta_{t}G_{\I(t)}^{2}}{2},\label{eq:special-refined-1}
\end{align}
where the second step is by applying Cauchy-Schwarz inequality, Assumption
\ref{assu:lip} and AM-GM inequality to have
\[
\left\langle \nabla f_{\I(t)}(\bx_{t}),\bx_{t}-\bx_{t+1}\right\rangle \leq\left\Vert \nabla f_{\I(t)}(\bx_{t})\right\Vert \left\Vert \bx_{t}-\bx_{t+1}\right\Vert \leq G_{\I(t)}\left\Vert \bx_{t}-\bx_{t+1}\right\Vert \leq\frac{\eta_{t}G_{\I(t)}^{2}}{2}+\frac{\left\Vert \bx_{t+1}-\bx_{t}\right\Vert ^{2}}{2\eta_{t}}.
\]

We sum up (\ref{eq:special-refined-1}) from $t=(k-1)n+1$ to $kn$
for a fixed $k\in\left[K\right]$ and notice $\eta_{t}=\widetilde{\eta}_{k}$
by Condition \ref{enu:special-refined-condition-2} to obtain
\begin{align*}
 & \sum_{t=(k-1)n+1}^{kn}f_{\I(t)}(\bx_{t})+\psi(\bx_{t+1})-f_{\I(t)}(\by)-\psi(\by)\\
\leq & \frac{\left\Vert \by-\bx_{(k-1)n+1}\right\Vert ^{2}-(1+\mu\widetilde{\eta}_{k})\left\Vert \by-\bx_{kn+1}\right\Vert ^{2}}{2\widetilde{\eta}_{k}}+\frac{\widetilde{\eta}_{k}}{2}\sum_{t=(k-1)n+1}^{kn}G_{\I(t)}^{2}.
\end{align*}
One more step, by Condition \ref{enu:special-refined-condition-1},
we can rewrite the above inequality into
\begin{align}
F(\bx_{kn+1})-F(\by) & \leq\frac{\left\Vert \by-\bx_{(k-1)n+1}\right\Vert ^{2}-(1+\mu\widetilde{\eta}_{k})\left\Vert \by-\bx_{kn+1}\right\Vert ^{2}}{2n\widetilde{\eta}_{k}}+\frac{\widetilde{\eta}_{k}G_{f,2}^{2}}{2}\nonumber \\
 & +\frac{1}{n}\sum_{t=(k-1)n+1}^{kn}f_{\I(t)}(\bx_{kn+1})-f_{\I(t)}(\bx_{t})+\psi(\bx_{kn+1})-\psi(\bx_{t+1}).\label{eq:special-refined-2}
\end{align}

We now prove bound on $\left\Vert \bx_{s+1}-\bx_{s}\right\Vert $,
which will be used later. Observe that when $\psi=\varphi+\charf_{\domx}$,
there is
\begin{align*}
\bx_{s+1} & =\argmin_{\bx\in\R^{d}}\psi(\bx)+\left\langle \nabla f_{\I(s)}(\bx_{s}),\bx\right\rangle +\frac{\left\Vert \bx-\bx_{s}\right\Vert ^{2}}{2\eta_{s}}\\
 & =\argmin_{\bx\in\domx}\varphi(\bx)+\left\langle \nabla f_{\I(s)}(\bx_{s}),\bx\right\rangle +\frac{\left\Vert \bx-\bx_{s}\right\Vert ^{2}}{2\eta_{s}},
\end{align*}
which implies that, by the optimality condition, there exists $\nabla\varphi(\bx_{s+1})\in\partial\varphi(\bx_{s+1})$
such that for any $\bz\in\domx$,
\[
\left\langle \nabla\varphi(\bx_{s+1})+\nabla f_{\I(s)}(\bx_{s})+\frac{\bx_{s+1}-\bx_{s}}{\eta_{s}},\bx_{s+1}-\bz\right\rangle \leq0.
\]
In particular, set $\bz=\bx_{s}$ to have
\begin{align}
\left\Vert \bx_{s+1}-\bx_{s}\right\Vert ^{2} & \leq\eta_{s}\left\langle \nabla\varphi(\bx_{s+1})+\nabla f_{\I(s)}(\bx_{s}),\bx_{s}-\bx_{s+1}\right\rangle \nonumber \\
\Rightarrow\left\Vert \bx_{s+1}-\bx_{s}\right\Vert  & \leq\eta_{s}\left\Vert \nabla\varphi(\bx_{s+1})+\nabla f_{\I(s)}(\bx_{s})\right\Vert \leq\widetilde{\eta}_{k}\left(G_{\psi}+G_{\I(s)}\right),\label{eq:special-refined-3}
\end{align}
where in the last step we use $\eta_{s}=\widetilde{\eta}_{k}$ when
$s\in\left\{ (k-1)n+1,\cdots,kn\right\} $ by Condition \ref{enu:special-refined-condition-2}
and $\varphi$ is $G_{\psi}$-Lipschitz on $\domx$ by Condition \ref{enu:special-refined-condition-3}.

Now notice that
\begin{align}
 & \sum_{t=(k-1)n+1}^{kn}f_{\I(t)}(\bx_{kn+1})-f_{\I(t)}(\bx_{t})\nonumber \\
\overset{(a)}{\leq} & \sum_{t=(k-1)n+1}^{kn}G_{\I(t)}\left\Vert \bx_{kn+1}-\bx_{t}\right\Vert \leq\sum_{t=(k-1)n+1}^{kn}\sum_{s=t}^{kn}G_{\I(t)}\left\Vert \bx_{s+1}-\bx_{s}\right\Vert \nonumber \\
\overset{\eqref{eq:special-refined-3}}{\leq} & \sum_{t=(k-1)n+1}^{kn}\sum_{s=t}^{kn}\widetilde{\eta}_{k}G_{\I(t)}\left(G_{\psi}+G_{\I(s)}\right)\leq\widetilde{\eta}_{k}\left(n^{2}G_{f,1}G_{\psi}+\frac{n^{2}G_{f,1}^{2}-nG_{f,2}^{2}}{2}\right),\label{eq:special-refined-4}
\end{align}
where $(a)$ is by Assumption \ref{assu:lip}. Moreover, there is
\begin{align}
 & \sum_{t=(k-1)n+1}^{kn}\psi(\bx_{kn+1})-\psi(\bx_{t+1})\nonumber \\
= & \sum_{t=(k-1)n+1}^{kn}\varphi(\bx_{kn+1})-\varphi(\bx_{t+1})=\sum_{t=(k-1)n+1}^{kn-1}\varphi(\bx_{kn+1})-\varphi(\bx_{t+1})\nonumber \\
\overset{(b)}{\leq} & G_{\psi}\sum_{t=(k-1)n+1}^{kn-1}\left\Vert \bx_{kn+1}-\bx_{t+1}\right\Vert \leq G_{\psi}\sum_{t=(k-1)n+1}^{kn-1}\sum_{s=t+1}^{kn}\left\Vert \bx_{s+1}-\bx_{s}\right\Vert \nonumber \\
\overset{\eqref{eq:special-refined-3}}{\leq} & G_{\psi}\sum_{t=(k-1)n+1}^{kn-1}\sum_{s=t+1}^{kn}\widetilde{\eta}_{k}\left(G_{\psi}+G_{\I(s)}\right)\leq\widetilde{\eta}_{k}n(n-1)\left(\frac{G_{\psi}^{2}}{2}+G_{\psi}G_{f,1}\right),\label{eq:special-refined-5}
\end{align}
where $(b)$ is by Condition \ref{enu:special-refined-condition-3}.

Combine (\ref{eq:special-refined-2}), (\ref{eq:special-refined-4})
and (\ref{eq:special-refined-5}) to have for any $\by\in\dom\psi=\domx$
and $k\in\left[K\right]$,
\[
F(\bx_{kn+1})-F(\by)\leq\frac{\left\Vert \by-\bx_{(k-1)n+1}\right\Vert ^{2}-(1+\mu\widetilde{\eta}_{k})\left\Vert \by-\bx_{kn+1}\right\Vert ^{2}}{2n\widetilde{\eta}_{k}}+\widetilde{\eta}_{k}n\left(G_{f,1}+G_{\psi}\right)^{2}.
\]
Note that this inequality also holds for $\by\notin\domx$. Hence,
the above result is true for any $\by\in\R^{d}$. We can then follow
similar steps of proving Lemma \ref{lem:last} to finally obtain for
any $\bz\in\R^{d}$,
\begin{align*}
F(\bx_{kn+1})-F(\bz) & \leq\O\left(\frac{\left\Vert \bz-\bx_{1}\right\Vert ^{2}}{n\sum_{k=1}^{K}\widetilde{\gamma}_{k}\widetilde{\eta}_{k}}+n\left(G_{f,1}+G_{\psi}\right)^{2}\sum_{k=1}^{K}\frac{\widetilde{\gamma}_{k}\widetilde{\eta}_{k}^{2}}{\sum_{\ell=k}^{K}\widetilde{\gamma}_{\ell}\widetilde{\eta}_{\ell}}\right)\\
 & =\O\left(\frac{\left\Vert \bz-\bx_{1}\right\Vert ^{2}}{n\sum_{k=1}^{K}\widetilde{\gamma}_{k}\widetilde{\eta}_{k}}+n\left(G_{f,1}^{2}+G_{\psi}^{2}\right)\sum_{k=1}^{K}\frac{\widetilde{\gamma}_{k}\widetilde{\eta}_{k}^{2}}{\sum_{\ell=k}^{K}\widetilde{\gamma}_{\ell}\widetilde{\eta}_{\ell}}\right),
\end{align*}
where $\widetilde{\gamma}_{k}\defeq\prod_{\ell=1}^{k-1}(1+\mu\widetilde{\eta}_{\ell}),\forall k\in\left[K+1\right]$.
\end{proof}

\subsection{Analysis for the $\protect\RR$ Sampling Scheme\label{subsec:RR-analysis}}

Due to Lemma \ref{lem:last}, our task reduces to bound $\left|\E\left[\Omega_{t}(\bx_{s})\right]\right|,\forall t\in\left[T\right],s\in\left[t\right]$.
In this subsection, we will show how to bound it under the $\RR$
sampling scheme. The main idea is inspired by \cite{NEURIPS2020_2e2c4bf7,NEURIPS2021_107030ca,NEURIPS2022_7bc4f74e},
where they only bound $\left|\E\left[\Omega_{t}(\bx_{t})\right]\right|,\forall t\in\left[T\right]$
in a simpler setting, i.e., $\psi=\charf_{\domx}$, $G_{i}\equiv G$,
and constant stepsize. Here, we provide a fine-grained analysis for
$\left|\E\left[\Omega_{t}(\bx_{s})\right]\right|,\forall t\in\left[T\right],s\in\left[t\right]$
working for the broader setting considered in our paper. As mentioned,
the finer dependence on $G_{f,1}$ and $G_{f,2}$ is the key to help
us obtain improved results over \cite{pmlr-v235-liu24cg}.
\begin{lem}
\label{lem:RR-Omega}Under Assumptions \ref{assu:basic} and \ref{assu:lip},
suppose the $\RR$ sampling scheme is employed, then for any $t\in\left[T\right]$
and $s\in\left[t\right]$, let $\qu\defeq\qu(t)$, Algorithm \ref{alg:Alg}
guarantees
\begin{itemize}
\item $\left|\E\left[\Omega_{t}(\bx_{s})\right]\right|=0$ if $s\in\left[(\qu-1)n\right]$;
\item $\left|\E\left[\Omega_{t}(\bx_{s})\right]\right|\leq\frac{\sqrt{2}G_{f,2}^{2}}{n}\sum_{j=(\qu-1)n+1}^{s-1}\frac{\gamma_{j}\eta_{j}}{\gamma_{s}}+\frac{2\sqrt{2}G_{f,1}G_{f,2}}{n}\sum_{i=(\qu-1)n+1}^{s-1}\sqrt{\sum_{j=i}^{s-1}\frac{\gamma_{j}^{2}\eta_{j}^{2}}{\gamma_{s}^{2}}}$
if $s\in\left[t\right]\backslash\left[(\qu-1)n\right]$;
\end{itemize}
where $\Omega_{t}(\cdot)$ and $\gamma_{t}$ are defined in Lemmas
\ref{lem:core} and \ref{lem:last}, respectively.
\end{lem}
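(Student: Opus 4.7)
The plan is to split into the two cases stated and handle them separately.

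For the easy case $s \in [(\qu-1)n]$, I would argue that under the $\RR$ scheme the iterate $\bx_s$ is measurable with respect to $\sigma(\pi_1, \ldots, \pi_{\qu(s)})$ with $\qu(s) < \qu$, whereas $\I(t) = \pi_{\qu}^{\re(t)}$ depends only on $\pi_{\qu}$. Since the permutations $\pi_1, \ldots, \pi_{\qu(T)}$ are mutually independent and any single coordinate of a uniform random permutation is itself marginally uniform on $[n]$, conditioning on $\bx_s$ gives $\E[f_{\I(t)}(\bx_s) \mid \bx_s] = f(\bx_s)$, so $\E[\Omega_t(\bx_s)] = 0$.

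For the same-epoch case $s \in [t] \setminus [(\qu-1)n]$, set $s_0 \defeq (\qu-1)n+1$ and telescope
\[
\Omega_t(\bx_s) = \Omega_t(\bx_{s_0}) + \sum_{j=s_0}^{s-1}\bigl[\Omega_t(\bx_{j+1}) - \Omega_t(\bx_j)\bigr].
\]
The first expectation vanishes by Case 1 applied at $s_0$, since $\bx_{s_0}$ depends only on earlier epochs. For each increment I would apply subgradient inequalities in both directions to $f_{\I(t)}$ and $f$ at the endpoints $\bx_j, \bx_{j+1}$, producing a first-order form $\langle \nabla f_{\I(t)}(\bx_j) - \nabla f(\bx_j),\ \bx_{j+1} - \bx_j\rangle$ modulo controllable residual. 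The displacement $\bx_{j+1} - \bx_j$ is then read off from the proximal update rule, and the $\mu$-strong convexity of $\psi$ supplies the contraction factor $1/(1+\mu\eta_j)$ that, when propagated through the telescope, assembles into the $\gamma_j/\gamma_s$ weights in the target bound.

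The central step is computing conditional expectations under the without-replacement law of $\RR$. Conditioning on all earlier epochs together with $\pi_{\qu}^1, \ldots, \pi_{\qu}^{\re(j)-1}$ freezes $\bx_j$ while rendering the pair $(\I(j), \I(t))$ a uniform ordered pair of distinct indices from the residual set $A_j \defeq [n] \setminus \{\pi_{\qu}^1, \ldots, \pi_{\qu}^{\re(j)-1}\}$. The conditional expectation of the inner-product term can then be computed via the identity $\sum_{i \neq i'}\langle g_i, g_{i'}\rangle = \|\sum_i g_i\|^2 - \sum_i \|g_i\|^2$ applied to the restricted gradients $g_i = \nabla f_i(\bx_j)$. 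The diagonal piece (forced by $\I(j) \neq \I(t)$) contributes the drift term of size $\sqrt{2}\,G_{f,2}^2/n$ per step, while the off-diagonal cross piece, summed over $j$ and bounded via Cauchy--Schwarz, produces the square-root martingale structure $\sum_i \sqrt{\sum_{j \geq i} \gamma_j^2\eta_j^2/\gamma_s^2}$ multiplied by $G_{f,1}G_{f,2}/n$.

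I expect the main obstacle to be the clean separation of the Lipschitz parameters into the asymmetric pair $(G_{f,1}, G_{f,2})$ rather than collapsing them into a single $G$ or only $G_{f,1}^2$ as in coarser analyses. Extracting $G_{f,2}^2$ from the second moment of the sampled gradient while keeping a $G_{f,1}$ factor on the deterministic average of gradient norms is precisely what enables the factor-$n^{-1/4}$ improvement over \cite{pmlr-v235-liu24cg}. A secondary technical obstacle is ensuring the bound is uniform in $s \leq t$ within the same epoch rather than only for $s = t$ as in prior without-replacement analyses: the telescope must be stopped at an arbitrary $\bx_s$ and the $\bx_{j+1}$-side residuals must be tracked through the $\gamma_j/\gamma_s$ weights so that the final constants $\sqrt{2}$ and $2\sqrt{2}$ come out as stated.
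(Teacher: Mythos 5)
Your handling of the easy case $s\in\left[(\qu-1)n\right]$ matches the paper's: condition on the earlier epochs, use independence of $\pi_1,\dots,\pi_{\qu(T)}$ and marginal uniformity of $\I(t)$ on $\left[n\right]$. That part is fine.

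The same-epoch case is where the proposal diverges from the paper and, I believe, develops genuine gaps. The paper does not telescope $\Omega_t$ across iterates; instead it first uses exchangeability of $\I(s)$ and $\I(t)$ conditional on $\F_{s-1}$ to reduce to $\E\left[\Omega_s(\bx_s)\right]$, then writes this as a sum over swap positions $i\in\left[\re-1\right]$ of differences $\E\left[f_{\pi_\qu^{\re}}(\bx_s)-f_{\pi_\qu^i}(\bx_s)\right]$, and finally uses the coupling/exchange identity (Lemma \ref{lem:RR-same-dist}) to turn each such difference into $\E\left[f_{\pi_\qu^{\re}}(\bx_s)-f_{\pi_\qu^{\re}}(\widehat{\bx}_s(\re,i))\right]$, where $\widehat{\bx}_s(\re,i)$ is a ghost trajectory started from the same point $\bx_{(\qu-1)n+1}$ but run with the permutation that swaps positions $\re$ and $i$. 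The whole estimate then rests on the stability bound for $\left\Vert\bx_s-\widehat{\bx}_s(\re,i)\right\Vert$ (Lemma \ref{lem:RR-stability}), which is proved by pairwise contraction of proximal maps (Lemma \ref{lem:contractive}).

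Your telescoping plan has three concrete problems. First, bounding $\left|\Omega_t(\bx_{j+1})-\Omega_t(\bx_j)\right|$ via Lipschitzness or the first-order remainder needs $\left\Vert\bx_{j+1}-\bx_j\right\Vert$ to be controlled, but under Assumptions \ref{assu:basic} and \ref{assu:lip} alone there is no a priori bound on the proximal step length for general $\psi$ (e.g.\ $\psi(\bx)=\frac{\mu}{2}\left\Vert\bx\right\Vert^2$ with a large $\eta_j$ gives arbitrarily large steps). The paper explicitly needs the extra structure $\psi=\varphi+\charf_{\domx}$ in Lemma \ref{lem:special-refined} precisely when a step-length bound is required; the coupling argument avoids this because the contraction lemma controls the \emph{difference} of two proximal outputs from a shared starting point without ever bounding a single step. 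Second, the factors $\gamma_j/\gamma_s=\prod_{\ell=j}^{s-1}(1+\mu\eta_\ell)^{-1}$ in the target inequality are an accumulation of strong-convexity contractions applied at every step \emph{after} the perturbation was introduced; a forward telescope attributes weight one to each increment and offers no mechanism for a geometric damping indexed by all future $\ell\ge j$. Third, the bound's second term has the nested structure $\sum_{i}\sqrt{\sum_{j\ge i}\gamma_j^2\eta_j^2/\gamma_s^2}$, whose outer index $i$ is the swap position and whose inner sum tracks how much noise accumulates from $i$ to $s-1$ in the two coupled trajectories. Cauchy--Schwarz applied to a single telescope would at best produce $\sqrt{\sum_j\cdot}$, not $\sum_i\sqrt{\sum_{j\ge i}\cdot}$. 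Your idea of computing conditional expectations under the without-replacement law and using $\sum_{i\neq i'}\langle g_i,g_{i'}\rangle=\|\sum_i g_i\|^2-\sum_i\|g_i\|^2$ is a reasonable way to split diagonal and cross terms, but in the proximal setting $\bx_{j+1}$ is a nonlinear function of $\nabla f_{\I(j)}(\bx_j)$ (not an affine step), so the clean pairwise-moment identity does not apply to the increment you wrote. I would suggest looking instead for the exchangeability identity $\E\left[\Omega_t(\bx_s)\right]=\E\left[\Omega_s(\bx_s)\right]$ and the ghost-trajectory coupling; those two ideas are what make the argument go through without step-length control and what generate the $\gamma_j/\gamma_s$ factors and the nested sum.
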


\begin{proof}
Let $\F_{t}\defeq\sigma(\I(1),\cdots,\I(t)),\forall t\in\left[T\right]$
denote the natural filtration and $\F_{0}$ be the trivial $\sigma$-algebra.
Note that $\bx_{t}\in\F_{t-1},\forall t\in\left[T\right]$. Additionally,
we use $\re\defeq\re(s)$ in the following.
\begin{itemize}
\item Given $s\in\left[(\qu-1)n\right]$, we know $\bx_{s}\in\F_{(\qu-1)n}$
and $\I(t)\diseq\mathrm{Uniform}\left[n\right]$ conditioned on $\F_{(\qu-1)n}$
under the $\RR$ sampling scheme. Therefore, $\E\left[f_{\I(t)}(\bx_{s})\mid\F_{(\qu-1)n}\right]=\frac{1}{n}\sum_{i=1}^{n}f_{i}(\bx_{s})=f(\bx_{s})$,
which implies
\[
\E\left[\Omega_{t}(\bx_{s})\right]=\E\left[f_{\I(t)}(\bx_{s})-f(\bx_{s})\right]=\E\left[\E\left[f_{\I(t)}(\bx_{s})\mid\F_{(\qu-1)n}\right]-f(\bx_{s})\right]=0.
\]
\item Given $s\in\left[t\right]\backslash\left[(\qu-1)n\right]$, we know
$\I(t)$ and $\I(s)$ have the same distribution conditioned on $\F_{s-1}$
under the $\RR$ sampling scheme. Hence, there is
\begin{equation}
\E\left[f_{\I(t)}(\bx_{s})\mid\F_{s-1}\right]=\E\left[f_{\I(s)}(\bx_{s})\mid\F_{s-1}\right]\Rightarrow\E\left[f_{\I(t)}(\bx_{s})\right]=\E\left[f_{\I(s)}(\bx_{s})\right]\Rightarrow\E\left[\Omega_{t}(\bx_{s})\right]=\E\left[\Omega_{s}(\bx_{s})\right].\label{eq:RR-Omega-equiv}
\end{equation}
Note that $\qu(s)=\qu$ when $s\in\left[t\right]\backslash\left[(\qu-1)n\right]$.
Thus, the index $\I(s)$ satisfies
\begin{equation}
\I(s)=\pi_{\qu(s)}^{\re(s)}=\pi_{\qu}^{\re}.\label{eq:RR-Omega-index}
\end{equation}
Therefore, we know
\begin{align}
f(\bx_{s}) & =\frac{1}{n}\sum_{i=1}^{n}f_{i}(\bx_{s})=\frac{1}{n}\sum_{i=1}^{\re-1}f_{\pi_{\qu}^{i}}(\bx_{s})+\frac{1}{n}\sum_{i=\re}^{n}f_{\pi_{\qu}^{i}}(\bx_{s})\nonumber \\
 & =\frac{1}{n}\sum_{i=1}^{\re-1}f_{\pi_{\qu}^{i}}(\bx_{s})+\frac{n-\re+1}{n}\E\left[f_{\pi_{\qu}^{\re}}(\bx_{s})\mid\F_{s-1}\right]\nonumber \\
 & \overset{\eqref{eq:RR-Omega-index}}{=}\frac{1}{n}\sum_{i=1}^{\re-1}f_{\pi_{\qu}^{i}}(\bx_{s})+\frac{n-\re+1}{n}\E\left[f_{\I(s)}(\bx_{s})\mid\F_{s-1}\right]\nonumber \\
\Rightarrow\E\left[f(\bx_{s})\right] & =\frac{1}{n}\sum_{i=1}^{\re-1}\E\left[f_{\pi_{\qu}^{i}}(\bx_{s})\right]+\frac{n-\re+1}{n}\E\left[f_{\I(s)}(\bx_{s})\right]\nonumber \\
\Rightarrow\E\left[\Omega_{s}(\bx_{s})\right] & =\frac{1}{n}\sum_{i=1}^{\re-1}\E\left[f_{\I(s)}(\bx_{s})-f_{\pi_{\qu}^{i}}(\bx_{s})\right]\overset{\eqref{eq:RR-Omega-index}}{=}\frac{1}{n}\sum_{i=1}^{\re-1}\E\left[f_{\pi_{\qu}^{\re}}(\bx_{s})-f_{\pi_{\qu}^{i}}(\bx_{s})\right].\label{eq:RR-Omega-1}
\end{align}
Now for any fixed $i\in\left[\re-1\right]$, we introduce $\widehat{\pi}_{\qu}(\re,i)$,
which is generated by exchanging $\pi_{\qu}^{\re}$ and $\pi_{\qu}^{i}$
in $\pi_{\qu}$, i.e.,
\begin{equation}
\widehat{\pi}_{\qu}(\re,i)\defeq(\pi_{\qu}^{1},\cdots,\pi_{\qu}^{i-1},\pi_{\qu}^{\re},\pi_{\qu}^{i+1},\cdots,\pi_{\qu}^{\re-1},\pi_{\qu}^{i},\pi_{\qu}^{\re+1},\cdots,\pi_{\qu}^{n}).\label{eq:RR-Omega-hat-pi}
\end{equation}
We then define the following sequence of points,
\begin{align}
\widehat{\bx}_{(\qu-1)n+1}(\re,i) & \defeq\bx_{(\qu-1)n+1},\label{eq:RR-Omega-hat-x-init}\\
\widehat{\bx}_{(\qu-1)n+1+j}(\re,i) & \defeq\argmin_{\bx\in\R^{d}}\psi(\bx)+\left\langle \nabla f_{\widehat{\pi}_{\qu}^{j}(\re,i)}(\widehat{\bx}_{(\qu-1)n+j}(\re,i)),\bx\right\rangle +\frac{\left\Vert \bx-\widehat{\bx}_{(\qu-1)n+j}(\re,i)\right\Vert ^{2}}{2\eta_{(\qu-1)n+j}},\forall j\in\left[\re-1\right].\label{eq:RR-Omega-hat-x}
\end{align}
By Lemma \ref{lem:RR-same-dist}, $\pi_{\qu}\diseq\widehat{\pi}_{\qu}(\re,i)$,
which implies $(\pi_{\qu},\bx_{s})\diseq(\widehat{\pi}_{\qu}(\re,i),\widehat{\bx}_{s}(\re,i))$\footnote{\label{fn:deterministic}Strictly speaking, this equation requires
that for any $i\in\left[n\right]$ and $\bx\in\R^{d}$, $\nabla f_{i}(\bx)$
is deterministically picked from the subgradient set $\partial f_{i}(\bx)$,
which possibly contains more than one element. We assume it holds
since this is realistic.}. Hence,
\[
\E\left[f_{\pi_{\qu}^{i}}(\bx_{s})\right]=\E\left[f_{\widehat{\pi}_{\qu}^{i}(\re,i)}(\widehat{\bx}_{s}(\re,i))\right]=\E\left[f_{\pi_{\qu}^{\re}}(\widehat{\bx}_{s}(\re,i))\right],
\]
which gives us
\begin{align*}
\E\left[\Omega_{t}(\bx_{s})\right] & \overset{\eqref{eq:RR-Omega-equiv}}{=}\E\left[\Omega_{s}(\bx_{s})\right]\overset{\eqref{eq:RR-Omega-1}}{=}\frac{1}{n}\sum_{i=1}^{\re-1}\E\left[f_{\pi_{\qu}^{\re}}(\bx_{s})-f_{\pi_{\qu}^{i}}(\bx_{s})\right]=\frac{1}{n}\sum_{i=1}^{\re-1}\E\left[f_{\pi_{\qu}^{\re}}(\bx_{s})-f_{\pi_{\qu}^{\re}}(\widehat{\bx}_{s}(\re,i))\right]\\
\Rightarrow\left|\E\left[\Omega_{t}(\bx_{s})\right]\right| & \leq\frac{1}{n}\sum_{i=1}^{\re-1}\E\left[\left|f_{\pi_{\qu}^{\re}}(\bx_{s})-f_{\pi_{\qu}^{\re}}(\widehat{\bx}_{s}(\re,i))\right|\right]\overset{(a)}{\leq}\frac{1}{n}\sum_{i=1}^{\re-1}\E\left[G_{\pi_{\qu}^{\re}}\left\Vert \bx_{s}-\widehat{\bx}_{s}(\re,i)\right\Vert \right]\\
 & =\frac{1}{n}\sum_{i=1}^{\re-1}\E\left[G_{\pi_{\qu}^{\re}}\E\left[\left\Vert \bx_{s}-\widehat{\bx}_{s}(\re,i)\right\Vert \mid\pi_{\qu}^{\re}\right]\right]\overset{(b)}{\leq}\frac{1}{n}\sum_{i=1}^{\re-1}\E\left[G_{\pi_{\qu}^{\re}}\sqrt{\E\left[\left\Vert \bx_{s}-\widehat{\bx}_{s}(\re,i)\right\Vert ^{2}\mid\pi_{\qu}^{\re}\right]}\right],
\end{align*}
where $(a)$ is because $f_{\pi_{\qu}^{\re}}$ is $G_{\pi_{\qu}^{\re}}$-Lipschitz
and $(b)$ is due to H\"{o}lder's inequality. Finally, we invoke
Lemma \ref{lem:RR-stability} to have
\begin{align*}
\left|\E\left[\Omega_{t}(\bx_{s})\right]\right| & \leq\frac{1}{n}\sum_{i=1}^{\re-1}\E\left[\sqrt{2}G_{\pi_{\qu}^{\re}}^{2}\frac{\gamma_{(\qu-1)n+i}\eta_{(\qu-1)n+i}}{\gamma_{s}}+2\sqrt{2}G_{\pi_{\qu}^{\re}}G_{f,2}\sqrt{\sum_{j=i}^{\re-1}\frac{\gamma_{(\qu-1)n+j}^{2}\eta_{(\qu-1)n+j}^{2}}{\gamma_{s}^{2}}}\right]\\
 & =\frac{\sqrt{2}G_{f,2}^{2}}{n}\sum_{i=1}^{\re-1}\frac{\gamma_{(\qu-1)n+i}\eta_{(\qu-1)n+i}}{\gamma_{s}}+\frac{2\sqrt{2}G_{f,1}G_{f,2}}{n}\sum_{i=1}^{\re-1}\sqrt{\sum_{j=i}^{\re-1}\frac{\gamma_{(\qu-1)n+j}^{2}\eta_{(\qu-1)n+j}^{2}}{\gamma_{s}^{2}}}\\
 & =\frac{\sqrt{2}G_{f,2}^{2}}{n}\sum_{j=(\qu-1)n+1}^{s-1}\frac{\gamma_{j}\eta_{j}}{\gamma_{s}}+\frac{2\sqrt{2}G_{f,1}G_{f,2}}{n}\sum_{i=(\qu-1)n+1}^{s-1}\sqrt{\sum_{j=i}^{s-1}\frac{\gamma_{j}^{2}\eta_{j}^{2}}{\gamma_{s}^{2}}},
\end{align*}
where we use the fact $(\qu-1)n+\re=s$ in the final step.
\end{itemize}
\end{proof}

\begin{lem}
\label{lem:RR-stability}Under the same settings in Lemma \ref{lem:RR-Omega},
let $\widehat{\bx}_{s}(\re,i)$ be the point defined by (\ref{eq:RR-Omega-hat-x-init})
and (\ref{eq:RR-Omega-hat-x}), then we have
\[
\E\left[\left\Vert \bx_{s}-\widehat{\bx}_{s}(\re,i)\right\Vert ^{2}\mid\pi_{\qu}^{\re}\right]\leq2G_{\pi_{\qu}^{\re}}^{2}\frac{\gamma_{(\qu-1)n+i}^{2}\eta_{(\qu-1)n+i}^{2}}{\gamma_{s}^{2}}+8G_{f,2}^{2}\sum_{j=i}^{\re-1}\frac{\gamma_{(\qu-1)n+j}^{2}\eta_{(\qu-1)n+j}^{2}}{\gamma_{s}^{2}},
\]
where $\gamma_{t}$ is defined in Lemma \ref{lem:last}.
\end{lem}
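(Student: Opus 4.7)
The plan is to track the deviation $\delta_{t}\defeq\bx_{t}-\widehat{\bx}_{t}(\re,i)$ by exploiting the fact that $\pi_{\qu}$ and $\widehat{\pi}_{\qu}(\re,i)$ differ only at positions $i$ and $\re$. Consequently, the first $i-1$ updates of both sequences use identical gradient indices starting from the same point, so $\delta_{t_{0}}=0$ at $t_{0}\defeq(\qu-1)n+i$. For the indices $j=i+1,\dots,\re-1$, both sequences update with the \emph{same} gradient index $\pi_{\qu}^{j}$ but from different points, and only at step $j=i$ do they use different indices ($\pi_{\qu}^{i}$ versus $\pi_{\qu}^{\re}$). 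Thus there is a single ``perturbation event'' at step $t_{0}$, which must be propagated through the remaining $\re-1-i$ same-index steps to reach time $s=(\qu-1)n+\re$.

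At step $t_{0}$, since $\bx_{t_{0}}=\widehat{\bx}_{t_{0}}$, the $\mu$-strong convexity of $\psi$ makes $\mathrm{prox}_{\eta\psi}$ a $\frac{1}{1+\mu\eta}$-contraction, so
\[
\|\delta_{t_{0}+1}\|^{2}\leq\frac{\eta_{t_{0}}^{2}\|\nabla f_{\pi_{\qu}^{i}}(\bx_{t_{0}})-\nabla f_{\pi_{\qu}^{\re}}(\bx_{t_{0}})\|^{2}}{(1+\mu\eta_{t_{0}})^{2}}\leq\frac{2\eta_{t_{0}}^{2}\bigl(G_{\pi_{\qu}^{i}}^{2}+G_{\pi_{\qu}^{\re}}^{2}\bigr)}{(1+\mu\eta_{t_{0}})^{2}}
\]
by Assumption~\ref{assu:lip} and $\|a-b\|^{2}\leq 2\|a\|^{2}+2\|b\|^{2}$. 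For $t=t_{0}+1,\dots,s-1$ the two iterates apply the same prox map with the same gradient index, so expanding $\|\delta_{t}-\eta_{t}(\nabla f_{\pi_{\qu}^{j}}(\bx_{t})-\nabla f_{\pi_{\qu}^{j}}(\widehat{\bx}_{t}))\|^{2}$ and discarding the nonpositive cross term (by convexity of $f_{\pi_{\qu}^{j}}$) gives
\[
\|\delta_{t+1}\|^{2}\leq\frac{\|\delta_{t}\|^{2}+4\eta_{t}^{2}G_{\pi_{\qu}^{t-(\qu-1)n}}^{2}}{(1+\mu\eta_{t})^{2}}.
\]
Multiplying by $\gamma_{t+1}^{2}$ and using $\gamma_{t+1}=(1+\mu\eta_{t})\gamma_{t}$ converts this into the clean telescoping recursion $\gamma_{t+1}^{2}\|\delta_{t+1}\|^{2}\leq\gamma_{t}^{2}\|\delta_{t}\|^{2}+4\gamma_{t}^{2}\eta_{t}^{2}G_{\pi_{\qu}^{t-(\qu-1)n}}^{2}$, which iterated from $t_{0}+1$ to $s-1$ together with the initial perturbation bound yields
\[
\gamma_{s}^{2}\|\delta_{s}\|^{2}\leq 2\gamma_{t_{0}}^{2}\eta_{t_{0}}^{2}\bigl(G_{\pi_{\qu}^{i}}^{2}+G_{\pi_{\qu}^{\re}}^{2}\bigr)+4\sum_{j=i+1}^{\re-1}\gamma_{(\qu-1)n+j}^{2}\eta_{(\qu-1)n+j}^{2}G_{\pi_{\qu}^{j}}^{2}.
\]

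Finally, conditional on $\pi_{\qu}^{\re}$, each $\pi_{\qu}^{j}$ with $j\neq\re$ is uniform on $[n]\setminus\{\pi_{\qu}^{\re}\}$, so $\E[G_{\pi_{\qu}^{j}}^{2}\mid\pi_{\qu}^{\re}]=\frac{nG_{f,2}^{2}-G_{\pi_{\qu}^{\re}}^{2}}{n-1}\leq 2G_{f,2}^{2}$. Taking the conditional expectation, I keep one copy of $2\gamma_{t_{0}}^{2}\eta_{t_{0}}^{2}G_{\pi_{\qu}^{\re}}^{2}$ unchanged—this is the piece of the initial perturbation already measurable w.r.t.\ $\pi_{\qu}^{\re}$—and collect all other $G_{\pi_{\qu}^{j}}^{2}$ contributions into $8G_{f,2}^{2}\sum_{j=i+1}^{\re-1}\gamma_{(\qu-1)n+j}^{2}\eta_{(\qu-1)n+j}^{2}$, absorbing the leftover $4\gamma_{t_{0}}^{2}\eta_{t_{0}}^{2}\cdot\frac{nG_{f,2}^{2}-G_{\pi_{\qu}^{\re}}^{2}}{n-1}$ from the initial term by relaxing $4\leq 8$ so the $j=i$ index joins the sum. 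Dividing by $\gamma_{s}^{2}$ yields the claim. The main obstacle is the $\gamma$-bookkeeping in the second paragraph—aligning $(1+\mu\eta_{t})^{-2}$ with $\gamma_{t+1}^{2}/\gamma_{t}^{2}$ so the recursion telescopes in $\gamma_{t}^{2}\|\delta_{t}\|^{2}$—combined with the careful split at the initial step so that after the $\pi_{\qu}^{\re}$-conditional average \emph{exactly one} factor of $G_{\pi_{\qu}^{\re}}^{2}$ survives while everything else rolls into the $G_{f,2}^{2}$ sum with the correct constant.
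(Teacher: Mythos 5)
Your proposal is correct and follows essentially the same route as the paper: a prox-contraction bound at the single ``perturbation'' step $j=i$, a same-index contraction recursion for $j>i$ that telescopes after weighting by $\gamma_t^2$, and the conditional second-moment bound $\E[G_{\pi_{\qu}^{j}}^{2}\mid\pi_{\qu}^{\re}]\leq 2G_{f,2}^{2}$ to collapse everything except the surviving $G_{\pi_{\qu}^{\re}}^2$ factor. The only cosmetic differences are that the paper unrolls the recursion with explicit products $\prod_\ell(1+\mu\eta_\ell)^{-2}$ and only identifies them with $\gamma$-ratios at the end, and it absorbs the $j=i$ term into the sum (relaxing $2\to 4$) \emph{before} taking the conditional expectation rather than after; both orderings give the same constants.
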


\begin{proof}
Note that $\pi_{\qu}^{j}=\widehat{\pi}_{\qu}^{j}(\re,i)$ for all
$j\in\left[i-1\right]$ by the definition of $\widehat{\pi}_{\qu}(\re,i)$
(see (\ref{eq:RR-Omega-hat-pi})) and $\bx_{(\qu-1)n+1}=\widehat{\bx}_{(\qu-1)n+1}(\re,i)$
by the definition (see (\ref{eq:RR-Omega-hat-x-init})). Thus, by
the definition of $\widehat{\bx}_{(\qu-1)n+1+j}$ (see (\ref{eq:RR-Omega-hat-x})),
there is
\[
\bx_{(\qu-1)n+j}=\widehat{\bx}_{(\qu-1)n+j}(\re,i),\forall j\in\left[i\right].
\]
In the following, we denote by $\by_{j}\defeq\bx_{(\qu-1)n+j}$ and
$\widehat{\by}_{j}\defeq\widehat{\bx}_{(\qu-1)n+j}(\re,i),\forall j\in\left\{ i,\cdots,\re\right\} $.
Note that there is $\by_{i}=\widehat{\by}_{i}$.

By Lemma \ref{lem:contractive},
\begin{align}
\left\Vert \by_{i+1}-\widehat{\by}_{i+1}\right\Vert  & \leq\frac{\left\Vert \by_{i}-\widehat{\by}_{i}-\eta_{(\qu-1)n+i}(\nabla f_{\pi_{\qu}^{i}}(\by_{i})-\nabla f_{\widehat{\pi}_{\qu}^{i}(\re,i)}(\widehat{\by}_{i}))\right\Vert }{1+\mu\eta_{(\qu-1)n+i}}=\frac{\eta_{(\qu-1)n+i}\left\Vert \nabla f_{\pi_{\qu}^{i}}(\by_{i})-\nabla f_{\widehat{\pi}_{\qu}^{i}(\re,i)}(\by_{i})\right\Vert }{1+\mu\eta_{(\qu-1)n+i}}\nonumber \\
 & \overset{\eqref{eq:RR-Omega-hat-pi}}{=}\frac{\eta_{(\qu-1)n+i}\left\Vert \nabla f_{\pi_{\qu}^{i}}(\by_{i})-\nabla f_{\pi_{\qu}^{\re}}(\by_{i})\right\Vert }{1+\mu\eta_{(\qu-1)n+i}}\leq\frac{\eta_{(\qu-1)n+i}(G_{\pi_{\qu}^{i}}+G_{\pi_{\qu}^{\re}})}{1+\mu\eta_{(\qu-1)n+i}},\label{eq:RR-stability-1}
\end{align}
where the last step is by the Lipschitz property of $f_{i}$. We invoke
Lemma \ref{lem:contractive} again to obtain for any $i+1\leq j\leq\re-1$,
\begin{align*}
\left\Vert \by_{j+1}-\widehat{\by}_{j+1}\right\Vert  & \leq\frac{\left\Vert \by_{j}-\widehat{\by}_{j}-\eta_{(\qu-1)n+j}(\nabla f_{\pi_{\qu}^{j}}(\by_{j})-\nabla f_{\widehat{\pi}_{\qu}^{j}(\re,i)}(\widehat{\by}_{j}))\right\Vert }{1+\mu\eta_{(\qu-1)n+j}}\\
 & \overset{\eqref{eq:RR-Omega-hat-pi}}{=}\frac{\left\Vert \by_{j}-\widehat{\by}_{j}-\eta_{(\qu-1)n+j}(\nabla f_{\pi_{\qu}^{j}}(\by_{j})-\nabla f_{\pi_{\qu}^{j}}(\widehat{\by}_{j}))\right\Vert }{1+\mu\eta_{(\qu-1)n+j}},
\end{align*}
which implies
\begin{align}
\left\Vert \by_{j+1}-\widehat{\by}_{j+1}\right\Vert ^{2} & \leq\frac{\left\Vert \by_{j}-\widehat{\by}_{j}\right\Vert ^{2}-2\eta_{(\qu-1)n+j}\left\langle \by_{j}-\widehat{\by}_{j},\nabla f_{\pi_{\qu}^{j}}(\by_{j})-\nabla f_{\pi_{\qu}^{j}}(\widehat{\by}_{j})\right\rangle +\eta_{(\qu-1)n+j}^{2}\left\Vert \nabla f_{\pi_{\qu}^{j}}(\by_{j})-\nabla f_{\pi_{\qu}^{j}}(\widehat{\by}_{j})\right\Vert ^{2}}{(1+\mu\eta_{(\qu-1)n+j})^{2}}\nonumber \\
 & \leq\frac{\left\Vert \by_{j}-\widehat{\by}_{j}\right\Vert ^{2}+4\eta_{(\qu-1)n+j}^{2}G_{\pi_{\qu}^{j}}^{2}}{(1+\mu\eta_{(\qu-1)n+j})^{2}},\label{eq:RR-stability-2}
\end{align}
where the last line is by
\begin{align*}
\left\langle \by_{j}-\widehat{\by}_{j},\nabla f_{\pi_{\qu}^{j}}(\by_{j})-\nabla f_{\pi_{\qu}^{j}}(\widehat{\by}_{j})\right\rangle  & \overset{\text{Assumption }\ref{assu:basic}}{\geq}0,\\
\left\Vert \nabla f_{\pi_{\qu}^{j}}(\by_{j})-\nabla f_{\pi_{\qu}^{j}}(\widehat{\by}_{j})\right\Vert ^{2} & \overset{\text{Assumption }\ref{assu:lip}}{\leq}4G_{\pi_{\qu}^{j}}^{2}.
\end{align*}

Finally, unrolling (\ref{eq:RR-stability-2}) recursively to obtain
\begin{align*}
\left\Vert \by_{\re}-\widehat{\by}_{\re}\right\Vert ^{2} & \leq\frac{\left\Vert \by_{i+1}-\widehat{\by}_{i+1}\right\Vert ^{2}}{\prod_{\ell=i+1}^{\re-1}(1+\mu\eta_{(\qu-1)n+\ell})^{2}}+\sum_{j=i+1}^{\re-1}\frac{4\eta_{(\qu-1)n+j}^{2}G_{\pi_{\qu}^{j}}^{2}}{\prod_{\ell=j}^{\re-1}(1+\mu\eta_{(\qu-1)n+\ell})^{2}}\\
 & \overset{\eqref{eq:RR-stability-1}}{\leq}\frac{\eta_{(\qu-1)n+i}^{2}(G_{\pi_{\qu}^{i}}+G_{\pi_{\qu}^{\re}})^{2}}{\prod_{\ell=i}^{\re-1}(1+\mu\eta_{(\qu-1)n+\ell})^{2}}+\sum_{j=i+1}^{\re-1}\frac{4\eta_{(\qu-1)n+j}^{2}G_{\pi_{\qu}^{j}}^{2}}{\prod_{\ell=j}^{\re-1}(1+\mu\eta_{(\qu-1)n+\ell})^{2}}\\
 & \leq\frac{2\eta_{(\qu-1)n+i}^{2}(G_{\pi_{\qu}^{i}}^{2}+G_{\pi_{\qu}^{\re}}^{2})}{\prod_{\ell=i}^{\re-1}(1+\mu\eta_{(\qu-1)n+\ell})^{2}}+\sum_{j=i+1}^{\re-1}\frac{4\eta_{(\qu-1)n+j}^{2}G_{\pi_{\qu}^{j}}^{2}}{\prod_{\ell=j}^{\re-1}(1+\mu\eta_{(\qu-1)n+\ell})^{2}},\\
 & \leq\frac{2\eta_{(\qu-1)n+i}^{2}G_{\pi_{\qu}^{\re}}^{2}}{\prod_{\ell=i}^{\re-1}(1+\mu\eta_{(\qu-1)n+\ell})^{2}}+\sum_{j=i}^{\re-1}\frac{4\eta_{(\qu-1)n+j}^{2}G_{\pi_{\qu}^{j}}^{2}}{\prod_{\ell=j}^{\re-1}(1+\mu\eta_{(\qu-1)n+\ell})^{2}}
\end{align*}
Therefore, we know
\begin{align*}
\E\left[\left\Vert \by_{\re}-\widehat{\by}_{\re}\right\Vert ^{2}\mid\pi_{\qu}^{\re}\right] & \leq\frac{2\eta_{(\qu-1)n+i}^{2}G_{\pi_{\qu}^{\re}}^{2}}{\prod_{\ell=i}^{\re-1}(1+\mu\eta_{(\qu-1)n+\ell})^{2}}+\sum_{j=i}^{\re-1}\frac{4\eta_{(\qu-1)n+j}^{2}\E\left[G_{\pi_{\qu}^{j}}^{2}\mid\pi_{\qu}^{\re}\right]}{\prod_{\ell=j}^{\re-1}(1+\mu\eta_{(\qu-1)n+\ell})^{2}}\\
 & \overset{(a)}{=}\frac{2\eta_{(\qu-1)n+i}^{2}G_{\pi_{\qu}^{\re}}^{2}}{\prod_{\ell=i}^{\re-1}(1+\mu\eta_{(\qu-1)n+\ell})^{2}}+\sum_{j=i}^{\re-1}\frac{8\eta_{(\qu-1)n+j}^{2}G_{f,2}^{2}}{\prod_{\ell=j}^{\re-1}(1+\mu\eta_{(\qu-1)n+\ell})^{2}}\\
 & \overset{(b)}{=}\frac{2\gamma_{(\qu-1)n+i}^{2}\eta_{(\qu-1)n+i}^{2}G_{\pi_{\qu}^{\re}}^{2}}{\gamma_{s}^{2}}+\sum_{j=i}^{\re-1}\frac{8\gamma_{(\qu-1)n+j}^{2}\eta_{(\qu-1)n+j}^{2}G_{f,2}^{2}}{\gamma_{s}^{2}},
\end{align*}
where $(a)$ is by (w.l.o.g., we assume $n\geq2$ now, otherwise,
our final bound holds automatically when $n=1$ since $\bx_{s}=\widehat{\bx}_{s}(\re,i)$
in that case)
\[
\E\left[G_{\pi_{\qu}^{j}}^{2}\mid\pi_{\qu}^{\re}\right]=\frac{nG_{f,2}^{2}-G_{\pi_{\qu}^{\re}}^{2}}{n-1}\leq\frac{nG_{f,2}^{2}}{n-1}\leq2G_{f,2}^{2},\forall j\neq\re,
\]
and $(b)$ is due to $\gamma_{t}=\prod_{s=1}^{t-1}(1+\mu\eta_{s}),\forall t\in\left[T+1\right]$.
We hence obtain the desired bound on $\E\left[\left\Vert \bx_{s}-\widehat{\bx}_{s}(\re,i)\right\Vert ^{2}\mid\pi_{\qu}^{\re}\right]$
as $\by_{\re}=\bx_{(\qu-1)n+\re}=\bx_{s}$ and $\widehat{\by}_{\re}=\widehat{\bx}_{(\qu-1)n+\re}(\re,i)=\widehat{\bx}_{s}(\re,i)$.
\end{proof}

\subsection{Analysis for the $\protect\SS$ Sampling Scheme\label{subsec:SS-analysis}}

This subsection helps us to bound $\left|\E\left[\Omega_{t}(\bx_{s})\right]\right|,\forall t\in\left[T\right],s\in\left[t\right]$
for the $\SS$ sampling scheme. The proof is inspired by \cite{NEURIPS2022_7bc4f74e}.
Again, our result can be viewed as a finer generalization than theirs
and hence requires more careful analysis.
\begin{lem}
\label{lem:SS-Omega}Under Assumptions \ref{assu:basic} and \ref{assu:lip},
suppose the $\SS$ sampling scheme is employed, then for any $t\in\left[T\right]$
and $s\in\left[t\right]$, let $\re\defeq\re(t)$, Algorithm \ref{alg:Alg}
guarantees
\begin{align*}
\left|\E\left[\Omega_{t}(\bx_{s})\right]\right|\leq & 4G_{f,2}^{2}\sum_{j=1}^{s-1}\frac{\gamma_{j}\eta_{j}}{\gamma_{s}}\left(\1\left[\re(j)=\re\right]+\frac{\1\left[\re(j)\neq\re\right]}{n-1}\right)\\
 & +\frac{2}{n}\sum_{i=1}^{n}G_{i}\sqrt{\sum_{j=1}^{s-1}\frac{\gamma_{j}^{2}\eta_{j}^{2}}{\gamma_{s}^{2}}\left(G_{f,2}^{2}+G_{i}^{2}\1\left[\re(j)=\re\right]+\frac{nG_{f,2}^{2}-G_{i}^{2}}{n-1}\1\left[\re(j)\neq\re\right]\right)},
\end{align*}
where $\Omega_{t}(\cdot)$ and $\gamma_{t}$ are defined in Lemmas
\ref{lem:core} and \ref{lem:last}, respectively.
\end{lem}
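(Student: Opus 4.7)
My plan is to adapt the swap-coupling strategy of Lemma~\ref{lem:RR-Omega} to the single-permutation structure of $\SS$. The first step is to decompose
\begin{equation*}
\E[\Omega_t(\bx_s)] = \tfrac{1}{n^2}\sum_{i=1}^{n}\sum_{k\neq i}\bigl(\E[f_i(\bx_s)\mid \pi^{\re(t)}=i]-\E[f_i(\bx_s)\mid \pi^{\re(t)}=k]\bigr).
\end{equation*}
For each ordered pair $(i,k)$ with $k\neq i$, I would introduce the value-swap involution $\sigma_{i,k}\colon S_n\to S_n$ that exchanges the positions of the values $i$ and $k$ in a permutation. Because $\sigma_{i,k}$ is measure-preserving and maps $\{\pi^{\re(t)}=i\}$ bijectively onto $\{\pi^{\re(t)}=k\}$, letting $\hat\bx_s$ denote the iterate Algorithm~\ref{alg:Alg} would produce using $\hat\pi:=\sigma_{i,k}(\pi)$ yields the coupling identity $\E[f_i(\bx_s)\mid\pi^{\re(t)}=k]=\E[f_i(\hat\bx_s)\mid\pi^{\re(t)}=i]$. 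Combining with the $G_i$-Lipschitz property of $f_i$ and H\"older gives the master reduction
\begin{equation*}
|\E[\Omega_t(\bx_s)]| \leq \tfrac{1}{n^2}\sum_{i=1}^{n}\sum_{k\neq i} G_i\,\E\bigl[\|\bx_s-\hat\bx_s\|\bigm|\pi^{\re(t)}=i\bigr].
\end{equation*}

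\textbf{Stability decomposition.} Conditional on $\pi^{\re(t)}=i$, the position $\beta$ of the value $k$ in $\pi$ is uniform on $[n]\setminus\{\re(t)\}$, and $\pi,\hat\pi$ agree except at positions $\re(t)$ and $\beta$. Setting $\delta_j:=\|\bx_j-\hat\bx_j\|$ and applying Lemma~\ref{lem:contractive} at each step $j<s$ produces two cases: a \emph{common} step ($\re(j)\notin\{\re(t),\beta\}$), where the same gradient is used and convexity plus $G_{\pi^{\re(j)}}$-Lipschitzness gives the squared contraction $(1+\mu\eta_j)^2\delta_{j+1}^2\leq\delta_j^2+4\eta_j^2 G_{\pi^{\re(j)}}^2$ as in Lemma~\ref{lem:RR-stability}; and a \emph{swap} step ($\re(j)\in\{\re(t),\beta\}$), where splitting $\nabla f_{\pi^{\re(j)}}(\bx_j)-\nabla f_{\hat\pi^{\re(j)}}(\hat\bx_j)$ into a common-index Lipschitz piece and a same-point swap piece yields only $(1+\mu\eta_j)\delta_{j+1}\leq\sqrt{\delta_j^2+4\eta_j^2 G_{\pi^{\re(j)}}^2}+\eta_j(G_i+G_k)$. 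To reconcile the two recursions cleanly, introduce auxiliary sequences $u_j,v_j\geq0$ with $u_1=v_1=0$ updated by $(1+\mu\eta_j)^2 u_{j+1}^2=u_j^2+4\eta_j^2 G_{\pi^{\re(j)}}^2$ at every step and $(1+\mu\eta_j)v_{j+1}=v_j+\eta_j(G_i+G_k)\mathds{1}[\re(j)\in\{\re(t),\beta\}]$ at swap steps only. The elementary inequality $\sqrt{(u+v)^2+c^2}\leq\sqrt{u^2+c^2}+v$ gives $\delta_s\leq u_s+v_s$ by induction, separating the common-step and swap-step contributions.

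\textbf{Averaging and main obstacle.} Unrolling the rescaled quantities produces $\gamma_s^2 u_s^2=4\sum_{j<s}\gamma_j^2\eta_j^2 G_{\pi^{\re(j)}}^2$ and $\gamma_s v_s=\sum_{j<s}\gamma_j\eta_j(G_i+G_k)\mathds{1}[\re(j)\in\{\re(t),\beta\}]$. Taking $\E[\cdot\mid\pi^{\re(t)}=i]$ averages $\beta$ uniformly over $[n]\setminus\{\re(t)\}$, so $\P[\re(j)\in\{\re(t),\beta\}\mid\pi^{\re(t)}=i]=\mathds{1}[\re(j)=\re(t)]+\tfrac{1}{n-1}\mathds{1}[\re(j)\neq\re(t)]$ and $\E[G_{\pi^{\re(j)}}^2\mid\pi^{\re(t)}=i]$ equals $G_i^2$ when $\re(j)=\re(t)$ and $\tfrac{nG_{f,2}^2-G_i^2}{n-1}$ otherwise---exactly the weights the lemma advertises. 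Applying H\"older $\E[u_s\mid\cdot]\leq\sqrt{\E[u_s^2\mid\cdot]}$ to the $u$-part delivers the square-root term, and the double average $\tfrac{1}{n^2}\sum_i\sum_{k\neq i}G_i(G_i+G_k)\leq 2G_{f,2}^2$ applied to the $v$-part delivers the linear term. The principal obstacle is precisely the coupling of the two inhomogeneous recursions: a naive squaring of the linear swap-step bound introduces a cross term $2\delta_j\eta_j(G_i+G_k)$ that cannot be discharged without paying an $s$-factor via Cauchy--Schwarz or a destructive $1/\mu$ via AM-GM, and the $(u,v)$ decomposition described above is the key device that keeps the swap contribution linearly separated all the way through and allows the advertised indicator weights to emerge from a single application of conditional expectation.
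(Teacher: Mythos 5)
Your proposal is correct, and while the high-level strategy (swap-coupling plus stability via Lemma~\ref{lem:contractive}) matches the paper, the execution diverges at the stability recursion in a way worth noting. Your master reduction with the double sum $\tfrac{1}{n^2}\sum_i\sum_{k\neq i}$ and explicit conditioning on $\{\pi^{\re(t)}=i\}$ is in fact the same object as the paper's $\tfrac{1}{n}\sum_i\E[f_i(\mathcal{A}_s(\widehat\pi(\re,\inv_i)))-f_i(\mathcal{A}_s(\pi))]$ after one further conditioning on $\pi^\re=k$, so that part is equivalent, just phrased more concretely. The genuinely different piece is how you dispose of the cross term $2\delta_j\eta_j\lVert\nabla f_{\I(j)}(\bx_j)-\nabla f_{\widehat\I(j)}(\bx_j)\rVert$ in the unrolled squared recursion. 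The paper (Lemma~\ref{lem:SS-stability}) keeps the single squared recursion and runs a self-bounding induction with a case split on whether $\gamma_{s+1}^2\delta_{s+1}^2$ exceeds $\max_{k\le s}\gamma_k^2\delta_k^2$, then Cauchy--Schwarz to pull out the $\fullmoon$ and $\newmoon$ terms. Your $(u,v)$-decomposition, driven by the elementary inequality $\sqrt{(u+v)^2+c^2}\le\sqrt{u^2+c^2}+v$, cleanly separates the common-step square-root contribution from the swap-step linear contribution at the level of the recursion itself, making the final averaging immediate and avoiding the case analysis entirely. I checked the arithmetic: the swap-step split $(1+\mu\eta_j)\delta_{j+1}\le\sqrt{\delta_j^2+4\eta_j^2 G_{\I(j)}^2}+\eta_j(G_i+G_k)$ is a valid consequence of Lemma~\ref{lem:contractive}, the unrolling of $u,v$ with the $\gamma$-weights is right, the conditional moments $\P[\re(j)\in\{\re,\beta\}\mid\pi^\re=i]$ and $\E[G_{\I(j)}^2\mid\pi^\re=i]$ are computed correctly (this parallels the paper's Lemma~\ref{lem:SS-marginal}), and the double average $\tfrac{1}{n^2}\sum_i\sum_{k\neq i}G_i(G_i+G_k)=\tfrac{n-2}{n}G_{f,2}^2+G_{f,1}^2\le 2G_{f,2}^2$ in fact gives a slightly sharper constant than the lemma's $4G_{f,2}^2$. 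So the route is different and arguably cleaner in the stability argument, and it lands at (a slight strengthening of) the stated bound.
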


\begin{proof}
Under the $\SS$ sampling scheme, for any $t\in\left[T\right]$, $\bx_{t}$
can be recognized as being generated by a deterministic map $\mathcal{A}_{t}$\footnote{Same as Footnote \ref{fn:deterministic}, we also assume $\nabla f_{i}(\bx)$
is deterministically picked from $\partial f_{i}(\bx)$ for any $i\in\left[n\right]$
and $\bx\in\R^{d}$.} from the permutation $\pi$ to $\R^{d}$ when the initial point $\bx_{1}$
and the stepsize $\eta_{t},\forall t\in\left[T\right]$ are fixed.
In other words, we can write
\[
\bx_{t}=\mathcal{A}_{t}(\pi),\forall t\in\left[T\right].
\]
We also recall the following fact about the index
\begin{equation}
\I(t)=\pi^{\re(t)}=\pi^{\re}.\label{eq:SS-Omega-index}
\end{equation}
Hence, there is
\[
\E\left[f_{\I(t)}(\bx_{s})\right]=\E\left[f_{\pi^{\re}}(\mathcal{A}_{s}(\pi))\right]=\sum_{i=1}^{n}\E\left[f_{i}(\mathcal{A}_{s}(\pi))\1\left[\pi^{\re}=i\right]\right].
\]
For any $i\in\left[n\right]$, let $\widehat{\pi}(\re,\inv_{i})$
denote the permutation obtained by exchanging $\pi^{\re}$ and $\pi^{\inv_{i}}$
where $\inv_{i}$ is the unique index satisfying $\pi^{\inv_{i}}=i$.
By applying Lemma \ref{lem:SS-same-dist} with $\phi(\cdot)=f_{i}(\mathcal{A}_{s}(\cdot))$,
there is
\[
\E\left[f_{i}(\mathcal{A}_{s}(\pi))\1\left[\pi^{\re}=i\right]\right]=\frac{1}{n}\E\left[f_{i}(\mathcal{A}_{s}(\widehat{\pi}(\re,\inv_{i})))\right],\forall i\in\left[n\right],
\]
which implies
\[
\E\left[f_{\I(t)}(\bx_{s})\right]=\sum_{i=1}^{n}\E\left[f_{i}(\mathcal{A}_{s}(\pi))\1\left[\pi^{\re}=i\right]\right]=\frac{1}{n}\sum_{i=1}^{n}\E\left[f_{i}(\mathcal{A}_{s}(\widehat{\pi}(\re,\inv_{i})))\right].
\]
Therefore,
\begin{align*}
\left|\E\left[\Omega_{t}(\bx_{s})\right]\right| & =\left|\frac{1}{n}\sum_{i=1}^{n}\E\left[f_{i}(\mathcal{A}_{s}(\widehat{\pi}(\re,\inv_{i})))-f_{i}(\mathcal{A}_{s}(\pi))\right]\right|\leq\frac{1}{n}\sum_{i=1}^{n}\E\left[\left|f_{i}(\mathcal{A}_{s}(\widehat{\pi}(\re,\inv_{i})))-f_{i}(\mathcal{A}_{s}(\pi))\right|\right]\\
 & \overset{(a)}{\leq}\frac{1}{n}\sum_{i=1}^{n}\E\left[G_{i}\left\Vert \mathcal{A}_{s}(\widehat{\pi}(\re,\inv_{i}))-\mathcal{A}_{s}(\pi)\right\Vert \right]=\frac{1}{n}\sum_{i=1}^{n}G_{i}\E\left[\left\Vert \widehat{\bx}_{s}(\re,\inv_{i})-\bx_{s}\right\Vert \right],
\end{align*}
where $(a)$ is because $f_{i}$ is $G_{i}$-Lipschitz on $\dom\psi$
and $\widehat{\bx}_{s}(\re,\inv_{i})$ is the output of running Algorithm
\ref{alg:Alg} with the same initial point $\bx_{1}$ and the stepsize
$\eta_{t},\forall t\in\left[T\right]$ under the $\SS$ sampling scheme
but using the permutation $\widehat{\pi}(\re,\inv_{i})$, i.e.,
\begin{align}
\widehat{\bx}_{1}(\re,\inv_{i}) & \defeq\bx_{1},\label{eq:SS-Omega-hat-x-init}\\
\widehat{\bx}_{j+1}(\re,\inv_{i}) & \defeq\argmin_{\bx\in\R^{d}}\psi(\bx)+\left\langle \nabla f_{\widehat{\pi}^{\re(j)}(\re,\inv_{i})}(\widehat{\bx}_{j}(\re,\inv_{i})),\bx\right\rangle +\frac{\left\Vert \bx-\widehat{\bx}_{j}(\re,\inv_{i})\right\Vert ^{2}}{2\eta_{j}},\forall j\in\left[s-1\right].\label{eq:SS-Omega-hat-x}
\end{align}

Finally, by Lemma \ref{lem:SS-stability}, we have
\begin{align*}
\frac{1}{n}\sum_{i=1}^{n}G_{i}\E\left[\left\Vert \widehat{\bx}_{s}(\re,\inv_{i})-\bx_{s}\right\Vert \right]\leq & \frac{1}{n}\sum_{i=1}^{n}G_{i}\cdot2\left(G_{f,1}+G_{i}\right)\sum_{j=1}^{s-1}\frac{\gamma_{j}\eta_{j}}{\gamma_{s}}\left(\1\left[\re(j)=\re\right]+\frac{\1\left[\re(j)\neq\re\right]}{n-1}\right)\\
 & +\frac{1}{n}\sum_{i=1}^{n}G_{i}\cdot2\sqrt{\sum_{j=1}^{s-1}\frac{\gamma_{j}^{2}\eta_{j}^{2}}{\gamma_{s}^{2}}\left(G_{f,2}^{2}+G_{i}^{2}\1\left[\re(j)=\re\right]+\frac{nG_{f,2}^{2}-G_{i}^{2}}{n-1}\1\left[\re(j)\neq\re\right]\right)}\\
= & 2\left(G_{f,1}^{2}+G_{f,2}^{2}\right)\sum_{j=1}^{s-1}\frac{\gamma_{j}\eta_{j}}{\gamma_{s}}\left(\1\left[\re(j)=\re\right]+\frac{\1\left[\re(j)\neq\re\right]}{n-1}\right)\\
 & +\frac{2}{n}\sum_{i=1}^{n}G_{i}\sqrt{\sum_{j=1}^{s-1}\frac{\gamma_{j}^{2}\eta_{j}^{2}}{\gamma_{s}^{2}}\left(G_{f,2}^{2}+G_{i}^{2}\1\left[\re(j)=\re\right]+\frac{nG_{f,2}^{2}-G_{i}^{2}}{n-1}\1\left[\re(j)\neq\re\right]\right)}.
\end{align*}
The proof is completed by using the fact $G_{f,1}\leq G_{f,2}$.
\end{proof}

\begin{lem}
\label{lem:SS-stability}Under the same settings in Lemma \ref{lem:SS-Omega},
let $\widehat{\bx}_{s}(\re,\inv_{i})$ be the point defined by (\ref{eq:SS-Omega-hat-x-init})
and (\ref{eq:SS-Omega-hat-x}), then we have
\begin{align*}
\E\left[\left\Vert \bx_{s}-\widehat{\bx}_{s}(\re,\inv_{i})\right\Vert \right]\leq & 2\left(G_{f,1}+G_{i}\right)\sum_{j=1}^{s-1}\frac{\gamma_{j}\eta_{j}}{\gamma_{s}}\left(\1\left[\re(j)=\re\right]+\frac{\1\left[\re(j)\neq\re\right]}{n-1}\right)\\
 & +2\sqrt{\sum_{j=1}^{s-1}\frac{\gamma_{j}^{2}\eta_{j}^{2}}{\gamma_{s}^{2}}\left(G_{f,2}^{2}+G_{i}^{2}\1\left[\re(j)=\re\right]+\frac{nG_{f,2}^{2}-G_{i}^{2}}{n-1}\1\left[\re(j)\neq\re\right]\right)},
\end{align*}
where $\gamma_{t}$ is defined in Lemma \ref{lem:last}.
\end{lem}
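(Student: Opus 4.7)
The plan is to adapt the RR stability argument from Lemma \ref{lem:RR-stability}, but accommodating a structural difference specific to $\SS$: swapping the values $\pi^\re$ and $\pi^{\inv_i}$ in the \emph{single} permutation $\pi$ produces $\widehat{\pi}(\re,\inv_i)$ that differs from $\pi$ at \emph{two} positions rather than one, so the coupled iterates $\by_j \defeq \bx_j$ and $\widehat{\by}_j \defeq \widehat{\bx}_j(\re,\inv_i)$ can use different subgradients at every $j$ with $\re(j) \in \{\re, \inv_i\}$, not just at one index. I will therefore partition each step $j \in [s-1]$ into three types: \emph{good} ($\re(j) \notin \{\re,\inv_i\}$, both runs invoke the common function $f_{\pi^{\re(j)}}$); \emph{bad A} ($\re(j)=\re$, where $\by$ uses $f_{\pi^\re}$ while $\widehat{\by}$ uses $f_i$); and \emph{bad B} ($\re(j)=\inv_i$, the functions swapped). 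When $\pi^\re = i$ (probability $1/n$) we have $\widehat\pi = \pi$ and $r_j \defeq \|\by_j-\widehat\by_j\| \equiv 0$, so only the complementary event matters.

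The one-step bound comes from Lemma \ref{lem:contractive}. At a good step, monotonicity of $\nabla f_{\pi^{\re(j)}}$ plus Assumption \ref{assu:lip} give the clean squared recursion
\[
(1+\mu\eta_j)^2 r_{j+1}^2 \le r_j^2 + 4\eta_j^2 G_{\pi^{\re(j)}}^2.
\]
At a bad step with gradient mismatch $\nabla f_a(\by_j) - \nabla f_b(\widehat{\by}_j)$ for some $\{a,b\}=\{\pi^\re,i\}$, I split
\[
\nabla f_a(\by_j) - \nabla f_b(\widehat{\by}_j) = \bigl[\nabla f_a(\by_j) - \nabla f_a(\widehat{\by}_j)\bigr] + \bigl[\nabla f_a(\widehat{\by}_j) - \nabla f_b(\widehat{\by}_j)\bigr],
\]
apply the triangle inequality, use monotonicity on the first bracket and Lipschitzness on the second, to obtain
\[
(1+\mu\eta_j)\,r_{j+1} \le \sqrt{r_j^2 + 4\eta_j^2 G_a^2} + \eta_j(G_{\pi^\re}+G_i).
\]
Normalizing $R_j \defeq \gamma_j r_j$ absorbs the contractive factors and produces a uniform recursion $R_{j+1} \le \sqrt{R_j^2 + q_j} + \ell_j$, where $\ell_j \defeq \gamma_j\eta_j(G_{\pi^\re}+G_i)(\1[\re(j)=\re]+\1[\re(j)=\inv_i])$ (zero on good steps) and $q_j \defeq 4\gamma_j^2\eta_j^2\bigl(G_{\pi^\re}^2\1[\re(j)=\re] + G_i^2\1[\re(j)=\inv_i] + G_{\pi^{\re(j)}}^2\1[\text{good}]\bigr)$.

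A short induction then establishes $R_s \le L_s + Q_s$ with $L_s \defeq \sum_{j<s}\ell_j$ and $Q_s \defeq \sqrt{\sum_{j<s}q_j}$: the inductive step reduces, after squaring, to $Q_j \le \sqrt{Q_j^2 + q_j}$, which is trivial. Taking expectations and dividing by $\gamma_s$, Jensen's inequality on the square-root piece reduces the task to bounding $\E[q_j]$; a case analysis on $\re(j)$, using that $\Pr[\inv_i = \re(j)] = 1/n$ and that conditional on $\inv_i = \re(j)\neq\re$ the value $\pi^\re$ is uniform on $[n]\setminus\{i\}$, yields the prescribed weights $G_{f,2}^2 + G_i^2\1[\re(j)=\re] + \frac{nG_{f,2}^2-G_i^2}{n-1}\1[\re(j)\neq\re]$. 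A parallel conditional computation for $\E[\ell_j]$ gives the factor $(G_{f,1}+G_i)\bigl(\1[\re(j)=\re] + \1[\re(j)\neq\re]/(n-1)\bigr)$, matching the target up to the slack constant $2$ absorbing residual terms like $\frac{nG_{f,1}+(n-2)G_i}{n(n-1)}\le \frac{2(G_{f,1}+G_i)}{n-1}$. The principal difficulty is engineering the bad-step one-step bound so that its linear jump and its ``good-step-like'' squared piece combine cleanly into a single recursion compatible with the $L_j + Q_j$ induction, while the conditional expectations over the random position $\inv_i$ produce precisely the $1/(n-1)$ combinatorial factor appearing in the target bound.
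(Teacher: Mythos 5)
Your proposal is correct, and it reaches the lemma by a route that is the same in spirit but noticeably different in its recursive machinery. The paper also couples the $\pi$-run and the $\widehat{\pi}(\re,\inv_i)$-run via Lemma \ref{lem:contractive} and bounds the per-step drift with the monotonicity-plus-Lipschitz split, but it does \emph{not} pre-partition steps into good/bad types: it keeps the gradient-mismatch magnitude $\left\Vert \nabla f_{\I(j)}(\bx_j)-\nabla f_{\widehat{\I}(j)}(\bx_j)\right\Vert$ as a random quantity that happens to vanish off the set $\re(j)\in\{\re,\inv_i\}$, works with the squared quantity $\gamma_s^2\delta_s^2$, and runs an induction with a two-case (``is $\gamma_{s+1}^2\delta_{s+1}^2$ the running max or not'') argument to extract the closed form $\gamma_s\delta_s \le 2\sum_{j<s}\gamma_j\eta_j\|\cdots\| + 2\sqrt{\sum_{j<s}\gamma_j^2\eta_j^2(G_{\I(j)}^2+G_{\widehat{\I}(j)}^2)}$. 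Your organization---partitioning into good / bad~A / bad~B, writing separate one-step bounds, and deriving the normalized scalar recursion $R_{j+1}\le\sqrt{R_j^2+q_j}+\ell_j$ whose induction to $R_s\le L_s+Q_s$ collapses to the trivial inequality $Q_j\le\sqrt{Q_j^2+q_j}$---is cleaner than the paper's max/non-max case split, and it yields a slightly sharper quadratic term (a single $G_a^2$ per step rather than $G_{\I(j)}^2+G_{\widehat{\I}(j)}^2$) and a factor $1$ rather than $2$ on the linear piece before expectation; the factor $2$ you end up with on the linear piece is pure slack against the lemma statement, whereas the paper needs it. The downstream expectation computation---conditioning on the random swap position $\inv_i$, using $\Pr[\inv_i=\re(j)]=1/n$, $\I(j)\diseq\mathrm{Uniform}[n]$, and $\widehat{\I}(j)\diseq\mathrm{Uniform}[n]\setminus\{i\}$ when $\re(j)\neq\re$ (the content of Lemma \ref{lem:SS-marginal})---is essentially identical to the paper's, just applied to differently shaped summands. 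One small tidying remark: your $q_j$ as written has potentially overlapping indicators when $\re=\inv_i$; since that event forces $\pi=\widehat{\pi}$ and $\delta_s\equiv0$, it contributes nothing, but in a write-up you would either exclude that event before building the recursion (as you do in prose) or define the three summands on a genuine partition of $[s-1]$.
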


\begin{proof}
For simplicity, we denote $\delta_{s}\defeq\left\Vert \bx_{s}-\widehat{\bx}_{s}(\re,\inv_{i})\right\Vert ,\forall s\in\left[t\right]$.
Moreover, let $\widehat{\I}(s)$ represent the index trajectory generated
under the permutation $\widehat{\pi}(\re,\inv_{i})$, i.e.,
\begin{equation}
\widehat{\I}(s)\defeq\widehat{\pi}^{\re(s)}(\re,\inv_{i}),\forall s\in\left[t-1\right].\label{eq:SS-stability-index}
\end{equation}

By Lemma \ref{lem:contractive}, we have
\begin{align}
\delta_{s+1}^{2} & \leq\frac{\delta_{s}^{2}-2\eta_{s}\left\langle \bx_{s}-\widehat{\bx}_{s}(\re,\inv_{i}),\nabla f_{\I(s)}(\bx_{s})-\nabla f_{\widehat{\I}(s)}(\widehat{\bx}_{s}(\re,\inv_{i}))\right\rangle +\eta_{s}^{2}\left\Vert \nabla f_{\I(s)}(\bx_{s})-\nabla f_{\widehat{\I}(s)}(\widehat{\bx}_{s}(\re,\inv_{i}))\right\Vert ^{2}}{(1+\mu\eta_{s})^{2}}\nonumber \\
 & \overset{(a)}{\leq}\frac{\delta_{s}^{2}+2\eta_{s}\left\Vert \nabla f_{\I(s)}(\bx_{s})-\nabla f_{\widehat{\I}(s)}(\bx_{s})\right\Vert \delta_{s}+\eta_{s}^{2}\left\Vert \nabla f_{\I(s)}(\bx_{s})-\nabla f_{\widehat{\I}(s)}(\widehat{\bx}_{s}(\re,\inv_{i}))\right\Vert ^{2}}{(1+\mu\eta_{s})^{2}}\nonumber \\
 & \overset{(b)}{\leq}\frac{\delta_{s}^{2}+2\eta_{s}\left\Vert \nabla f_{\I(s)}(\bx_{s})-\nabla f_{\widehat{\I}(s)}(\bx_{s})\right\Vert \delta_{s}+2\eta_{s}^{2}\left(G_{\I(s)}^{2}+G_{\widehat{\I}(s)}^{2}\right)}{(1+\mu\eta_{s})^{2}},\label{eq:SS-stability-1}
\end{align}
where $(a)$ is by 
\begin{align*}
 & \left\langle \bx_{s}-\widehat{\bx}_{s}(\re,\inv_{i}),\nabla f_{\I(s)}(\bx_{s})-\nabla f_{\widehat{\I}(s)}(\widehat{\bx}_{s}(\re,\inv_{i}))\right\rangle \\
= & \left\langle \bx_{s}-\widehat{\bx}_{s}(\re,\inv_{i}),\nabla f_{\widehat{\I}(s)}(\bx_{s})-\nabla f_{\widehat{\I}(s)}(\widehat{\bx}_{s}(\re,\inv_{i}))\right\rangle +\left\langle \bx_{s}-\widehat{\bx}_{s}(\re,\inv_{i}),\nabla f_{\I(s)}(\bx_{s})-\nabla f_{\widehat{\I}(s)}(\bx_{s})\right\rangle \\
\overset{\text{Assumption }\ref{assu:basic}}{\geq} & \left\langle \bx_{s}-\widehat{\bx}_{s}(\re,\inv_{i}),\nabla f_{\I(s)}(\bx_{s})-\nabla f_{\widehat{\I}(s)}(\bx_{s})\right\rangle \geq-\left\Vert \nabla f_{\I(s)}(\bx_{s})-\nabla f_{\widehat{\I}(s)}(\bx_{s})\right\Vert \delta_{s},
\end{align*}
and $(b)$ holds due to
\[
\left\Vert \nabla f_{\I(s)}(\bx_{s})-\nabla f_{\widehat{\I}(s)}(\widehat{\bx}_{s}(\re,\inv_{i}))\right\Vert \overset{\text{Assumption }\ref{assu:lip}}{\leq}G_{\I(s)}+G_{\widehat{\I}(s)}\Rightarrow\left\Vert \nabla f_{\I(s)}(\bx_{s})-\nabla f_{\widehat{\I}(s)}(\widehat{\bx}_{s}(\re,\inv_{i}))\right\Vert ^{2}\leq2\left(G_{\I(s)}^{2}+G_{\widehat{\I}(s)}^{2}\right).
\]

Now recall that $\gamma_{t}=\prod_{s=1}^{t-1}(1+\mu\eta_{s}),\forall t\in\left[T+1\right]$.
Multiply both sides of (\ref{eq:SS-stability-1}) by $\gamma_{s+1}^{2}$
to obtain
\begin{align}
\gamma_{s+1}^{2}\delta_{s+1}^{2} & \leq\gamma_{s}^{2}\delta_{s}^{2}+2\gamma_{s}\eta_{s}\left\Vert \nabla f_{\I(s)}(\bx_{s})-\nabla f_{\widehat{\I}(s)}(\bx_{s})\right\Vert \gamma_{s}\delta_{s}+2\gamma_{s}^{2}\eta_{s}^{2}\left(G_{\I(s)}^{2}+G_{\widehat{\I}(s)}^{2}\right)\nonumber \\
\Rightarrow\gamma_{s+1}^{2}\delta_{s+1}^{2} & \leq\gamma_{1}^{2}\delta_{1}^{2}+\sum_{j=1}^{s}2\gamma_{j}\eta_{j}\left\Vert \nabla f_{\I(j)}(\bx_{j})-\nabla f_{\widehat{\I}(j)}(\bx_{j})\right\Vert \gamma_{j}\delta_{j}+2\gamma_{j}^{2}\eta_{j}^{2}\left(G_{\I(j)}^{2}+G_{\widehat{\I}(j)}^{2}\right)\nonumber \\
 & =\sum_{j=1}^{s}2\gamma_{j}\eta_{j}\left\Vert \nabla f_{\I(j)}(\bx_{j})-\nabla f_{\widehat{\I}(j)}(\bx_{j})\right\Vert \gamma_{j}\delta_{j}+2\gamma_{j}^{2}\eta_{j}^{2}\left(G_{\I(j)}^{2}+G_{\widehat{\I}(j)}^{2}\right),\label{eq:SS-stability-2}
\end{align}
where the last equation is by $\delta_{1}=\left\Vert \bx_{1}-\widehat{\bx}_{1}(\re,\inv_{i})\right\Vert \overset{\eqref{eq:SS-Omega-hat-x-init}}{=}0$.

Next, we use induction to prove
\begin{equation}
\gamma_{s}^{2}\delta_{s}^{2}\leq4\left(\sum_{j=1}^{s-1}\gamma_{j}\eta_{j}\left\Vert \nabla f_{\I(j)}(\bx_{j})-\nabla f_{\widehat{\I}(j)}(\bx_{j})\right\Vert \right)^{2}+\sum_{j=1}^{s-1}4\gamma_{j}^{2}\eta_{j}^{2}\left(G_{\I(j)}^{2}+G_{\widehat{\I}(j)}^{2}\right),\forall s\in\left[t\right].\label{eq:SS-stability-hypothesis}
\end{equation}
For $s=1$, (\ref{eq:SS-stability-hypothesis}) holds as $\delta_{1}=0$.
Suppose (\ref{eq:SS-stability-hypothesis}) is true for all indices
in $\left[s\right]$ where $s\in\left[t-1\right]$. Then for $s+1$,
\begin{itemize}
\item if $\gamma_{s+1}^{2}\delta_{s+1}^{2}\leq\max_{k\in\left[s\right]}\gamma_{k}^{2}\delta_{k}^{2}$,
we know
\begin{align*}
\gamma_{s+1}^{2}\delta_{s+1}^{2} & \leq\max_{k\in\left[s\right]}\gamma_{k}^{2}\delta_{k}^{2}\\
 & \overset{\eqref{eq:SS-stability-hypothesis}}{\leq}\max_{k\in\left[s\right]}\left[4\left(\sum_{j=1}^{k-1}\gamma_{j}\eta_{j}\left\Vert \nabla f_{\I(j)}(\bx_{j})-\nabla f_{\widehat{\I}(j)}(\bx_{j})\right\Vert \right)^{2}+\sum_{j=1}^{k-1}4\gamma_{j}^{2}\eta_{j}^{2}\left(G_{\I(j)}^{2}+G_{\widehat{\I}(j)}^{2}\right)\right]\\
 & \leq4\left(\sum_{j=1}^{s}\gamma_{j}\eta_{j}\left\Vert \nabla f_{\I(j)}(\bx_{j})-\nabla f_{\widehat{\I}(j)}(\bx_{j})\right\Vert \right)^{2}+\sum_{j=1}^{s}4\gamma_{j}^{2}\eta_{j}^{2}\left(G_{\I(j)}^{2}+G_{\widehat{\I}(j)}^{2}\right);
\end{align*}
\item if $\gamma_{s+1}^{2}\delta_{s+1}^{2}>\max_{k\in\left[s\right]}\gamma_{k}^{2}\delta_{k}^{2}$,
we know
\begin{align*}
\gamma_{s+1}^{2}\delta_{s+1}^{2} & \overset{\eqref{eq:SS-stability-2}}{\leq}\sum_{j=1}^{s}2\gamma_{j}\eta_{j}\left\Vert \nabla f_{\I(j)}(\bx_{j})-\nabla f_{\widehat{\I}(j)}(\bx_{j})\right\Vert \gamma_{j}\delta_{j}+2\gamma_{j}^{2}\eta_{j}^{2}\left(G_{\I(j)}^{2}+G_{\widehat{\I}(j)}^{2}\right)\\
 & \leq2\left(\sum_{j=1}^{s}\gamma_{j}\eta_{j}\left\Vert \nabla f_{\I(j)}(\bx_{j})-\nabla f_{\widehat{\I}(j)}(\bx_{j})\right\Vert \right)\gamma_{s+1}\delta_{s+1}+2\gamma_{j}^{2}\eta_{j}^{2}\left(G_{\I(j)}^{2}+G_{\widehat{\I}(j)}^{2}\right)\\
 & \overset{(c)}{\leq}\frac{\gamma_{s+1}^{2}\delta_{s+1}^{2}}{2}+2\left(\sum_{j=1}^{s}\gamma_{j}\eta_{j}\left\Vert \nabla f_{\I(j)}(\bx_{j})-\nabla f_{\widehat{\I}(j)}(\bx_{j})\right\Vert \right)^{2}+\sum_{j=1}^{s}2\gamma_{j}^{2}\eta_{j}^{2}\left(G_{\I(j)}^{2}+G_{\widehat{\I}(j)}^{2}\right)\\
\Rightarrow\gamma_{s+1}^{2}\delta_{s+1}^{2} & \leq4\left(\sum_{j=1}^{s}\gamma_{j}\eta_{j}\left\Vert \nabla f_{\I(j)}(\bx_{j})-\nabla f_{\widehat{\I}(j)}(\bx_{j})\right\Vert \right)^{2}+\sum_{j=1}^{s}4\gamma_{j}^{2}\eta_{j}^{2}\left(G_{\I(j)}^{2}+G_{\widehat{\I}(j)}^{2}\right),
\end{align*}
where $(c)$ is due to AM-GM inequality.
\end{itemize}
Therefore, we always have
\[
\gamma_{s+1}^{2}\delta_{s+1}^{2}\leq4\left(\sum_{j=1}^{s}\gamma_{j}\eta_{j}\left\Vert \nabla f_{\I(j)}(\bx_{j})-\nabla f_{\widehat{\I}(j)}(\bx_{j})\right\Vert \right)^{2}+\sum_{j=1}^{s}4\gamma_{j}^{2}\eta_{j}^{2}\left(G_{\I(j)}^{2}+G_{\widehat{\I}(j)}^{2}\right).
\]
By induction, (\ref{eq:SS-stability-hypothesis}) holds for any $s\in\left[t\right]$,
which implies
\begin{align}
\gamma_{s}\delta_{s} & \leq2\sum_{j=1}^{s-1}\gamma_{j}\eta_{j}\left\Vert \nabla f_{\I(j)}(\bx_{j})-\nabla f_{\widehat{\I}(j)}(\bx_{j})\right\Vert +2\sqrt{\sum_{j=1}^{s-1}\gamma_{j}^{2}\eta_{j}^{2}\left(G_{\I(j)}^{2}+G_{\widehat{\I}(j)}^{2}\right)}\nonumber \\
\Rightarrow\gamma_{s}\E\left[\delta_{s}\right] & \leq2\underbrace{\E\left[\sum_{j=1}^{s-1}\gamma_{j}\eta_{j}\left\Vert \nabla f_{\I(j)}(\bx_{j})-\nabla f_{\widehat{\I}(j)}(\bx_{j})\right\Vert \right]}_{\defeq\fullmoon}+2\underbrace{\E\left[\sqrt{\sum_{j=1}^{s-1}\gamma_{j}^{2}\eta_{j}^{2}\left(G_{\I(j)}^{2}+G_{\widehat{\I}(j)}^{2}\right)}\right]}_{\defeq\newmoon}.\label{eq:SS-stability-3}
\end{align}
\begin{itemize}
\item For term $\fullmoon$, note that if $\inv_{i}=\re$, then $\pi=\widehat{\pi}(\re,\star_{i})\Rightarrow\nabla f_{\I(j)}(\bx_{j})=\nabla f_{\widehat{\I}(j)}(\bx_{j}),\forall j\in\left[s-1\right]$.
So there is
\begin{align*}
\left\Vert \nabla f_{\I(j)}(\bx_{j})-\nabla f_{\widehat{\I}(j)}(\bx_{j})\right\Vert  & =\left\Vert \nabla f_{\I(j)}(\bx_{j})-\nabla f_{\widehat{\I}(j)}(\bx_{j})\right\Vert \1\left[\inv_{i}\ne\re\right]\\
 & =\sum_{\ell\in\left[n\right]\backslash\left\{ \re\right\} }\left\Vert \nabla f_{\I(j)}(\bx_{j})-\nabla f_{\widehat{\I}(j)}(\bx_{j})\right\Vert \1\left[\pi^{\ell}=i\right].
\end{align*}
When $\pi^{\ell}=i$ for some $\ell\in\left[n\right]\backslash\left\{ \re\right\} $,
we observe that $\widehat{\I}(j)\overset{\eqref{eq:SS-stability-index}}{=}\widehat{\pi}^{\re(j)}(\re,\ell)=\pi^{\re(j)}=\I(j)$
if $\re(j)\neq\re$ and $\re(j)\neq\ell$, which implies
\begin{align*}
\left\Vert \nabla f_{\I(j)}(\bx_{j})-\nabla f_{\widehat{\I}(j)}(\bx_{j})\right\Vert \1\left[\pi^{\ell}=i\right] & =\left\Vert \nabla f_{\I(j)}(\bx_{j})-\nabla f_{\widehat{\I}(j)}(\bx_{j})\right\Vert \1\left[\pi^{\ell}=i\right]\1\left[\re(j)=\re\text{ or }\ell\right]\\
 & =\left\Vert \nabla f_{\pi^{\re}}(\bx_{j})-\nabla f_{i}(\bx_{j})\right\Vert \1\left[\pi^{\ell}=i\right]\1\left[\re(j)=\re\text{ or }\ell\right]\\
 & \leq\left(G_{\pi^{\re}}+G_{i}\right)\1\left[\pi^{\ell}=i\right]\1\left[\re(j)=\re\text{ or }\ell\right],
\end{align*}
where the second to last step is by $\left\{ \I(j),\widehat{\I}(j)\right\} =\left\{ \pi^{\re(j)},\widehat{\pi}^{\re(j)}(\re,\ell)\right\} =\left\{ \pi^{\re},i\right\} $
under the events $\pi^{\ell}=i$ and $\re(j)=\re\text{ or }\ell$.
Thus, for any $j\in\left[s-1\right]$,
\begin{align*}
\left\Vert \nabla f_{\I(j)}(\bx_{j})-\nabla f_{\widehat{\I}(j)}(\bx_{j})\right\Vert  & \leq\sum_{\ell\in\left[n\right]\backslash\left\{ \re\right\} }\left(G_{\pi^{\re}}+G_{i}\right)\1\left[\pi^{\ell}=i\right]\1\left[\re(j)=\re\text{ or }\ell\right]\\
\Rightarrow\E\left[\left\Vert \nabla f_{\I(j)}(\bx_{j})-\nabla f_{\widehat{\I}(j)}(\bx_{j})\right\Vert \right] & \leq\sum_{\ell\in\left[n\right]\backslash\left\{ \re\right\} }\E\left[\left(G_{\pi^{\re}}+G_{i}\right)\1\left[\pi^{\ell}=i\right]\right]\1\left[\re(j)=\re\text{ or }\ell\right]\\
 & \overset{(d)}{=}\sum_{\ell\in\left[n\right]\backslash\left\{ \re\right\} }\left(\frac{nG_{f,1}-G_{i}}{n(n-1)}+\frac{G_{i}}{n}\right)\1\left[\re(j)=\re\text{ or }\ell\right]\\
 & =\left(G_{f,1}+\frac{n-2}{n}G_{i}\right)\left(\1\left[\re(j)=\re\right]+\frac{\1\left[\re(j)\neq\re\right]}{n-1}\right)\\
 & \leq\left(G_{f,1}+G_{i}\right)\left(\1\left[\re(j)=\re\right]+\frac{\1\left[\re(j)\neq\re\right]}{n-1}\right),
\end{align*}
where $(d)$ is by, for any fixed $\ell\in\left[n\right]\backslash\left\{ \re\right\} $,
there are
\[
\E\left[G_{\pi^{\re}}\1\left[\pi^{\ell}=i\right]\right]=\sum_{k\in\left[n\right]\backslash\left\{ i\right\} }G_{k}\P\left[\pi^{\ell}=i,\pi^{\re}=k\right]=\frac{1}{n(n-1)}\sum_{k\in\left[n\right]\backslash\left\{ i\right\} }G_{k}=\frac{nG_{f,1}-G_{i}}{n(n-1)},
\]
and 
\[
\E\left[G_{i}\1\left[\pi^{\ell}=i\right]\right]=\frac{G_{i}}{n}.
\]
We thereby have
\begin{equation}
\Circle\leq\left(G_{f,1}+G_{i}\right)\sum_{j=1}^{s-1}\gamma_{j}\eta_{j}\left(\1\left[\re(j)=\re\right]+\frac{\1\left[\re(j)\neq\re\right]}{n-1}\right).\label{eq:SS-stability-white}
\end{equation}
\item For term $\newmoon$, for any fixed $j\in\left[s-1\right]$, we claim
the following three facts hold
\begin{eqnarray*}
\I(j)\diseq\mathrm{Uniform}\left[n\right], & \widehat{\I}(j)=1\text{ if }\re(j)=\re, & \widehat{\I}(j)\diseq\mathrm{Uniform}\left[n\right]\backslash\left\{ i\right\} \text{ if }\re(j)\neq\re,
\end{eqnarray*}
in which the first one follows by the definition of the $\SS$ sampling
scheme, the second one is true by recalling $\widehat{\I}(j)\overset{\eqref{eq:SS-stability-index}}{=}\widehat{\pi}^{\re(j)}(\re,\inv_{i})=i$
if $\re(j)=\re$, and the third one is by (\ref{eq:SS-stability-index})
and Lemma \ref{lem:SS-marginal}. Therefore, by H\"{o}lder's inequality
\begin{align}
\newmoon & \leq\sqrt{\sum_{j=1}^{s-1}\gamma_{j}^{2}\eta_{j}^{2}\left(\E\left[G_{\I(j)}^{2}\right]+\E\left[G_{\widehat{\I}(j)}^{2}\right]\right)}\nonumber \\
 & =\sqrt{\sum_{j=1}^{s-1}\gamma_{j}^{2}\eta_{j}^{2}\left(G_{f,2}^{2}+G_{i}^{2}\1\left[\re(j)=\re\right]+\frac{nG_{f,2}^{2}-G_{i}^{2}}{n-1}\1\left[\re(j)\neq\re\right]\right)}.\label{eq:SS-stability-black}
\end{align}
\end{itemize}
Finally, we conclude by plugging (\ref{eq:SS-stability-white}) and
(\ref{eq:SS-stability-black}) back into (\ref{eq:SS-stability-3})
and dividing both sides by $\gamma_{s}$.
\end{proof}

\section{Auxiliary Lemmas\label{sec:auxiliary}}

This section includes some technical results applied in the analysis
presented in the previous sections.

We first provide two algebraic inequalities for the stepsize proportional
to $\qu(T)-\qu(t)+1$, which is used for the $\RR$ sampling scheme
in Theorem \ref{thm:RR-cvx-full}.
\begin{lem}
\label{lem:stepsize}Suppose $\eta_{t}=\eta_{\star}(\qu(T)-\qu(t)+1),\forall t\in\left[T\right]$
where $\eta_{\star}>0$ is a constant, then there are
\begin{eqnarray*}
\sum_{t=1}^{T}\eta_{t}\geq\frac{\eta_{\star}\qu(T)T}{2} & and & \sum_{t=1}^{T}\frac{\eta_{t}^{2}}{\sum_{s=t}^{T}\eta_{s}}\leq\frac{9\eta_{\star}(\qu(T)+\log n)}{2}.
\end{eqnarray*}
\end{lem}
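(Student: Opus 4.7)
Write $K := \qu(T)$ and $r := \re(T) \in \{1,\dots,n\}$, so $T = (K-1)n + r$, and let $n_k$ denote the length of the $k$-th epoch ($n_k = n$ for $k < K$, and $n_K = r$). Since the stepsize is constant on each epoch, taking the value $a_k := \eta_\star(K-k+1)$, my plan is to group both sums by epoch and handle them by direct arithmetic.

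For the first inequality, a one-line computation gives
\[
\sum_{t=1}^T \eta_t \;=\; \eta_\star\sum_{k=1}^K n_k(K-k+1) \;=\; \eta_\star\!\left(\frac{nK(K+1)}{2} - n + r\right),
\]
and subtracting $\eta_\star KT/2 = \eta_\star K\bigl((K-1)n + r\bigr)/2$ leaves $\eta_\star(K-1)(n - r/2) \geq 0$, which closes the first claim.

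For the tail-ratio inequality, I would set $S_{k+1} := \sum_{k'>k} n_{k'} a_{k'}$ and $c_k := S_{k+1}/a_k$, so the contribution of epoch $k$ is
\[
E_k \;=\; a_k \sum_{j=1}^{n_k} \frac{1}{j + c_k}
\]
(after the substitution $j = n_k - i + 1$). Evaluating $S_{k+1}$ explicitly and simplifying yields $c_K = 0$, $c_{K-1} = n_K/2$, and for $m := K-k \geq 2$,
\[
c_k \;=\; \frac{nm}{2} - \frac{n - n_K}{m+1} \;\geq\; \frac{nm}{2} - \frac{n}{3} \;\geq\; \frac{nm}{3}.
\]
I would then bound $E_k$ in three regimes. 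The last epoch satisfies $E_K \leq \eta_\star H_{n_K} \leq \eta_\star(1 + \log n)$. For $k = K-1$, the integral estimate gives $\sum_{j=1}^n 1/(j+c_{K-1}) \leq \log(1 + 2n/n_K) \leq \log 3 + \log n$ (worst case $n_K = 1$), hence $E_{K-1} \leq 2\eta_\star(\log 3 + \log n)$. For $k \leq K-2$ (so $m \geq 2$), the integral estimate plus $\log(1+x) \leq x$ and $c_k \geq nm/3$ gives
\[
E_k \;\leq\; a_k\log\!\Bigl(1 + \tfrac{n}{c_k}\Bigr) \;\leq\; \frac{n a_k}{c_k} \;\leq\; \frac{3\eta_\star(m+1)}{m} \;\leq\; \frac{9\eta_\star}{2},
\]
the last inequality saturating at $m = 2$. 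Summing over the $K-2$ interior epochs and adding the two boundary terms,
\[
\sum_{k=1}^K E_k \;\leq\; \frac{9\eta_\star(K-2)}{2} + 2\eta_\star(\log 3 + \log n) + \eta_\star(1 + \log n),
\]
and collecting constants (using $9 - 2\log 3 - 1 \geq 3\log n/2$ when $K \geq 2$) shows this is at most $\frac{9}{2}\eta_\star(K + \log n)$.

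The main obstacle is keeping the leading constant \emph{exactly} $9/2$: it forces using the crude $\log(1+x) \leq x$ estimate rather than the sharper $\log(1 + 3/m)$, and it requires the lower bound $c_k \geq nm/3$ rather than the easier $c_k \geq n(m-1)/2$. The edge cases $K = 1$ and $K = 2$ are handled by direct verification, since only the boundary terms $E_K$ and $E_{K-1}$ contribute, and both fit comfortably under $\frac{9}{2}\eta_\star(K + \log n)$.
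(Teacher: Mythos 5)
Your proof follows essentially the same plan as the paper's: group the sum by epoch, bound the last epoch's contribution by a harmonic sum ($\leq 1+\log n$), bound the second-to-last epoch by a $\log(1+2n/\re(T))$ integral estimate, and bound each interior epoch by the constant $9/2$ (which in both arguments is realized at the epoch three from the end). The only cosmetic difference is your packaging of the tail via the normalized shift $c_k$; otherwise the quantities and estimates match the paper's nearly term for term.

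Two algebraic slips are worth flagging, even though neither invalidates the argument. For the first inequality, the subtraction $\sum_{t=1}^T\eta_t-\eta_\star KT/2$ actually equals $\eta_\star\left[n(K-1)-\tfrac{r(K-2)}{2}\right]$, not $\eta_\star(K-1)(n-r/2)$; the two differ by $\eta_\star r/2$. Both happen to be nonnegative, so the conclusion stands, but the expression you display is not what the subtraction produces. In the final collection of constants you invoke the inequality $9-2\log 3-1\geq\tfrac{3}{2}\log n$, which fails once $n$ exceeds roughly $48$; what the step actually requires is the reverse direction $2\log 3-8\leq\tfrac{3}{2}\log n$, which is trivially true because the left-hand side is negative. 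With those two corrections the proof is complete and yields exactly the paper's bound.
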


\begin{proof}
Note that for any $t\in\left[T\right]$, there is
\begin{align}
\sum_{s=t}^{T}\eta_{s} & =\sum_{s=(\qu(T)-1)n+1}^{T}\eta_{s}+\sum_{s=\qu(t)n+1}^{(\qu(T)-1)n}\eta_{s}+\sum_{s=t}^{\qu(t)n}\eta_{s}\nonumber \\
 & =\eta_{\star}\left[\re(T)+n\left(\sum_{j=\qu(t)+1}^{\qu(T)-1}\qu(T)-j+1\right)+(n-\re(t)+1)(\qu(T)-\qu(t)+1)\right]\nonumber \\
 & =\eta_{\star}\left[\re(T)+\frac{n}{2}(\qu(T)-\qu(t)-1)(\qu(T)-\qu(t)+2)+(n-\re(t)+1)(\qu(T)-\qu(t)+1)\right].\label{eq:stepsize-1}
\end{align}
In particular, for $t=1$,
\[
\sum_{s=1}^{T}\eta_{s}=\eta_{\star}\left[\re(T)+\frac{n}{2}(\qu(T)-2)(\qu(T)+1)+n\qu(T)\right]=\eta_{\star}\left[\frac{n}{2}\qu(T)(\qu(T)+1)+\re(T)-n\right].
\]
\begin{itemize}
\item If $\qu(T)=1$ (i.e., $T\in\left[n\right]$), we have
\[
\sum_{s=1}^{T}\eta_{s}=\eta_{\star}\re(T)=\eta_{\star}\qu(T)T\geq\frac{\eta_{\star}\qu(T)T}{2}.
\]
\item If $\qu(T)\geq2$ (i.e., $T\geq n+1$), we have
\begin{align*}
\sum_{s=1}^{T}\eta_{s} & =\eta_{\star}\left[\frac{n}{2}\qu(T)(\qu(T)+1)+\re(T)-n\right]\overset{(a)}{\geq}\eta_{\star}\left[\frac{n}{2}\qu(T)(\frac{T}{n}+1)+\re(T)-n\right]\\
 & =\eta_{\star}\left[\frac{\qu(T)T}{2}+\frac{\qu(T)}{2}n+\re(T)-n\right]\overset{(b)}{\geq}\eta_{\star}\left[\frac{\qu(T)T}{2}+\re(T)\right]\geq\frac{\eta_{\star}\qu(T)T}{2},
\end{align*}
where $(a)$ is by $\qu(T)\geq\frac{T}{n}$ and $(b)$ is due to $\qu(T)\geq2$
now.
\end{itemize}
Hence, we always have
\[
\sum_{s=1}^{T}\eta_{s}\geq\frac{\eta_{\star}\qu(T)T}{2}.
\]

Next, we observe
\begin{align}
\sum_{t=1}^{T}\frac{\eta_{t}^{2}}{\sum_{s=t}^{T}\eta_{s}}\overset{\eqref{eq:stepsize-1}}{=} & \eta_{\star}\sum_{t=(\qu(T)-1)n+1}^{T}\frac{1}{\re(T)-\re(t)+1}\nonumber \\
 & +\eta_{\star}\sum_{k=1}^{\qu(T)-1}\sum_{i=1}^{n}\frac{(\qu(T)-k+1)^{2}}{\re(T)+\frac{n}{2}(\qu(T)-k-1)(\qu(T)-k+2)+(n-i+1)(\qu(T)-k+1)}.\label{eq:stepsize-2}
\end{align}
\begin{itemize}
\item For the first part in (\ref{eq:stepsize-2}), we have
\[
\sum_{t=(\qu(T)-1)n+1}^{T}\frac{1}{\re(T)-\re(t)+1}=\sum_{i=1}^{\re(T)}\frac{1}{i}\leq1+\log\re(T).
\]
\item For the second part in (\ref{eq:stepsize-2}), under relabeling the
index, we have
\begin{align*}
 & \sum_{k=1}^{\qu(T)-1}\sum_{i=1}^{n}\frac{(\qu(T)-k+1)^{2}}{\re(T)+\frac{n}{2}(\qu(T)-k-1)(\qu(T)-k+2)+(n-i+1)(\qu(T)-k+1)}\\
= & \sum_{k=2}^{\qu(T)}\sum_{i=1}^{n}\frac{k^{2}}{\re(T)+\frac{n}{2}(k-2)(k+1)+ik}\leq\sum_{i=1}^{n}\frac{4}{\re(T)+2i}+\sum_{k=3}^{\qu(T)}\frac{2k^{2}}{(k-2)(k+1)}\\
\leq & 2\log\left(1+\frac{2n}{\re(T)}\right)+\frac{9}{2}(\qu(T)-2)\leq2\log\left(1+\frac{2n}{\re(T)}\right)+\frac{9}{2}\qu(T)-\frac{9}{2}.
\end{align*}
\end{itemize}
So there is
\begin{align*}
\sum_{t=1}^{T}\frac{\eta_{t}^{2}}{\sum_{s=t}^{T}\eta_{s}} & \leq\eta_{\star}\left[\log\re(T)+2\log\left(1+\frac{2n}{\re(T)}\right)+\frac{9}{2}\qu(T)-\frac{7}{2}\right]\\
 & \leq\eta_{\star}\left[2\log(\re(T)+2n)+\frac{9}{2}\qu(T)-\frac{7}{2}\right]\\
 & \leq\eta_{\star}\left[2\log n+\frac{9}{2}\qu(T)+2\log3-\frac{7}{2}\right]\\
 & \leq\frac{9\eta_{\star}(\qu(T)+\log n)}{2}.
\end{align*}
\end{proof}

Next, we introduce Lemma \ref{lem:contractive}, which gives a general
upper bound on the distance between two points output by different
proximal updates but using the same stepsize and plays a key role
in bounding $\left|\E\left[\Omega_{t}(\bx_{s})\right]\right|,\forall t\in\left[T\right],s\in\left[t\right]$.
\begin{lem}
\label{lem:contractive}Under Assumption \ref{assu:basic}, given
$\bar{\bx},\bar{\by},\bg_{\bar{\bx}},\bg_{\bar{\by}}\in\R^{d}$ and
$\eta>0$, let
\begin{align*}
\tilde{\bx} & \defeq\argmin_{\bx\in\R^{d}}\psi(\bx)+\left\langle \bg_{\bar{\bx}},\bx\right\rangle +\frac{\left\Vert \bx-\bar{\bx}\right\Vert ^{2}}{2\eta},\\
\tilde{\by} & \defeq\argmin_{\by\in\R^{d}}\psi(\by)+\left\langle \bg_{\bar{\by}},\by\right\rangle +\frac{\left\Vert \by-\bar{\by}\right\Vert ^{2}}{2\eta},
\end{align*}
then there is
\[
\left\Vert \tilde{\bx}-\tilde{\by}\right\Vert \leq\frac{\left\Vert \bar{\bx}-\bar{\by}-\eta(\bg_{\bar{\bx}}-\bg_{\bar{\by}})\right\Vert }{1+\mu\eta}.
\]
\end{lem}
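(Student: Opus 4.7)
The proof will follow the standard contraction argument for proximal operators of strongly convex functions. The plan is to apply the first-order optimality conditions to the two proximal subproblems, subtract them, and then use the $\mu$-strong convexity of $\psi$ (guaranteed by Assumption \ref{assu:basic}) to derive the contraction factor $\frac{1}{1+\mu\eta}$.

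More concretely, by the optimality conditions for $\tilde{\bx}$ and $\tilde{\by}$, there exist $\nabla\psi(\tilde{\bx})\in\partial\psi(\tilde{\bx})$ and $\nabla\psi(\tilde{\by})\in\partial\psi(\tilde{\by})$ satisfying
\[
\tilde{\bx}-\bar{\bx}+\eta\bg_{\bar{\bx}}+\eta\nabla\psi(\tilde{\bx})=\bzero,\quad \tilde{\by}-\bar{\by}+\eta\bg_{\bar{\by}}+\eta\nabla\psi(\tilde{\by})=\bzero.
\]
Subtracting these two identities and rearranging yields
\[
(\tilde{\bx}-\tilde{\by})+\eta\left(\nabla\psi(\tilde{\bx})-\nabla\psi(\tilde{\by})\right)=\bar{\bx}-\bar{\by}-\eta(\bg_{\bar{\bx}}-\bg_{\bar{\by}}).
\]
Taking the inner product of both sides with $\tilde{\bx}-\tilde{\by}$, applying Cauchy--Schwarz on the right, and invoking the $\mu$-strong convexity inequality $\langle\nabla\psi(\tilde{\bx})-\nabla\psi(\tilde{\by}),\tilde{\bx}-\tilde{\by}\rangle\geq\mu\|\tilde{\bx}-\tilde{\by}\|^{2}$ on the left, I obtain
\[
(1+\mu\eta)\|\tilde{\bx}-\tilde{\by}\|^{2}\leq\|\bar{\bx}-\bar{\by}-\eta(\bg_{\bar{\bx}}-\bg_{\bar{\by}})\|\cdot\|\tilde{\bx}-\tilde{\by}\|.
\]
Dividing through by $\|\tilde{\bx}-\tilde{\by}\|$ (the case $\tilde{\bx}=\tilde{\by}$ being trivial) gives exactly the claimed bound.

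There is essentially no hard step here: the result is the standard firm non-expansiveness / contraction property of $\mathrm{prox}_{\eta\psi}$ when $\psi$ is $\mu$-strongly convex, adapted to the case where the two updates use different linearization points $\bg_{\bar\bx},\bg_{\bar\by}$. The only mild care required is to use the strong-convexity form stated in Assumption \ref{assu:basic}, which is expressed in terms of subgradients (with the qualifier ``whenever $\partial\psi\neq\varnothing$''); this is not an issue because $\tilde{\bx},\tilde{\by}\in\dom\psi$ and they admit subgradients by construction as minimizers of strongly convex subproblems.
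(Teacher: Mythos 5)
Your proof is correct and follows essentially the same route as the paper: write the first-order optimality conditions for both proximal subproblems, combine them by pairing against $\tilde{\bx}-\tilde{\by}$, lower-bound the $\psi$-subgradient term via $\mu$-strong convexity, upper-bound the right-hand side by Cauchy--Schwarz, and divide. The paper phrases the combination step slightly differently (adding two inner-product identities rather than subtracting the optimality equations first), but this is cosmetic and leads to the identical inequality.
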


\begin{proof}
By the definition of $\tilde{\bx}$, there exists $\nabla\psi(\tilde{\bx})\in\partial\psi(\tilde{\bx})$
such that $\bzero=\nabla\psi(\tilde{\bx})+\bg_{\bar{\bx}}+\frac{\tilde{\bx}-\bar{\bx}}{\eta}$,
which implies
\[
\left\langle \eta\nabla\psi(\tilde{\bx})+\tilde{\bx},\tilde{\bx}-\tilde{\by}\right\rangle =\left\langle \bar{\bx}-\eta\bg_{\bar{\bx}},\tilde{\bx}-\tilde{\by}\right\rangle .
\]
Similarly, we have
\[
\left\langle \eta\nabla\psi(\tilde{\by})+\tilde{\by},\tilde{\by}-\tilde{\bx}\right\rangle =\left\langle \bar{\by}-\eta\bg_{\bar{\by}},\tilde{\by}-\tilde{\bx}\right\rangle .
\]
Sum up the above two equations to obtain
\[
\eta\left\langle \nabla\psi(\tilde{\bx})-\nabla\psi(\tilde{\by}),\tilde{\bx}-\tilde{\by}\right\rangle +\left\Vert \tilde{\bx}-\tilde{\by}\right\Vert ^{2}=\left\langle \bar{\bx}-\bar{\by}-\eta(\bg_{\bar{\bx}}-\bg_{\bar{\by}}),\tilde{\bx}-\tilde{\by}\right\rangle .
\]
Note that Assumption \ref{assu:basic} implies
\[
\left\langle \nabla\psi(\tilde{\bx})-\nabla\psi(\tilde{\by}),\tilde{\bx}-\tilde{\by}\right\rangle \geq\mu\left\Vert \tilde{\bx}-\tilde{\by}\right\Vert ^{2}.
\]
Hence, there is
\[
\left\Vert \tilde{\bx}-\tilde{\by}\right\Vert ^{2}\leq\frac{\left\langle \bar{\bx}-\bar{\by}-\eta(\bg_{\bar{\bx}}-\bg_{\bar{\by}}),\tilde{\bx}-\tilde{\by}\right\rangle }{1+\mu\eta}\Rightarrow\left\Vert \tilde{\bx}-\tilde{\by}\right\Vert \leq\frac{\left\Vert \bar{\bx}-\bar{\by}-\eta(\bg_{\bar{\bx}}-\bg_{\bar{\by}})\right\Vert }{1+\mu\eta}.
\]
\end{proof}

Finally, we provide some useful facts related to the random permutation
inspired by \cite{NEURIPS2021_107030ca,NEURIPS2022_7bc4f74e}. We
recall that $S_{n}$ is the symmetric group of $\left[n\right]$,
i.e., the set containing all permutations of $\left[n\right]$.
\begin{lem}
\label{lem:RR-same-dist}Suppose $\pi=(\pi^{1},\cdots,\pi^{n})$ is
uniformly distributed on $S_{n}$, for any $\re\in\left[n\right]$
and $i\in\left[\re-1\right]$, we define $\widehat{\pi}(\re,i)$ by
exchanging $\pi^{\re}$ and $\pi^{i}$ in $\pi$, then there is
\[
\pi\diseq\widehat{\pi}(\re,i).
\]
\end{lem}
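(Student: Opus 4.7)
The plan is to reduce the claim to the fact that swapping two fixed coordinates is a bijection on $S_n$, and then exploit the uniformity of $\pi$. Concretely, define the map $\Psi : S_n \to S_n$ by $\Psi(\sigma) \defeq \widehat{\sigma}(\re, i)$, namely the permutation obtained from $\sigma$ by exchanging its entries in positions $\re$ and $i$. Because applying this swap twice restores the original permutation, $\Psi$ is an involution: $\Psi \circ \Psi = \mathrm{id}_{S_n}$, so in particular $\Psi$ is a bijection from $S_n$ to itself.

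Having established that $\Psi$ is a bijection, for any fixed target permutation $\tau \in S_n$ I would compute
\[
\P[\widehat{\pi}(\re, i) = \tau] \;=\; \P[\Psi(\pi) = \tau] \;=\; \P[\pi = \Psi^{-1}(\tau)] \;=\; \P[\pi = \Psi(\tau)] \;=\; \frac{1}{n!},
\]
where the penultimate step uses $\Psi^{-1} = \Psi$ and the final step uses the assumption that $\pi$ is uniform on $S_n$. Since $\P[\pi = \tau] = 1/n!$ as well, the two laws agree on every singleton of $S_n$, which is enough to conclude $\pi \diseq \widehat{\pi}(\re, i)$.

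There is essentially no obstacle here; the only thing to be slightly careful about is that $\Psi$ is well defined for every input (i.e.\ the output is indeed a permutation, which is immediate since swapping two entries of a bijection of $[n]$ with itself remains a bijection) and that the involution property $\Psi \circ \Psi = \mathrm{id}$ holds regardless of whether $\sigma^{\re} = \sigma^{i}$ or not. No conditions on $\re$ and $i$ beyond $\re \neq i$ are used, which is already guaranteed by $i \in [\re - 1]$. Hence the lemma follows in one short argument, and it is the only ingredient needed to justify the distributional identity $(\pi_{\qu}, \bx_s) \diseq (\widehat{\pi}_{\qu}(\re, i), \widehat{\bx}_s(\re, i))$ invoked in the proof of Lemma~\ref{lem:RR-Omega}.
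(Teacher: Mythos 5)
Your proof is correct and follows the same route as the paper: both check that the two laws assign probability $1/n!$ to every permutation. You merely make explicit the step the paper leaves implicit, namely that the coordinate swap $\Psi$ is a bijection (indeed an involution) on $S_n$, which is why $\P[\widehat{\pi}(\re,i)=\tau]=\P[\pi=\Psi(\tau)]=1/n!$.
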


\begin{proof}
It is enough to prove that, for any fixed $o\in S_{n}$, there is
\[
\P\left[\pi=o\right]=\P\left[\widehat{\pi}(\re,i)=o\right],
\]
which clearly holds as both sides equal $\frac{1}{n!}$.
\end{proof}

\begin{lem}
\label{lem:SS-same-dist}Suppose $\pi=(\pi^{1},\cdots,\pi^{n})$ is
uniformly distributed on $S_{n}$, for any $\re,i\in\left[n\right]$,
we define $\widehat{\pi}(\re,\inv_{i})$ by exchanging $\pi^{\re}$
and $\pi^{\inv_{i}}$ in $\pi$ where $\inv_{i}$ is the unique index
satisfying $\pi^{\inv_{i}}=i$, then there is
\[
\E\left[\phi(\pi)\1\left[\pi^{\re}=i\right]\right]=\frac{1}{n}\E\left[\phi(\widehat{\pi}(\re,\inv_{i}))\right],
\]
where $\phi:S_{n}\to\R$ can ba any deterministic map.
\end{lem}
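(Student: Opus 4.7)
The plan is to encode the swap-with-position-of-$i$ operation as a deterministic map $T\colon S_{n}\to S_{n}$, namely $T(\sigma)\defeq\widehat{\sigma}(\re,\inv_{i}(\sigma))$ where $\inv_{i}(\sigma)$ is the unique position of the value $i$ in $\sigma$. Since $\pi$ is uniform on $S_{n}$, the random permutation $\widehat{\pi}(\re,\inv_{i})$ equals $T(\pi)$, and the identity reduces to the purely combinatorial statement
\[
\sum_{\sigma\in S_{n}}\phi(T(\sigma))=n\sum_{\tau\in S_{n},\,\tau^{\re}=i}\phi(\tau),
\]
after multiplying through by $n!$.

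First I would note that every $T(\sigma)$ lies in the set $A_{i}\defeq\{\tau\in S_{n}:\tau^{\re}=i\}$, because the defining swap moves the value $i$ into position $\re$. Then I would compute $|T^{-1}(\tau)|$ for a fixed $\tau\in A_{i}$. The preimage $\sigma=\tau$ arises from $\inv_{i}(\sigma)=\re$, since swapping position $\re$ with itself fixes $\tau$. Any other preimage $\sigma$ must satisfy $\inv_{i}(\sigma)=\ell$ for some $\ell\neq\re$, in which case $\sigma$ is forced to be the permutation obtained from $\tau$ by swapping positions $\re$ and $\ell$. Since $\tau^{\re}=i$ forces $\tau^{\ell}\neq i$ for every $\ell\neq\re$, each of the $n-1$ choices of $\ell$ yields a distinct valid preimage, giving $|T^{-1}(\tau)|=1+(n-1)=n$ uniformly over $\tau\in A_{i}$.

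Finally I would assemble the conclusion as
\[
\E[\phi(\widehat{\pi}(\re,\inv_{i}))]=\frac{1}{n!}\sum_{\sigma\in S_{n}}\phi(T(\sigma))=\frac{1}{n!}\sum_{\tau\in A_{i}}|T^{-1}(\tau)|\phi(\tau)=\frac{n}{n!}\sum_{\tau\in A_{i}}\phi(\tau)=n\,\E[\phi(\pi)\1[\pi^{\re}=i]],
\]
which rearranges to the claim. No real obstacle is anticipated; the only subtle point is verifying that every $\tau\in A_{i}$ admits exactly the same number $n$ of preimages, which is precisely what produces the factor $1/n$ in the final identity.
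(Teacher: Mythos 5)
Your proof is correct. The underlying combinatorics is the same as the paper's, but you organize it from the opposite direction, which is worth noting. The paper first splits the left-hand side by cases on the position $j=\inv_i(\pi)$, writing $\E[\phi(\widehat{\pi}(\re,\inv_i))]=\sum_{j=1}^n\E[\phi(\widehat{\pi}(\re,j))\1[\pi^j=i]]$, and then shows each of the $n$ summands equals $\E[\phi(\pi)\1[\pi^\re=i]]$ by the change of variables $o\mapsto\mathcal{T}_{\re\leftrightarrow j}(o)$ on $S_n$. You instead treat the swap-with-$i$'s-position operation as a single map $T:S_n\to S_n$, observe that its image lies in $A_i=\{\tau:\tau^\re=i\}$, and count fibers: every $\tau\in A_i$ has exactly $n$ preimages, namely $\{\mathcal{T}_{\re\leftrightarrow\ell}(\tau):\ell\in[n]\}$ (the case $\ell=\re$ giving $\tau$ itself). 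Summing $\phi\circ T$ over $S_n$ then immediately produces the factor $n$ in front of $\sum_{\tau\in A_i}\phi(\tau)$. The two arguments are dual: your fibers of $T$ over $\tau$ are precisely the orbits the paper traces out by applying the $n$ transpositions $\mathcal{T}_{\re\leftrightarrow j}$. Both are elementary and of similar length; your version has the mild advantage of making the origin of the constant $1/n$ visible at a glance (it is literally a fiber size), while the paper's version avoids having to argue separately that the fibers are pairwise disjoint and exhaustive since each step is a plain reindexing.
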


\begin{proof}
We observe that
\begin{equation}
\E\left[\phi(\widehat{\pi}(\re,\inv_{i}))\right]=\E\left[\sum_{j=1}^{n}\phi(\widehat{\pi}(\re,\inv_{i}))\1\left[\inv_{i}=j\right]\right]=\E\left[\sum_{j=1}^{n}\phi(\widehat{\pi}(\re,j))\1\left[\pi^{j}=i\right]\right]=\sum_{j=1}^{n}\E\left[\phi(\widehat{\pi}(\re,j))\1\left[\pi^{j}=i\right]\right].\label{eq:SS-same-dist-1}
\end{equation}
For any fixed $j\in\left[n\right]$, let $\mathcal{T}_{\re\leftrightarrow j}:S_{n}\to S_{n}$
be the operation of exchanging the value of $o^{\re}$ and $o^{j}$
for $o\in S_{n}$. Then we know
\begin{align}
\E\left[\phi(\widehat{\pi}(\re,j))\1\left[\pi^{j}=i\right]\right] & =\frac{1}{n!}\sum_{o\in S_{n}}\phi(\mathcal{T}_{\re\leftrightarrow j}(o))\1\left[o^{j}=i\right]=\frac{1}{n!}\sum_{o\in S_{n}}\phi(\mathcal{T}_{\re\leftrightarrow j}(o))\1\left[\left(\mathcal{T}_{\re\leftrightarrow j}(o)\right)^{\re}=i\right]\nonumber \\
 & =\frac{1}{n!}\sum_{o\in S_{n}}\phi(o)\1\left[o^{r}=i\right]=\E\left[\phi(\pi)\1\left[\pi^{r}=i\right]\right],\label{eq:SS-same-dist-2}
\end{align}
where the second to last step is by $\left\{ \mathcal{T}_{\re\leftrightarrow j}(o):o\in S_{n}\right\} =S_{n}$.
Combine (\ref{eq:SS-same-dist-1}) and (\ref{eq:SS-same-dist-2})
to finally obtain
\[
\E\left[\phi(\pi)\1\left[\pi^{\re}=i\right]\right]=\frac{1}{n}\sum_{j=1}^{n}\E\left[\phi(\widehat{\pi}(\re,j))\1\left[\pi^{j}=i\right]\right]=\frac{1}{n}\E\left[\phi(\widehat{\pi}(\re,\inv_{i}))\right].
\]
\end{proof}

\begin{lem}
\label{lem:SS-marginal}Suppose $\pi=(\pi^{1},\cdots,\pi^{n})$ is
uniformly distributed on $S_{n}$, for any $\re,i\in\left[n\right]$,
we define $\widehat{\pi}(\re,\inv_{i})$ by exchanging $\pi^{\re}$
and $\pi^{\inv_{i}}$ in $\pi$ where $\inv_{i}$ is the unique index
satisfying $\pi^{\inv_{i}}=i$, then there is 
\[
\widehat{\pi}^{k}(\re,\inv_{i})\diseq\mathrm{Uniform}\left[n\right]\backslash\left\{ i\right\} ,\forall k\in\left[n\right],k\neq\re.
\]
\end{lem}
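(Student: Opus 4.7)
The plan is to show that for any $k \in [n] \setminus \{\re\}$ and any $j \in [n] \setminus \{i\}$, we have $\mathbb{P}[\widehat{\pi}^k(\re, \inv_i) = j] = \frac{1}{n-1}$. Since $(n-1)$ is the size of $[n] \setminus \{i\}$, this together with the fact that the support of $\widehat{\pi}^k(\re, \inv_i)$ is contained in $[n] \setminus \{i\}$ (because by construction $\widehat{\pi}^{\re}(\re, \inv_i) = i$, and permutations take distinct values) yields the desired uniform distribution.

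The cleanest way to compute the probability is to invoke Lemma \ref{lem:SS-same-dist} with the deterministic map $\phi : S_n \to \R$ defined by $\phi(\sigma) \defeq \1\left[\sigma^k = j\right]$. This yields
\[
\E\left[\1[\pi^k = j]\,\1[\pi^\re = i]\right] = \frac{1}{n}\E\left[\1[\widehat{\pi}^k(\re,\inv_i) = j]\right],
\]
i.e., $\mathbb{P}[\widehat{\pi}^k(\re, \inv_i) = j] = n \cdot \mathbb{P}[\pi^k = j,\; \pi^\re = i]$. For fixed $k \neq \re$ and $j \neq i$, since $\pi$ is uniform on $S_n$, the number of permutations with $\pi^k = j$ and $\pi^\re = i$ is $(n-2)!$, so $\mathbb{P}[\pi^k = j, \pi^\re = i] = \frac{(n-2)!}{n!} = \frac{1}{n(n-1)}$. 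Therefore $\mathbb{P}[\widehat{\pi}^k(\re, \inv_i) = j] = \frac{1}{n-1}$ as claimed.

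As a sanity check (and an alternative route that avoids Lemma \ref{lem:SS-same-dist}), one can verify the result directly by decomposing on whether $k = \inv_i$ or $k \neq \inv_i$. The event $\{k = \inv_i\} = \{\pi^k = i\}$ forces $\widehat{\pi}^k(\re, \inv_i) = \pi^\re$, contributing $\mathbb{P}[\pi^k = i, \pi^\re = j] = \frac{1}{n(n-1)}$; on the complementary event $\{k \neq \inv_i\} = \{\pi^k \neq i\}$, the position $k$ is untouched by the swap (as $k \neq \re$ as well), so $\widehat{\pi}^k(\re, \inv_i) = \pi^k$, contributing $\mathbb{P}[\pi^k = j] = \frac{1}{n}$ (noting that $\{\pi^k = j\} \subseteq \{\pi^k \neq i\}$ since $j \neq i$). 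Summing gives $\frac{1}{n(n-1)} + \frac{1}{n} = \frac{1}{n-1}$, matching the first derivation.

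There is no serious obstacle here; the lemma is essentially a counting fact about $S_n$, and the main subtlety is simply being careful that $\inv_i$ is a random index (depending on $\pi$) rather than a fixed one, which is precisely what Lemma \ref{lem:SS-same-dist} was designed to handle via the exchange operator $\mathcal{T}_{\re \leftrightarrow j}$.
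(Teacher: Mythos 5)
Your argument is correct and follows essentially the same route as the paper: both invoke Lemma \ref{lem:SS-same-dist} with $\phi(\sigma)=\1[\sigma^k=j]$ to reduce $\P[\widehat{\pi}^k(\re,\inv_i)=j]$ to $n\,\P[\pi^\re=i,\pi^k=j]$, then finish by the elementary count $\frac{(n-2)!}{n!}=\frac{1}{n(n-1)}$. Your alternative case-split on $k=\inv_i$ vs.\ $k\neq\inv_i$ is a nice direct check, but it does not change the substance of the argument.
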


\begin{proof}
Given $k\in\left[n\right]$ and $k\neq\re$, let $j\in\left[n\right]$
be fixed. We define the function $\phi(\pi)\defeq\1\left[\pi^{k}=j\right]$
and notice that
\[
\P\left[\widehat{\pi}^{k}(\re,\inv_{i})=j\right]=\E\left[\phi(\widehat{\pi}(\re,\inv_{i}))\right]\overset{\text{Lemma \ref{lem:SS-same-dist}}}{=}n\E\left[\phi(\pi)\1\left[\pi^{\re}=i\right]\right]=n\P\left[\pi^{\re}=i,\pi^{k}=j\right]=\begin{cases}
0 & j=i\\
\frac{1}{n-1} & j\neq i
\end{cases},
\]
which concludes the result.
\end{proof}

\section{Lower Bound for Strongly Convex $\psi$\label{sec:lb}}

We present a lower bound that can be applied to the first-order algorithm
containing a proximal update.

Given $F=f+\psi:\R^{d}\to\bR$ satisfying Assumptions \ref{assu:basic}
and \ref{assu:lip} and an initial point $\bx_{1}\in\R^{d}$, we consider
a family of algorithms obeying the following update rules in a total
of $T$ iterations,
\begin{align}
\by_{t+1} & \in\bx_{1}+\mathrm{Span}\cup_{s\in\left[t\right]}\left\{ \bx_{s}-\bx_{1},\by_{s}-\bx_{1},\nabla f(\bx_{s}),\nabla f(\by_{s})\right\} ,\label{eq:update-1}\\
\bx_{t+1} & =\argmin_{\bx\in\R^{d}}\psi(\bx)+\frac{\left\Vert \bx-\by_{t+1}\right\Vert ^{2}}{2\gamma_{t}},\label{eq:update-2}
\end{align}
where $\by_{1}=\bx_{1}$, $\nabla f(\bz_{t})\in\partial f(\bz),\forall t\in\left[T\right]$
for $\bz\in\left\{ \bx,\by\right\} $, and $\gamma_{t},\forall t\in\left[T\right]$
is a positive sequence. Note that (\ref{eq:update-1}) can be viewed
as a generalization of the existing span assumption \cite{nesterov2018lectures},
as it contains more information based on the output of the proximal
update in (\ref{eq:update-2}). In particular, (\ref{eq:update-1})
and (\ref{eq:update-2}) recover Proximal GD with the stepsize sequence
$\eta_{t},\forall t\in\left[T\right]$ by setting $\by_{t+1}=\bx_{1}+\bx_{t}-\bx_{1}-\eta_{t}\nabla f(\bx_{t})=\bx_{t}-\eta_{t}\nabla f(\bx_{t})$
and $\gamma_{t}=\eta_{t}$.

Now we are ready to prove the lower bound. As mentioned in Footnote
\ref{fn:lb}, the proof is only a simple variation of the existing
analysis in \cite{bubeck2015convex}.
\begin{thm}
\label{thm:lb}For any given $D_{\star}>0$, $G>0$, $\mu>0$, $T\in\N$
satisfying $T\geq\frac{G^{2}}{\mu^{2}D_{\star}^{2}}-1$, $d\in\N$
satisfying $d\geq T+1$, and $\bx_{1}\in\R^{d}$, there exist a function
$F=f+\psi:\R^{d}\to\R$ where $f$ is convex and $G$-Lipschitz on
$\R^{d}$ and $\psi$ is $\mu$-strongly convex on $\R^{d}$ and a
subgradient oracle $\nabla f$ such that any algorithm in the form
of (\ref{eq:update-1}) and (\ref{eq:update-2}) starting with $\bx_{1}$
has
\[
\min_{t\in\left[T\right]}F(\bx_{t+1})-F(\bx_{\star})\geq\frac{G^{2}}{2\mu(T+1)},
\]
where $\bx_{\star}=\argmin_{\bx\in\R^{d}}F(\bx)$ satisfying $\left\Vert \bx_{\star}-\bx_{1}\right\Vert \leq D_{\star}$.
\end{thm}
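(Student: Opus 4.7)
The plan is to adapt the classical Nemirovski-style resisting subgradient construction to the setting where the proximal step (\ref{eq:update-2}) can be interleaved with first-order updates. By translating, I may assume $\bx_1 = \bzero$ (replace $F$ by $F(\cdot + \bx_1)$). Set $f(\bx) = G \max_{i \in [T+1]} \bx^{(i)}$ and $\psi(\bx) = \frac{\mu}{2}\|\bx\|^2$: $f$ is convex and $G$-Lipschitz (each canonical-basis subgradient $G\be_i$ has norm $G$), and $\psi$ is $\mu$-strongly convex. A symmetry computation over the first $T+1$ coordinates identifies the unique minimizer $\bx_\star = -\frac{G}{\mu(T+1)} \sum_{i=1}^{T+1} \be_i$, giving $F(\bx_\star) = -\frac{G^2}{2\mu(T+1)}$ and $\|\bx_\star - \bx_1\| = \frac{G}{\mu\sqrt{T+1}} \leq D_\star$ under the hypothesis $T+1 \geq G^2/(\mu^2 D_\star^2)$.

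I would then specify the resisting subgradient oracle $\nabla f(\bx) = G\be_{i^*}$ where $i^* = \min\{i \in [T+1]: \bx^{(i)} = \max_{j \in [T+1]} \bx^{(j)}\}$, which lies in $\partial f(\bx)$. Writing $V_t \defeq \mathrm{Span}\{\be_1, \ldots, \be_t\}$ with $V_0 \defeq \{\bzero\}$, the key claim is the subspace invariant: for every $t \in [T+1]$, all previously produced points $\bx_s, \by_s$ with $s \leq t$ lie in $V_{t-1}$. The base case holds because $\bx_1 = \by_1 = \bzero$. For the inductive step, any $\bz \in V_{t-1}$ has $\bz^{(i)} = 0$ for $i \geq t$, so $\max_{j \in [T+1]} \bz^{(j)} \geq 0$; whether this max is strictly positive (attained at some index $\leq t-1$) or equal to zero (with smallest maximizer exactly $t$), we obtain $\nabla f(\bz) \in V_t$. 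Every vector in the span defining $\by_{t+1}$ in (\ref{eq:update-1}) is therefore in $V_t$, so $\by_{t+1} \in V_t$.

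The only step that genuinely departs from the classical first-order argument is the proximal update: since $\psi$ is the centered squared Euclidean norm, a direct solution of (\ref{eq:update-2}) yields $\bx_{t+1} = \by_{t+1}/(1 + \gamma_t \mu)$, which evidently preserves $V_t$ and closes the induction. Hence every $\bx_{t+1}$ with $t \in [T]$ lies in $V_T$, which forces $\bx_{t+1}^{(T+1)} = 0$ and therefore $f(\bx_{t+1}) \geq 0$ and $F(\bx_{t+1}) \geq 0 > F(\bx_\star)$. The claimed bound $F(\bx_{t+1}) - F(\bx_\star) \geq -F(\bx_\star) = \frac{G^2}{2\mu(T+1)}$ follows. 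The main obstacle is precisely showing that the proximal step does not enlarge the active subspace; this is where the choice of $\psi$ as a spherical quadratic is essential, because then the proximal operator is a linear contraction along each ray from its center and the otherwise-standard Bubeck-style argument proceeds without modification.
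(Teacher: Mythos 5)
Your proposal is correct and follows essentially the same path as the paper: same hard instance $f(\bx)=G\max_{j\in[T+1]}\bx[j]$, $\psi=\frac{\mu}{2}\|\cdot\|^2$, same resisting oracle returning the lowest-index maximizer, same span induction, and the same key observation that the proximal operator for a centered spherical quadratic is a scalar multiple $\by_{t+1}/(1+\mu\gamma_t)$ and hence preserves the active subspace. (Minor note: in the parenthetical on the max-equal-to-zero case, the smallest maximizer need not be exactly $t$ — it can be any index $\leq t$ — but all cases give $\nabla f(\bz)\in V_t$ so the argument is unaffected.)
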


\begin{proof}
W.l.og., we assume $\bx_{1}=\bzero$. For a general point $\bx_{1}\in\R^{d}$,
one can change every $\bx$ to $\bx-\bx_{1}$ in the following definition
of $f$ and $\psi$ and then conclude by similar steps.

The construction of the hard instance is essentially the same as \cite{bubeck2015convex}.
Let $f(\bx)\defeq G\max_{j\in\left[T+1\right]}\bx\left[j\right]$
and $\psi(\bx)\defeq\frac{\mu}{2}\left\Vert \bx\right\Vert ^{2}$,
where $\bx\left[j\right]$ is the $j$-th coordinate of $\bx$. Note
that
\[
\partial f(\bx)=G\cdot\mathrm{Conv}\left\{ \be_{j}:j\in\mathrm{argmax}_{i\in\left[T+1\right]}\bx\left[i\right]\right\} ,
\]
where $\be_{j}\in\R^{d}$ denotes the vector that takes $1$ in the
$j$-th coordinate and $0$ in any other place. So $f$ is $G$-Lipschitz
on $\R^{d}$. $\psi$ is $\mu$-strongly convex on $\R^{d}$ from
its definition.

Next, we claim the minimum value $F(\bx_{\star})=-\frac{G^{2}}{2\mu(T+1)}$
where $\bx_{\star}$ satisfies
\[
\bx_{\star}\left[j\right]=\begin{cases}
-\frac{G}{\mu(T+1)} & j\in\left[T+1\right],\\
0 & j\in\left[d\right]\backslash\left[T+1\right].
\end{cases}
\]
We consider two cases:
\begin{itemize}
\item $\max_{j\in\left[T+1\right]}\bx\left[j\right]\geq0$. We observe $\bzero$
falls into this case and note that $F(\bx)\geq\psi(\bx)\geq0=F(\bzero)$.
\item $\max_{j\in\left[T+1\right]}\bx\left[j\right]<0$. We observe $\bx_{\star}$
falls into this case. Now suppose $\max_{j\in\left[T+1\right]}\bx\left[j\right]=-a$
for some $a>0$, we have $\left|\bx\left[j\right]\right|\geq a,\forall j\in\left[T+1\right]$.
Thus, there is
\begin{align*}
F(\bx) & =-Ga+\frac{\mu}{2}\sum_{j=1}^{d}\left|\bx\left[j\right]\right|^{2}=-Ga+\frac{\mu}{2}\sum_{j=1}^{T+1}\left|\bx\left[j\right]\right|^{2}+\frac{\mu}{2}\sum_{j=T+2}^{d}\left|\bx\left[j\right]\right|^{2}\\
 & \geq-Ga+\frac{\mu(T+1)a^{2}}{2}\geq-\frac{G^{2}}{2\mu(T+1)}=F(\bx_{\star}).
\end{align*}
\end{itemize}
We thereby have
\begin{equation}
\min_{\bx\in\R^{d}}F(\bx)=\min\left\{ F(\bzero),F(\bx_{\star})\right\} =F(\bx_{\star})=-\frac{G^{2}}{2\mu(T+1)}.\label{eq:lb-1}
\end{equation}
Moreover, $\left\Vert \bx_{\star}\right\Vert =\frac{G}{\mu\sqrt{T+1}}\leq D_{\star}$
once $T\geq\frac{G^{2}}{\mu^{2}D_{\star}^{2}}-1$.

Because $\bx_{1}=\bzero$ now, we have $\by_{t+1}\in\mathrm{Span}\cup_{s\in\left[t\right]}\left\{ \bx_{s},\by_{s},\nabla f(\bx_{s}),\nabla f(\by_{s})\right\} $.
Moreover, we can explicitly write
\begin{align*}
\bx_{t+1} & =\argmin_{\bx\in\R^{d}}\psi(\bx)+\frac{\left\Vert \bx-\by_{t+1}\right\Vert ^{2}}{2\gamma_{t}}=\argmin_{\bx\in\R^{d}}\frac{\mu\left\Vert \bx\right\Vert ^{2}}{2}+\frac{\left\Vert \bx-\by_{t+1}\right\Vert ^{2}}{2\gamma_{t}}\\
 & =\frac{\by_{t+1}}{1+\mu\gamma_{t}}\in\mathrm{Span}\cup_{s\in\left[t\right]}\left\{ \bx_{s},\by_{s},\nabla f(\bx_{s}),\nabla f(\by_{s})\right\} .
\end{align*}
Now we define the subgradient oracle
\[
\nabla f(\bx)\defeq\be_{j_{\bx}}\text{ where }j_{\bx}\defeq\min\mathrm{argmax}_{i\in\left[T+1\right]}\bx\left[i\right].
\]
By induction, one can show $\mathrm{Span}\cup_{s\in\left[t\right]}\left\{ \bx_{s},\by_{s},\nabla f(\bx_{s}),\nabla f(\by_{s})\right\} \subseteq\mathrm{Span}\left\{ \be_{1},\cdots,\be_{t}\right\} ,\forall t\in\left[T\right]$.
As such, $\bx_{t+1}\in\mathrm{Span}\left\{ \be_{1},\cdots,\be_{t}\right\} ,\forall t\in\left[T\right]$,
which implies $F(\bx_{t+1})\geq f(\bx_{t+1})\geq G\bx_{t+1}\left[t+1\right]=0,\forall t\in\left[T\right]$.
We hence conclude
\[
\min_{t\in\left[T\right]}F(\bx_{t+1})-F(\bx_{\star})\overset{\eqref{eq:lb-1}}{\geq}\frac{G^{2}}{2\mu(T+1)}.
\]
\end{proof}

\end{document}